\newcommand{\C}{\mathcal{C}}
\newcommand{\D}{\mathcal{D}}
\newcommand{\F}{\mathcal{F}}
\newcommand{\G}{\mathcal{G}}
\newcommand{\BS}{\mathbf{S}}
\newcommand{\M}{\mathcal{M}}
\newcommand{\N}{\mathcal{N}}
\newcommand{\LL}{\mathcal{L}}
\newcommand{\R}{\mathcal{R}}
\newcommand{\LK}{\mathcal{LK}}
\newcommand{\RK}{\mathcal{RK}}
\newtheorem{thm}{Theorem}[subsection]
\newtheorem{cor}[thm]{Corollary}
\newtheorem{lem}[thm]{Lemma}
\newtheorem{prop}[thm]{Proposition}
\theoremstyle{definition}
\newtheorem{define}[thm]{Definition}
\theoremstyle{remark}
\newtheorem{rem}[thm]{Remark}
\newtheorem{example}[thm]{Example}
\DeclareMathOperator{\holim}{holim}
\DeclareMathOperator{\Top}{\mcal{S}}
\DeclareMathOperator{\Set}{Set}
\DeclareMathOperator{\Cat}{Cat}
\DeclareMathOperator{\Cob}{Cob}
\DeclareMathOperator{\semiCob}{semiCob}
\DeclareMathOperator{\ev}{ev}
\DeclareMathOperator{\Map}{Map}
\DeclareMathOperator{\Hom}{Hom}
\DeclareMathOperator{\qu}{qu}
\DeclareMathOperator{\nonu}{s}
\DeclareMathOperator{\cosk}{cosk}
\DeclareMathOperator{\sk}{sk}
\DeclareMathOperator{\op}{op}
\DeclareMathOperator{\CS}{CS}
\DeclareMathOperator{\CsS}{CsS}
\DeclareMathOperator{\QsS}{QsS}
\DeclareMathOperator{\GsS}{GsS}
\DeclareMathOperator{\inv}{inv}
\DeclareMathOperator{\Ho}{Ho}
\DeclareMathOperator{\aut}{aut}
\DeclareMathOperator{\df}{def}
\DeclareMathOperator{\Sp}{Sp}
\DeclareMathOperator{\Seg}{Seg}
\DeclareMathOperator{\K}{K}
\DeclareMathOperator{\Id}{Id}
\DeclareMathOperator{\ind}{ind}
\DeclareMathOperator{\Comp}{Comp}
\DeclareMathOperator{\fib}{fib}
\def\alp{{\alpha}}
\def\bet{{\beta}}
\def\gam{{\gamma}}
\def\sig{{\sigma}}
\def\vphi{{\varphi}}
\def\Om{{\Omega}}
\def\Del{{\Delta}}
\def\Lam{{\Lambda}}
\def\vphi{{\varphi}}
\def\lrar{\longrightarrow}
\def\hrar{\hookrightarrow}
\def\x{\stackrel}
\def\mcal{\mathcal}
\def\ovl{\overline}
\def\what{\widehat}
\def\wtl{\widetilde}
\def\bksl{\;\backslash\;}
\numberwithin{equation}{section}
\title{ Quasi-unital $\infty$-Categories }
\author{Yonatan Harpaz}
\begin{document}
\maketitle

\begin{abstract}
Inspired by Lurie's theory of quasi-unital algebras we prove an analogous result for $\infty$-categories. In particular, we show that the unital structure of an $\infty$-category can be uniquely recovered from the underlying non-unital structure once suitable candidates for units have been identified. The main result of this paper can be used to produce a proof for the $1$-dimensional cobordism hypothesis, as described in a forthcoming paper~\cite{har}.
\end{abstract}



\tableofcontents

\section*{ Introduction }
The notion of units in higher category theory carries considerably more structure then the corresponding discrete notion. Informally speaking, given an $\infty$-category $\C$ we are provided not only with a unit morphism $I_C: C \lrar C$ for every object $C \in \C$, but also with the precise way in which these are units, i.e. with explicit homotopies of the form $I_C \circ f \simeq f$ and $f \circ I_D \simeq f$ for every morphism $f: C \lrar D$. Furthermore, we are provided with higher homotopies exhibiting the inner coherence of the above data as well as its compatibility with composition of morphisms (along with all of its higher structure).

This bundle of information is encoded differently in different \textbf{models} for the theory of $\infty$-categories. In this paper we will be considering the model of \textbf{complete Segal space} developed by Rezk in his fundamental paper~\cite{rez}. A complete Segal space is a simplicial space $X$ satisfying a Segal condition and a completeness condition. Such a simplicial space $X$ determines an $\infty$-category $\C$ which can be described informally as follows:
\begin{enumerate}
\item
The $0$-simplices of $X$ correspond to \textbf{objects} of $\C$.
\item
The $1$-simplices of $X$ correspond to \textbf{morphisms} of $\C$ where the source and target of a given morphism are provided by the face maps $d_0,d_1: X_1 \lrar X_0$.
\item
The $2$-simplices of $X$ encode the composition in $\C$. In particular, we can think of a triangle $\sig \in X_2$ as encoding a homotopy from $d_0(\sig) \circ d_2(\sig)$ to $d_1(\sig)$.
\item
The $3$-simplices of $X$ provide us with \textbf{associativty} homotopies. Similarly, the spaces of $n$-simplices of $X$ for $n > 3$ provide us with higher coherence homotopies for the associativity strucutre.
\end{enumerate}

In this setting it natural to ask what do the structure maps $\rho^*:X_n \lrar X_m$ for the various $\rho:[n] \lrar [m]$ encode. This can be desicribed as follows. The higher face maps give information which is analogous to the source and target maps encoded by the face maps $[0] \lrar [1]$, i.e. they tells us to which objects, morphisms, etc. a specific piece of structure applies to. The degeneracy maps, on the other hand, have a different interpretation - they encode the \textbf{unital structure} of $\C$.

The $0$'th degeneracy map $s_0: X_0 \lrar X_1$ tells us for each object who is its identity morphism. Similarly, the two degeneracy maps $s_0,s_1:X_1 \lrar X_2$ provides us with homotopies of the form $I_D \circ f \simeq f$ and $g \circ I_C \simeq g$ for each morphism $f: C\lrar D$. The higher degeneracy maps can be interpreted as exhibiting the coherence of the unital structure with the composition and associativity structure.

The fact that the unital structure is encoded in the collection of degeneracies shows that it contains a somewhat intricate web of data. However, it also tells what we need to do in order to \textbf{forget it}: we should simply consider $X$ without the degeneracy maps, i.e. consider only the underlying \textbf{semi-simplicial space}.

A first motivation for forgetting this data comes from situations in which there are no natural choices for this vast unital structure. Such a case occurs, for example, when one is attempting to construct various \textbf{cobordism $\infty$-categories}. Suppose that we want to describe the $\infty$-category whose objects are closed $n$-manifolds and morphisms are cobordisms between them. Since cobordisms have their own automorphisms we can't simply take them as a set, but rather as the space classifying the corresponding topological automorphism groupoid. Gluing of cobordisms induces a weak composition operation on these classifying spaces.

As explained in~\cite{tft} \S\S $2.2$, this composition structure naturally leads to a \textbf{semi-simplicial space} $\semiCob_n$ satisfying the Segal condition. Such objects are referred to there as \textbf{semiSegal spaces}. In order to promote $\semiCob_n$ to a full simplicial space, one needs to understand the behaviour of \textbf{units} in these cobordism categories.

Now given an $n$-manifold $M$ there will certainly be an equivalence class of cobordisms $M \lrar M$ which are candidates for being the "identity" - all cobordisms which are diffeomorphic to $M \times I$. However it is a bit unnatural to choose any \textbf{specific one} of them. Note that even if we choose a specific identity cobordism $M \times I$ we will still have to arbitrarily choose diffeomorphisms of the form $\left[M \times I\right] \coprod_M W \cong W$ for each cobordism $W$ out of $M$ as well as many other coherence homotopies.

These choice problems can be overcome in various ways, some more ad-hoc than others, and in the end a unital structure can be obtained. In other words, $\semiCob_n$ can be promoted to a \textbf{Segal space} $\Cob_n$. However, there is great convenience in \textbf{not having to make these choices}. As claimed (but not proved) in~\cite{tft}, this unital structure is actually uniquely determined once we verify that suitable candidates for units exist.

Exploring this issue further, we see that an obvious necessary condition for a semiSegal space to come from a Segal space is that each object admits an endomorphism which is neutral with respect to composition (up to homotopy). Following Lurie's~\cite{higher-algebra} \S\S $6.1.3$, we will call such morphisms \textbf{quasi-units}. Informally, one is lead to consider the following questions:
\begin{enumerate}
\item
Given a non-unital $\infty$-category $\ovl{\C}$ in which every object admits a quasi-unit, can $\ovl{\C}$ be promoted to an $\infty$-category $\C$?
\item
If such a $\C$ exist, is it essentially unique?
\item
Given two $\infty$-categories $\C,\D$ with underlying non-unital $\infty$-categories $\ovl{\C},\ovl{\D}$, can the functor category $\C^\D$ be reconstructed from a suitable functor category $\ovl{\C}^{\ovl{\D}}$?
\end{enumerate}

In this paper we give a positive answer to the above questions. More precisely, we will construct a monoidal model category $\Comp_s$ which is a quasi-unital analouge of Rezk's complete Segal space model category $\Comp$. We will then show that the forgetful functor $\Comp \lrar \Comp_s$ fits into a Quillen equivalence between $\Comp$ and $\Comp_s$. Furthermore, we will show that this Quillen equivalence preserves the respective monoidal structures in an appropriate sense. This will yield an affirmative answer to all three questions above.

Before outlining our construction, let us explain our motivation for considering this question. As explained in~\cite{tft}, a result of this kind can be used to facilitate the construction of the cobordism categories. However, the relationship between such questions and the cobordism hypothesis goes beyond this mere added efficiency. In particular, one can actually use the result above in order to \textbf{prove} the $n=1$ case of the cobordism hypothesis (in the setting of $\infty$-categories). This application is described in~\cite{har}.

Let us now describe our approach for constructing $\Comp_s$. As explained above, when we encode the structure of an $\infty$-category in a simplicial space, what we need to do in order to remove the unital structure is to remove the degeneracy maps. This leads to the notion of a \textbf{semiSegal space}, which is defined formally in \S\S~\ref{ss-basic}. The data of a semiSegal space $X$ describes a non-unital $\infty$-category structure on its $0$'th space $X_0$.

The next step is to understand what it means for a morphism $f \in X_1$ such that $d_0(f) = d_1(f)$ to be a \textbf{quasi-unit}, i.e. to be neutral with respect to composition. This is defined formally in \S\S~\ref{ss-basic}. We shall say that a semiSegal space is \textbf{quasi-unital} if it admits quasi-units for every object. One can then phrase question $(1)$ above in terms of quasi-unital semiSegal spaces. However, in order to get any intelligent answer one should work not only with such semiSegal spaces themselves, but also with a correct notion of mappings between them. In particular, questions $(1)$ and $(2)$ should be considered together for an entire suitable $\infty$-category of quasi-unital semiSegal spaces.

A first discouraging observation is that maps of semiSegal spaces need not in general send quasi-units to quasi-units. This statement should be interpreted as follows: the structure of units is \textbf{not a mere condition}. Indeed, if this was the case one would expect the forget-the-units functor to be fully-faithful. Instead, we see that if $X,Y$ are two Segal spaces with underlying semiSegal spaces $\ovl{X},\ovl{Y}$, then a map $\ovl{f}: \ovl{X} \lrar \ovl{Y}$ has a chance of coming from a map $f: X \lrar Y$ only if it sends quasi-units to quasi-units. Hence we conclude that the collection of quasi-units should be \textbf{marked} as part of the data. The main result of this paper says that this is in fact all one needs to specify - all the additional unital structure is then essentially \textbf{uniquely determined}.

Our second observation is that instead of marking the quasi-units, one can instead mark the slightly larger collection of \textbf{invertible morphisms}. These are the morphisms composition with which induces weak equivalences on mapping spaces. A simple lemma (which we prove in \S\S~\ref{ss:qu-to-inv}) says that a map of semiSegal spaces sends quasi-units to quasi-units if and only if it sends invertible edges to invertible edges. Furthermore, the condition that an object admits a quasi-unit is equivalent to the condition that this object contains an invertible morphism out of it.

From this point of view we see that marking the invertible edges is essentially the same as marking the quasi-units. Furthermore, this alternative is much more convenient in practice. This is due to the fact that invertibility is a considerably more robust notion - for example, one does not need to check that a morphism has equal source and target before considering its invertibility. Furthermore, a morphism will stay invertible if we "deform it a little bit", i.e. the space of invertible $1$-simplices is a union of connected components of $X_1$.

Now in order to prove a result such as the one we are interested in here one might like a convenient model category in which one can consider semi-simplicial spaces for which certain $1$-simplices have been \textbf{marked}. This naturally leads to the category of \textbf{marked semi-simplicial spaces} in which we will work from \S\S~\ref{s:marked} onward.

In~\cite{rez} Rezk constructs two successive localizations of the Reedy model structure on the category of $\Top^{\Del^{\op}}$ of simplicial spaces. Our strategy in this paper we will be to mimic Rezk's constructions in the category $\Top^{\Del^{\op}_s}_+$ of marked semi-simplcial spaces. We will start in \S\S~\ref{s:marked} where we will establish the existence of a (monoidal) model structure on $\Top^{\Del^{\op}_s}_+$ which is analogous to the \textbf{Reedy model structure} on the category $\Top^{\Del^{\op}}$. We will refer to this structure as the \textbf{marked model structure}. We will then construct a Quillen adjunction (which is not an equivalence)
$$ \xymatrix{
\Top^{\Del^{\op}} \ar@<0.5ex>[r]^{\F^+} & \Top^{\Del^{\op}_s}_+ \ar@<0.5ex>[l]^{\RK^+} \\
}$$
between the Reedy model structure on $\Top^{\Del^{\op}}$ and the marked model structure on $\Top^{\Del^{\op}_s}_+$. This Quillen adjunction will be the basis of comparison between the model category $\Comp_s$ that will be constructed in this paper and the model category $\Comp$ of complete Segal spaces.

We will continue our strategy in \S~\ref{ss-marked-semiSegal} where we will localize the marked model structure in order to obtain the \textbf{semiSegal model category} $\Seg_s$. This model structure is analogous to the Segal model structure of~\cite{rez}. The fibrant objects of $\Seg_s$ will be called \textbf{marked semiSegal spaces}. We will then say that a marked semiSegal space is \textbf{quasi-unital} if each object admits an invertible edge out of it and if all invertible edges are marked. This will formalize the intuition described above regarding how to describe quasi-unital $\infty$-categories. We will denote by $\QsS \subseteq \Seg_s^{\fib}$ the full topological subcategory spanned by quasi-unital marked semiSegal spaces.

Following the footsteps of Rezk we observe that $\QsS$ itself is still not a model for the correct $\infty$-category of quasi-unital $\infty$-categories. As in the analogous case of Segal spaces, the problem is that equivalences in $\QsS$ are far too strict. To obtain the correct notion one needs to \textbf{localize} $\QsS$ with respect to a certain natural family of \textbf{Dwyer-Kan equivalences} which will be studies in \S\S~\ref{ss-fully-faithful}.

This route will lead us in \S~\ref{s-complete-semiSegal} to further localize the semiSegal model structure to obtain our target model category $\Comp_s$ whose fibrant objects are referred to as \textbf{complete marked semiSegal spaces}. We will denote the full topological subcategory of complete marked semiSegal spaces by $\CsS$. It is immediate to verify that complete marked semiSegal spaces are always quasi-unital, and so we have an inclusion $\CsS \subseteq \QsS$. The purpose of \S~\ref{s-complete-semiSegal} is to prove the following (see Theorems~\ref{t:localization} and~\ref{t:main-2}):

\begin{thm}\label{t:main}\
\begin{enumerate}
\item
The inclusion $\CsS \subseteq \QsS$ admits a left adjoint $\what{\bullet}: \QsS \lrar \CsS$ (in the $\infty$-categorical sense), known as the \textbf{completion functor}, which exhibits $\CsS$ as the left localization of $\QsS$ with respect to Dwyer-Kan equivalences.
\item
The adjunction $\F^+ \vdash \RK^+$ from above descends to a (suitably monoidal) \textbf{Quillen equivalence} between $\Comp$ and $\Comp_s$. In particular, the topological category $\CsS$ is equivalent to the topological category of complete Segal spaces, and this equivalence respects internal mapping objects.
\end{enumerate}
\end{thm}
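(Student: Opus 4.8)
The plan is to prove the two statements by carrying out, in the category $\Top^{\Del^{\op}_s}_+$ of marked semi-simplicial spaces, the second of Rezk's two successive Bousfield localizations. Concretely, $\Comp_s$ is defined as the left Bousfield localization of the semiSegal model structure $\Seg_s$ at the set of maps encoding the marked-semiSegal analogue of Rezk's completeness condition. The first thing to check --- by the standard existence theorem for left Bousfield localizations of left proper cellular model categories --- is that this localization exists, that its fibrant objects are exactly the complete marked semiSegal spaces $\CsS$, and that every complete marked semiSegal space is quasi-unital, so that $\CsS \subseteq \QsS \subseteq \Seg_s^{\fib}$. The completion functor $\what{\bullet}\colon \QsS \lrar \CsS$ is then the restriction of a fibrant-replacement functor of $\Comp_s$, together with its universal map $X \lrar \what{X}$; that it is a left adjoint (in the $\infty$-categorical sense) exhibiting $\CsS$ as a left localization of $\QsS$ is formal once one knows the $\Comp_s$-local equivalences between quasi-unital objects.

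For part $(1)$ the content is precisely to identify those local equivalences with the Dwyer--Kan equivalences of \S\S~\ref{ss-fully-faithful}, and this requires two implications. The first is that for every quasi-unital marked semiSegal space $X$ the completion map $X \lrar \what{X}$ is a Dwyer--Kan equivalence; I would establish this by writing down an explicit model for $\what{X}$ --- the marked analogue of Rezk's classifying diagram, built from the semi-simplicial space of invertible simplices of $X$ --- and checking by hand that it is fully faithful and essentially surjective, using that invertible edges are closed under composition and that the invertible $1$-simplices form a union of components of $X_1$. The second implication, which I expect to be the main obstacle in part $(1)$, is that a Dwyer--Kan equivalence $f\colon X \lrar Y$ between \emph{complete} marked semiSegal spaces is already a levelwise (Reedy) weak equivalence, i.e. that complete objects are local with respect to Dwyer--Kan equivalences. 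Full faithfulness of $f$, combined with the semiSegal condition and an induction on simplicial degree, gives that $f$ is a weak equivalence on each $X_n$ once one knows $f_0\colon X_0 \lrar Y_0$; and the latter is forced by essential surjectivity together with completeness. This last step is exactly where the completeness condition earns its keep: it lets one lift an equivalence in the homotopy category to an actual marked (invertible) edge, and it identifies $X_0$ with the appropriate space of such edges, so that surjectivity on isomorphism classes of objects of the homotopy categories upgrades to the required surjectivity and injectivity up to homotopy on $X_0$.

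For part $(2)$ I would first descend the given Quillen adjunction $\F^+\dashv\RK^+$ to a Quillen adjunction between the localized model categories using the standard recognition criterion: since $\Comp$ and $\Comp_s$ are left Bousfield localizations of $\Top^{\Del^{\op}}$ and $\Top^{\Del^{\op}_s}_+$, it suffices to verify that the left adjoint $\F^+$ carries the generating localizing maps of $\Comp$ (the Segal maps and the Rezk-completeness maps) to $\Comp_s$-local equivalences, which is a direct computation once one recalls that $\F^+$ sends a simplicial space to its underlying semi-simplicial space with a natural class of edges marked. Upgrading this to a Quillen equivalence is then the analysis of the derived unit and counit, and here the crucial --- and hardest --- step is the derived counit $\F^+\RK^+ X \lrar X$ at a complete marked semiSegal space $X$: its being a weak equivalence amounts to the assertion that the right Kan extension $\RK^+ X$ is a complete Segal space whose underlying marked semiSegal space is $X$, which is precisely the promised reconstruction of the unital structure from the non-unital one. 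One proves this by computing $\RK^+ X$ through its matching objects and checking that the relevant matching maps are weak equivalences exactly because $X$ is quasi-unital (an invertible edge out of every object, all invertibles marked); the degeneracies of $\RK^+ X$, read off from the marked edges, are then seen to satisfy the unit and coherence identities, and the Segal and completeness conditions follow. Given this, the derived unit $Y \lrar \RK^+\F^+ Y$ at a complete Segal space $Y$ is handled formally: both $Y$ and $\RK^+\F^+ Y$ are complete Segal spaces (the latter by the counit computation, applied to the complete quasi-unital marked semiSegal space $\F^+ Y$), the unit is a Dwyer--Kan equivalence between them, and hence a levelwise equivalence by Rezk's theorem.

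Finally, for the monoidal refinement one observes that $\F^+$ is strong monoidal for the monoidal structures on $\Top^{\Del^{\op}}$ and $\Top^{\Del^{\op}_s}_+$ (forgetting degeneracies and marking the natural class of edges both commute with the monoidal product), so that $\RK^+$ acquires a lax monoidal structure; since the internal mapping objects of $\Comp$ and of $\Comp_s$ compute the respective functor categories, it remains to check that the lax structure maps on $\RK^+$, and the induced comparison maps between internal homs, are weak equivalences on fibrant objects. Combining this with the Quillen equivalence of part $(2)$ shows that the equivalence $\CsS \simeq \{\text{complete Segal spaces}\}$ respects internal mapping objects, which in particular yields the affirmative answer to question $(3)$ of the introduction.
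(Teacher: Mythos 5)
Your overall architecture mirrors the paper's (localize $\Seg_s$ at the completeness maps, realize the completion as a fibrant replacement computed by an explicit classifying-diagram model, show DK-equivalences between complete objects are levelwise, and reduce part (2) to the counit statement for $\RK^+$), but part (1) has a genuine gap. You define $\what{\bullet}$ as a $\Comp_s$-fibrant replacement and propose to prove that the completion map $X \lrar \what{X}$ is a DK-equivalence by writing down the explicit classifying-diagram model and checking it is fully faithful and essentially surjective. That check does not identify the explicit model with $\what{X}$: for that you must show that the map from $X$ to the explicit model is a \emph{$\Comp_s$-local equivalence}, and since the whole point of part (1) is to identify the local equivalences between quasi-unital objects with the DK-equivalences, your argument as stated is circular. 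This missing step is the technical heart of the paper: the completion is exhibited as a homotopy colimit of the objects $X^{(\Del^m)^{\sharp}}$, and one must prove (Theorem~\ref{Q-anodyne-lem-2}) that $W^Y \lrar W^X$ is a $\Comp_s$-equivalence for every Q-anodyne map $X \lrar Y$; this in turn rests on Proposition~\ref{Q-anodyne}, on the quasi-unitality of mapping objects $W^A$ (Proposition~\ref{mapping-is-qu}, proved via a section of the free-loop fibration using the Segal--Puppe realization theorem), and on the $(\Del^1)^{\sharp}$-cylinder argument of Proposition~\ref{DK-to-categorical}. None of this machinery appears in your proposal. (Your second implication, that a DK-equivalence of complete objects is levelwise, is also only gestured at for $f_0$; the paper proves it by passing to the maximal semiKan subspaces and using Corollary~\ref{equiv-preserves-realization} together with homotopy-constancy.)

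For part (2) you correctly locate the crux at the counit $\F^+\RK^+X \lrar X$, but two points need repair. First, the claim that $\F^+$ is strong monoidal is false: for example $\left(\F^+(\Del^0)\otimes\F^+(\Del^0)\right)_1$ has three components while $\F^+(\Del^0\times\Del^0)_1$ is a point. $\F^+$ carries only a \emph{lax} structure, which (being on the left adjoint) induces no structure whatsoever on $\RK^+$; the paper must instead prove, using the counit theorem via Lemma~\ref{l:lk} and Proposition~\ref{p:monoidality}, that the lax structure maps become weak equivalences in $\Comp_s$, and derive the compatibility of internal homs from that. Your plan to transfer a lax structure to $\RK^+$ and compare internal homs therefore cannot be carried out as stated, though the conclusion survives by the weaker route. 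Second, your preliminary step of descending the Quillen adjunction by checking that $\F^+$ sends the Segal and completeness maps to $\Comp_s$-equivalences is not the ``direct computation'' you claim: $\F^+$ of a simplicial space contains all degenerate simplices, so comparing with the semi-simplicial generating maps already requires statements of the type of Lemma~\ref{l:lk}, which in the paper are consequences of the counit theorem (the paper instead obtains the Quillen adjunction from Corollary~\ref{rk-complete}); similarly, your proof sketch of the counit statement (``matching maps are equivalences because $X$ is quasi-unital'') conceals the actual mechanism, namely cofinality of the subcategory of surjections $\C^0_n \subseteq \C_n$, its weak contractibility, and the homotopy-constancy of the restricted diagram, which again uses Proposition~\ref{Q-anodyne} together with completeness of mapping objects (Propositions~\ref{mapping-into-complete} and~\ref{complete-DK}) and hence the monoidality of $\Seg_s$.
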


Theorem~\ref{t:main} will then give us the desired positive answers to all three questions appearing above.

\subsection*{ Relation to other work }
The theory developed here is closely related and much inspired by the theory of quasi-unital algebras introduced by Lurie in~\cite{higher-algebra} \S\S $6.1.3$. There he considers non-unital algebra objects in a general monoidal $\infty$-category $\D$. Enforcing an existence condition for quasi-units and an appropriate unitality condition for morphisms one obtains the $\infty$-category of quasi-unital algebra objects in $\D$. It is then proven that

\begin{thm}[~\cite{higher-algebra}]\label{lurie-qu}
The forgetful functor from the $\infty$-category of algebra objects in $\D$ to the $\infty$-category of quasi-unital algebra objects in $\D$ is an equivalence of $\infty$-categories.
\end{thm}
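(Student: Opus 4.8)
The plan is to reduce the theorem to a single contractibility statement about spaces of unit structures. Fix a monoidal $\infty$-category $\D$ and write $\Alg^{\mathrm{nu}}(\D)$, $\Alg(\D)$ for the $\infty$-categories of non-unital and unital associative algebra objects, so that $\Alg^{\mathrm{qu}}(\D)$ is the (non-full, but ``componentwise full'') subcategory of $\Alg^{\mathrm{nu}}(\D)$ whose objects admit a quasi-unit and whose morphisms form the union of those connected components of the mapping spaces consisting of quasi-unital maps. The passage from unital to non-unital algebras is restriction along a map of $\infty$-operads $\operatorname{Assoc}^{\mathrm{nu}} \to \operatorname{Assoc}$, and the forgetful functor $U\colon \Alg(\D) \to \Alg^{\mathrm{qu}}(\D)$ is obtained from it once one checks the easy facts that a genuine unit is a quasi-unit and that a map of unital algebras is quasi-unital. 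For a non-unital algebra $A$ let $\operatorname{Lift}(A)$ be the space of unital structures refining $A$ (the fiber of the operadic restriction over $A$), and let $\operatorname{QU}(A) \subseteq \Map_\D(\mathbf{1}, A)$ be the space of quasi-units. Restricting a unital structure to its underlying unit defines a map $r_A\colon \operatorname{Lift}(A) \to \operatorname{QU}(A)$, and I claim the whole theorem follows from: $(\star)$ \emph{$r_A$ is a homotopy equivalence, and $\operatorname{QU}(A)$ is contractible whenever it is nonempty.}

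Granting $(\star)$, essential surjectivity of $U$ is immediate: if $A$ is quasi-unital then $\operatorname{QU}(A) \neq \emptyset$, so $\operatorname{Lift}(A) \neq \emptyset$ and $A$ lifts to $\Alg(\D)$. For full faithfulness, let $A, B \in \Alg(\D)$; a point of $\Map_{\Alg(\D)}(A,B)$ consists of a non-unital map $f$ together with coherent data identifying $f$ with a map of unital algebras, and the space of such refinements of a fixed $f$ is, up to the bookkeeping of higher coherences which $(\star)$ also controls, the space of paths in $\operatorname{QU}(B)$ from $f(\mathbf{1}_A)$ to $\mathbf{1}_B$. By $(\star)$ this space is contractible when nonempty and nonempty exactly when $f$ is quasi-unital; hence $\Map_{\Alg(\D)}(A,B) \to \Map_{\Alg^{\mathrm{qu}}(\D)}(A,B)$ is a homotopy equivalence. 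Naturality in $\D$ and compatibility with monoidal structures is then tracked alongside, since every construction above is functorial in the monoidal $\infty$-category.

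It remains to prove $(\star)$, which is the heart of the matter. Connectedness of $\operatorname{QU}(A)$ is elementary: if $\epsilon_0, \epsilon_1$ are quasi-units then evaluating right-quasi-unitality of $\epsilon_1$ at $\epsilon_0$ and left-quasi-unitality of $\epsilon_0$ at $\epsilon_1$ gives $\epsilon_0 \simeq m(\epsilon_0, \epsilon_1) \simeq \epsilon_1$. The genuine difficulty is the higher homotopical structure — promoting ``connected'' to ``contractible'', and promoting ``a quasi-unit exists'' to ``a full coherent unital structure exists and is unique''. Here the key observation is that a quasi-unit $\epsilon$ is an \textbf{idempotent} element of the non-unital algebra $A$: from left-quasi-unitality evaluated at $\epsilon$ one gets $m(\epsilon,\epsilon) \simeq \epsilon$, coherently. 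I would therefore feed $\epsilon$ into the theory of idempotents in $\infty$-categories of~\cite{higher-algebra}: an idempotent element admits an essentially unique completion to the coherent data of an idempotent diagram, the $\infty$-category indexing such diagrams being weakly contractible in the appropriate sense; and under the operadic dictionary this coherent idempotent data is precisely a unital structure on $A$. Transporting this through the comparison of operads shows both that $\operatorname{QU}(A)$ is contractible when inhabited and that $r_A$ is an equivalence.

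The main obstacle, then, is exactly step $(\star)$, and within it the passage from the naive connectivity argument to the full statement. A direct attack — filling the coherence cells of a unital structure one operadic arity at a time, using quasi-unitality at each stage — is possible but combinatorially heavy; the conceptual route through idempotents is cleaner and is what I would actually pursue, the price being that one must set up carefully the dictionary between ``idempotent element of $A$'', ``idempotent diagram in the relevant $\infty$-category'', and ``arity-$0$ operation of a unital algebra structure on $A$''. Everything else — well-definedness of $U$, the reductions in the first two paragraphs, and the monoidal refinements — is routine once $(\star)$ is in hand.
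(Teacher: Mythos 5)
You should note at the outset that the paper does not prove Theorem~\ref{lurie-qu} at all: it is quoted from Lurie's \emph{Higher Algebra} \S\S 6.1.3 as motivation, and what the paper itself proves is the $\infty$-categorical analogue (Theorem~\ref{t:main}, via marked semiSegal spaces, completion, and the Quillen equivalence of Theorem~\ref{t:main-2}), by an entirely different, model-categorical route. So there is no internal proof to compare yours against, and I can only assess your outline on its own terms.

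On those terms there is a genuine gap, located exactly at your step $(\star)$, and it is not merely unproven but false. Your own connectedness argument shows that $\mathrm{QU}(A)\subseteq \Map_{\D}(\mathbf{1},A)$ is a union of connected components of $\Map_{\D}(\mathbf{1},A)$ (for an algebra underlying a unital one, the component of the unit), and nothing makes that component contractible. Concretely, take $\D$ to be spaces with the Cartesian product and $A=\Omega S^2$ with loop concatenation, viewed as a non-unital algebra: every point of $\Omega S^2$ is a quasi-unit (a path to the constant loop shows left and right translation by it are homotopic to the identity), so $\mathrm{QU}(A)\simeq \Omega S^2$, which has $\pi_1\cong\ZZ$; yet if the theorem holds, the homotopy fiber you call $\mathrm{Lift}(A)$ is contractible, being the fiber of an equivalence. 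Hence $r_A$ cannot be an equivalence, $(\star)$ actually contradicts the statement it is meant to prove, and the full-faithfulness step likewise fails: with $A=B=\Omega S^2$ and $f=\mathrm{id}$, the space of unital refinements of $f$ is contractible, whereas your proposed model for it, a path space in $\mathrm{QU}(B)$, is $\Omega^2 S^2$. The underlying error is the conflation of the space of quasi-units (a condition-cut subspace of $\Map_{\D}(\mathbf{1},A)$) with the space of coherent unital structures; that these differ is precisely the point the paper stresses in its introduction when it says the unital structure is not a mere condition. The idempotent route does not repair this: a quasi-unit is only a homotopy idempotent, and homotopy idempotents in an $\infty$-category need not admit a coherent refinement, nor an essentially unique one when they do; and even a coherent idempotent structure on $\epsilon$ is not the same data as a unital algebra structure on $A$, which must interact coherently with the entire multiplication. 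Any correct argument (Lurie's included) is a global one about the $\infty$-categories of unital and quasi-unital morphisms, not a pointwise contractibility statement about a subspace of $\Map_{\D}(\mathbf{1},A)$ established first.
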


Note that if $\D$ is the monoidal $\infty$-category of spaces (with the Cartesian product) then algebra objects in $\D$ can be identified with pointed $\infty$-categories with one object. Similarly, quasi-unital algebra objects in $\D$ can be considered as pointed quasi-unital $\infty$-categories with one object. Hence we see that there is a strong link between the main result of this paper and Theorem~\ref{lurie-qu}. However, even when restricting attention to quasi-unital $\infty$-categories with one object, our result is not a particular case of Theorem~\ref{lurie-qu}. This is due to the fact that the mapping space between quasi-unital $\infty$-categories with one object does not coincide, in general, with the corresponding pointed mapping space between them.

In the context of \textbf{strict n-categories} the notion of \textbf{quasi-units} has enjoyed a fair amount of interest as well. In~\cite{koc}, Kock defines the notion of a \textbf{fair $n$-category}, which in our terms can be called a \textbf{strict quasi-unital n-category}. For $n=2$ and for a variation of the $n=3$ case Kock and Joyal have shown that a (non-strict) unital structure can be uniquely recovered (see~\cite{jk1}). In~\cite{jk2} Kock and Joyal further show that every simply connected homotopy $3$-type can be modelled by a fair $3$-groupoid (see~\cite{jk2}). The main difference between their work and the present paper is that we address the (manifestly non-strict) case of quasi-unital $\infty$-categories (or $(\infty,1)$-categories, as opposed to $(n,n)$-categories). Furthermore, our results are framed in terms of a complete equivalence between the notions of unital and quasi-unital $\infty$-categories.

\section{ Preliminaries and overview }

\subsection{ Basic definitions }\label{s-preliminaries}

Let $\Del$ denote the simplicial category, i.e., the category whose objects are the finite ordered sets $[n] = \{0,...,n\}$ and whose morphisms are non-decreasing maps. Let $\Top = \Set^{\Del^{\op}}$ denote the category of \textbf{simplicial sets} endowed with the \textbf{Kan model structure}, i.e., the weak equivalences are the maps which induce isomorphisms on all homotopy groups and fibrations are Kan fibrations.

We will refer to objects $K \in \Top$ as \textbf{spaces}. We will say that two maps $f,g: K \lrar L$ in $\Top$ are \textbf{homotopic} (denoted $f \sim g$) if they induce the same map in the homotopy category associated to the Kan model structure. A \textbf{point} in a space $K$ will mean a $0$-simplex and a \textbf{path} in $K$ will mean a $1$-simplex.

Many of the categories which we will come across will be enriched in $\Top$, so that we have mapping spaces $\Map(X,Y) \in \Top$ which carry strictly associative composition rules. We will say that an $\Top$-enriched category is \textbf{topological} if all the mapping spaces are Kan.

In this paper we will be working mostly with \textbf{symmetric monoidal simplicial model categories}. Let us gloss over the relevant definitions:
\begin{define}
Let $\M$ be a category and $\otimes: \M \times \M \lrar \M$ a symmetric monoidal product. We say that $\otimes$ is \textbf{closed} if there exists an internal mapping functor $\M^{\op} \times \M \lrar \M$, typically denoted by
$ (X,Y) \mapsto Y^X $,
together with natural maps
$ \nu_{X,Y}: Y^X \otimes X \lrar Y $
which induce isomorphisms
$$ \Map\left(Z,Y^X\right) \x{\simeq}{\lrar} \Map_{\Top}(Z \otimes X,Y) $$
for every $X,Y,Z$. These isomorphisms are sometimes referred to as the \textbf{exponential law}.
\end{define}

\begin{rem}
If $\M$ is presentable and $\otimes$ is a symmetric monoidal product then $\otimes$ is closed if and only if it preserves colimits separately in each variable. This follows from the adjoint functor theorem.
\end{rem}

\begin{define}\label{d:monoidal-adj}
Let $(\C,\otimes), (\D,\times)$ be two symmetric monoidal categories and
$ \xymatrix{
\C \ar@<0.5ex>[r]^{\LL} & \D \ar@<0.5ex>[l]^{\R} \\
}$
an adjunction. Let
$$ \alp_{X,Y}: \R(X) \otimes \R(Y) \lrar \R(X \times Y) \;\;\;\;\; u: 1_{\C} \lrar \R(1_{\D}) $$
be a lax structure on $\R$ and
$$ \bet_{Z,W}: \LL(Z \otimes W) \lrar \LL(Z) \times \LL(W) \;\;\;\;\; v: \LL(1_{\C}) \lrar 1_{\D} $$
a colax structure on $\LL$. We will say that $(\alp_{X,Y},u)$ and $(\bet_{Z,W},v)$ are \textbf{compatible} if $u$ and $v$ are adjoint and the diagrams
$$
\begin{matrix}
\xymatrix{
Z \otimes W \ar[r]\ar[d] & \R(\LL(Z)) \otimes \R(\LL(W)) \ar^{\alp_{\LL(Z),\LL(W)}}[d] \\
\R(\LL(Z \otimes W)) \ar^{\R(\bet_{Z,W})}[r] & \R(\LL(Z) \times \LL(W))\\
} &
\xymatrix{
\LL(\R(X) \otimes \R(Y)) \ar^{\bet_{\R(X),\R(Y)}}[d]\ar^{\LL(\alp_{X,Y})}[r] & \LL(\R(X \times Y)) \ar[d] \\
\LL(\R(X)) \times \LL(\R(Y)) \ar[r] & X \times Y \\
}
\end{matrix}
$$
commute (where the unnamed maps are given by the unit/counit of the adjunction $\LL \dashv \R$). An adjunction with compatible lax-colax structure is called a \textbf{lax-monoidal adjunction}. We will say that a lax-monoidal adjunction is \textbf{strongly monoidal} if $\LL$ is \textbf{monoidal}, i.e. if $\bet_{Z,W}$ and $v$ are natural isomorphisms. We refer the reader to~\cite{ss} for more details.
\end{define}

\begin{rem}
If $(\alp_{X,Y},u)$ and $(\bet_{Z,W},v)$ as above are compatible then they are completely determined by each other. In fact, for any lax structure on $\R$ there is a unique compatible colax structure on $\LL$, and vice versa.
\end{rem}

\begin{define}\label{d:compatible}
Let $\M$ be a model category with a closed symmetric monoidal product $\otimes$ such that the unit of $\otimes$ is cofibrant. We say that $\M$ is \textbf{compatible} with $\otimes$ if for every pair of cofibrations $f:X' \lrar X,g:Y' \lrar Y$ the induced map
$$ h:\left[X' \otimes Y\right] \coprod_{X' \otimes Y'} \left[X \otimes Y'\right] \lrar X \otimes Y $$
is a cofibration, and is further a trivial cofibration if at least one of $f,g$ is trivial. This condition is commonly referred to as the \textbf{pushout-product axiom}. In this case we say that $\M$ is a \textbf{symmetric monoidal model category}.
\end{define}

\begin{rem}
The definition above can be extended to the case where the unit of $\otimes$ is not necessarily cofibrant (see~\cite{hov} Definition $4.2.6$). However, since in our case the units will always be confibrant it will simplify matters for us to assume this from now on.
\end{rem}

\begin{example}
The Kan model structure on $\Top$ is compatible with the Cartesian monoidal structure.
\end{example}

\begin{define}\label{d:quillen-monoidal}
Let $(\M,\otimes),(\N,\times)$ be two symmetric monoidal model categories. A Quillen adjunction $\LL \dashv \R$ between $\M$ and $\N$ will be called lax-monoidal (resp. strongly monoidal) if it is lax-monoidal (resp. strongly monoidal) as an ordinary adjunction (see Definition~\ref{d:monoidal-adj}). A lax monoidal Quillen adjunction will be called \textbf{weakly monoidal} if the structure maps of the colax structure on $\LL$ are weak equivalences.
\end{define}

\begin{define}\label{d:simplicial}
Let $\M$ be a symmetric monoidal model category. A \textbf{simplicial sturcture} on $\M$ is a strongly monoidal Quillen adjunction
$$ \xymatrix{
\Top \ar@<0.5ex>[r]^{\LL} & \M \ar@<0.5ex>[l]^{\R} \\
}.$$
In this case we say that $\M$ is a \textbf{symmetric monoidal simplicial model category}. $\M$ then acquires a natural enrichment over $\Top$ given by
$$ \Map_\M(X,Y) \x{\df}{=} \R\left(Y^X\right) $$
and one has natural isomorphisms
$$ \Map_{\Top}\left(K,\Map_\M(X,Y)) \cong \Map_\M\left(\LL(K),Y^X\right) \cong \Map_\M(\LL(K) \otimes X,Y\right) $$
for $K \in \Top^{\op},X \in \M^{\op}$ and $Y \in \M$. When there is no room for confusion we will usually abuse notation and denote $\LL(K)$ simply by $K$.
\end{define}

\begin{rem}
A simplicial structure can be defined also for $\M$ which do not posses a symmetric monoidal structure but instead carry an action of $\Top$ which satisfies analogous conditions to those of definition~\ref{d:compatible}.
\end{rem}

\subsection{ Semi-simplicial spaces and the Reedy model structure}\label{ss-semi-simplicial}

Let $\Del_s \subseteq \Del$ denote the subcategory consisting only of \textbf{injective} maps. A \textbf{semi-simplicial set} is a functor $\Del^{\op}_s \lrar \Set$. Similarly, a \textbf{semi-simplicial spaces} is a functor $\Del^{\op}_s \lrar \Top$. The category of semi-simplicial spaces will be denoted by $\Top^{\Del^{\op}_s}$.

We will denote by $\Del^n$ the standard $n$-simplex considered as a \textbf{semi-simplicial set} (it is given by the functor $\Del^{\op}_s \lrar \Set$ represented by $[n]$). If we will want to refer to the standard simplex as a \textbf{space} (i.e., an object in $\Top$) we will denote it as $|\Del^n| \in \Top$. This notation is consistent with our notation for the \textbf{geometric realization} functor which we consider as the functor
$$ |\bullet|: \Top^{\Del^{\op}_s} \lrar \Top $$
given by the coend $|X| = \int^{\Del_s} X_\bullet \times |\Del^\bullet|$.

For a subset $I \subseteq [n]$ we will denote by $\Del^I \subseteq \Del^n$ the sub semi-simplicial set corresponding to the sub-simplex spanned by $I$. We will denote by
$$ \Sp^n = \Del^{\{0,1\}} \coprod_{\Del^{\{1\}}} \Del^{\{1,2\}} \coprod_{\Del^{\{2\}}} ... \coprod_{\Del^{\{n-1\}}} \Del^{\{n-1,n\}} \subseteq \Del^n $$
the \textbf{spine} of $\Del^n$, i.e., the sub semi-simplicial set consisting of all the vertices and all the edges between consecutive vertices.

We will occasionally abuse notation and consider $\Del^n$ as a semi-simplicial space as well (which is levelwise discrete). Orthogonally, we will sometimes consider a space $K \in \Top$ as a semi-simplicial space which is concentrated in degree zero, i.e., as the semi-simplicial space given by $K_0 = K$ and $K_n = \emptyset$ for $n > 0$.

The category $\Top^{\Del^{\op}_s}$ carries the \textbf{Reedy model structure} with respect to the Kan model structure on $\Top$ and the obvious Reedy structure on $\Del_s$. Since $\Del_s$ is a Reedy category in which all non-trivial morphisms are increasing the Reedy model structure coincides with the \textbf{injective model structure}. This is a particularly nice situation because we have a concrete description for all three classes of maps. In particular, the weak equivalences and cofibrations are defined levelwise, and fibrations are defined in terms of \textbf{matching objects}. We refer the reader to~\cite{hir} \S $15$ for more details.

Now recall the standard (non-Cartesian) symmetric monoidal product $X,Y \mapsto X \otimes Y$ on $\Top^{\Del^{\op}_s}$ defined as in~\cite{rs} \S $3$ (the definition there was made originally for semi-simplicial sets but extends immediately to semi-simplicial spaces). The unit of $\otimes$ is $\Del^0$.

\begin{rem}\label{concrete}
One can obtain an explicit description of the space of $k$-simplices in $X \otimes Y$ as follows: let $P^{n,m}_k$ denote the set of injective order preserving maps
$$ \rho: [k] \lrar [n] \times [m] $$
such that $ p_{[n]} \circ \rho: [k] \lrar [n] $ and $ p_{[m]} \circ \rho: [k] \lrar [m] $ are \textbf{surjective} (such maps are sometimes called \textbf{shuffles}). Then one has
$$ (X \otimes Y)_k = \coprod_{n,m \leq k} P^{n,m}_k \times X_n \times Y_m .$$
In particular, the set of $k$-simplices of $\Del^n \otimes \Del^m$ can be identified with the set of \textbf{all} injective order preserving maps $ [k] \lrar [n] \times [m] $.
\end{rem}

\begin{rem}
The left Kan extension functor $\LK: \Top^{\Del^{\op}_s} \lrar \Top^{\Del^{\op}}$ is \textbf{monoidal} (where $\Top^{\Del^{\op}}$ is endowed with the Cartesian structure). In particular, we have natural isomorphisms
$$ \LK(X \otimes Y) \x{\simeq}{\lrar} \LK(X) \otimes \LK(Y) $$
which exhibit $(X \otimes Y)_k \subseteq \LK(X \otimes Y)_k$ as the subspace of \textbf{non-degenerate $k$-simplices} of $\left(\LK(X) \otimes \LK(Y)\right)_k$. This implies, in particular, that the geometric realization functor $|\bullet|: \Top^{\Del^{\op}_s} \lrar \Top$ is monoidal as well.
\end{rem}

The symmetric monoidal product $\otimes$ is \textbf{closed} and the corresponding internal mapping object can be described explicitly as follows: if $X,Y$ are two semi-simplicial spaces then the mapping object $Y^X$ is given by
$$ (Y^X)_n = \Map(\Del^n \otimes X,Y) .$$

The Reedy model structure on $\Top^{\Del_s^{\op}}$ is \textbf{compatible} with $\otimes$. This can be easily verified using the explicit formula in Remark~\ref{concrete}. Furthermore, $\Top^{\Del_s^{\op}}$ admits a natural \textbf{simplicial structure} (see Definition~\ref{d:simplicial}) given by the adjunction
$$ \xymatrix{
\Top \ar@<0.5ex>[r]^{\LL} & \Top^{\Del^{\op}_s} \ar@<0.5ex>[l]^{\R} \\
}$$
where $\LL(K)$ is given by $K$ concentrated in degree $0$ and $\R(X) = X_0$. In particular, the Reedy model category $\Top^{\Del_s^{\op}}$ is a symmetric monoidal simplicial model category with respect to $\otimes$.

\subsection{ SemiSegal spaces and quasi-units }\label{ss-basic}

\begin{define}\label{d:semiSegal}
Let $X$ be a semi-simplicial space. Let $[n],[m] \in \Del_s$ be two objects and consider the commutative (pushout) diagram
$$ \xymatrix{
[0] \ar^{0}[r]\ar^{n}[d]  & [m] \ar^{g_{n,m}}[d] \\
[n] \ar_{f_{n,m}}[r] & [n+m] \\
}$$
where $f_{n,m}(i) = i$ and $g_{n,m}(i) = i+n$. We will say that $X$ satisfies the \textbf{Segal condition} if for each $[n],[m]$ as above the induced commutative diagram
$$ \xymatrix{
X_{m+n} \ar^{g_{n,m}^*}[r]\ar_{f_{n,m}^*}[d] & X_m \ar^{0^*}[d] \\
X_n \ar^{n^*}[r] & X_0 \\
}$$
is \textbf{homotopy Cartesian}. We will say that $X$ is a \textbf{semiSegal space} if it is \textbf{Reedy fibrant} and satisfies the Segal condition. Note that in that case the above square will induce a homotopy equivalence
$$ X_{m+n} \simeq X_m \times_{X_0} X_n .$$
\end{define}

\begin{example}\label{e-topological}
Let $\D$ be a small non-unital topological category. We can associate with $\D$ a semiSegal space via a non-unital analogue of the nerve construction as follows. For each $n$, let $C^{\nonu}([n])$ denote the non-unital category whose objects are the numbers $0,...,n$ and whose mapping spaces are
$$ \Map_{\C^{\nonu}([n])}(i,j) = \left\{
\begin{matrix}
\emptyset & i \geq j \\
* & i < j \\ \end{matrix}\right. $$
As $\C^{\nonu}([n])$ depends functorially on $[n] \in \Del_s$ we can get a semi-simplicial space $N(\D)$ by setting
$$ N(\D)_n = \Map_{\Cat_{\Top}}(\C^{\nonu}([n]),\D) .$$
Note that $N(\D)$ will generally not be Reedy fibrant, but after applying the Reedy fibrant replacement functor (which is a levelwise equivalence) one indeed obtains a \textbf{semiSegal space}.
\end{example}

We think of a general semiSegal space as encoding a relaxed version of Example~\ref{e-topological}, i.e. a non-unital $\infty$-category. This can be described as follows: the objects of this non-unital $\infty$-category are the points of $X_0$. Given two points $x,y \in X_0$ we define the \textbf{mapping space} between them by
$$ \Map_{X}(x,y) = \{x\} \times_{X_0} X_1 \times_{X_0} \{y\}, $$
i.e., as the fiber of the (Kan) fibration
$$ X_1 \x{(d_0,d_1)}{\lrar} X_0 \times X_0 $$
over the point $(x,y)$. The space $X_2$ of triangles then induces a weak composition operation on these mapping spaces which is homotopy associative in a coherent way. For a more detailed description in the unital case we refer the reader to~\cite{rez}.

\begin{rem}\label{r:uncomplete}
As in the unital case, a semiSegal space carries more information then just a non-unital $\infty$-category structure on $X_0$. One aspect of this is that $X_0$ itself is not a set, but a space, and the homotopy type of this space is not determined by the non-unital $\infty$-category structure. In the unital case  (as well as the quasi-unital case, as we will see in \S\S~\ref{s-complete-semiSegal}) this issue can be resolved via the notion of \textbf{completeness}.
\end{rem}

\begin{example}\label{e:cosk_0}
Let $Z$ be a Kan simplicial set. Applying the $0$'th coskeleton functor one obtains a semi-simplicial space
$$ X = \cosk_0(Z) $$
which is given by $X_n = \Map\left(\sk_0(\Del^n),Z\right) = Z^{n+1}$. It is then easy to verify that $X$ is a \textbf{semiSegal space}. This semiSegal spaces encodes a very "boring" non-unital structure in which all the mapping spaces are contractible. However, it can admit arbitrary homotopy types for the space of objects $X_0$ (see Remark~\ref{r:uncomplete}).
\end{example}

\begin{define}
Let $X$ be a semiSegal space. We define its \textbf{non-unital homotopy category} $\Ho(X)$ to be the non-unital category whose objects are the points of $X_0$ and whose morphism sets are given by
$$ \Hom_{\Ho(X)}(x,y) \x{\df}{=} \pi_0(\Map_X(x,y)) .$$
The composition is determined by $X_2$ in the following way: for each triangle $\sig \in X_2$ of the form
$$ \xymatrix{
& y \ar^{g}[dr] & \\
x \ar^{f}[ur] \ar^{h}[rr] && z \\
}$$
the component $[h] \in \pi_0(\Map_X(x,z))$ is the composition of the components $[f] \in \pi_0(\Map_X(x,y))$ and $[g] \in \pi_0(\Map_X(y,z))$. The Segal condition in dimensions $2$ and $3$  ensures that this composition is well defined and associative.
\end{define}

Our first goal when dealing with semiSegal spaces is to understand when a morphism $f:x \lrar y$ is \textbf{neutral} with respect to composition. For this we need to extract in some way the action of $f$ on mapping spaces. This can be done as follows. Let $x,y,z \in X_0$ be points and $f: x \lrar y$ a morphism in $X$ (i.e. a point $f \in \Map_X(x,y)$). Consider the space
$$ C^R_{f,z} = \left\{\sig \in X_2 \left|\; \sig|_{\Del^{\{1,2\}}} = f, \sig|_{\Del^{\{0\}}} = z \right\}\right. $$
together with the two restriction maps
$$ \xymatrix{
& C^R_{f,z} \ar^{\psi}[dr] \ar_{\vphi}[dl] & \\
\Map_{X}(z,x) & & \Map_{X}(z,y) \\
}\\
 $$
By the Segal condition we see that $\vphi$ is a weak equivalence. We then define a homotopy class
$$ [f]_* \in \Hom_{\Ho(\Top)}\left(\Map_X(z,x),\Map_X(z,y)\right) $$
by setting $[f]_* \x{\df}{=} [\psi] \circ [\vphi]^{-1}$. This can be considered as the homotopy class of the almost-defined map $f_*$ obtained by composition with $f$. Similarly, one can define a homotopy class
$$ [f]^*: \Hom_{\Ho(\Top)}\left(\Map_X(y,z),\Map_X(x,z)\right) $$
describing the homotopy class of pre-composition with $f$.

\begin{rem}
Given a point $z \in X_0$ the definition above yields a \textbf{functor} of non-unital categories
$ \Ho(X) \lrar \Ho(\Top) $
given by
$$ x \mapsto \Map_X(z,x), \;\;\; [f] \mapsto [f]_* .$$
Similarly, we can construct a functor
$ \Ho^{\op}(X) \lrar \Ho(\Top) $
by setting
$$ x \mapsto \Map_X(x,z), \;\;\; [f] \mapsto [f]^* .$$
These functors correspond to the representable and corepresentable functors of $X$ after descending to the (non-unital) homotopy category (although they are not the representables and corepresentables of $\Ho(X)$ itself as they take values in $\Ho(\Top)$ and not in $\Set$).
\end{rem}

The above construction can be used to determine when a morphism is neutral with respect to composition:
\begin{define}
Let $x \in X_0$ be an object and $f: x \lrar x$ a morphism in $X$. We will say that $f$ is a \textbf{quasi-unit} if for each $z \in X_0$ the homotopy classes
$$ [f]_* \in \Hom_{\Ho(\Top)}\left(\Map_{X}(z,x), \Map_{X}(z,x)\right) $$
and
$$ [f]^* \in \Hom_{\Ho(\Top)}\left(\Map_{X}(x,z), \Map_{X}(x,z)\right) $$
are both the \textbf{identity} in $\Ho(\Top)$.
\end{define}

\begin{define}
Let $X$ be a semiSegal space. We will say that $X$ is \textbf{quasi-unital} if every object $x \in X_0$ admits a quasi-unit $q: x \lrar x$. We will informally say that $X$ models a \textbf{quasi-unital $\infty$-category}.
\end{define}

\begin{example}
The semiSegal spaces $\semiCob_n$ constructed in~\cite{tft} \S\S $2.2$ (which model the underlying non-unital $\infty$-category of the $n$'th cobordism category) are easily seen to be quasi-unital. Informally speaking, any trivial cobordism from an $n$-manifold $M$ to itself corresponds to a quasi-unit in $\semiCob_n$.
\end{example}

\begin{rem}
If $X$ is a quasi-unital semiSegal space then $\Ho(X)$ acquires a natural structure of a \textbf{unital category}.
\end{rem}

For each $x \in X_0$, we will denote by $X^{\qu}_x \subseteq \Map_X(x,x)$ the maximal subspace spanned by the quasi-units $f \in \left(\Map_X(x,x)\right)_0$. Clearly $X^{\qu}_x$ is a union of connected components of $\Map_X(x,x)$.

\begin{lem}\label{qu-connected}
Let $X$ be a semiSegal space and $x \in X_0$ a point. If $X^{\qu}_x$ is not empty then it is connected.
\end{lem}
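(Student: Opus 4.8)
The plan is to show that any two quasi-units $f, g: x \lrar x$ lie in the same connected component of $\Map_X(x,x)$ by exhibiting an explicit path between them, constructed from the composition structure on $X$. The key idea: if $g$ is a quasi-unit, then $[g]_* = \mathrm{id}$ on $\Ho(\Top)$, which by definition means that composing \emph{any} morphism with $g$ returns a homotopic morphism. In particular, composing $f$ with $g$ should return something homotopic to $f$. So I would first produce an actual point $h \in \Map_X(x,x)$ together with a $2$-simplex $\sig \in X_2$ witnessing $h = g \circ f$ (using the Segal condition in dimension $2$: the fibration $X_2 \to X_1 \times_{X_0} X_1$ over $(f,g)$ is a trivial fibration, so we may pick a lift, giving both $h$ and $\sig$). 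Then, because $g$ is a quasi-unit, $[h] = [g]_*[f] = [f]$ in $\pi_0 \Map_X(x,x)$, so $h$ lies in the same component as $f$. Symmetrically, because $f$ is a quasi-unit, $[h] = [f]^*[g] = [g]$ (here $h = g \circ f$ is also the result of \emph{pre}composing $g$ with $f$, read off from the \emph{same} triangle $\sig$ via the $[f]^*$ construction), so $h$ lies in the same component as $g$. Concatenating, $f$ and $g$ are in the same component, hence $X^{\qu}_x$ is connected.

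The one subtlety to nail down carefully is the bookkeeping between the two maps $[f]_*$ and $[g]^*$ and the single triangle $\sig$. Concretely: the triangle $\sig$ has $\sig|_{\Del^{\{0,1\}}} = f$, $\sig|_{\Del^{\{1,2\}}} = g$, and $\sig|_{\Del^{\{0,2\}}} = h$. Viewing $\sig$ through the space $C^R_{g,x}$ of the excerpt (with $z = x$, the chosen morphism being $g$, since $\sig|_{\Del^{\{1,2\}}} = g$ and $\sig|_{\Del^{\{0\}}} = x$), the map $\vphi$ sends $\sig$ to $f = \sig|_{\Del^{\{0,1\}}}$ and $\psi$ sends $\sig$ to $h = \sig|_{\Del^{\{0,2\}}}$; since $\vphi$ is a weak equivalence, $[g]_*[f] = [\psi][\vphi]^{-1}[f] = [h]$. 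Dually, viewing $\sig$ through the analogous space $C^L_{f,x}$ used to define $[f]^*$, we get $[f]^*[g] = [h]$. Once these two identifications are made explicit, the quasi-unit hypothesis on $g$ gives $[h] = [f]$ and the quasi-unit hypothesis on $f$ gives $[h] = [g]$.

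I expect the main (and only) obstacle to be purely notational: making sure the correspondences between $\sig$, the contractible choice-spaces $C^R_{\bullet,\bullet}$ and $C^L_{\bullet,\bullet}$, and the homotopy classes $[f]_*, [f]^*$ are set up so that the two computations $[h] = [g]_*[f]$ and $[h] = [f]^*[g]$ genuinely come from the \emph{same} triangle and no additional coherence (dimension $3$) input is needed. Since we only need equalities in $\pi_0$, no higher Segal conditions are required — dimension $2$ alone suffices. A clean way to organize the write-up is: (i) invoke the Segal condition in dimension $2$ to get $(h, \sig)$; (ii) read $[h] = [g]_*[f]$ off $\sig$; (iii) read $[h] = [f]^*[g]$ off $\sig$; (iv) apply the two quasi-unit hypotheses and conclude. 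This also makes transparent why the statement is stated as a conditional ("if $X^{\qu}_x$ is not empty"): if it is empty there is nothing to prove, and otherwise any element provides both a reference point and the needed hypotheses.
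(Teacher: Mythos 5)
Your proof is correct and follows essentially the same strategy as the paper: both arguments produce a single $2$-simplex involving the two quasi-units and use the neutrality of composition (the quasi-unit hypotheses on both edges) to conclude that the two quasi-units lie in the same connected component of $\Map_X(x,x)$, then invoke the fact that $X^{\qu}_x$ is a union of components. The only minor difference is that the paper fills a triangle with $q_1$ on $\Del^{\{0,1\}}$ and $q_2$ on $\Del^{\{0,2\}}$ and notes that the resulting third edge is again a quasi-unit, whereas you place $f$ and $g$ on the spine and read the composite edge off in two ways; both routes are equally valid.
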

\begin{proof}

Let $q_1,q_2: x \lrar x$ be two quasi-units. We need to show that $q_1,q_2$ are in the same connected component of $X^{\qu}_x$. Since $X^{\qu}_x$ is a union of components of $\Map_X(x,x)$ it is enough to show that $q_1,q_2$ are in the same connected component of $\Map_X(x,x)$.
Since $q_1$ is a quasi-unit there exists a triangle of the form
$$ \xymatrix{
& x \ar^{q_3}[dr] & \\
x \ar^{q_1}[ur] \ar^{q_2}[rr] & & x \\
}$$
for some $q_3: x \lrar x$. Then $q_3$ is necessarily a quasi-unit and so $q_1$ and $q_2$ are in the same connected component of $\Map_X(x,x)$.
\end{proof}

\subsection{ From quasi-units to invertible edges }\label{ss:qu-to-inv}

Our interest in this paper is to study quasi-unital $\infty$-categories and functors between them which respect quasi-units. As explained in the introduction, it will be useful to weaken the definition of quasi-units and to consider the more robust notion of \textbf{invertible edges}:

\begin{define}\label{inv-unital}
Let $x,y \in X_0$ be two objects and $f: x \lrar y$ a morphism in $X$. We will say that $f$ is \textbf{invertible} if for every $z \in X_0$ the homotopy classes
$$ [f]_* \in \Hom_{\Ho(\Top)}\left(\Map_{X}(z,x), \Map_{X}(z,y)\right) $$
and
$$ [f]^* \in \Hom_{\Ho(\Top)}\left(\Map_{X}(y,z), \Map_{X}(x,z)\right) $$
are \textbf{isomorphisms} in $\Ho(\Top)$.
\end{define}

\begin{rem}\label{inv-concrete}
It is immediate from the definition that a morphism $f:x \lrar y$ in $X$ is invertible if and only if
\begin{enumerate}
\item
Each map of the form $ \sig:\Lam^2_2 \lrar X $ such that $\sig\left(\Del^{\{1,2\}}\right) = f$ has a contractible space of extensions $ \ovl{\sig}:\Del^2 \lrar X $.
\item
Each map of the form $ \sig:\Lam^2_0 \lrar X $ such that $\sig\left(\Del^{\{0,1\}}\right) = f$ has a contractible space of extensions $ \ovl{\sig}:\Del^2 \lrar X $.
\end{enumerate}
\end{rem}

Invertible morphisms can be described informally as morphisms composition with which induces a weak equivalence on mapping spaces. Note that the notion of invertibility does not presuppose the existence of identity morphisms, i.e., it makes sense in the non-unital setting as well.

We will denote by
$ X^{\inv}_1 \subseteq X_1 $
the maximal subspace spanned by the invertible vertices $f \in (X_1)_0$. Using Reedy fibrancy it is not hard to show that $X^{\inv}_1$ is just the \textbf{union of connected components of $X_1$} which meet invertible edges.

\begin{define}\label{d:semiKan}
Let $X$ be a semiSegal space. We will say that $X$ is a \textbf{semiKan space} if every edge in $X_1$ is invertible.
\end{define}

The following lemma follows immediately from the definition:
\begin{lem}\label{inv-closed-composition}
Let $X$ be a semiSegal space. Then the space of invertible edges satisfies the following closure properties:
\begin{enumerate}
\item (2-out-of-3)
Let $\sig: \Del^2 \lrar X$ be a $2$-simplex with two of the edges being invertible. Then the third edge is invertible as well.
\item (2-out-of-6)
Let $\sig: \Del^3 \lrar X$ be a $3$-simplex such that $\sig\left(\Del^{\{0,2\}}\right)$ and $\sig\left(\Del^{\{1,3\}}\right)$ are invertible. Then all the edges of $\sig$ are invertible.
\end{enumerate}
\end{lem}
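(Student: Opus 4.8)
The plan is to deduce both closure statements from the concrete characterization of invertibility in Remark~\ref{inv-concrete}, which says that $f$ is invertible precisely when certain horn-filling problems (for inner horns $\Lam^2_2$ and outer horns $\Lam^2_0$, in the appropriate edge-position) have contractible spaces of fillers. The underlying mechanism is the same as in ordinary quasi-category theory, translated into the semiSegal-space language: composing $2$-simplices along a common edge and using the Segal condition to compare spaces of fillers.

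For part (1), the 2-out-of-3 property, I would argue via the action of edges on mapping spaces. Recall that for a morphism $g: y \lrar z$ and a point $w$, the class $[g]_* \in \Hom_{\Ho(\Top)}(\Map_X(w,y),\Map_X(w,z))$ is defined using the space $C^R_{g,w}$ of $2$-simplices with $\Del^{\{1,2\}}$-edge equal to $g$ and $\Del^{\{0\}}$-vertex equal to $w$; invertibility of $g$ is exactly the statement that $[g]_*$ and $[g]^*$ are isomorphisms for all $w$. Now given $\sig:\Del^2 \lrar X$ with edges $f = \sig|_{\Del^{\{0,1\}}}$, $g = \sig|_{\Del^{\{1,2\}}}$, $h = \sig|_{\Del^{\{0,2\}}}$, the triangle $\sig$ exhibits $[h] = [g]\circ[f]$ in $\Ho(X)$, and more: by the Segal condition in dimension $3$ (filling $3$-simplices and restricting to the relevant $2$-faces) one obtains the relation $[h]_* = [g]_* \circ [f]_*$ and dually $[h]^* = [f]^* \circ [g]^*$ as maps in $\Ho(\Top)$, for every auxiliary object $z$. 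Since a composite of morphisms in $\Ho(\Top)$ is an isomorphism if and only if, given that two of the three are isomorphisms, so is the third, the 2-out-of-3 property for invertibility follows immediately. The only care needed is to check that $[h]_* = [g]_*\circ[f]_*$ really does hold on the nose in the homotopy category; this is a standard manipulation with a $3$-simplex whose $\Del^{\{1,2,3\}}$ and $\Del^{\{0,1,2\}}$ faces are prescribed, using the Segal condition $X_3 \simeq X_1 \times_{X_0} X_1 \times_{X_0} X_1$ to produce the comparison $2$-simplex on the face $\Del^{\{0,1,3\}}$ or $\Del^{\{0,2,3\}}$.

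For part (2), the 2-out-of-6 property, I would reduce it to repeated application of part (1). Let $\sig:\Del^3 \lrar X$ have edges labelled by the vertices $0,1,2,3$, and suppose $\sig|_{\Del^{\{0,2\}}}$ and $\sig|_{\Del^{\{1,3\}}}$ are invertible. Look first at the $2$-face $\Del^{\{0,1,2\}}$: it has the edge $\Del^{\{0,2\}}$ invertible, but that alone is not enough. Instead, consider the face $\Del^{\{1,2,3\}}$ together with $\Del^{\{0,1,3\}}$: the edge $\Del^{\{1,3\}}$ is shared and invertible. The standard trick is to first show that $\sig|_{\Del^{\{2,3\}}}$ is invertible, then $\sig|_{\Del^{\{0,1\}}}$, and then the remaining edges follow from 2-out-of-3 on the four faces. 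Concretely: on the face $\Del^{\{1,2,3\}}$ we know nothing yet; on $\Del^{\{0,1,2\}}$ we have $\Del^{\{0,2\}}$ invertible; on $\Del^{\{0,2,3\}}$ we have $\Del^{\{0,2\}}$ invertible; on $\Del^{\{0,1,3\}}$ we have $\Del^{\{1,3\}}$ invertible. The way to break the symmetry is to compose the relations in $\Ho(X)$ and $\Ho(\Top)$: writing $a = \sig|_{\Del^{\{0,1\}}}$, $b = \sig|_{\Del^{\{1,2\}}}$, $c = \sig|_{\Del^{\{2,3\}}}$, the hypothesis says $[b]_*[a]_* = [\sig|_{\Del^{\{0,2\}}}]_*$ is an isomorphism (so $[a]_*$ is a split mono and $[b]_*$ a split epi, compatibly for all target objects) and $[c]_*[b]_* = [\sig|_{\Del^{\{1,3\}}}]_*$ is an isomorphism (so $[b]_*$ is a split mono and $[c]_*$ a split epi). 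Hence $[b]_*$ is both a split mono and a split epi in $\Ho(\Top)$, so it is an isomorphism; the same argument with the contravariant action shows $[b]^*$ is an isomorphism, so $b = \sig|_{\Del^{\{1,2\}}}$ is invertible. Once $b$ is invertible, 2-out-of-3 applied to $\Del^{\{0,1,2\}}$ makes $a$ invertible and applied to $\Del^{\{1,2,3\}}$ makes $c$ invertible; then the two remaining edges $\sig|_{\Del^{\{0,3\}}}$ and (already known) $\sig|_{\Del^{\{0,2\}}}$, $\sig|_{\Del^{\{1,3\}}}$ are handled by 2-out-of-3 on the last faces.

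The main obstacle I anticipate is the bookkeeping lemma underlying everything: that a triangle $\sig$ genuinely induces the identity $[\sig|_{\Del^{\{0,2\}}}]_* = [\sig|_{\Del^{\{1,2\}}}]_* \circ [\sig|_{\Del^{\{0,1\}}}]_*$ of morphisms in $\Ho(\Top)$ (not merely up to some unspecified coherence). This requires a clean argument with the spaces $C^R_{-,-}$ and the homotopy pullback presentation $X_3 \simeq X_2 \times_{X_1} X_2$ coming from the Segal condition, and one must be careful about which horn/face is being filled and that the relevant comparison maps are the weak equivalences asserted in Definition~\ref{d:semiSegal}. The invertibility-as-contractible-fillers reformulation in Remark~\ref{inv-concrete} is what makes this tractable, since contractibility of a filler space is stable under the base changes used above. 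Everything else — the split-mono/split-epi juggling and the four applications of 2-out-of-3 — is formal once this functoriality statement is in hand, so the proof reduces to verifying that one composition identity and then running the combinatorics.
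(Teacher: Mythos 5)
Your argument is correct and is essentially the one the paper intends (the paper states the lemma as immediate from the definition): invertibility is detected by the classes $[f]_*$ and $[f]^*$ in $\Ho(\Top)$, the triangle/tetrahedron faces give the composition identities via the functors $\Ho(X) \lrar \Ho(\Top)$ already noted in the paper's remark on (co)representable functors, and then part (1) is 2-out-of-3 for isomorphisms while part (2) follows from the split mono/split epi argument plus repeated use of part (1). The functoriality step you flag as the main obstacle is exactly the content of that remark, so no extra work is needed beyond what you describe.
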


Our next goal is to verify that for the purpose of studying quasi-unital $\infty$-categories one can replace the notion of quasi-units with that of invertible edges. We begin with the following observation:

\begin{lem}\label{l:qu-inv}
Let $X$ be a semiSegal space and $x \in X_0$ a point. Then $x$ admits a quasi-unit if and only if there exists an invertible edge with source $x$.
\end{lem}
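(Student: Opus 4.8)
The plan is to prove the two implications separately. For the direction that a quasi-unit gives an invertible edge, I would first observe that a quasi-unit $q: x \lrar x$ is by definition an invertible edge: indeed, $[q]_*$ and $[q]^*$ are the identity in $\Ho(\Top)$ for every $z \in X_0$, and the identity is in particular an isomorphism, so $q$ satisfies Definition~\ref{inv-unital} with $y = x$. This gives an invertible edge with source (and target) $x$, which is all that is needed.

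For the converse, suppose $f: x \lrar y$ is an invertible edge with source $x$. The idea is to produce a candidate quasi-unit on $x$ as a kind of ``$f^{-1} \circ f$''. Concretely, I would consider the space of $2$-simplices $\sig: \Del^2 \lrar X$ with $\sig|_{\Del^{\{0,1\}}} = f$ and $\sig|_{\Del^{\{0,2\}}} = f$; by Remark~\ref{inv-concrete}(2), since $f = \sig|_{\Del^{\{0,1\}}}$ is invertible, the restriction map from this space to the space of ways of filling the corresponding $\Lam^2_0$ with $\sig|_{\Del^{\{0,1\}}} = f$ is such that extensions exist and are contractible; in particular the space is nonempty, so there is such a $\sig$ with third edge $q := \sig|_{\Del^{\{1,2\}}}: y \lrar y$. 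Wait — this produces an endomorphism of $y$, not of $x$; to land on $x$ I instead use invertibility of $f$ on the \emph{target} side: consider $2$-simplices with $\sig|_{\Del^{\{1,2\}}} = f$ and $\sig|_{\Del^{\{0,2\}}} = f$, fill the $\Lam^2_2$ (Remark~\ref{inv-concrete}(1) applies since $f = \sig|_{\Del^{\{1,2\}}}$ is invertible), and take $q := \sig|_{\Del^{\{0,1\}}}: x \lrar x$. By the $2$-out-of-$3$ property (Lemma~\ref{inv-closed-composition}), $q$ is invertible. It remains to check $q$ is a \emph{quasi-unit}, i.e.\ that $[q]_*$ and $[q]^*$ are literally the identity in $\Ho(\Top)$, not merely isomorphisms.

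For this I would unwind the definition of $[q]_*$ and $[f]_*$ via the spaces $C^R_{-,z}$ and use the triangle $\sig$ witnessing a composite $f \circ q \simeq f$ (read off from $\sig$: the edges are $q = \sig|_{\Del^{\{0,1\}}}$, $f = \sig|_{\Del^{\{1,2\}}}$, $f = \sig|_{\Del^{\{0,2\}}}$, so $[f]\circ[q] = [f]$ in $\Ho(X)$, hence $[f]_* \circ [q]_* = [f]_*$ in $\Ho(\Top)$). Since $f$ is invertible, $[f]_*: \Map_X(z,x) \lrar \Map_X(z,y)$ is an isomorphism in $\Ho(\Top)$, so it can be cancelled, yielding $[q]_* = \Id$. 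Symmetrically, using the precomposition functor $\Ho^{\op}(X) \lrar \Ho(\Top)$, the same triangle gives $[f]^* = [q]^* \circ [f]^*$; hmm — here I need $[f]^*: \Map_X(y,z) \lrar \Map_X(x,z)$ to be an isomorphism, which holds since $f$ is invertible, and cancelling it on the appropriate side gives $[q]^* = \Id$. Thus $q$ is a quasi-unit on $x$.

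The main obstacle I anticipate is the bookkeeping in the last paragraph: one must be careful that the triangle $\sig$ really encodes the relation $f \circ q \simeq f$ with the correct source/target assignments so that the functoriality of $x \mapsto \Map_X(z,x)$ (resp.\ $x \mapsto \Map_X(x,z)$) applies, and that after applying $[f]_*$ (resp.\ $[f]^*$) — which are isomorphisms but a priori not identities — one genuinely gets $[q]_* = \Id$ on the nose in $\Ho(\Top)$ rather than merely up to conjugation. The key point making this work is that both sides of the cancelled equation involve the \emph{same} map $[f]_*$ (resp.\ $[f]^*$) precomposed, so cancellation is valid; the invertibility hypothesis on $f$ is exactly what licenses it. Everything else is a direct application of the Segal condition and Lemma~\ref{inv-closed-composition}.
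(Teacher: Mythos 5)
Your proposal is correct and follows essentially the same route as the paper: one direction is immediate, and for the other you fill the $\Lam^2_2$-horn with two copies of the invertible edge $f$ (via Remark~\ref{inv-concrete}) and take the resulting edge $q:x\lrar x$, which is a quasi-unit. The paper simply asserts that $q$ is "necessarily a quasi-unit," and your cancellation argument ($[f]_*\circ[q]_*=[f]_*$ and $[q]^*\circ[f]^*=[f]^*$ with $[f]_*,[f]^*$ isomorphisms) is exactly the correct way to justify that assertion.
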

\begin{proof}
If $x$ has a quasi-unit then this quasi-unit is in particular an invertible edge with source $x$. On the other hand, if $f: x \lrar y$ is an invertible edge then according to Remark~\ref{inv-concrete} there exist a triangle $\sig: \Del^2 \lrar X$ of the form
$$ \xymatrix{
& y & \\
x \ar^{f}[ur] \ar^{q}[rr] & & x \ar_{f}[ul] \\
}$$
Then $q$ is necessarily a quasi-unit and we are done.
\end{proof}

This means that the existence condition for quasi-units can be phrased equivalently in terms of invertible edges. Our next proposition verifies that the associated restrictions on functors are equivalent as well:

\begin{prop}\label{unital-if-marked}
Let $\vphi:X \lrar Y$ be a map between quasi-unital semiSegal spaces. The following are equivalent:
\begin{enumerate}
\item
$\vphi$ sends quasi-units to quasi-units.
\item
$\vphi$ sends invertible edges to invertible edges.
\end{enumerate}
\end{prop}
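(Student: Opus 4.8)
The plan is to establish both implications using Lemma~\ref{l:qu-inv} and the $2$-out-of-$3$ property of invertible edges (Lemma~\ref{inv-closed-composition}), together with the observation that a map of semiSegal spaces, being compatible with the face maps, automatically sends the space $C^R_{f,z}$ (and its cousin $C^R_{f,z}$ built from $\Lam^2_0$) into the corresponding space for $\vphi(f)$, hence is compatible with the formation of the homotopy classes $[f]_*$ and $[f]^*$. The direction $(2) \Rightarrow (1)$ is the easy one: since $X$ and $Y$ are quasi-unital, any object $\vphi(x)$ of $Y$ has \emph{some} quasi-unit, and I claim that if $f:x\to x$ is a quasi-unit then $\vphi(f)$ is invertible (being the $\vphi$-image of an invertible edge by hypothesis) with equal source and target, so by the last sentence of the proof of Lemma~\ref{l:qu-inv} it must actually be a quasi-unit. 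More carefully: apply Remark~\ref{inv-concrete} to the invertible edge $\vphi(f)$ to produce a triangle in $Y$ witnessing some $q:\vphi(x)\to\vphi(x)$ as a quasi-unit; but one also has the $\vphi$-image of the composition triangle for $f\circ f\simeq f$ in $X$, and comparing these two quasi-units via Lemma~\ref{qu-connected} (applied in $Y$) shows $\vphi(f)$ lies in the same component as $q$, hence is itself a quasi-unit. So $(2)\Rightarrow(1)$.

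For $(1)\Rightarrow(2)$, let $f:x\to y$ be an invertible edge in $X$; I want $\vphi(f)$ invertible in $Y$. By Remark~\ref{inv-concrete} there is a triangle $\sig:\Del^2\to X$ with $\sig(\Del^{\{1,2\}}) = f$, $\sig(\Del^{\{0,1\}}) = q$ for a quasi-unit $q:x\to x$, and $\sig(\Del^{\{0,2\}}) = f$; that is, a triangle exhibiting $f\circ q\simeq f$. Applying $\vphi$ gives a triangle in $Y$ exhibiting $\vphi(f)\circ\vphi(q)\simeq\vphi(f)$, where $\vphi(q)$ is a quasi-unit by hypothesis $(1)$, hence invertible. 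Symmetrically, using the $\Lam^2_2$-version (there is a triangle exhibiting $q'\circ f\simeq f$ for a quasi-unit $q':y\to y$, again by Remark~\ref{inv-concrete} applied to the invertible $f$), one gets a triangle in $Y$ exhibiting $\vphi(q')\circ\vphi(f)\simeq\vphi(f)$ with $\vphi(q')$ invertible. Now I would like to conclude that $\vphi(f)$ is invertible, and here I must be slightly careful, because the $2$-out-of-$3$ property (Lemma~\ref{inv-closed-composition}(1)) lets me deduce invertibility of an edge of a triangle from invertibility of the \emph{other two}, and in the triangle $\vphi(f)\circ\vphi(q)\simeq\vphi(f)$ the edge $\vphi(f)$ appears twice — so two of the three edges being invertible would require already knowing $\vphi(f)$ invertible. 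Instead I will argue directly: since $\vphi(q)$ is a quasi-unit, $[\vphi(q)]_* $ and $[\vphi(q)]^*$ are identities, so the triangle exhibiting $\vphi(f)\circ\vphi(q)\simeq\vphi(f)$ shows that the homotopy class $[\vphi(f)]_*$ on the relevant mapping spaces, after precomposition with the isomorphism $[\vphi(q)]_* = \mathrm{id}$, agrees with what $[\vphi(f)]_*$ does — but this needs the invertibility of $\vphi(f)$ to make sense of $[\vphi(f)]_*$ in the first place.

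The cleaner route, which I will take, avoids this circularity by observing that an invertible edge $f:x\to y$ in $X$ admits an honest inverse at the level of the homotopy category: since $[f]_*:\Map_X(y,x)\to\Map_X(y,y)$ is an isomorphism in $\Ho(\Top)$ and $Y$ (being quasi-unital, using $x\mapsto$ its quasi-unit) has an identity on $y$ in $\Ho(X)$, there is an edge $g:y\to x$ with $[g]\circ[f] = [\mathrm{id}_x]$ and $[f]\circ[g] = [\mathrm{id}_y]$ in $\Ho(X)$; choose triangles $\sig_1,\sig_2$ in $X$ witnessing these. Apply $\vphi$: we get triangles in $Y$ witnessing $[\vphi(g)]\circ[\vphi(f)] = [\vphi(\mathrm{id}_x)]$ and $[\vphi(f)]\circ[\vphi(g)] = [\vphi(\mathrm{id}_y)]$, and by hypothesis $(1)$ the images $\vphi(\mathrm{id}_x),\vphi(\mathrm{id}_y)$ are quasi-units, hence invertible. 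Now in each of these triangles the edge $\vphi(g)$ and the edge $\vphi(\mathrm{id}_x)$ (resp. $\vphi(g)$ and $\vphi(\mathrm{id}_y)$) are two \emph{distinct} edges, so I cannot yet apply $2$-out-of-$3$ either — but combining the two triangles and applying the $2$-out-of-$6$ property (Lemma~\ref{inv-closed-composition}(2)) to a suitable $3$-simplex whose long edges are the two invertible quasi-units $\vphi(\mathrm{id}_x),\vphi(\mathrm{id}_y)$ forces \emph{all} its edges, in particular $\vphi(f)$ and $\vphi(g)$, to be invertible. Constructing that $3$-simplex (filling the appropriate horn using the Segal condition, with the two composition triangles $\vphi(\sig_1),\vphi(\sig_2)$ as faces) is the one genuinely fiddly point; it is the standard "an isomorphism in the homotopy category lifts to an invertible edge" argument, and it is the step I expect to require the most care to spell out rigorously. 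Everything else is bookkeeping with the functoriality of $[-]_*$, $[-]^*$ and the definitions.
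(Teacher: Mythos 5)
Your proposal is essentially workable, but on the key direction $(1)\Rightarrow(2)$ it takes a genuinely different (and heavier) route than the paper, and the reason you give for avoiding the direct route is mistaken. The classes $[f]_*$ and $[f]^*$ are defined for an \emph{arbitrary} edge $f$: their construction uses only the Segal condition (the restriction $C^R_{f,z}\lrar\Map_X(z,x)$ is an equivalence whether or not $f$ is invertible), so there is no circularity in considering $[\vphi(f)]_*$ before knowing $\vphi(f)$ is invertible. The paper exploits exactly this: invertibility of $f$ produces triangles exhibiting $g\circ f\simeq q$ and $f\circ h\simeq r$ with $q,r$ quasi-units; pushing them to $Y$ and using that $[\vphi(q)]_*,[\vphi(r)]_*$ (and the corresponding $[\,\cdot\,]^*$) are identities shows that $[\vphi(f)]_*$ and $[\vphi(f)]^*$ admit one-sided inverses on both sides, hence are isomorphisms, which is the definition of invertibility. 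Your alternative -- glue $\vphi(\sig_1)$ and $\vphi(\sig_2)$ along the common edge $\vphi(g)$, adjoin the face $\{0,2,3\}$ by an inner $\Lam^2_1$ filler, fill the resulting inner horn $\Lam^3_2$ by the Segal condition, and apply the $2$-out-of-$6$ property of Lemma~\ref{inv-closed-composition} to the long edges $\{0,2\}=\vphi(q)$, $\{1,3\}=\vphi(r)$ -- does go through, but it buys nothing over the two-triangle argument and forces you through the horn-filling step you yourself flag as fiddly.

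In the direction $(2)\Rightarrow(1)$ your appeal to Lemma~\ref{qu-connected} is off target: that lemma compares two edges already known to be quasi-units, whereas at the point you invoke it $\vphi(f)$ is not yet known to be one, so it cannot by itself place $\vphi(f)$ in the component of $q$. What closes the argument is cancellation against the invertible $\vphi(f)$: your two triangles give $[\vphi(f)]\circ[q]=[\vphi(f)]$ and $[\vphi(f)]\circ[\vphi(f)]=[\vphi(f)]$ in $\pi_0\Map_Y(\vphi(x),\vphi(x))$ (the third edge of the image triangle lies in the component of $\vphi(f)$ since $f\circ f\simeq f$ only up to component), and injectivity of $[\vphi(f)]_*$ on $\pi_0$ then yields $[q]=[\vphi(f)]$, so $\vphi(f)$ lies in a component consisting of quasi-units. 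The paper orders things so that Lemma~\ref{qu-connected} is used legitimately: it fills a triangle $q\circ q'\simeq q$ in $X$, pushes it forward, cancels against the invertible $\vphi(q)$ to see that $\vphi(q')$ is a quasi-unit (the mechanism of Lemma~\ref{l:qu-inv}), and only then uses connectedness of the quasi-units of $x$ in $X$ together with the fact that $\vphi$ preserves connected components. So: correct in outline, but repair the citation in $(2)\Rightarrow(1)$, and note that the paper's $(1)\Rightarrow(2)$ is both simpler and available to you once you observe that $[\,\cdot\,]_*$ and $[\,\cdot\,]^*$ need no invertibility hypothesis to be defined.
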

\begin{proof}
First assume that $\vphi$ sends invertible edges to invertible edges and let $x \in X_0$ a point. Since $X$ is quasi-unital there exists a quasi-unit $q: x \lrar x$. Then there must exist a triangle of the form
$$ \xymatrix{
& x & \\
x \ar^{q}[ur] \ar^{q'}[rr] & & x \ar_{q}[ul] \\
}$$
in which $q'$ is necessarily a quasi-unit as well. The map $\vphi$ then sends this triangle to a triangle of the form
$$ \xymatrix{
& \vphi(x) & \\
\vphi(x) \ar^{\vphi(q)}[ur] \ar^{\vphi(q')}[rr] & & \vphi(x) \ar_{\vphi(q)}[ul] \\
}$$
where $\vphi(q)$ is invertible, and hence $\vphi(q')$ is a quasi-unit. From Lemma~\ref{qu-connected} we get that $\vphi$ maps all quasi-units of $x$ to quasi-units of $\vphi(x)$.

Now assume that $\vphi$ sends quasi-units to quasi-units and let $f:x \lrar y$ be an invertible edge. Since $X$ is quasi-unital there exist quasi-units $q: x \lrar x$ and $r:y \lrar y$. Since $f$ is invertible there exist triangles of the form

\begin{align}
\xymatrix{
& y  \ar^{g}[dr] &\\
x \ar^{f}[ur] \ar^{q}[rr] & & x \\
} &&
\xymatrix{
& x \ar^{f}[dr] & \\
y \ar^{h}[ur] \ar^{r}[rr] & & y \\
}
\end{align}

Applying $\vphi$ to these triangles and using the fact that $\vphi(q),\vphi(r)$ are quasi-units we get that $\vphi(f)$ is invertible. This finishes the proof of Proposition~\ref{unital-if-marked}.

\end{proof}

Lemma~\ref{l:qu-inv} and Proposition~\ref{unital-if-marked} suggest that the notion of a quasi-unital $\infty$-category can be encoded as semiSegal spaces in which every object admits an invertible edge out of it. Furthermore, in order to consider only functors which respect quasi-units one can instead study maps of semiSegal space which preserve invertible edges.

At this point it is worth while to consider the particular case of \textbf{quasi-unital semiKan spaces} (see Definition~\ref{d:semiKan}). In this case every map automatically respects invertible edges (and hence quasi-units) and so we can study it without any additional technicality. It turns out that the desired result in this particular case is straightforward generalization of the well-known theorem of Graeme Segal. These ideas will be explained in the next subsection.

\subsection{ Quasi-unital semiKan spaces }\label{ss-qu-groupoids}
Let $X$ be a quasi-unital semiKan space. We will say that $X$ is \textbf{connected} if for each $x,y \in X_0$ one has $\Map_{X}(x,y) \neq \emptyset$. Every quasi-unital semiKan space is a disjoint union of connected quasi-unital semiKan spaces in an essentially unique way. We will refer to these as the \textbf{connected components} of $X$. We will say that a map $f: X \lrar Y$ of quasi-unital semiKan spaces is a \textbf{DK-equivalence} if it induces an isomorphism on the set of connected components and induces a weak equivalence on mapping spaces.

Let $ \GsS \subseteq \Top^{\Del_s}$ be the full subcategory spanned by quasi-unital semiKan spaces. The localization of $\GsS$ by DK-equivalences is a natural choice for a model for the homotopy theory of \textbf{quasi-unital $\infty$-groupoids}.

In this subsection we will study this localized category via the geometric realization functor
$ |\bullet|: \Top^{\Del^{\op}_s} \lrar \Top $.
This functor has a right adjoint
$ \Pi: \Top \lrar \Top^{\Del^{\op}_s}_+ $
given by
$$ \Pi(Z)_n = \Map_S(|\Del^n|,Z) $$
for $Z \in \Top$. When $Z$ is a Kan complex, the semi-simplicial space $\Pi(Z)$ is fibrant, and one can easily verify that it is a quasi-unital semiKan space.

Let $\K \subseteq \Top$ be the full subcategory spanned by Kan complexes. For a simplicial set $Z \in \Top$ let us denote by $\what{Z} \in \K$ the (functorial) Kan replacement of $Z$. The functor $\what{\bullet}$ is then homotopy-left adjoint to the full inclusion $\K \subseteq \Top$. Furthermore, it exhibits $\K$ as the left localization of $\Top$ with respect to weak equivalences.

From the above considerations we see that the adjoint pair
$$ |\bullet|: \Top^{\Del^{\op}_s} \rightleftarrows \Top: \Pi $$
induces a \textbf{homotopy-adjoint} pair
$$ \what{|\bullet|}: \GsS \rightleftarrows \K: \Pi|_{\K} .$$
Since the category $\Del_s$ is weakly contractible we get that the functor $\Pi_{|\K}$ is actually \textbf{fully-faithful} (i.e., induces an equivalence between $\K$ and the full subcategory of $\GsS$ spanned by its image) and the counit map is a natural equivalence. Hence we can consider $\what{|\bullet|}$ as a \textbf{left localization} functor. The class of morphisms by which it localizes are the morphisms which it sends to equivalences.

In this section we will prove that this class of equivalences localized by $|\bullet|$ are exactly the DK-equivalences (Corollary~\ref{equiv-preserves-realization} below). This means that $\what{|\bullet|}$ serves as a \textbf{left localization} functor with respect to DK-equivalences between quasi-unital semiKan spaces. We can frame this theorem as follows:
\begin{thm}\label{main-groupoid}
The $\infty$-category of quasi-unital $\infty$-groupoids (i.e., the localization of $\GsS$ by DK-equivalences) is equivalent to $\K$. The equivalence is given by sending a semiKan space $X$ to the Kan replacement of its realization $\what{|X|}$.
\end{thm}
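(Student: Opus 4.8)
The plan is to show that the homotopy-adjoint pair $\what{|\bullet|}: \GsS \rightleftarrows \K : \Pi|_{\K}$ exhibits $\K$ as the localization of $\GsS$ at the DK-equivalences. Since we already know (from the discussion preceding the theorem) that $\what{|\bullet|}$ is a left localization functor at the class $\W$ of morphisms it inverts, it suffices to identify $\W$ with the class of DK-equivalences. This splits into two inclusions, and the heart of the matter is the inclusion ``$f$ induces a weak equivalence on realizations $\Rightarrow$ $f$ is a DK-equivalence'' — this is the direction that genuinely uses the semiKan hypothesis and is the main obstacle.

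For the easy inclusion, I would first observe that a DK-equivalence $f: X \lrar Y$ induces a weak equivalence $|f|: |X| \lrar |Y|$. The cleanest route is to reduce to the connected case (both $|\bullet|$ and ``being a DK-equivalence'' are compatible with the disjoint-union decomposition into connected components), and then, for a connected quasi-unital semiKan space $X$ with a chosen basepoint $x_0 \in X_0$, to compute $|X|$ via a homotopy-colimit / Segal-style spectral sequence or bar-construction argument. The key input is that for a semiKan space the weak composition on $\Map_X(x_0,x_0)$ makes it a group-like $A_\infty$-space (grouplikeness being precisely the invertibility of all edges, via the $2$-out-of-$3$ closure of Lemma~\ref{inv-closed-composition}), so that $|X|$ is equivalent to the classifying space / delooping $B\Map_X(x_0,x_0)$, recovering $\Map_X(x_0,x_0) \simeq \Omega|X|$; connectivity handles $\pi_0$. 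Thus $|f|$ being an equivalence is controlled by, and controls, the induced maps on $\pi_0$ and on mapping spaces.

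More precisely, the main technical statement — which I will isolate as a lemma — is: for any connected quasi-unital semiKan space $X$ with basepoint $x_0$, the realization $|X|$ is connected and there is a natural weak equivalence $\Map_X(x_0,x_0) \simeq \Omega_{x_0}|X|$. Granting this, the theorem follows: if $|f|$ is a weak equivalence then $f$ is a bijection on $\pi_0$ (a connected quasi-unital semiKan space has connected, nonempty realization, and conversely $\pi_0|X| \cong \pi_0(\GsS\text{-components of }X)$), and on each component $f$ induces $\Omega|f|$, hence a weak equivalence, on the loop space, which by the lemma is $\Map_X(x_0,x_0)$; since $X$ is connected every mapping space is (non-canonically, via composition with an invertible edge, using that all edges are invertible) equivalent to $\Map_X(x_0,x_0)$, so $f$ is a weak equivalence on all mapping spaces and therefore a DK-equivalence. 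The converse runs the same identifications backwards.

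The hard part will be proving the lemma $|X| \simeq B\Map_X(x_0,x_0)$ with enough naturality; the Reedy-fibrancy and Segal conditions give that $X$, restricted to the components reachable from $x_0$, is the semi-simplicial bar construction on the grouplike $A_\infty$-monoid $M = \Map_X(x_0,x_0)$ (here one uses connectedness to contract the spaces of objects against $x_0$, and the semiKan condition to get grouplikeness), and then one invokes the group-completion / Segal delooping theorem for semi-simplicial spaces — the semi-simplicial version of Segal's theorem, which is exactly what the last sentence of the introduction to this subsection alludes to. A subtlety to be careful about: one must check that the forgetful passage from simplicial to semi-simplicial objects does not lose the delooping (equivalently, that $|\bullet|$ computed semi-simplicially agrees with the simplicial realization of $\LK(X)$), which is handled by the monoidality of $\LK$ recorded earlier and the weak contractibility of $\Del_s$. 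I would also record that all equivalences produced are natural in $X$, so that $\what{|\bullet|}$ and $\Pi|_{\K}$ assemble into an honest equivalence of $\infty$-categories after localization, completing the proof of Theorem~\ref{main-groupoid}.
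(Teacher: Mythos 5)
Your global architecture is the same as the paper's: reduce Theorem~\ref{main-groupoid} to identifying the class of maps inverted by $\what{|\bullet|}$ with the DK-equivalences, and derive that identification from a loop-space description of mapping spaces. The paper gets this from Theorem~\ref{realization-of-groupoid}, which asserts $\Map_X(x,y) \simeq \Om\left(\what{|X|},x,y\right)$ for an arbitrary semiKan space $X$ and \emph{arbitrary} $x,y$, proved by a direct adaptation of Segal--Puppe: one forms the path semiKan space $P(X,x)$, checks that the squares of $p:P(X,x) \lrar X$ are homotopy Cartesian (this is where the semiKan condition enters), applies Puppe's Theorem~\ref{pupe}, and shows $|P(X,x)|$ is contractible by comparison with $\cosk_0(P(X,x)_0)$ (Lemma~\ref{P-is-contractible}); Corollary~\ref{equiv-preserves-realization} then yields both implications at once. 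A minor remark: because you only prove a based version at $x_0$, your deduction that $f$ is an equivalence on \emph{all} mapping spaces needs the translation equivalences $\Map_X(x,y)\simeq\Map_X(x_0,x_0)$ to be compatible with $f$ (they are, since they are given by composition with chosen invertible edges and their images, but this has to be said); the paper's two-point statement makes this step unnecessary.

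The genuine gap is in your proof of the key lemma itself, at the step ``use connectedness to contract the spaces of objects against $x_0$''. The delooping/group-completion theorem you invoke treats the one-object case (a grouplike $A_\infty$-monoid), and the passage from a semiKan space with an arbitrary space of objects $X_0$ to the bar construction on $M=\Map_X(x_0,x_0)$ is not a formal reduction: $X_0$ carries homotopical information not determined by the non-unital structure (Remark~\ref{r:uncomplete}, Example~\ref{e:cosk_0}), and the comparison you would need --- say, the inclusion into $X$ of the levelwise fiber of $X_n \lrar X_0^{n+1}$ over the constant tuple at $x_0$, or a zig-zag through a bar construction --- is itself a DK-equivalence whose realization-invariance is an instance of the very statement being proven, so appealing to it would be circular. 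Nor can one shortcut this with Puppe applied to $X \lrar \cosk_0(X_0)$: the relevant squares (already $X_1 \lrar X_0$ over $X_0^2 \lrar X_0$) are not homotopy Cartesian. To justify the reduction you would essentially have to run the path-space/Puppe argument of Theorem~\ref{realization-of-groupoid} anyway, at which point the bar-construction detour buys nothing; as written, the central step of your proposal is asserted rather than proven.
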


Note that this is exactly what happens in the case of \textbf{unital} $\infty$-groupoids (where the realization functor is sometimes referred to as \textbf{classifying space}). In particular, quasi-unital and unital $\infty$-groupoids have the same homotopy theory, i.e., the homotopy theory of Kan complexes. Hence we get the main conclusion of this subsection:
\begin{thm}\label{t:main-groupoid-2}
The forgetful functor induces an equivalence between the $\infty$-category of $\infty$-groupoids and the $\infty$-category of quasi-unital $\infty$-groupoids.
\end{thm}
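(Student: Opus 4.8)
The plan is to deduce Theorem~\ref{t:main-groupoid-2} as a formal consequence of Theorem~\ref{main-groupoid} together with the classical fact that the $\infty$-category of $\infty$-groupoids is modelled by Kan complexes. The forgetful functor in question takes an $\infty$-groupoid, presented as a (complete) Segal space which happens to be a Kan space, to its underlying semi-simplicial space, which is a quasi-unital semiKan space. After localizing both sides at the appropriate equivalences, the target is $\K$ by Theorem~\ref{main-groupoid}, and the source is likewise $\K$ (this is Rezk's completion story restricted to the Kan case, or alternatively the nerve equivalence between $\infty$-groupoids and Kan complexes). So the entire content is that the forgetful functor, under these two identifications, becomes an equivalence — and Theorem~\ref{main-groupoid} already pins down the inverse equivalence $X \mapsto \what{|X|}$ on the quasi-unital side.

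The key steps, in order, are as follows. First, I would recall that a unital $\infty$-groupoid is the same data as a complete Segal space all of whose edges are invertible, and that the underlying functor to such objects is the geometric realization / classifying space functor's right adjoint; in particular the homotopy theory of unital $\infty$-groupoids is $\K$, with the equivalence to $\K$ again given by realization. Second, I would observe that the forgetful functor from unital to quasi-unital semiKan spaces commutes with geometric realization (this is immediate, since forgetting degeneracies does not change the coend $\int^{\Del_s} X_\bullet \times |\Del^\bullet|$ up to equivalence — the degeneracies contribute only degenerate simplices, which are collapsed in the realization). Third, by Theorem~\ref{main-groupoid} the quasi-unital side localizes to $\K$ via $\what{|\bullet|}$, and by the classical fact the unital side does too via the same recipe; since the forgetful functor intertwines the two realization functors, it descends to the identity functor on $\K$ after localization, hence to an equivalence. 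Finally, one checks that the forgetful functor indeed sends DK-equivalences of unital semiKan spaces to DK-equivalences of quasi-unital ones (clear, since the mapping spaces and set of components are defined identically in both settings, using only face maps), so that it genuinely descends to the localized categories.

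The main obstacle I anticipate is not any single hard lemma but rather the bookkeeping of matching up the two localizations precisely: one must be careful that the class of equivalences on the unital side by which Rezk localizes, when restricted to semiKan objects, really does correspond to the DK-equivalences as defined in \S\S~\ref{ss-qu-groupoids}, and that the realization functor on the unital side agrees (up to natural equivalence) with the composite "forget, then realize". Both of these are routine once one writes down the definitions, but they are exactly the points where an error could creep in. A secondary subtlety is making sure the statement is really about the $\infty$-categorical forgetful functor and not merely an equivalence of underlying homotopy categories; this is handled automatically because Theorem~\ref{main-groupoid} is itself an $\infty$-categorical statement (an equivalence of $\infty$-categories via an explicit left-localization functor), so transporting it across the realization-compatible square yields an $\infty$-categorical equivalence with no extra work.
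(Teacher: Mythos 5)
Your proposal is correct and follows essentially the same route as the paper: the paper deduces Theorem~\ref{t:main-groupoid-2} directly from Theorem~\ref{main-groupoid} together with the classical fact that unital $\infty$-groupoids are also equivalent to $\K$ via the classifying space (realization) functor, which the forgetful functor intertwines. Your additional checks (compatibility of fat and ordinary realization, preservation of DK-equivalences) are exactly the routine verifications the paper leaves implicit.
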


The core argument for proving Theorem~\ref{main-groupoid} is Theorem~\ref{realization-of-groupoid} below. The particular case of Theorem~\ref{realization-of-groupoid} where $X_0$ is a point (and simplicial sets are replaced with actual topological spaces) is a famous theorem of Segal whose proof was completed by Puppe (see ~\cite{seg} and ~\cite{pup}). The proof we offer is a straightforward adaptation of the Segal-Puppe proof to the general case.

Given a space $Z$ and points $x,y \in Z$ we will denote $\Om(Z,x,y)$ the space of paths in $Z$ from $x$ to $y$ and by $P(Z,x)$ the space of paths in $z$ which start at $x$.

\begin{thm}\label{realization-of-groupoid}
Let $X$ be a semiKan space and $x,y \in X_0$ two points. We will consider $x,y$ as points in $\what{|X|}$ via the natural inclusion $X_0 \hrar \what{|X|}$. Then the natural map
$$ \Map_X(x,y) \lrar \Om\left(\what{|X|},x,y\right) $$
is a weak equivalence.
\end{thm}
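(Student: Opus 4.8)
The plan is to imitate the classical Segal--Puppe argument, which proceeds by building a ``path-space fibration'' over the realization and identifying its fiber with the mapping space. Fix a point $x \in X_0$. First I would construct an auxiliary semi-simplicial space $P_x$ over $X$ together with a map $P_x \lrar X$ such that $|P_x| \lrar |X|$ is a quasi-fibration with fiber over $y \in X_0$ (weakly) equivalent to $\Map_X(x,y)$, and such that $|P_x|$ is itself weakly contractible. Concretely, $(P_x)_n$ should be the space of ``$(n+1)$-simplices with an extra initial vertex pinned at $x$'', i.e. one builds $P_x$ from the décalage/path construction on the associated simplicial object (after passing through $\LK$, or directly by hand using the sets of injective maps $[k] \hrar [n+1]$ with $0 \mapsto 0$). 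The map $P_x \lrar X$ remembers the face opposite the pinned vertex. Because $X$ is a semiKan space, every edge is invertible, and the Segal condition in dimensions $2$ and $3$ then makes the relevant squares homotopy Cartesian; this is exactly what is needed to show that $|P_x| \lrar |X|$ is a quasi-fibration.

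Second, I would verify contractibility of $|P_x|$. The point is that $P_x$ has an ``extra degeneracy'' phenomenon coming from the pinned vertex at $x$: the path construction produces a semi-simplicial space with a contractible realization by the usual simplicial-homotopy argument (the cone on $x$), using Reedy fibrancy to control matching objects and using that the inclusion of the pinned vertex exhibits $|P_x|$ as homotopy equivalent to a point. Equivalently, $|P_x|$ deformation retracts onto the vertex $x$ via the simplicial structure.

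Third, combining the quasi-fibration $|P_x| \lrar |X|$ with contractibility of the total space, the long exact sequence (or rather the fiber-sequence statement for quasi-fibrations) gives a weak equivalence from the fiber over $y$ to $\Om(|X|, x, y)$. Since fibers of a quasi-fibration compute the homotopy fiber, and since the homotopy fiber of a contractible total space over $|X|$ at $y$ is the based loop/path space $\Om(|X|,x,y)$, we get $\Map_X(x,y) \simeq \Om(|X|,x,y)$. Finally, since $X_0 \hrar |X|$ factors through the Kan replacement and $\Om(|X|,x,y) \simeq \Om(\what{|X|},x,y)$ (Kan replacement is a weak equivalence, and loop spaces are homotopy invariant), one deduces the statement with $\what{|X|}$ in place of $|X|$. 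One should also check that the composite $\Map_X(x,y) \to |P_x|_{\text{fiber}} \to \Om(\what{|X|},x,y)$ agrees with the ``natural map'' in the statement; this is a direct unwinding of the inclusion $X_\bullet \hrar |X|$ and the definition of $\Map_X(x,y)$ as the fiber of $X_1 \to X_0 \times X_0$.

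The main obstacle is the quasi-fibration claim: showing that $|P_x| \lrar |X|$ is a quasi-fibration is the technical heart, exactly as in Segal--Puppe. The standard tool is Dold--Thom's gluing criterion for quasi-fibrations, applied to the skeletal filtration of $|X|$; at each stage one attaches cells and must check that the map restricted to the newly attached part is a quasi-fibration and that the two pieces glue compatibly. The key input that makes this go through in the semi-simplicial setting (rather than failing for lack of degeneracies) is precisely the semiKan hypothesis: invertibility of all edges, together with the Segal condition, guarantees that the relevant face maps are weak equivalences, so that the local models over each simplex of $X$ are genuinely (homotopy) trivial fibrations. Carefully setting up $P_x$ semi-simplicially — so that its realization is computed correctly and the gluing lemma applies — is where the real work lies; once the quasi-fibration is established, the rest is formal.
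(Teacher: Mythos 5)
Your overall strategy coincides with the paper's: both form the pinned path object $P_x$ with $(P_x)_n\subseteq X_{n+1}$ the simplices whose initial vertex is $x$, show that the projection to $X$ realizes to a fibration-up-to-homotopy whose fiber over $y$ is $\Map_X(x,y)$, and conclude by contracting the total space. The quasi-fibration step you defer to a Dold--Thom gluing argument is exactly the content that the paper outsources to Puppe's theorem (a map of semi-simplicial spaces all of whose squares over injective $[k]\lrar[n]$ are homotopy Cartesian realizes to a homotopy Cartesian square against the $0$-th spaces), and your identification of where the semiKan hypothesis enters that step (invertibility of edges makes the comparison of fibers over faces an equivalence) is correct; leaving that to the standard machinery is acceptable, since the paper also cites it rather than reproving it.

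The genuine gap is your contractibility argument for $|P_x|$. The ``extra degeneracy''/cone contraction is a statement about simplicial objects: the classical contraction of the d\'ecalage is written entirely in terms of degeneracy operators, and $P_x$ is only a \emph{semi}-simplicial space, so none of those operators exist; nor is there a literal cone structure on $|P_x|$, since the pinned vertex is not a vertex of the realized simplices (the $n$-simplices of $P_x$ are realized as $|\Del^n|$, not $|\Del^{n+1}|$). A semi-simplicial null-homotopy is a map $\Del^1\otimes P_x\lrar P_x$, and producing one requires supplying many interpolating simplices that a general semi-simplicial space simply does not have; ``Reedy fibrancy to control matching objects'' does not create them. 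Moreover, contrary to your division of labor, the semiKan hypothesis is needed precisely here: the paper's proof observes that, because every edge of $X$ is invertible, the natural map $P(X,x)\lrar\cosk_0\left(P(X,x)_0\right)$ is a \emph{levelwise} weak equivalence, and since realization preserves levelwise equivalences it suffices to contract $\left|\cosk_0(Z)\right|$ for nonempty $Z$, which does admit a canonical semi-simplicial null-homotopy $\Del^1\otimes\cosk_0(Z)\lrar\cosk_0(Z)$. Without this (or some other device replacing the missing degeneracies), your contraction of $|P_x|$ is unsupported, and this is the one step of the argument that does not go through as written.
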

\begin{proof}

We will rely on the main result of~\cite{pup} which can be stated as follows:
\begin{thm}\label{pupe}
Let $X,Y$ be two semi-simplicial spaces and let $\vphi: X \lrar Y$ be a map such that for each $f: [k] \lrar [n]$ in $\Del_s$ the square
$$ \xymatrix{
X_n \ar^{\vphi_n}[d]\ar^{f^*}[r] & X_k \ar^{\vphi_k}[d] \\
Y_n \ar^{f^*}[r] & Y_k \\
}$$
is homotopy Cartesian. Then the square
$$ \xymatrix{
X_0 \ar[r]\ar[d] & |X| \ar[d] \\
Y_0 \ar[r] & |Y| \\
}$$
is homotopy Cartesian as well.
\end{thm}

Let us now prove Theorem~\ref{realization-of-groupoid}. Define the path semiKan space $P(X,x)$ as follows:
$$ P(X,x)_n = \{\sig \in X_{n+1} | d_0(\sig) = x\} \subseteq X_{n+1} .$$
It is not hard to see that $P(X,x)$ is also a semiKan space. We have a natural map
$ p: P(X,x) \lrar X $
which maps $\sig \in P(X,x)_n$ to $\sig|_{\Del^{\{1,...,n+1\}}} \in X_n$. Note that for each $n$ the map
$ p_n: P(X,x)_n \lrar X_n $
is a fibration whose fiber over $\sig \in X_n$ is homotopy equivalent to the mapping space
$ \Map_X\left(x,\sig|_{\Del^{\{0\}}}\right)$. Furthermore, it is not hard to see that for each $f: [k] \lrar [n]$ in $\Del_s$ the square
$$ \xymatrix{
P(X,x)_n \ar^{f^*}[r]\ar^{p_n}[d] & P(X,x)_k \ar^{p_k}[d] \\
X_n\ar^{f^*}[r] & X_k \\
}$$
is homotopy Cartesian. Hence by Puppe's theorem the left square in the diagram
$$ \xymatrix{
P(X,x)_0 \ar[r]\ar^{p_0}[d] & |P(X,x)| \ar[d]\ar[r] & P\left(\what{|X|},x\right) \ar^{\ev_1}[d] \\
X_0 \ar[r] & |X| \ar@{=}[r] & |X| \\
}$$
is a homotopy Cartesian (where $\ev_1$ is the function which associates to a path $\gam$ its value $\gam(1)$). Now note that $p_0$ and $\ev_1$ are both fibrations. Identifying the fibers of these fibrations we see that the desired result is equivalent to the exterior rectangle being homotopy Cartesian, or equivalently, that the right square is homotopy Cartesian. Since $P\left(\what{|X|},x\right)$ is contractible it will suffice to show that:

\begin{lem}\label{P-is-contractible}
The space $|P(X,x)|$ is contractible.
\end{lem}
\begin{proof}
Consider the natural map
$$ P(X,x) \lrar \cosk_0(P(X,x)_0) .$$
Unwinding the definition of $P(X,x)$ and using the fact that $X$ is a semiKan space we see that this map is actually a levelwise weak equivalence. Note that the realization of a semi-simplicial space coincides with its homotopy colimit and so is preserved by levelwise equivalences. Hence it is enough to show that
$ |\cosk_0(P(X,x)_0)| $
is contractible. This in turn is due to the fact that any semi-simplicial space of the form $\cosk_0(Z)$ for $Z \neq \emptyset$ admits a canonical semi-simplicial null-homotopy $\Del^1 \otimes \cosk_0(Z) \lrar \cosk_0(Z)$.
\end{proof}
This finishes the proof of Theorem~\ref{realization-of-groupoid}.
\end{proof}

\begin{cor}\label{c:kan-counit}
Let $X$ be a semiKan space. Then the counit map
$ X \lrar \Pi\left(\what{|X|}\right) $
is a DK-equivalence.
\end{cor}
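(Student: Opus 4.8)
The plan is to unwind the two sides of the map and reduce everything to Theorem~\ref{realization-of-groupoid}. First I would record the relevant part of the structure of $\Pi(Z)$ for a Kan complex $Z$. Since $\Pi(Z)_0 = \Map(|\Del^0|,Z) \cong Z$ and $\Pi(Z)_1 = \Map(|\Del^1|,Z)$ is the path space of $Z$, with the two face maps $d_0,d_1$ given by restriction to the two endpoints, the fiber of the (Kan) fibration $(d_0,d_1)\colon \Pi(Z)_1 \lrar \Pi(Z)_0 \times \Pi(Z)_0$ over a pair $(x,y)$ is the space $\Om(Z,x,y)$ of paths joining $x$ and $y$; thus $\Map_{\Pi(Z)}(x,y) = \Om(Z,x,y)$. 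Next, under the adjunction $|\bullet| \dashv \Pi$ the counit map $\eta\colon X \lrar \Pi(\what{|X|})$ corresponds to the canonical map $|X| \lrar \what{|X|}$. In particular $\eta_0$ is the composite $X_0 \hrar |X| \lrar \what{|X|}$, and for $x,y \in X_0$ the map induced by $\eta$ on mapping spaces, $\Map_X(x,y) \lrar \Map_{\Pi(\what{|X|})}(x,y) = \Om(\what{|X|},x,y)$, is precisely the natural map appearing in Theorem~\ref{realization-of-groupoid}. By that theorem it is a weak equivalence for all $x,y$, so $\eta$ induces weak equivalences on all mapping spaces.

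It remains to show that $\eta$ induces a bijection on sets of connected components. The connected components of the (quasi-unital) semiKan space $X$ are the classes of $X_0$ under the relation $x \sim y \Longleftrightarrow \Map_X(x,y) \neq \emptyset$; this is an equivalence relation since quasi-units give reflexivity, invertibility of edges gives symmetry, and composition (the Segal condition in degree $2$) gives transitivity. The connected components of $\Pi(\what{|X|})$ are the path-components of $\what{|X|} = \Pi(\what{|X|})_0$, i.e. $\pi_0(\what{|X|}) = \pi_0(|X|)$. Two observations finish the argument. First, the map $X_0 \lrar |X|$ is surjective on $\pi_0$: writing $|X| = \int^{\Del_s} X_\bullet \times |\Del^\bullet|$, any point of $|X|$ lies in some cell $\{\sig\} \times |\Del^n|$ and can be joined, inside that cell, to the image of the $0$-th vertex of $\sig$, which lies in the image of $X_0$. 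Second, for $x,y \in X_0$ one has $x \sim y$ in $X$ if and only if $x$ and $y$ lie in the same path-component of $|X|$: indeed $\Map_X(x,y) \neq \emptyset$ if and only if $\Om(\what{|X|},x,y) \neq \emptyset$ by Theorem~\ref{realization-of-groupoid}, and the latter says exactly that $x$ and $y$ lie in the same path-component of $\what{|X|}$, hence of $|X|$. Combining these, the map induced by $\eta_0$ from the set of connected components of $X$ to $\pi_0(|X|) = \pi_0(\what{|X|})$ — the set of connected components of $\Pi(\what{|X|})$ — is a bijection. Together with the previous paragraph, $\eta$ is therefore a DK-equivalence.

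I do not expect a serious obstacle here. The only points needing a little care are matching the abstractly-defined counit with the concrete map it induces on mapping spaces, so that Theorem~\ref{realization-of-groupoid} literally applies, and the $\pi_0$-surjectivity of $X_0 \lrar |X|$; both become routine once the geometric realization is written as a coend over $\Del_s$.
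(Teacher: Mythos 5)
Your proof is correct and follows essentially the same route as the paper: fully-faithfulness is exactly Theorem~\ref{realization-of-groupoid} applied to the counit, and the bijection on connected components follows from surjectivity of $X_0 \lrar \what{|X|}$ on $\pi_0$ combined with fully-faithfulness. You simply spell out the identification $\Map_{\Pi(Z)}(x,y) = \Om(Z,x,y)$ and the $\pi_0$ argument in more detail than the paper does.
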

\begin{proof}
By Theorem~\ref{realization-of-groupoid} the counit map is fully-faithful. Since the map $X_0 \lrar \what{|X|}$ is surjective on connected components we see that the map $f$ is in fact a DK-equivalence.
\end{proof}

\begin{cor}\label{equiv-preserves-realization}
Let $f: X \lrar Y$ be a map between quasi-unital semiKan spaces. Then $f$ is a DK-equivalence if and only if the induced map
$ f_*:\what{|X|} \lrar \what{|Y|} $
is a weak equivalence.
\end{cor}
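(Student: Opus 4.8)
The plan is to deduce Corollary~\ref{equiv-preserves-realization} from Corollary~\ref{c:kan-counit} together with the two-out-of-three property of weak equivalences, by comparing $f$ with the functorial counit maps on source and target. First I would record the naturality square
$$ \xymatrix{
X \ar^{f}[r]\ar[d] & Y \ar[d] \\
\Pi\left(\what{|X|}\right) \ar^{\Pi(\what{|f|})}[r] & \Pi\left(\what{|Y|}\right) \\
} $$
in which the vertical maps are the counits, which by Corollary~\ref{c:kan-counit} are DK-equivalences. (Here I would first note that $|\bullet|$ sends levelwise equivalences to equivalences, hence $\what{|f|}$ is well defined up to homotopy and $\Pi(\what{|f|})$ makes sense.)

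Next I would observe that a map of the form $\Pi(g)$ for $g: Z \lrar W$ a map of Kan complexes is a DK-equivalence if and only if $g$ is a weak equivalence. Indeed, by Theorem~\ref{realization-of-groupoid} applied to the semiKan spaces $\Pi(Z)$ and $\Pi(W)$ — whose realizations are (after Kan replacement) equivalent to $Z$ and $W$ respectively, since $\Pi|_\K$ is fully faithful with invertible counit as noted in~\S\S~\ref{ss-qu-groupoids} — the mapping spaces of $\Pi(Z)$ compute the path spaces of $Z$, and the set of connected components of $\Pi(Z)$ as a quasi-unital semiKan space is $\pi_0(Z)$. So $\Pi(g)$ induces an isomorphism on components and equivalences on mapping spaces precisely when $g$ induces an isomorphism on $\pi_0$ and equivalences on all path spaces, i.e. precisely when $g$ is a weak equivalence of Kan complexes.

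Given these two inputs the argument is a diagram chase. If $f$ is a DK-equivalence, then since the vertical counits are DK-equivalences and DK-equivalences satisfy two-out-of-three (immediate from the definition, as they are detected by isomorphisms on $\pi_0$ and weak equivalences on mapping spaces), the bottom map $\Pi(\what{|f|})$ is a DK-equivalence, hence by the previous paragraph $\what{|f|} = f_*$ is a weak equivalence. Conversely, if $f_*$ is a weak equivalence then $\Pi(\what{|f|})$ is a DK-equivalence, and the same two-out-of-three argument (now run the other way around the square) gives that $f$ is a DK-equivalence.

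The only genuinely substantive point — and hence the main obstacle — is the identification of $\pi_0$ and mapping spaces of $\Pi(Z)$ carried out in the second paragraph; everything else is formal. This identification rests on Theorem~\ref{realization-of-groupoid} and on the fact, already established in~\S\S~\ref{ss-qu-groupoids}, that $\Pi|_\K$ is fully faithful with the counit $|\Pi(Z)| \lrar Z$ a weak equivalence (using weak contractibility of $\Del_s$). Once that is in hand, the corollary follows with no further computation.
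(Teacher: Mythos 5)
Your argument is correct in outline but takes a genuinely different route from the paper's, and one step in it is justified too quickly. The paper does not pass through the naturality square at all: it notes that the connected components of $X$ biject with $\pi_0\left(\what{|X|}\right)$ and then invokes Theorem~\ref{realization-of-groupoid} to translate the two defining conditions of a DK-equivalence directly into the statement that $f_*$ is a bijection on $\pi_0$ and a weak equivalence on all path spaces, i.e.\ a weak equivalence. Your chase through the counit square instead uses Corollary~\ref{c:kan-counit} as a black box, which is a legitimate alternative. Note also that your ``only genuinely substantive point'' is easier than you make it: $\Map_{\Pi(Z)}(x,y)$ is by definition the fiber of $\Map\left(|\Del^1|,Z\right) \lrar Z \times Z$ over $(x,y)$, i.e.\ literally the path space $\Om(Z,x,y)$, and the components of $\Pi(Z)$ are visibly $\pi_0(Z)$; no appeal to Theorem~\ref{realization-of-groupoid} or to full faithfulness of $\Pi|_{\K}$ is needed for that identification.

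The step that does need more care is the parenthetical claim that DK-equivalences satisfy two-out-of-three ``immediately from the definition''. Writing $c_X: X \lrar \Pi\left(\what{|X|}\right)$ for the counit, the instance you use in the forward implication is: $c_X$ and $\Pi\left(\what{|f|}\right)\circ c_X$ are DK-equivalences, hence so is $\Pi\left(\what{|f|}\right)$. To verify the mapping-space condition for $\Pi\left(\what{|f|}\right)$ you must check it at pairs of points of $\Pi\left(\what{|X|}\right)_0 = \what{|X|}_0$ that need not lie in the image of $X_0$, and this is exactly where essential surjectivity of $c_X$ has to do real work: one must first transport along (invertible) edges --- equivalently, in this instance, use the standard fact that path spaces of a Kan complex change by homotopy equivalences when base points are moved within a path component, compatibly with $\what{|f|}$ --- before two-out-of-three for spaces can be applied. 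This is true and easy to supply, but it is not immediate from the definition; it is precisely the content that the paper's direct use of Theorem~\ref{realization-of-groupoid} absorbs. (The instance used in your converse direction, deducing that $f$ is DK from $c_Y \circ f$ and $c_Y$ being DK, really is immediate.) With that lemma spelled out, your proof is complete.
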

\begin{proof}
First note that the connected components of $X$ as a semiKan space are in bijection with the connected components of $\what{|X|}$ as a space. Hence Theorem~\ref{realization-of-groupoid} tells us that $f: X \lrar Y$ is a DK-equivalence if and only if it induces a bijection
$$ \pi_0\left(\what{|X|}\right) \lrar \pi_0\left(\what{|Y|}\right) $$
and for each $x,y \in \what{|X|}$ the induced map
$$ \Om\left(\what{|X|},x,y\right) \lrar \Om\left(\what{|Y|},f(x),f(y)\right) $$
is a weak equivalence. But this is equivalent to $f_*:\what{|X|} \lrar \what{|Y|} $ being a weak equivalence and we are done.
\end{proof}

We finish this subsection with an application which we record for future use. Recall that in general geometric realization does not commute with \textbf{Cartesian products} of semi-simplicial spaces (i.e., levelwise products). The following corollary shows that in the specific case of semiKan spaces, geometric realization \textbf{does} commute with Cartesian products:
\begin{cor}\label{products}
Let $X,Y$ be two quasi-unital semiKan spaces. Then the natural map
$ |X \times Y| \lrar |X| \times |Y| $
is a weak equivalence.
\end{cor}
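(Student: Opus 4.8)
The plan is to deduce Corollary~\ref{products} from Corollary~\ref{equiv-preserves-realization} and from Theorem~\ref{realization-of-groupoid}, using the fact that the geometric realization functor has a right adjoint $\Pi$ and so commutes with colimits, while the remaining work is to check that the natural map $|X \times Y| \lrar |X| \times |Y|$ induces isomorphisms on the relevant homotopy invariants. First I would observe that both $X \times Y$ and the "target" are, up to the machinery at hand, controlled by a single Dwyer--Kan-type comparison: the space $|X| \times |Y|$ is not literally the realization of a semiSegal space, so the cleanest route is to factor the natural map through $\Pi$. Concretely, applying the Kan replacement $\what{\bullet}$ everywhere, it suffices to show that $\what{|X \times Y|} \lrar \what{|X|} \times \what{|Y|}$ is a weak equivalence, and for this I would compare both sides with the realization of $\Pi\bigl(\what{|X|}\bigr) \times \Pi\bigl(\what{|Y|}\bigr)$.

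The key steps, in order, would be as follows. First, note that $X \times Y$ is again a quasi-unital semiKan space: invertibility of an edge $(f,g)$ follows from invertibility of $f$ and of $g$ componentwise via the characterization in Remark~\ref{inv-concrete}, and a pair of quasi-units gives a quasi-unit. Second, by Corollary~\ref{c:kan-counit} the counit maps $X \lrar \Pi(\what{|X|})$, $Y \lrar \Pi(\what{|Y|})$ and $X \times Y \lrar \Pi(\what{|X \times Y|})$ are DK-equivalences, and the product of two DK-equivalences between quasi-unital semiKan spaces is again a DK-equivalence (one checks the effect on $\pi_0$ and on mapping spaces, both of which are compatible with products). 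Third, I would identify $\Pi(\what{|X|}) \times \Pi(\what{|Y|}) \cong \Pi\bigl(\what{|X|} \times \what{|Y|}\bigr)$, since $\Pi$ is a right adjoint and hence preserves products; this is the place where the comparison map $|X \times Y| \lrar |X| \times |Y|$ enters, as the adjoint of the obvious map into the product. Combining, $X \times Y \lrar \Pi\bigl(\what{|X|} \times \what{|Y|}\bigr)$ is a DK-equivalence, and then Corollary~\ref{equiv-preserves-realization} (or directly Theorem~\ref{realization-of-groupoid}, comparing mapping spaces $\Map_{X \times Y}((x,y),(x',y')) \simeq \Map_X(x,x') \times \Map_Y(y,y')$ with the corresponding loop spaces $\Om(\what{|X|} \times \what{|Y|}, \ldots) \simeq \Om(\what{|X|},\ldots) \times \Om(\what{|Y|},\ldots)$) forces $\what{|X \times Y|} \lrar \what{|X|} \times \what{|Y|}$ to be a weak equivalence, which is the claim after unwinding $\what{\bullet}$.

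The step I expect to be the main obstacle is making precise the chain of identifications around $\Pi$ and ensuring the relevant square of natural maps actually commutes — i.e. that the canonical map $|X \times Y| \lrar |X| \times |Y|$ is genuinely the one induced by adjunction through $\Pi$, so that Corollary~\ref{equiv-preserves-realization} applies to \emph{it} and not merely to some abstractly equivalent map. A secondary subtlety is that $\what{|X|} \times \what{|Y|}$ is already Kan (a product of Kan complexes), so $\what{\bigl(|X| \times |Y|\bigr)} \simeq \what{|X|} \times \what{|Y|}$, but one should confirm the compatibility of Kan replacement with finite products at the level of maps, not just objects. Once these bookkeeping points are settled, the homotopy-theoretic content is entirely supplied by Theorem~\ref{realization-of-groupoid}: mapping spaces in a product semiKan space are the products of the mapping spaces, loop spaces of a product are products of loop spaces, and $\pi_0$ is multiplicative — so the DK-equivalence criterion of Corollary~\ref{equiv-preserves-realization} is verified termwise and the result follows.
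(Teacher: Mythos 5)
Your proposal is correct, but your primary route is more roundabout than the paper's: the paper proves the corollary directly by the argument you only mention parenthetically. Namely, it first notes that $X \times Y$ is again a semiKan space and that $\pi_0\left(|X \times Y|\right) \lrar \pi_0(|X|) \times \pi_0(|Y|)$ is a bijection (both sides being the components of $X \times Y$), and then applies Theorem~\ref{realization-of-groupoid} three times to identify the map on path spaces $\Om\left(\what{|X \times Y|},-,-\right) \lrar \Om\left(\what{|X|} \times \what{|Y|},-,-\right)$ with the isomorphism $\Map_{X \times Y}((x,y),(x',y')) \cong \Map_X(x,x') \times \Map_Y(y,y')$; no mention of $\Pi$, DK-equivalences, or Corollary~\ref{equiv-preserves-realization} is needed. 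Your main route through $\Pi$ does go through, but it carries exactly the overhead you yourself flagged: Corollary~\ref{equiv-preserves-realization} applied to the DK-equivalence $X \times Y \lrar \Pi\left(\what{|X|} \times \what{|Y|}\right)$ only gives that $\what{|X \times Y|} \lrar \what{\left|\Pi\left(\what{|X|} \times \what{|Y|}\right)\right|}$ is an equivalence, so you additionally need the counit equivalence $\what{|\Pi(Z)|} \simeq Z$ for Kan $Z$ (available in the paper from the weak contractibility of $\Del_s$) together with naturality and the homotopy triangle identity to see that the resulting composite is homotopic to the canonical map $|X \times Y| \lrar |X| \times |Y|$. These bookkeeping steps are standard and fixable with what the paper provides, so your argument is sound; what the paper's direct approach buys is precisely the avoidance of this adjunction-level compatibility check, since it compares the specific natural map termwise on $\pi_0$ and on path spaces, where all the homotopy-theoretic content is already supplied by Theorem~\ref{realization-of-groupoid}.
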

\begin{proof}
First note that if $X,Y$ are semiKan spaces then $X \times Y$ is a semiKan space as well. Furthermore, it is clear that the natural map
$$ \pi_0(|X \times Y|) \lrar \pi_0(|X|) \times \pi_0(|Y|) $$
is an isomorphism (as both sides can be identified with the set of connected components of the semiKan space $X \times Y$).

Now let $x_1,x_2 \in X, y_1,y_2 \in Y$ be points and consider the natural map
$$ \Om\left(|X \times Y|,(x_1,y_1),(x_2,y_2)\right) \lrar \Om\left(|X| \times |Y|,(x_1,y_1),(x_2,y_2)\right) .$$
Theorem~\ref{realization-of-groupoid} show that this map is weakly equivalent to the isomorphism
$$ \Map_{X \times Y}((x,y),(x,y)) \x{\simeq}{\lrar} \Map_X(x,x) \times \Map_Y(y,y) $$
and so is itself a weak equivalence. The desired result now follows.
\end{proof}

The purpose of this paper is to obtain a generalization of Theorem~\ref{t:main-groupoid-2} from $\infty$-groupoids to $\infty$-categories. This will require some modifications in order to guarantee that we consider only functors which send invertible edges to invertible edges.

In order to keep track of invertible edges it will be useful to work in a variant of the category of semiSegal spaces where the invertible edges can be somehow \textbf{marked}. For this one needs to replace the notion of a semi-simplicial space with that of a \textbf{marked semi-simplicial space}. The formal basis for such a framework will be laid out in the next subsection.

\subsection{ Marked semi-simplicial spaces }\label{s:marked}

Let us open with the basic definition:
\begin{define}
A \textbf{marked semi-simplicial space} is a pair $(X,A)$ where $X$ is a semi-simplicial space and $A \subseteq X_1$ is a subspace. In order to keep the notation clean we will often denote a marked semi-simplicial space $(X,A)$ simply by $X$. Given two marked semi-simplicial spaces $(X,A), (Y,B)$ we denote by
$$ \Map^{+}(X,Y) \subseteq \Map(X,Y) $$
the subspace of maps which send $A$ to $B$. We will refer to this kind of maps as \textbf{marked} maps.
We denote by
$ \Top^{\Del_s^{\op}}_+ $
the $\Top$-enriched category of marked semi-simplicial spaces and marked maps between them.
\end{define}

\begin{rem}
The analogous notion of marked simplicial sets plays an essential role in the theory of $\infty$-categories as developed in~\cite{higher-topos}. Our definition above as well as many of the associated notations are following their analogues in~\cite{higher-topos}.
\end{rem}

\begin{define}
Given a semi-simplicial space $X$ we will denote by $X^{\sharp}$ the marked semi-simplicial $(X,X_1)$ in which \textbf{all} edges are marked. The association $X \mapsto X^{\sharp}$ is right adjoint to the forgetful functor $(X,A) \mapsto X$.
\end{define}

\begin{define}
Given a semi-simplicial space $X$ we will denote by $X^{\flat}$ the marked semi-simplicial space $(X,\emptyset)$ in which \textbf{no} edges are marked. The association $X \mapsto X^{\flat}$ is left adjoint to the forgetful functor $(X,A) \mapsto X$.
\end{define}

\begin{define}
Let $(X,A)$ be a marked semi-simplicial space. We will denote by $\ovl{A} \subseteq \pi_0(X_1)$ the \textbf{image} of the map
$$ \pi_0(A) \lrar \pi_0(X), $$
i.e., the set of connected components of $X_1$ which meet $A$. We refer to $\ovl{A}$ as the set of \textbf{marked connected components} of $X_1$.
\end{define}

\begin{define}
We will say that a map $f: (X,A) \lrar (Y,B)$ of marked semi-simplicial spaces is a \textbf{marked equivalence} if
\begin{enumerate}
\item
The underling map $f: X \lrar Y$ is a levelwise equivalence.
\item
The induced map
$ f_*: \ovl{A} \lrar \ovl{B} $
is an isomorphism of sets.
\end{enumerate}
\end{define}

\begin{thm}\label{marked-model}
There exists a left proper combinatorial model category structure on $\Top^{\Del^{\op}_s}_+$ such that
\begin{enumerate}
\item
The weak equivalences are the marked equivalences.
\item
The cofibrations are the maps $f: (X,A) \lrar (Y,B)$ for which the underlying map $X \lrar Y$ is a cofibration (i.e., levelwise injective).
\item
A map is a fibration if and only if it satisfies the right lifting property with respect to all morphisms which are both cofibrations and weak equivalences.
\end{enumerate}
\end{thm}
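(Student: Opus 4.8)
The plan is to produce the model structure by invoking a standard recognition theorem for left Bousfield localizations, or more directly by transferring/adapting a known model structure on marked simplicial sets. The cleanest route is to realize $\Top^{\Del^{\op}_s}_+$ as a left Bousfield localization of a model structure we already understand. First I would observe that a marked semi-simplicial space $(X,A)$ is the same datum as a functor from a category $\Del^{\op}_{s,+}$ (obtained by adjoining to $\Del^{\op}_s$ an extra object $[1]^{\sharp}$ with a single map $[1] \to [1]^{\sharp}$, so that $X_{[1]^{\sharp}} = A$) to $\Top$, subject to the condition that the map $A = X_{[1]^{\sharp}} \to X_{[1]} = X_1$ is a monomorphism (levelwise inclusion of simplicial sets). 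So I would first put the injective (= Reedy, since all maps in $\Del^{\op}_{s,+}$ are still "increasing" in the appropriate Reedy sense) model structure on the full presheaf category $\Fun(\Del^{\op}_{s,+},\Top)$: weak equivalences and cofibrations levelwise, which is left proper and combinatorial since $\Top$ is. Then marked semi-simplicial spaces sit inside as those presheaves whose structure map at $[1]^{\sharp}$ is a cofibration; this is not a full subcategory closed under all colimits, so a small correction is needed — the honest statement is that $\Top^{\Del^{\op}_s}_+$ is equivalent to a localization of the presheaf category, or one works with the model structure on the larger category and notes every object is equivalent to one in the image. I would instead follow the more robust approach below.

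The more robust approach mirrors \cite{higher-topos}'s construction of the Cartesian model structure on marked simplicial sets. Define the model structure directly by declaring the cofibrations and weak equivalences as in the statement, and produce the factorizations via the small object argument applied to an explicit set $I$ of generating cofibrations and an explicit set $J$ of generating trivial cofibrations. For $I$ I would take the maps $\partial|\Del^k| \otimes \Del^n \coprod |\Del^k| \otimes \partial\Del^n \to |\Del^k| \otimes \Del^n$ (in the unmarked variables, with the flat marking) together with the single map $(\Del^1)^\flat \to (\Del^1)^\sharp$ that marks the $1$-simplex; these generate exactly the levelwise-mono maps. For the trivial cofibrations one takes $I$-cofibrations that are also marked equivalences: concretely, the horn-type trivial cofibrations $\Lambda^k_i \times |\Del^m| \to \Del^k \times |\Del^m|$ (flat) and, crucially, a map witnessing that if two parallel edges are connected by a path in $X_1$ and one is marked then marking the other is a trivial cofibration — i.e. something like $(\Del^1 \times |\Del^1|, \{0\}\text{-edge marked}) \to (\Del^1\times|\Del^1|, \text{both end edges marked})$. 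Then I would verify the retract, 2-out-of-3, and lifting axioms, with the main work being the standard argument that $J$-cell complexes are marked equivalences and that a map with the RLP against $I$ is a trivial fibration (hence a marked equivalence).

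The key steps, in order: (1) show the proposed cofibrations and the class $I$ above generate the same saturated class (routine, since marking a single edge is the only non-Reedy generator); (2) define $W$ = marked equivalences and check 2-out-of-3 and closure under retracts — this uses that $\ovl{A} \subseteq \pi_0(X_1)$ is functorial and that $\pi_0$ and levelwise $\pi_*$ detect levelwise equivalences; (3) identify the trivial cofibrations and produce a generating set $J$, checking $J \subseteq W \cap \cof$; (4) run the small object argument to get the two factorizations, then deduce the lifting axioms from the factorizations in the usual way; (5) combinatoriality is automatic from $\Top$ being combinatorial and $I,J$ being sets; (6) left properness follows because cofibrations are levelwise monos and marked equivalences are stable under levelwise cobase change along monos — here one uses that for a pushout along a cofibration, $\pi_0$ of level $1$ and hence $\ovl{A}$ behaves correctly. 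I expect step (3) to be the main obstacle: pinning down a set $J$ whose saturation is \emph{exactly} $W \cap \cof$ requires the characteristic lemma that marking an edge in the same component as an already-marked edge is a trivial cofibration, together with a "marked anodyne" analysis analogous to, but simpler than, the one for marked simplicial sets in \cite{higher-topos}; the simplification here is that there are no non-degenerate higher-dimensional markings to manage, only the single subspace $A \subseteq X_1$, so the combinatorics collapses to understanding $\pi_0(A) \to \pi_0(X_1)$.
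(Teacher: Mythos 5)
Your overall strategy differs from the paper's, and as written it has a genuine gap exactly at the point you flag yourself. The paper does not attempt to exhibit a set of generating trivial cofibrations at all: it invokes the Smith--Lurie existence criterion (Proposition A.2.6.13 of \emph{Higher Topos Theory}), whose hypotheses are only that the cofibrations are generated by a set, that the class $W$ of marked equivalences is perfect and stable under pushout along cofibrations, and that maps with the right lifting property against the generating cofibrations lie in $W$. The perfectness and pushout-stability are checked by a short trick: the functor $F(X,A)=(X,\ovl{A})$ into $\Top^{\Del^{\op}_s}\times\Set$ (product of the Reedy and trivial model structures) preserves colimits and pulls back the weak equivalences and cofibrations there to the marked ones, so the needed properties are inherited. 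Your plan instead goes through Kan's recognition theorem with explicit sets $I$ and $J$, and the step you defer --- producing a set $J$ and proving that its saturation is \emph{exactly} the class of trivial cofibrations (equivalently, that every levelwise trivial cofibration inducing a bijection on marked components is a retract of a relative $J$-cell complex, or the dual condition on $J$-injectives) --- is precisely the content that makes the theorem nontrivial. Saying you "expect this to be the main obstacle" and gesturing at a marked-anodyne analysis does not discharge it; without that argument the factorization and lifting axioms are not established, so the proof is incomplete.

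There is also a concrete error in the candidate $J$: the maps you list as "horn-type trivial cofibrations" $\Lam^k_i\otimes|\Del^m|\lrar\Del^k\otimes|\Del^m|$, with $\Lam^k_i\hrar\Del^k$ a horn inclusion of semi-simplicial sets, are \emph{not} marked equivalences --- semi-simplicial horn inclusions are not levelwise equivalences, and they become weak equivalences only after the later localization producing $\Seg_s$, not in the marked model structure being constructed here. The generating trivial cofibrations relevant at this stage must involve trivial cofibrations of \emph{spaces} $|\Lam^k_i|\hrar|\Del^k|$ in pushout-product with the semi-simplicial boundary inclusions $(\partial\Del^n)^{\flat}\hrar(\Del^n)^{\flat}$ and with $(\Del^1)^{\flat}\hrar(\Del^1)^{\sharp}$, together with generators handling the marking of edges lying in an already-marked component. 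So even granting the recognition-theorem framework, your $J$ as stated is not contained in $W\cap\cof$, and after correcting it you would still owe the completeness argument described above. If you want to keep your route, the honest advice is to replace steps (3)--(4) by the paper's appeal to Smith's theorem, where no explicit $J$ is needed and the only genuinely new verification is that $C_0$-injective maps are marked equivalences.
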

\begin{proof}
We will use a general existence theorem which is a slightly weaker version of Proposition $A.2.6.13$ of~\cite{higher-topos} (which in turn is based on work of Smith). In the following the term \textbf{presentable} is used as in~\cite{higher-topos} (which in classical terminology is often called \textbf{locally presentable}).
\begin{thm}[Lurie, Smith]\label{existence}
Let $\M$ be a presentable category. Let $C,W$ be two classes of morphisms in $\M$ such that
\begin{enumerate}
\item
$C$ is weakly saturated and is generated (as a weakly saturated class of morphisms) by a \textbf{set} of morphisms $C_0$.
\item
$W$ is \textbf{perfect} (see Definition $A.2.6.10$ of ~\cite{higher-topos}).
\item
$W$ is stable under pushouts along $C$, i.e., if
$$ \xymatrix{
X \ar^{f}[r]\ar^{g}[d] & Y \ar^{g'}[d] \\
Z \ar[r] & W \\
}$$
is a pushout square such that $f \in C$ and $g \in W$ then $g' \in W$ as well.
\item
If a morphism $f$ in $\M$ has the right lifting property with respect to every morphism in $C$ (or equivalently in $C_0$) then $f \in W$.
\end{enumerate}
Then there exists a left proper combinatorial model structure on $\M$ such that the weak equivalences are $W$ and the cofibrations are $C$.
\end{thm}

First it is clear that $\Top^{\Del^{\op}_s}_+$ is presentable. Let $W$ be the class of marked equivalences and $C$ the class of marked maps which are levelwise injective. We need to show that the classes $(W,C)$ meet the requirements of Theorem~\ref{existence}. We start by finding a set of morphisms which generates $C$ as a weakly saturated class.

Let $C_0$ to be the \textbf{set} containing all the morphisms
$$ \left[|\partial \Del^k| \otimes \left(\Del^n\right)^{\flat}\right] \coprod_{|\partial \Del^k| \otimes \left(\partial \Del^n\right)^{\flat}} \left[|\Del^k| \otimes \left(\partial \Del^n\right)^{\flat}\right] \hrar |\Del^k| \otimes (\Del^n)^{\flat} $$
and all the morphisms
$$ \left[|\partial \Del^k| \otimes \left(\Del^1\right)^{\sharp} \right]
\coprod_{|\partial \Del^k| \otimes \left(\Del^1\right)^{\flat}}
\left[|\Del^k| \otimes \left(\Del^1\right)^{\flat}\right] \hrar |\Del^k| \otimes \left(\Del^1\right)^{\sharp} .$$
It is not hard to check that $C$ is exactly the weakly saturated class generated from this set (these are standard arguments).

We will now show that $(W,C)$ satisfy the assumptions $2$ and $3$ of Theorem~\ref{existence}. Consider the category $\Set$ with its trivial model structure (i.e., the weak equivalences are the isomorphisms and all maps are fibrations and cofibrations). We endow $\Top^{\Del^{\op}_s} \times \Set$ with the \textbf{product model structure} (i.e., weak equivalences, fibrations and cofibrations are defined coordinate-wise, where on the left we use the Reedy model structure). Let $W',C'$ be the classes of weak equivalences and cofibrations in $\Top^{\Del^{\op}_s} \times \Set$ respectively.

Since both $\Top^{\Del^{\op}_s}$ and $\Set$ are left proper combinatorial model categories it follows that $\Top^{\Del^{\op}_s} \times \Set$ is a left proper combinatorial model category. This means that $W'$ is stable under pushouts along $C'$ and that $W'$ is perfect (this is part of Smith's theory of combinatorial model categories, cited for example in~\cite{higher-topos} A.2.6.6).

Now let $F:\Top_+^{\Del^{\op}_s} \lrar \Top^{\Del^{\op}_s} \times \Set$ be the functor given by $ F(X,A) = (X,\ovl{A}) $. Then it is clear that $F$ preserves colimits. Since
$ W = F^{-1}(W') $
and
$ C = F^{-1}(C') $
we get that $W$ is stable under pushouts along $C$ and that $W$ is perfect (see~\cite{higher-topos} A.2.6.12). It is then left to check the last assumption of Theorem~\ref{existence}.

Let $f: (X,A) \lrar (Y,B)$ be a morphism which has the right lifting property with respect to all maps in $C_0$. Since $C_0$ contains all maps of the form
$$ \left[|\partial \Del^k| \otimes \left(\Del^n\right)^{\flat} \right] \coprod_{|\partial \Del^k| \otimes \partial \left(\Del^n\right)^{\flat}} \left[|\Del^k| \otimes \left(\partial \Del^n\right)^{\flat}\right] \hrar  |\Del^k| \otimes (\Del^n)^{\flat} $$
it follows that $f$ is a levelwise equivalence. It is left to show that $f$ induces an isomorphism
$ \ovl{A} \lrar \ovl{B} $
Note that since $f$ is a levelwise equivalence it induces an isomorphism $\pi_0(X_1) \lrar \pi_0(Y_1)$ and so the map $\ovl{A} \lrar \ovl{B}$ is injective. The fact that it is surjective follows from having the right lifting property with respect to
$ \left(\Del^1\right)^{\flat} \hrar \left(\Del^1\right)^{\sharp} $
which is one of the maps in $C_0$. This completes the proof of Theorem~\ref{marked-model}.
\end{proof}

\begin{define}\label{marked-notation}
We will use the terms \textbf{marked fibrations}  and \textbf{marked cofibrations} to denote fibrations and cofibrations in the marked model structure. We will use the term \textbf{marked-fibrant semi-simplicial spaces} to denote fibrant objects in the marked model structure.
\end{define}

\begin{rem}\label{adjoint}
The forgetful functor $(X,A) \mapsto X$ from $\Top^{\Del^{\op}_s}_+$ to $\Top^{\Del^{\op}_s}$ is both a left and a right Quillen functor. As mentioned above, it has a right adjoint $X \mapsto X^{\sharp}$ and a left adjoint $X \mapsto X^{\flat}$. Furthermore it is easy to verify that both the forgetful functor and its left adjoint preserve cofibrations and weak equivalences.
\end{rem}

\begin{lem}\label{marked-fibrant}
A marked semi-simplicial space $(X,A)$ is \textbf{marked-fibrant} if and only if
\begin{enumerate}
\item
$X$ is Reedy fibrant.
\item
$A$ is a union of connected components of $X$.
\end{enumerate}
\end{lem}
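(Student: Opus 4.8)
\textbf{Proof strategy for Lemma~\ref{marked-fibrant}.}
The plan is to characterize marked-fibrant objects by testing against the two families of generating trivial cofibrations implicit in the proof of Theorem~\ref{marked-model}, and to match these families against the two stated conditions. Recall that $(X,A)$ is marked-fibrant precisely when the unique map $(X,A) \lrar *$ has the right lifting property against all marked trivial cofibrations; by the small object argument it suffices to test against a generating set of trivial cofibrations. So the first step is to produce such a generating set and observe that it splits into two pieces: those whose underlying semi-simplicial map is a Reedy trivial cofibration (with flat, or otherwise irrelevant, marking), and those of the ``marking'' type such as $\left(\Del^1\right)^{\flat} \hrar \left(\Del^1\right)^{\sharp}$ together with its pushout-product variants with $|\partial\Del^k| \hrar |\Del^k|$.

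Next I would argue the ``if'' direction. Suppose $X$ is Reedy fibrant and $A$ is a union of connected components of $X_1$. Given a marked trivial cofibration $j: (Y,B) \lrar (Z,C)$ and a marked map $(Z,C) \lrar *$, we must solve the lifting problem $(Y,B) \lrar (X,A)$ against $j$. Forgetting the marking, the underlying map $Y \lrar Z$ is a Reedy trivial cofibration (this follows because $F(j) \in W' \cap C'$ in the notation of the proof of Theorem~\ref{marked-model}, and the first coordinate of $F$ is the underlying semi-simplicial space), so by Reedy fibrancy of $X$ there exists an underlying lift $g: Z \lrar X$. It remains to check that $g$ can be chosen to be marked, i.e. $g(C) \subseteq A$. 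Since $j$ is a marked \emph{equivalence}, the induced map $\ovl{B} \lrar \ovl{C}$ is a bijection; combined with the fact that $Y \lrar Z$ induces an isomorphism $\pi_0(Y_1) \lrar \pi_0(Z_1)$, every connected component of $Z_1$ meeting $C$ already meets the image of $B$. Because $B \subseteq Y$ maps into $A$ (the restriction of the original map $(Y,B) \lrar (X,A)$ is marked) and $A$ is a \emph{union of connected components} of $X_1$, the lift $g$ automatically sends all of $C$ into $A$. Hence the lift is marked and the ``if'' direction follows.

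For the ``only if'' direction, assume $(X,A)$ is marked-fibrant. Testing against the trivial cofibrations whose underlying maps are the generating Reedy trivial cofibrations $\left[|\partial\Del^k| \otimes (\Del^n)^{\flat}\right] \coprod \cdots \hrar |\Del^k| \otimes (\Del^n)^{\flat}$ immediately gives that $X$ is Reedy fibrant. For the second condition, I would test against the marking-type trivial cofibration $\left(\Del^1\right)^{\flat} \hrar \left(\Del^1\right)^{\sharp}$ and its variants: lifting against $\left(\Del^1\right)^{\flat} \hrar \left(\Del^1\right)^{\sharp}$ (which is a marked equivalence since both sides have the same underlying space $\Del^1$ and the same single marked component) forces that any edge of $X$ which is homotopic to (in fact, any edge whatsoever, by applying the pushout-products with $|\partial\Del^k| \hrar |\Del^k|$ for $k \geq 1$) a marked edge is again marked. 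More precisely, lifting against the $k=1$ pushout-product map $\left[|\partial\Del^1|\otimes(\Del^1)^{\sharp}\right]\coprod_{|\partial\Del^1|\otimes(\Del^1)^{\flat}}\left[|\Del^1|\otimes(\Del^1)^{\flat}\right] \hrar |\Del^1|\otimes(\Del^1)^{\sharp}$ shows that if the two endpoints of a path in $X_1$ include a marked edge, the whole path lies in $A$, which says exactly that $A$ is closed under the path-connectedness relation; together with $A \subseteq X_1$ being a subspace this gives that $A$ is a union of connected components.

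\textbf{Main obstacle.} The only genuinely delicate point is bookkeeping in the second direction: making sure the pushout-product maps $|\partial\Del^k|\otimes(\Del^1)^{\flat}\hrar\cdots$ are indeed trivial cofibrations in the marked model structure (they are marked equivalences because their underlying maps are levelwise equivalences between things with identical marked-component sets) and that lifting against them precisely encodes ``$A$ is a union of path components of $X_1$'' rather than something weaker or stronger. Everything else is a routine translation through the functor $F(X,A) = (X,\ovl{A})$ used in the proof of Theorem~\ref{marked-model}.
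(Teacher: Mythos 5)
Your ``if'' direction is essentially the paper's own argument (lift the underlying map using Reedy fibrancy, then observe that the lift is automatically marked because the trivial cofibration is a marked equivalence and $A$ is a union of components), and marked-fibrancy does imply Reedy fibrancy -- though not for the reason you give: the pushout-products with $|\partial\Del^k| \hrar |\Del^k|$ are the generating \emph{cofibrations}, not trivial cofibrations; Reedy fibrancy follows instead by lifting against $Y^{\flat} \hrar Z^{\flat}$ for trivial Reedy cofibrations $Y \hrar Z$, equivalently because the left adjoint $(\bullet)^{\flat}$ of the forgetful functor preserves trivial cofibrations.

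The genuine gap is in your proof that marked-fibrancy forces $A$ to be a union of components. First, the map $\left(\Del^1\right)^{\flat} \hrar \left(\Del^1\right)^{\sharp}$ is \emph{not} a marked equivalence: the flat side has no marked connected components while the sharp side has one, so it is not a legitimate test map (if it were, every edge of every marked-fibrant object would be marked, contradicting the fibrancy of $X^{\flat}$). Second, the maps you do correctly identify as trivial marked cofibrations, namely the pushout-products of $|\partial\Del^k| \hrar |\Del^k|$ with $\left(\Del^1\right)^{\flat} \hrar \left(\Del^1\right)^{\sharp}$, encode only the condition that any map $|\Del^k| \lrar X_1$ whose restriction to $|\partial\Del^k|$ lands in $A$ lands entirely in $A$ (for the case $k=1$: a path must have \emph{both} endpoints in $A$, not just one, before you may conclude anything). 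This is strictly weaker than ``$A$ is a union of components'': take $X_1$ to be the nerve of the contractible groupoid on two objects (a Kan complex with two vertices) and $A$ the subcomplex spanned by one vertex; every simplex of $X_1$ with full boundary in $A$ is degenerate on that vertex and hence already in $A$, yet $A$ is not a union of components. The paper's proof avoids this by testing instead against the pushout-products of the \emph{horn} inclusions $|\Lam^k_i| \hrar |\Del^k|$ (for $k \geq 1$, $0 \leq i \leq k$) with $\left(\Del^1\right)^{\flat} \hrar \left(\Del^1\right)^{\sharp}$; these are trivial marked cofibrations since $|\Lam^k_i|$ is nonempty and $|\Del^k|$ connected, and the right lifting property against them says precisely that the inclusion $A \hrar X_1$ is a Kan fibration, and an injective Kan fibration is the inclusion of a union of connected components. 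Replacing your boundary-based test maps by these horn-based ones repairs the argument.
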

\begin{proof}

Let $(X,A)$ be a marked-fibrant object. From remark~\ref{adjoint} we see that $X$ is Reedy fibrant. Now consider the maps
$$ \left[|\Lam_i^k| \otimes \left(\Del^1\right)^{\sharp}\right]
\coprod_{|\Lam_i^k| \otimes \left(\Del^1\right)^{\flat} }
\left[|\Del^k| \otimes \left(\Del^1\right)^{\flat} \right] \hrar |\Del^k| \otimes \left(\Del^1\right)^{\sharp} $$
for $k \geq 1$ and $0 \leq i \leq k$. By definition we see that these maps are trivial marked cofibrations. Since $(X,A)$ is Reedy fibrant it satisfies the right lifting property with respect to such maps, which in turn means that the inclusion $A \hrar X_1$ satisfies the right lifting property with respect to the inclusion of spaces $|\Lam_i^k| \hrar |\Del^k|$ for $k \geq 1$. This means that the inclusion $A \hrar X_1$ is Kan fibration and hence a union of components of $X_1$.

In the other direction assume that $X$ is Reedy fibrant and $A \subseteq X_1$ is a union of components. Consider an extension problem
$$ \xymatrix{
(Y,B) \ar^{f}[r]\ar[d] & (X,A) \\
(Z,C) & \\
}$$
such that $(Y,B) \hrar (Z,C)$ is a trivial marked cofibration. In this case $Y \hrar Z$ will be a trivial Reedy cofibration and so there will exist an extension $\ovl{f}:Z \lrar X$ in the category of semi-simplicial spaces. We claim that $\ovl{f}$ will necessarily send $C$ to $A$. In fact, let $W \subseteq Z_1$ be a connected component which meets $C$. Since $(Y,B) \hrar (Z,C)$ is a marked equivalences it follows that $W$ also meets the image of $B$. Since $A$ is a union of components of $X_1$ we get $\ovl{f}$ sends all of $W$ to $A$. This means that $\ovl{f}$ sends $C$ to $A$ and we are done.

\end{proof}

\begin{cor}\label{marked-fibrant-we}
A map $f: X \lrar Y$ between marked-fibrant semi-simplicial spaces is a marked equivalence if and only if it is a levelwise equivalence which induces a weak equivalence on the corresponding spaces of marked edges.
\end{cor}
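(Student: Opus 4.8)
The plan is to deduce this directly from Lemma~\ref{marked-fibrant}. For a marked-fibrant $(X,A)$ that lemma tells us that $X$ is Reedy fibrant and that $A$ is a union of connected components of $X_1$; consequently the canonical map $\pi_0(A) \lrar \pi_0(X_1)$ is injective with image exactly $\ovl{A}$, so it identifies $\pi_0(A)$ with $\ovl{A}$ (and likewise $\pi_0(B) \cong \ovl{B}$). This identification is the one place where marked-fibrancy is genuinely used — for a general marked semi-simplicial space $\ovl{A}$ is only a quotient of $\pi_0(A)$.

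Next I would record the following elementary remark. If the underlying map $X \lrar Y$ is a levelwise equivalence then in particular $f_1 \colon X_1 \lrar Y_1$ is a weak equivalence, hence a bijection on $\pi_0$; and since $f$ is a marked map it carries $A$ into $B$. Because $A$ and $B$ are unions of connected components of $X_1$ and $Y_1$ respectively, the restriction $f|_A \colon A \lrar B$ sends each connected component of $A$ by a weak equivalence onto a connected component of $Y_1$ that lies in $B$. Therefore $f|_A$ is a weak equivalence if and only if $\pi_0(A) \lrar \pi_0(B)$ is a bijection (it is automatically injective, as $\pi_0(X_1) \lrar \pi_0(Y_1)$ is one).

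With these two observations the corollary follows in both directions. If $f$ is a marked equivalence then its underlying map is a levelwise equivalence and $f_* \colon \ovl{A} \lrar \ovl{B}$ is an isomorphism of sets; transporting along $\ovl{A} \cong \pi_0(A)$ and $\ovl{B} \cong \pi_0(B)$ shows that $\pi_0(A) \lrar \pi_0(B)$ is a bijection, so by the previous paragraph $f|_A$ is a weak equivalence. Conversely, if $f$ is a levelwise equivalence and $f|_A$ is a weak equivalence, then $\pi_0(A) \lrar \pi_0(B)$ is a bijection, hence so is $\ovl{A} \lrar \ovl{B}$, and $f$ is a marked equivalence.

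I do not anticipate any real obstacle here: all of the substance has already been absorbed into Lemma~\ref{marked-fibrant}. What remains is the bookkeeping identification $\ovl{A} \cong \pi_0(A)$ for marked-fibrant objects, together with the elementary fact that a weak equivalence restricts to a weak equivalence on a union of full connected components precisely when the relevant sets of components correspond; one need only take a little care to check that the square relating $\pi_0(A)\to\pi_0(B)$ with $\ovl{A}\to\ovl{B}$ commutes, which is immediate from naturality of $\pi_0$ applied to the inclusions $A\hrar X_1$ and $B\hrar Y_1$.
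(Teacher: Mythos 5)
Your argument is correct and is exactly the intended one: the corollary is an immediate consequence of Lemma~\ref{marked-fibrant}, using that for marked-fibrant objects $A$ and $B$ are unions of connected components, so that $\ovl{A}\cong\pi_0(A)$, $\ovl{B}\cong\pi_0(B)$, and (given the levelwise equivalence) the condition that $f|_A\colon A\lrar B$ be a weak equivalence reduces to a bijection on these sets of components. No gaps; this matches the paper's (implicit) proof.
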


We shall now show that $\Top^{\Del^{\op}_s}$ can be endowed with a structure of symmetric monoidal simplicial model category. Let $(X,A),(Y,B)$ be two marked semi-simplicial spaces. According to Remark~\ref{concrete} one has
$$ \left(X \otimes Y\right)_1 = \left(X_1 \times Y_0\right) \coprod \left(X_0 \times Y_1\right) \coprod \left(X_1 \times Y_1\right) .$$
We will extend the monoidal product $\otimes$ to marked semi-simplicial spaces by defining $(X,A) \otimes (Y,B)$ to be the marked semi-simplicial space $(X \otimes Y,C)$ where the marking $C$ is given by
$$ C = \left(A \times Y_0\right) \coprod \left(X_0 \times B\right) \coprod \left(A \times B\right) \subseteq \left(X \otimes Y\right)_1 .$$

The product $\otimes$ on $\Top^{\Del^{\op}_s}_+$ is again closed and the corresponding internal mapping object is defined as follows:
\begin{define}
Let $X, Y$ be two marked semi-simplicial spaces. The \textbf{marked mapping object} from $X$ to $Y$ is the marked semi-simplicial space $\left(Y^X,H\right)$ given by
$$ \left(Y^X\right)_n = \Map^+\left(X \times \left(\Del^n\right)^{\flat},Y\right) $$
where the marking $H$ is given by
$$ H = \Map^+\left(X \times \left(\Del^1\right)^{\sharp},Y\right) \subseteq \Map^+\left(X \times \left(\Del^1\right)^{\flat},Y\right) = \left(Y^X\right)_1 .$$
\end{define}

\begin{lem}
The marked model structure on $\Top^{\Del^{\op}_s}_+$ is \textbf{compatible} with the symmetric monoidal structure $\otimes$ (see Definition~\ref{d:compatible}).
\end{lem}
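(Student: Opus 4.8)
The plan is to verify the pushout-product axiom directly, using the fact that on underlying semi-simplicial spaces the product $\otimes$ already satisfies the pushout-product axiom (this was asserted earlier for the Reedy model structure), and reducing the marked part of the statement to a combinatorial check on marked connected components. Recall from Theorem~\ref{marked-model} that cofibrations in $\Top^{\Del^{\op}_s}_+$ are detected on underlying semi-simplicial spaces, and that marked equivalences are levelwise equivalences which are additionally isomorphisms on the sets $\ovl{\bullet}$ of marked components. So I would split the verification into two parts: an underlying-space part (automatic) and a marked-components part.

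\textbf{Reduction via the functor $F$.} The cleanest route is to reuse the functor $F: \Top_+^{\Del^{\op}_s} \lrar \Top^{\Del^{\op}_s} \times \Set$, $F(X,A) = (X,\ovl{A})$, from the proof of Theorem~\ref{marked-model}. First I would observe that $F$ is (op)lax monoidal in an appropriate sense: on the second coordinate, $\Set$ is given the Cartesian monoidal structure, and from the formula $C = (A \times Y_0) \coprod (X_0 \times B) \coprod (A \times B)$ one computes that $\ovl{C}$ — the set of components of $(X \otimes Y)_1$ meeting $C$ — is precisely the image of $\ovl{A} \times \pi_0(Y_0) \;\cup\; \pi_0(X_0) \times \ovl{B} \;\cup\; \ovl{A} \times \ovl{B}$ under the three inclusions $\pi_0(X_1) \times \pi_0(Y_0), \pi_0(X_0)\times\pi_0(Y_1), \pi_0(X_1)\times\pi_0(Y_1) \hookrightarrow \pi_0((X\otimes Y)_1)$. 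Since $\Set$ with the trivial model structure is monoidal (the pushout-product of two injections of sets is an injection, trivially, and everything is a "trivial cofibration" only when it is an isomorphism), and the Reedy model structure on $\Top^{\Del^{\op}_s}$ is compatible with $\otimes$ as recalled in \S\S\ref{ss-semi-simplicial}, the product model structure on $\Top^{\Del^{\op}_s}\times\Set$ is a symmetric monoidal model category. The strategy is then to transport the pushout-product axiom across $F$.

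\textbf{Carrying out the transport.} Given cofibrations $f: X' \to X$, $g: Y' \to Y$ in $\Top_+^{\Del^{\op}_s}$, form the pushout-product map $h$. On underlying semi-simplicial spaces, $h$ is the pushout-product of the underlying cofibrations, hence a cofibration (trivial if $f$ or $g$ is, by the Reedy compatibility and Remark~\ref{adjoint} that the forgetful functor preserves weak equivalences). So it remains only to check the behaviour of $h$ on $\ovl{\bullet}$. Here I would argue that applying $F$ to $h$ yields exactly the pushout-product — in the Cartesian $\Set$ factor — of the maps $\ovl{f}: \ovl{A'}\to\ovl{A}$ and $\ovl{g}: \ovl{B'}\to\ovl{B}$, up to the identification above; the key point is that $\pi_0$ commutes with the relevant finite coproducts and with the pushout defining $h$ (the latter because on components the pushout square stays a pushout of sets). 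Since $f,g$ cofibrations means $\ovl{f},\ovl{g}$ are injections of sets, the $\Set$-component of $F(h)$ is an injection, i.e. $h$ is a marked cofibration; and if, say, $f$ is a marked equivalence then $\ovl{f}$ is a bijection, so the $\Set$-component of $F(h)$ is a bijection, giving that $h$ is a marked equivalence (we already know it is a levelwise equivalence). That is precisely the pushout-product axiom for $\Top^{\Del^{\op}_s}_+$.

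\textbf{Main obstacle.} The genuinely fiddly step is the identification of $\ovl{C}$ for a tensor product, and more precisely the claim that $\pi_0$ turns the pushout defining $h$ into a pushout of sets in the marked-components coordinate. One must check that no "unexpected" marked components appear: a component of $(X\otimes Y)_1$ is marked iff it meets one of the three pieces of $C$, and one needs that the pushout of markings along $f$ and $g$ does not identify components in a way that breaks injectivity — this is where cofibrancy (levelwise injectivity of $f,g$) is used. I expect this to reduce to a short diagram chase with $\pi_0$ of finite diagrams of spaces, made routine once one has written down $(X\otimes Y)_1$ explicitly via Remark~\ref{concrete}, but it is the one place where care is required. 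Everything else follows formally from the already-established Reedy compatibility and the structure of the functor $F$.
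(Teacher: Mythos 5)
Your overall skeleton is the right one and is in fact the paper's: the cofibration half and the levelwise-equivalence half of the pushout-product axiom come for free from the Reedy compatibility of the unmarked $\otimes$, so everything reduces to a statement about marked connected components. But the mechanism you propose for that last step has a genuine gap. The functor $F(X,A)=(X,\ovl{A})$ is \emph{not} monoidal for the Cartesian structure on the $\Set$ factor: as your own formula shows, the set of marked components of $(X,A)\otimes(Y,B)$ is $\left(\ovl{A}\times\pi_0(Y_0)\right)\sqcup\left(\pi_0(X_0)\times\ovl{B}\right)\sqcup\left(\ovl{A}\times\ovl{B}\right)$, which is not $\ovl{A}\times\ovl{B}$ and is not determined by the sets $\ovl{A},\ovl{B}$ alone. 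Consequently the $\Set$-component of $F(h)$ is not the Cartesian pushout-product of $\ovl{f}$ and $\ovl{g}$, and the concluding inference ``if $f$ is a marked equivalence then $\ovl{f}$ is a bijection, so the $\Set$-component of $F(h)$ is a bijection'' does not follow as stated: when $f:X'\lrar X$ is the trivial cofibration, hitting the marked components of type $\pi_0(X_0)\times\ovl{B}$ requires $\pi_0(X'_0)\lrar\pi_0(X_0)$ to be surjective, which is information carried by the space coordinate (it holds because $f$ is a levelwise equivalence), not by $\ovl{f}$ being a bijection. If you instead rig a product on $\Top^{\Del^{\op}_s}\times\Set$ making $F$ monoidal, its compatibility with the product model structure is exactly what is to be proved, so nothing is gained. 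A smaller error: marked cofibrations are by definition maps whose underlying map is levelwise injective; they impose no condition on markings and need not induce injections $\ovl{A'}\lrar\ovl{A}$ (levelwise injectivity does not give injectivity on $\pi_0$). This is harmless only because the cofibration half of the axiom involves no marking condition at all.

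The fix is short and is essentially the paper's argument. Since $h$ is already a levelwise equivalence, it induces a bijection on $\pi_0$ of the spaces of $1$-simplices, so the induced map on marked components is automatically injective; only surjectivity needs checking. With, say, $g:Y'\lrar Y$ the trivial marked cofibration, run through the three types of marked components of $(X\otimes Y)_1$: those in $\ovl{A}\times\pi_0(Y_0)$ are hit from $X\otimes Y'$ because $\pi_0(Y'_0)\lrar\pi_0(Y_0)$ is onto (levelwise equivalence), and those in $\pi_0(X_0)\times\ovl{B}$ and $\ovl{A}\times\ovl{B}$ are hit because $\ovl{B'}\lrar\ovl{B}$ is onto (marked equivalence). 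Your observation that $\pi_0$ preserves the pushout defining the domain of $h$ is correct and is what makes this chase legitimate; it is the transport-along-$F$ step, not the $\pi_0$ bookkeeping, that fails.
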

\begin{proof}
Since the Reedy model structure on $\Top^{\Del^{\op}_s}$ is compatible with the unmarked version of $\otimes$ we only need to verify the following: if $X' \lrar X$ is a marked cofibration and $Y' \lrar Y$ is a trivial marked cofibration then the map
$$ h:\left(X \otimes Y'\right)_1 \coprod_{(X' \otimes Y')_1} \left(X' \otimes Y\right)_1 \lrar (X \otimes Y)_1 $$
induces an isomorphism on the set of \textbf{marked connected components}. Since this map is already a weak equivalence it is enough to check that it is surjective on marked components. But this is a direct consequence of the fact that the map $Y'_1 \lrar Y_1$ induces an isomorphism on the set of marked connected components.
\end{proof}

Now, it is not hard to verify that the adjunction
$$
\xymatrix{
\Top^{\Del^{\op}_s} \ar@<0.5ex>[r]^{(\bullet)^{\flat}} &
\Top^{\Del^{\op}_s}_+   \ar@<0.5ex>[l]^{\ovl{\bullet}} \\
}
$$
is strongly monoidal (see Definition~\ref{d:quillen-monoidal}). In particular, the model category $\Top^{\Del^{\op}_s}_+$ inherits the simplicial structure of $\Top^{\Del^{\op}_s}$. To conclude $\Top^{\Del^{\op}_s}_+$ is a symmetric monoidal simplicial model category.

We finish this subsection with the following definition which we frame for future use:
\begin{define}\label{sub-groupoid}
Let $W$ be a marked semi-simplicial space with marking $M \subseteq W_1$. We will denote by $\wtl{W} \subseteq W$ the marked semi-simplicial space such that
$$ \wtl{W}_n = \left\{\sig \in W_n | f^*\sig \in M, \forall f:[1] \lrar [n]\right\} .$$
In particular, all the edges of $\wtl{W}$ are marked.
\end{define}

\subsection{ The marked right Kan extension }\label{sss-marked-right-kan}

The Reedy model structures on simplicial and semi-simplcial spaces can be related via a Quillen adjunction
$$ \xymatrix{
\Top^{\Del^{\op}} \ar@<0.5ex>[r]^{\F} & \Top^{\Del^{\op}_s} \ar@<0.5ex>[l]^{\RK} \\
}$$
where $\F$ is the forgetful functor and $\RK$ is the right Kan extension. The purpose of this section is to construct an analogous Quillen adjunction between $\Top^{\Del^{\op}}$ and the marked model structure on $\Top^{\Del^{\op}}_+$.

We begin by defining the \textbf{marked forgetful functor}
$$ \F^+: \Top^{\Del^{\op}} \lrar \Top^{\Del^{\op}_s}_+ $$
as follows: given a simplicial space $X$ we will define $\F^+(X)$ to be the marked semi-simplicial space $\left(\ovl{X},D\right)$ where $\ovl{X}$ is the underlying semi-simplicial space of $X$ and $D \subseteq \ovl{X}_1 = X_1$ is the subspace of \textbf{degenerate $1$-simplices}, i.e., the image of
$ s_0: X_0 \lrar X_1 $. The functor $\F^+$ admits a right adjoint
$$ \RK^+: \Top^{\Del^{\op}_s}_+ \lrar \Top^{\Del^{\op}} $$
which we shall now describe. For this we will need the following definition:

\begin{define}
Let $f: [m] \lrar [n]$ be a map in $\Del$. We will say that an edge $e \in (\Del^m)_1$ is \textbf{$f$-degenerate} if $f$ maps both its vertices to the same element of $[n]$. We will denote by $(\Del^m)^{f}$ the marked semi-simplicial space $(\Del^m,A_f)$ where $\Del^m$ is considered as a semi-simplicial space which is levelwise discrete and $A_f \subseteq (\Del^m)_1$ is the set of $f$-degenerate edges. Now given a marked semi-simplicial space $(X,A)$ we define
$$ X^{f}_m = \Map^{+}\left((\Del^m)^{f},(X,A)\right) .$$
Note that we have a natural inclusion
$ X^{f}_m \subseteq X_m $.
\end{define}

We will now construct the functor
$ \RK^+ $
as follows. For each $[n] \in \Del$ consider the fiber product category
$$ \C_n = \Del^{op}_s \times_{\Del^{op}} \Del^{op}_{[n]/} .$$
The objects of $\C_n$ can be identified with maps $f:[m] \lrar [n]$ in $\Del$ and a morphism from $f:[m] \lrar [n]$ to $g:[k] \lrar [n]$ in $\C_n$ can then be described as a commutative triangle
$$ \xymatrix{
[k] \ar_{g}[dr]\ar^{h}[rr] & & [m] \ar^{f}[dl] \\
& [n]  &\\
}$$
such that $h$ is \textbf{injective}. Now let $(X,A)$ be a marked semi-simplicial space and let
$ \mcal{G}_n: \C_n \lrar \Top $
be the functor which associates to each $f:[m] \lrar [n]$ the space
$ \mcal{G}_n(f) = X^{f}_m $.
Note that for each map $[n] \lrar [n']$ in $\Del$ one has a functor
$ \F_n: \C_n \lrar \C_n' $
and a natural transformation
$ \F_n^*\G_n' \lrar \G_n $.
We can then define $\RK^+(X,A)$ by setting
$$ \RK^+(X,A)_n = \lim_{\C_n} \mcal{G}_n $$
which is functorial in $[n] \in \Del$.

\begin{rem}\label{homotopy-limit}
The category $\C_n$ carries a Reedy structure which is induced from that of $\Del_s$. If $(X,A)$ is marked-fibrant then the functor $f \mapsto X^f_m$ will be a Reedy fibrant functor from $\C_n$ to $\Top$. This means that in this case the limit above will coincide with the respective \textbf{homotopy limit}.
\end{rem}

\begin{rem}
One has natural maps
$$ \RK^+(X,A)_n = \lim_{\C_n} X^{f}_m \lrar \lim_{\C_n} X_m = \RK(X)_n $$
which assemble together to form a natural transformation
$$ \RK^+(X,A) \lrar \RK(X) .$$
From Lemma~\ref{marked-fibrant} we see that when $(X,A)$ is marked-fibrant the map above identifies $\RK^+(X,A)_n$ with a union of connected components of $\RK(X)_n$.
\end{rem}

The pair $\F^+,\RK^+$ carries a compatible pair of unit and counit transformations exhibiting it as an adjunction
$$ \xymatrix{
\Top^{\Del^{\op}} \ar@<0.5ex>[r]^{\F^+} & \Top_+^{\Del^{\op}_s} \ar@<0.5ex>[l]^{\RK^+} \\
} .$$
This adjunction is easily seen to be a Quillen adjunction: since any Reedy cofibration in $\Top^{\Del^{\op}}$ is a levelwise injection it follows that $\F^+$ preserves cofibrations. Furthermore, it is not hard to check that $\F^+$ maps levelwise equivalences to marked equivalences, and hence trivial cofibrations to trivial marked cofibrations.

Now the forgetful functor
$ \F: \Top^{\Del^{\op}} \lrar \Top^{\Del^{\op}_s} $
factors through $\F^+$. This means that the Quillen adjunction
$$ \xymatrix{
\Top^{\Del^{\op}} \ar@<0.5ex>[r]^{\F} & \Top^{\Del^{\op}_s} \ar@<0.5ex>[l]^{\RK} \\
}$$
factors through $\Top^{\Del^{\op}_s}_+$ as the composition
$$ \xymatrix{
\Top^{\Del^{\op}}     \ar@<0.5ex>[r]^{\F^+} &
\Top^{\Del^{\op}_s}_+ \ar@<0.5ex>[l]^{\RK^+} \ar@<0.5ex>[r]^{\ovl{\bullet}} &
\Top^{\Del^{\op}_s}   \ar@<0.5ex>[l]^{(\bullet)^{\sharp}} \\
&  & \\
}$$
where $\ovl{\bullet}$ denotes the forgetful functor $(X,A) \mapsto X$.

\section{ Marked semiSegal spaces }\label{ss-marked-semiSegal}

In his paper~\cite{rez}, Rezk constructs a localization of the Reedy model structure on $\Top^{\Del^{\op}}$ in which the new fibrant objects are the Segal spaces. The purpose of this section is to construct a similar working environment for a suitable notion of a \textbf{marked semiSegal spaces}. Let us introduce the basic definitions:

\begin{define}\label{marked-segal}
Let $(W,M) \in \Top^{\Del^{\op}_s}_+$ be a \textbf{marked-fibrant} semi-simplicial space. We will say that $(W,M)$ is a \textbf{marked semiSegal space} if the following conditions are satisfied:
\begin{enumerate}
\item
$W$ is a semiSegal space.
\item
Every marked edge of $(W,M)$ is invertible, i.e., $M \subseteq W^{\inv}_1$.
\item
$M$ is closed under $2$-out-of-$3$, i.e., if there exists a triangle $\sig \in W_2$ with two marked edges then the third edge is marked as well.
\end{enumerate}
\end{define}

\begin{example}
Let $X$ be a \textbf{semiSegal space}. We will denote by $X^{\natural}$ the marked semi-simplicial space whose underlying semi-simplicial space is $X$ and whose marking is given by $X^{\inv}_1$. Then it is easy to verify that $X^{\natural}$ is a marked semiSegal space.
\end{example}

\begin{example}
Let $X$ be a \textbf{semiSegal space}. Then $X^{\flat}$ is a marked semiSegal space.
\end{example}

\begin{example}\label{e:cosk^+_0}
Let $Z$ be a Kan simplicial set. The $0$'th coskeleton functor (see Example~\ref{e:cosk_0}) has a marked analogue $\cosk^+_0: \Top \lrar \Top^{\Del^{\op}_s}_+$ which is given by $\cosk^+_0(Z) = \left(\cosk_0(Z)\right)^{\sharp}$ (this marked analogue is again the right adjoint to the functor $X \mapsto X_0$). Since $\cosk_0(Z)$ is a semiSegal space in which all edges are invertible we get that $\cosk^+_0(Z)$ is a marked semiSegal space.
\end{example}

We consider a marked semiSegal space as encoding a non-unital $\infty$-category in which a certain (suitably closed) subspace of the invertible morphisms has been marked. Given a marked semiSegal space $(W,M)$ we will denote by
$$ \Map^+_{W}(x,y) \subseteq \Map_W(x,y) $$
the subspace of marked edges from $x$ to $y$, i.e. the fiber of the map
$$ M \lrar X_0 \times X_0 $$
over $(x,y)$.

\begin{define}
We will say that $W$ is a \textbf{marked semiKan space} if $W$ is a marked semiSegal space in which all edges are marked. Note that in this case the underlying semi-simplicial space of $W$ is a semiKan space.
\end{define}

\begin{define}\label{d:quasi-unital}
We will say that a marked semiSegal space $(W,M)$ is \textbf{quasi-unital} if the following additional conditions are satisfied:
\begin{enumerate}
\item
The restricted fibration map $d_0: M \lrar X_0$ is surjective (i.e., every object admits a marked morphism out of it).
\item
Every invertible edge in $W$ is marked.
\end{enumerate}
We will denote by $\QsS \subseteq \Seg_s^{\fib}$ the full topological subcategory spanned by quasi-unital marked semiSegal spaces.
\end{define}

As explained in \S\ref{ss:qu-to-inv} we can consider a quasi-unital marked semiSegal space as encoding a quasi-unital $\infty$-category in which the invertible morphisms have been marked. As a result (see Proposition~\ref{unital-if-marked}) maps in $\QsS$ are the "unital" ones - they are maps of the underlying semiSegal spaces which send quasi-units to quasi-units.

As in the Segal space case, the topological category $\QsS$ is a good starting point for understanding the homotopy theory of quasi-unital $\infty$-categories. However, it is still not the correct model for it. The reason for this is that equivalences in $\QsS$ are too strict. The correct notion will be obtained after suitably localizing $\QsS$ with respect to a natural notion of Dwyer-Kan equivalences (see \S\S~\ref{ss-fully-faithful}).

It will be useful to describe the property of being quasi-unital in terms of a suitable right lifting property. Consider the following three maps of marked semi-simplicial sets (considered as levelwise discrete semi-simplicial spaces):
\begin{enumerate}
\item
The inclusion $C_0:\Del^{\{0\}} \hrar (\Del^1)^{\sharp}$.
\item
The inclusion $C_1:\Del^{\{1\}} \hrar (\Del^1)^{\sharp}$.
\item
The inclusion $C_2:\left(\Del^3,A\right) \hrar \left(\Del^3\right)^{\sharp}$ where $A = \left\{\Del^{\{0,2\}},\Del^{\{1,3\}}\right\}$.
\end{enumerate}
We then have the following simple observation:
\begin{lem}\label{qu-rlp}
Let $(X,M)$ be a marked semiSegal space and let $T$ be the terminal marked semi-simplicial space (i.e. $T_n$ is a point for every $n$ and the edge in $T_1$ is marked). Then $X$ is quasi-unital if and only if the terminal map $X \lrar T$ has the right lifting property with respect to $C_0,C_1,C_2$ above.
\end{lem}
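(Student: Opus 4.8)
The plan is to translate each of the three right‑lifting conditions into a concrete statement about $(X,M)$ and then match these with the two clauses of Definition~\ref{d:quasi-unital}. Throughout I will use Lemma~\ref{marked-fibrant}: since $(X,M)$ is marked‑fibrant, $X$ is Reedy fibrant and $M$ is a union of connected components of $X_1$; in particular a $1$‑simplex of $X$ lies in $M$ as soon as \emph{some} $1$‑simplex in the same component of $X_1$ does, and by Definition~\ref{marked-segal}(2) every edge of $M$ is invertible. Since the target $T$ is terminal, a lifting square against $X\to T$ just amounts to extending a map out of the source of $C_i$. Unwinding, one sees: the RLP with respect to $C_1$ is equivalent to $d_0|_M\colon M\to X_0$ being surjective; the RLP with respect to $C_0$ is equivalent to $d_1|_M\colon M\to X_0$ being surjective (``every object is the source of a marked edge''); and the RLP with respect to $C_2$ is equivalent to the statement that for every $\sig\in X_3$ with $\sig|_{\Del^{\{0,2\}}}$ and $\sig|_{\Del^{\{1,3\}}}$ marked, \emph{all} edges of $\sig$ are marked.

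$(\Rightarrow)$ Assume $(X,M)$ is quasi‑unital. The RLP with respect to $C_1$ is precisely Definition~\ref{d:quasi-unital}(1). For $C_2$: if $\sig|_{\Del^{\{0,2\}}}$ and $\sig|_{\Del^{\{1,3\}}}$ are marked they are invertible, so by the $2$‑out‑of‑$6$ clause of Lemma~\ref{inv-closed-composition} every edge of $\sig$ is invertible, hence marked by Definition~\ref{d:quasi-unital}(2). For $C_0$: given $x\in X_0$, pick by clause (1) a marked (hence invertible) edge $e_0$ with $d_0(e_0)=x$; using Remark~\ref{inv-concrete} fill the horn $\Lam^2_0$ whose $\Del^{\{0,1\}}$‑ and $\Del^{\{0,2\}}$‑edges both equal $e_0$, obtaining a $2$‑simplex whose $\Del^{\{1,2\}}$‑edge is an endomorphism of $x$; by $2$‑out‑of‑$3$ (Lemma~\ref{inv-closed-composition}) this edge is invertible, hence marked by clause (2), and it has source $x$. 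So $d_1|_M$ is surjective.

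$(\Leftarrow)$ Assume $X\to T$ has the RLP with respect to $C_0,C_1,C_2$. Clause (1) of Definition~\ref{d:quasi-unital} is exactly the RLP with respect to $C_1$, so it remains to prove clause (2): every invertible edge $e\colon x\to y$ lies in $M$. By the RLP with respect to $C_1$, choose a marked edge $m$ with target $y$; since $e$ is invertible, Remark~\ref{inv-concrete} lets us fill the horn $\Lam^2_2$ with $\Del^{\{1,2\}}$‑edge $e$ and $\Del^{\{0,2\}}$‑edge $m$, producing an edge $g_1$ with target $x$ together with a $2$‑simplex witnessing $[m]=[e]\circ[g_1]$ in $\Ho(X)$. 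Dually, by the RLP with respect to $C_0$, choose a marked edge $n$ with source $x$ and fill the horn $\Lam^2_0$ with $\Del^{\{0,1\}}$‑edge $e$ and $\Del^{\{0,2\}}$‑edge $n$, producing an edge $g_3$ with source $y$ and $[n]=[g_3]\circ[e]$ in $\Ho(X)$. The triple $(g_1,e,g_3)$ is composable, so by the Segal condition (the spine map $X_3\to X_1\times_{X_0}X_1\times_{X_0}X_1$ is surjective on $\pi_0$) there is a $3$‑simplex $\sig$ whose spine is $(g_1,e,g_3)$, so that $\sig|_{\Del^{\{1,2\}}}$ lies in the component of $e$. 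Computing faces in $\Ho(X)$, $[\sig|_{\Del^{\{0,2\}}}]=[e]\circ[g_1]=[m]$ and $[\sig|_{\Del^{\{1,3\}}}]=[g_3]\circ[e]=[n]$, so these two edges lie in the components of $m$ and $n$ and are therefore marked. Applying the RLP with respect to $C_2$ to $\sig$ forces all edges of $\sig$ marked; since $\sig|_{\Del^{\{1,2\}}}$ is in the component of $e$ and $M$ is a union of components, $e\in M$.

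I expect the main obstacle to be precisely this last construction in the $(\Leftarrow)$ direction: one must produce a $3$‑simplex realizing the ``$2$‑out‑of‑$6$ pattern'' $\{\Del^{\{0,2\}},\Del^{\{1,3\}}\}$ in which the prescribed invertible edge $e$ is one of the remaining edges and the two pattern‑edges are genuinely \emph{marked}, not merely invertible. One cannot bootstrap markedness onto $e$ using $2$‑out‑of‑$3$ alone (composing $e$ with a marked quasi‑unit only yields an edge equivalent to $e$), so the use of $C_2$ is essential, and the real work is arranging the composites $[e]\circ[g_1]$ and $[g_3]\circ[e]$ to land on the already‑marked edges $m$ and $n$ supplied by $C_1$ and $C_0$. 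The remaining points — the concrete translation of the three lifting problems, and the (standard) surjectivity of the spine map for semiSegal spaces — are routine.
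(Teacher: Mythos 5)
Your proof is correct and takes essentially the same route as the paper, which merely asserts the three equivalences you verify in detail: clause (1) of quasi-unitality corresponds to the $C_0,C_1$ liftings (with the horn-filling/$2$-out-of-$3$ trick supplying the other vertex condition), and the $C_2$ lifting is the $2$-out-of-$6$ property of the marking, which given clause (1) amounts to all invertible edges being marked — exactly your $3$-simplex construction. One small streamlining: since $X$ is Reedy fibrant and satisfies the Segal condition, the spine map $X_3 \lrar X_1\times_{X_0}X_1\times_{X_0}X_1$ is a trivial fibration and hence surjective on the nose, so you may take $\sig$ with spine literally $(g_1,e,g_3)$ and skip the $\pi_0$/component bookkeeping.
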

\begin{proof}
Condition $(1)$ of Definition~\ref{d:quasi-unital} is clearly equivalent to having the right lifting property with respect to $C_0,C_1$. The right lifting property with respect to $C_2$ is equivalent to the space of marked edges satisfying the $2$-out-of-$6$ rule. When condition $(1)$ is satisfied this is equivalent to the having all the equivalences marked.
\end{proof}

The remainder of this section is organized as follows. To begin, we will localize the marked model structure on $\Top^{\Del^{\op}_s}_+$ so that marked semiSegal spaces will coincide with the new fibrant objects. The construction of this localization as well as the verification of its compatibility with the monoidal structure will be taken up in \S\S~\ref{ss:marked-segal-model}. In particular, we will obtain a notion of internal mapping objects for marked semiSegal spaces.

In \S\S~\ref{ss-fully-faithful} we will study the notions of fully-faithful maps and Dwyer-Kan equivalences (DK-equivalences for short) between marked semiSegal spaces. Our suggested model for the homotopy theory of quasi-unital $\infty$-categories is the localization of $\QsS$ with respect to DK-equivalences. In \S\S~\ref{ss-Q-anodyne} we will attempt to study the notion of quasi-unitality from a model categorical point of view. This will allow us to establish some useful results which will be exploited in the final section, e.g., we will prove that the full subcategory $\QsS$ is closed under taking mapping objects. This can be considered as a step towards the main theorem as well as it essentially says that when quasi-units exist then they can be chosen coherently over arbitrary families.

\subsection{ The marked semiSegal model structure }\label{ss:marked-segal-model}

The purpose of this subsection is to show that one can identify the full subcategory of marked semiSegal spaces with the subcategory of fibrant objects in a suitable localization of the marked model category $\Top^{\Del^{\op}_s}_+$. In order to do this we will need to identify a set of maps such that the condition of being a marked semiSegal space can be expressed as a locality condition with respect to these maps. To describe this conveniently we will need a bit of terminology.

We will use the phrase \textbf{marked horn inclusion} to describe an inclusion of marked semi-simplicial sets of the form
$$ (\Lam^n_i,A) \subseteq (\Del^n,B) $$
such that $A = B \cap (\Lam^n_i)_1$. We will be interested in the following kind of marked horn inclusions:
\begin{define}\label{admissible}
We will say that a marked horn inclusion
$$ (\Lam^n_i,A) \subseteq (\Del^n,B) $$
is \textbf{admissible} if $B = A$, $n \geq 2$ and in addition one of the following (mutually exclusive) conditions is satisfied:
\begin{enumerate}
\item
$0 < i < n$ and $A = \emptyset$.
\item
$i=0$ and $A = \left\{\Del^{\{0,1\}}\right\}$.
\item
$i=n$ and $A = \left\{\Del^{\{n-1,n\}}\right\}$.
\end{enumerate}
\end{define}

Our purpose is to show that conditions $(1)$ and $(2)$ of Definition~\ref{marked-segal} can be formulated in terms of locality with respect to admissible marked horn inclusions. We begin with the following technical lemma:
\begin{lem}\label{induction-lem-1}
Let $n \geq 2$ and $0 \leq i,j \leq n$. Define $A = \{0,...,j-1\} \cup \{i\}$ and $B = \{j,...,n\} \cup \{i\}$. Let
$$ X = \Del^A \coprod_{\Del^{\{i\}}} \Del^B \hrar \Del^n $$
be the corresponding subcomplex of $\Del^n$ and let $M \subseteq X_1$ be the set of all edges of the form $\Del^{\{i,x\}} \subseteq X$ for $x \in B, x \geq i$ and all edges of the form $\Del^{\{x,i\}}$ for $x \in A, x \leq i$. Then the marked semi-simplicial set $\left(\Lam^n_i,M\right)$ can be obtained from $X$ be performing pushouts along admissible marked horn inclusions.
\end{lem}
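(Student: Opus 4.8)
The plan is to build the marked semi-simplicial set $(\Lam^n_i, M)$ up from $X$ by a sequence of pushouts, each of which attaches a new non-degenerate simplex of $\Lam^n_i$ that is not already contained in $X$, using an admissible marked horn inclusion. The non-degenerate simplices of $\Lam^n_i$ are exactly the faces $\Del^S$ for $S \subsetneq [n]$ with $S \ne [n]\setminus\{i\}$; those already in $X$ are the ones with $S \subseteq A$ or $S \subseteq B$. So the simplices we must attach are those $\Del^S$ with $S$ meeting both $\{0,\dots,j-1\}\setminus\{i\}$ and $\{j,\dots,n\}\setminus\{i\}$ and $S \ne [n]$, $S \ne [n]\setminus\{i\}$.

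First I would set up an induction on the size of $S$ (attaching smaller simplices first, so that when we attach $\Del^S$ its entire boundary, minus possibly one face, is already present). For a simplex $S$ to attach: if $i \in S$, say $S = \{s_0 < \dots < s_k\}$ with $s_p = i$, I would attach it via the horn inclusion obtained by omitting the face opposite a vertex; the choice of which vertex to omit is dictated by the requirement that the horn be \emph{admissible} (Definition~\ref{admissible}). Concretely: if $S$ contains an element strictly between the ``left part'' and $i$, or between $i$ and the ``right part'', one omits an inner face and uses an $A=\emptyset$-type horn — but this only works if none of the edges of $S$ touching $i$ is forced to be marked. The delicate bookkeeping is: an edge $\Del^{\{i,x\}}$ with $x > i$ (resp. $\Del^{\{x,i\}}$ with $x < i$) lies in $M$, so any horn whose marking is determined by intersecting $M$ with the horn must be of type (2) (the $i=0$, $A=\{\Del^{\{0,1\}}\}$ case) or type (3) accordingly. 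So I would carefully match each simplex $S$ to the horn $\Lam^{|S|-1}_{i'}$ inside $\Del^S$ where $i'$ is the position of $i$ in $S$ (and if $i \notin S$, to an inner horn with empty marking), and verify that the induced marking is precisely $M \cap (\text{horn})_1$ and that this equals the required admissible pattern.

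The key steps, in order: (1) enumerate the non-degenerate simplices of $\Lam^n_i$ not in $X$ and fix the attaching order by dimension; (2) for each such $S$, identify the missing face — the one face of $\Del^S$ not yet attached — and check it is the face opposite the vertex $i' \in S$ corresponding to $i$ (when $i \in S$), or opposite some inner vertex (when $i \notin S$), so that the relevant inclusion is a horn inclusion $\Lam^{\dim S}_{i'} \hookrightarrow \Del^S$; (3) verify the marking induced on this horn from $M$ matches one of the three admissible patterns — this is where the cases $i=0$, $0<i<n$, $i=n$ of the definition get used, and where one must check that the only marked edges landing in the horn are exactly $\Del^{\{i',x\}}$ for $x$ above $i'$ resp. $\Del^{\{x,i'\}}$ for $x$ below; (4) conclude by assembling the pushouts and noting the colimit is $(\Lam^n_i, M)$ with the correct marking.

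The main obstacle I anticipate is step (3): ensuring at each stage that the marking we are forced to carry along (the restriction of $M$) is \emph{exactly} an admissible marking and never something larger or inconsistent — in particular that we never need to mark an inner edge or an edge not incident to $i$, and that the two ``sides'' $A,B$ interact with $i$ in the clean way that makes types (2) and (3) apply. I would handle this by a direct combinatorial description of which edges of each $\Del^S$ lie in $M$ (purely in terms of their endpoints relative to $i$), together with the order of attachment guaranteeing that an edge $\Del^{\{i,x\}}$ is itself attached (as a $1$-simplex, hence already marked) before any $2$-simplex containing it, so no marking conflict can arise. Degenerate-simplex subtleties do not occur here since everything is levelwise discrete and semi-simplicial, which simplifies the bookkeeping considerably.
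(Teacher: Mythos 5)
Your plan has a genuine structural gap in its treatment of the simplices $\Del^S$ with $i \notin S$. A pushout along a horn inclusion $\Lam^k_l \hrar \Del^k$ adjoins \emph{two} new non-degenerate simplices at once: the top cell and the facet opposite the vertex $l$. Under your scheme (attach every missing simplex of $\Lam^n_i$ separately, in order of increasing dimension) this breaks down immediately: the lowest-dimensional missing simplices are the ``crossing'' edges $\Del^{\{x,y\}}$ with $x \in \{0,\dots,j-1\}\setminus\{i\}$ and $y \in \{j,\dots,n\}\setminus\{i\}$, and no admissible horn pushout can adjoin a lone edge (admissible horns have dimension at least $2$). For a higher-dimensional crossing $S$ with $i \notin S$ the situation is no better: by the time its turn comes, \emph{every} facet of $\Del^S$ is already present (each facet is either one-sided, hence in $X$, or a lower-dimensional missing simplex attached earlier), so there is no ``missing face opposite some inner vertex'', and gluing $\Del^S$ along a horn anyway would create a duplicate copy of that facet, so the colimit would not be $\Lam^n_i$. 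The paper's proof attaches only the ``spread'' simplices, i.e.\ those $\Del^J$ with $i \in J$ and $J$ meeting both $\{0,\dots,j-1\}\setminus\{i\}$ and $\{j,\dots,n\}\setminus\{i\}$, ordered by dimension; one checks that the intersection of such a $\Del^J$ with the previous stage is exactly the horn at the vertex $i$, so each pushout adjoins $\Del^J$ together with its facet $\Del^{J\setminus\{i\}}$, and $J \mapsto J\setminus\{i\}$ is a bijection from spread simplices onto the crossing simplices not containing $i$; the latter are never attached by pushouts of their own. Your handling of the case $i \in S$ (horn at the position of $i$, inner if $i$ is not extremal in $S$, type $(2)$ or $(3)$ of Definition~\ref{admissible} otherwise) is exactly the paper's mechanism; what is missing is the organizing idea that these are the \emph{only} simplices that receive a pushout.

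On the marking bookkeeping you rightly single out as the delicate step: when $i$ is the largest element of a spread $J$, admissibility of the outer horn requires the edge between the two largest vertices of $J$, namely $\Del^{\{y,i\}}$ with $y$ the second largest element, to be marked; since $J$ meets $\{j,\dots,n\}\setminus\{i\}$ and all other elements of $J$ lie below $i$, one gets $y \geq j$, i.e.\ $y \in B$, and dually when $i$ is minimal the second smallest vertex lies in $A$ above $i$. So the marked edges the argument actually consumes are those joining $i$ to $B$-vertices below $i$ and to $A$-vertices above $i$; this is how the paper argues (``the second biggest element of $J$ is in $B$ and so the last edge is marked''), and you should note that the formula for $M$ in the statement, read literally, lists the complementary family of edges at $i$ -- the version used in the proof and in the application in Corollary~\ref{admissible-marked-horn-inclusion} (where the resulting marking on $\Lam^n_n$ is $\{\Del^{\{n-1,n\}}\}$) is the one just described, so your step $(3)$ must be carried out with that marking. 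Finally, a simplification: you do not need the induced marking on the horn to equal the admissible pattern exactly; the attaching map only has to be a marked map out of the admissible horn, and since an admissible inclusion has the same marking on horn and simplex, the pushouts create no new marked edges, so the final marking is automatically the one you started with.
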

\begin{proof}
We will say that a simplex $\Del^J \subseteq \Del^n$ for $|J| \geq 3$ is \textbf{spread} if $J$ contains $i$ and $J$ has a non-empty intersection with both $A \bksl \{i\}$ and $B \bksl \{i\}$. Now define
$$ X = X_1 \subseteq X_2 \subseteq ... \subseteq X_{n-1} $$
inductively by letting $X_k$ be the union of $X_{k-1}$ and all the spread simplices of dimension $k$. Note that if $\Del^J \subseteq \Del^n$ is spread of dimension $|J| = k$ then $\Del^J \cap X_j$ is the horn of $\Del^J$ which contains all the $(k-1)$-faces except the face opposite the vertex $i \in J$. Furthermore, if $i$ is a maximal element of $J$ then the second biggest element of $J$ is in $B$ and so the last edge of $\Del^J$ is marked. Similarly if $i$ is a minimal element of $J$ then the second smallest element of $J$ is in $A$ and so the first edge of $\Del^J$ is marked. In either case the addition of $\Del^J$ can be preformed by a pushout along an admissible marked horn inclusion. To finish the proof note that $X_{n-1} = \left(\Lam^n_i,M\right)$.
\end{proof}

Lemma~\ref{induction-lem-1} has the following corollary:
\begin{cor}\label{admissible-marked-horn-inclusion}
Let $(X,M)$ be a fibrant marked semi-simplicial space. Then
\begin{enumerate}
\item
$X$ satisfies condition $(1)$ of Definition~\ref{marked-segal} (i.e. the Segal condition) if and only if $X$ is local with respect to inner admissible horn inclusions.
\item
Assume that $X$ satisfies the Segal condition. Then $X$ satisfies condition $(2)$ of Definition~\ref{marked-segal} if and only if $X$ is local with respect to non-inner admissible horn inclusions.
\end{enumerate}
\end{cor}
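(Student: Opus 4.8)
The plan is to prove both statements simultaneously by exploiting Lemma~\ref{induction-lem-1}, which tells us that every admissible marked horn inclusion $(\Lam^n_i, M) \hookrightarrow (\Del^n)^\flat$ (or with the appropriate terminal edge marked) can be built from the subcomplex $X = \Del^A \coprod_{\Del^{\{i\}}} \Del^B$ of Lemma~\ref{induction-lem-1} by a sequence of pushouts along \emph{other} admissible marked horn inclusions — but of strictly larger "spread" type, so the induction is well-founded once the base cases are understood. More precisely, I would first record that being local with respect to a class $S$ of maps is preserved under the operations used to assemble maps in that class (pushouts, retracts, transfinite composition), so that locality with respect to the generating admissible horn inclusions automatically upgrades to locality with respect to the subcomplex inclusions $X \hookrightarrow \Del^n$ appearing in Lemma~\ref{induction-lem-1}. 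This is the engine that converts a "one step at a time" horn-filling condition into the genuinely two-dimensional Segal and invertibility conditions.

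For part~(1): the Segal condition of Definition~\ref{d:semiSegal} says exactly that each square
$$ \xymatrix{
X_{m+n} \ar[r]\ar[d] & X_m \ar[d] \\
X_n \ar[r] & X_0 \\
} $$
is homotopy Cartesian, which (since $X$ is Reedy fibrant) is equivalent to saying that the map $X_{m+n} \to X_m \times_{X_0} X_n$ is a weak equivalence. I would observe that $X_m \times_{X_0} X_n = \Map(\Del^{\{0,\dots,m\}} \coprod_{\Del^{\{m\}}} \Del^{\{m,\dots,m+n\}}, X)$, i.e. a spine-type subcomplex, so the Segal condition says precisely that the restriction map $\Map(\Del^{m+n}, X) \to \Map(\Sp^{m+n}, X)$ is a trivial fibration. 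The classical combinatorial fact — which is the semi-simplicial analogue of the standard simplicial statement — is that the spine inclusion $\Sp^{n} \hookrightarrow \Del^n$ lies in the weakly saturated class generated by the \emph{inner} horn inclusions $\Lam^n_i \hookrightarrow \Del^n$ with $0 < i < n$, and conversely each inner horn inclusion is built from spine inclusions; hence locality against the spine inclusions and against inner horns coincide. Feeding in the marked version — the inner admissible horns carry the empty marking $A = \emptyset$, so this is just the unmarked statement applied to the underlying semi-simplicial space — gives exactly part~(1). The one point requiring care is that "local" here must be interpreted correctly for a \emph{marked} object: locality with respect to a map $f \colon U \to V$ between levelwise-discrete marked semi-simplicial sets means $\Map^+(V, X) \to \Map^+(U, X)$ is a trivial Kan fibration, and since the marked mapping space into a Reedy-fibrant $(X,M)$ with $M$ a union of components sits inside the unmarked one as a union of components (by Lemma~\ref{marked-fibrant}), this reduces to the unmarked statement.

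For part~(2): assume the Segal condition holds. Condition~(2) of Definition~\ref{marked-segal} says $M \subseteq W^{\inv}_1$, i.e. every marked edge is invertible. By Remark~\ref{inv-concrete}, $f \colon x \to y$ is invertible iff every $\sig \colon \Lam^2_2 \to X$ with $\sig(\Del^{\{1,2\}}) = f$ extends uniquely (up to contractible choice) to $\Del^2$, and symmetrically for $\Lam^2_0$. I would rephrase this: the non-inner admissible horn inclusion $(\Lam^2_0, \{\Del^{\{0,1\}}\}) \hookrightarrow (\Del^2, \{\Del^{\{0,1\}}\})$ being an equivalence against $X$ says exactly that any $2$-horn whose "long" edge is marked can be filled contractibly, which by the Segal condition (already known) is equivalent to that marked edge being invertible — here one uses that the Segal condition identifies $X_2$ with $X_1 \times_{X_0} X_1$, so the filling space is a fiber of the composition map, and contractibility of that fiber for all inputs is precisely the invertibility condition via $[f]_*$ and $[f]^*$ being isomorphisms in $\Ho(\Top)$. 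Running the same argument with $\Lam^n_0$ and $\Lam^n_n$ for $n \geq 2$ is automatic once $n = 2$ is known, again by Lemma~\ref{induction-lem-1} reducing higher horns to lower ones together with the Segal condition; and conversely, if every marked edge is invertible then Remark~\ref{inv-concrete} gives the contractible fillings, establishing locality.

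The main obstacle I anticipate is the bookkeeping in the inductive reduction of Lemma~\ref{induction-lem-1}'s subcomplex inclusions to the admissible generating set — specifically, checking that at each stage of the filtration $X_{k-1} \subseteq X_k$ the newly-attached spread simplex $\Del^J$ meets $X_{k-1}$ precisely in a horn, and that the relevant extremal edge is marked exactly when it needs to be (i.e. when $i$ is the max or min of $J$) so that the attaching map is genuinely an \emph{admissible} marked horn inclusion and not merely a marked horn inclusion. Getting the marking conventions to line up — remembering that "admissible" forces $B = A$ and the marking to be the single extremal edge, while the intermediate subcomplexes $X$ of Lemma~\ref{induction-lem-1} carry a larger marking $M$ — is where the argument is most delicate, but this is precisely the content Lemma~\ref{induction-lem-1} has already packaged for us, so the proof of the corollary itself should be short: invoke the lemma, invoke closure of locality under the weakly saturated operations, and translate the resulting lifting properties into the Segal and invertibility conditions using Definition~\ref{d:semiSegal} and Remark~\ref{inv-concrete}.
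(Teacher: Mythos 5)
Your overall strategy does match the paper's: Lemma~\ref{induction-lem-1} as the combinatorial engine, closure of locality under the weakly saturated operations, Remark~\ref{inv-concrete} together with the Segal condition for the two-dimensional non-inner horns, and an induction on dimension for the higher ones; your treatment of part (2) is essentially the paper's argument in compressed form.

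There is, however, one step in part (1) that fails as stated: the appeal to the ``classical combinatorial fact'' that, in the semi-simplicial setting, each inner horn inclusion lies in the weakly saturated class generated by the spine inclusions. This converse is exactly what you need for the ``Segal condition $\Rightarrow$ local with respect to inner admissible horns'' half of (1), and it is false without degeneracies (the classical simplicial proof uses them essentially). Concretely, run the small object argument on the spine inclusions starting from the semi-simplicial set $\Lam^3_1$: every cell attached along $\Sp^n \hrar \Del^n$ has all of its $2$-dimensional faces fresh, since a spine contains no triangles, so the colimit has the lifting property against all spine inclusions while the original three triangles of $\Lam^3_1$ never become the faces of any $3$-simplex; hence $\Lam^3_1 \hrar \Del^3$ is not in the saturated class generated by the spines, and locality cannot be transferred by saturation alone -- indeed the locality statement you are invoking is essentially the assertion of part (1) itself. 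The repair is the paper's ``simple inductive argument,'' which is the same two-out-of-three pattern you use in part (2): by Lemma~\ref{induction-lem-1} with $i=j$ the inclusion of the two-sided Segal subcomplex $\Del^{\{0,\dots,i\}} \coprod_{\Del^{\{i\}}} \Del^{\{i,\dots,n\}}$ into $\Lam^n_i$ is built from inner admissible horns of dimension $<n$ (no marking occurs since $i$ is interior to every spread simplex), hence is a locality equivalence by the induction hypothesis; the inclusion of that subcomplex into $\Del^n$ is a locality equivalence by the Segal condition; two-out-of-three then gives locality with respect to $\Lam^n_i \hrar \Del^n$. Two smaller points: in the admissible horn $\Lam^2_0$ the marked edge is $\Del^{\{0,1\}}$, not the long edge $\Del^{\{0,2\}}$; and in the inductive step for the non-inner horns the identification of the mapping space out of $\Del^n$ with the fiber product computed from the subcomplex requires switching the structure map $M \lrar X_0$ from one vertex of the marked edge to the other, which uses the already-established two-dimensional outer-horn locality (invertibility of marked edges) and not the Segal condition alone.
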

\begin{proof}
Part $(1)$ follows from Lemma~\ref{induction-lem-1} with $0 < i=j < n$ (in which case no marking is involved) together with a simple inductive argument. Now assume that $X$ satisfies the Segal condition. If $X$ is local with respect to admissible horn inclusions then in particular $X$ is local with respect to the inclusions
$$ \left(\Lam^2_0,\left\{\Del^{\{0,1\}}\right\}\right) \hrar \left(\Del^2, \left\{\Del^{\{0,1\}}\right\}\right) $$
and
$$ \left(\Lam^2_2,\left\{\Del^{\{1,2\}}\right\}\right) \hrar \left(\Del^2, \left\{\Del^{\{1,2\}}\right\}\right) $$
and so by Remark~\ref{inv-concrete} every marked edge in $X$ is invertible.

Now assume that every marked edge is invertible. Then $X$ is local with respect to admissible horn inclusions of dimension $2$. Assume by induction that $X$ is local with respect to all admissible horn inclusions of dimension $k \geq 2$. Applying Lemma~\ref{induction-lem-1} for the case $j=k,i=n=k+1$ we get from the induction hypothesis that $X$ is local with respect to the inclusion
$$ \Del^A \coprod_{\Del^{\{i\}}} \Del^B \hrar \left(\Lam^n_n,\{\Del^{\{n-1,n\}}\}\right) \subseteq \Del^n $$
and so
$$ \Map_+\left(\left(\Lam^n_n,\{\Del^{\{n-1,n\}}\}\right),X\right) \cong X_{|A|} \times_{X_0} M \cong X_1 \times_{X_0} ... \times_{X_0} X_1 \times_{X_0} M $$
where in the fiber product $X_1 \times_{X_0} M$ at the end of the right hand side both maps $X \lrar X_0$ and $M \lrar X_0$ are induced by $d_1$. Using locality with respect to admissible horn inclusion of dimension $2$ one sees that this fiber product is equivalent to the similar fiber product $X_1 \times_{X_0} M$ now involving the map $d_0: M \lrar X_0$ instead of $d_1$. The desired result now follows from the Segal condition.

\end{proof}

Now let $W \in \Top^{\Del^{\op}_s}_+$ be a marked-fibrant object. In light of Corollary~\ref{admissible-marked-horn-inclusion} we see that $W$ will be a semiSegal space if and only if $W$ is local with respect to the set $\BS$ defined as follows:
\begin{define}\label{local-generators}
Let $\BS$ be the set which contains:
\begin{enumerate}
\item
All admissible marked horn inclusions.
\item
All the maps of the form
$$ \left(\Del^2,A\right) \hrar \left(\Del^2\right)^{\sharp} $$
where $A \subseteq \left(\Del^2\right)_1$ a set of size $2$.
\end{enumerate}
\end{define}

We are now in a position to define our desired model category. Since the marked model structure is combinatorial and left proper the left Bousfield localization of $\Top^{\Del^{\op}_s}_+$ with respect to $\BS$ exists. In particular, there exists a (combinatorial, left proper) model category $\Seg_s$ whose underlying category is $\Top^{\Del^{\op}_s}_+$ such that
\begin{enumerate}
\item
Weak equivalences in $\Seg_s$ are maps $f: X \lrar Y$ such that for every marked semiSegal space $W$ the induced map
$$ \Map^+(Y, W) \lrar \Map(X,W) $$
is a weak equivalence.
\item
Cofibrations in $\Seg_s$ are the cofibrations of the marked model structure (i.e., levelwise injective maps).
\item
The fibrant objects in $\Seg_s$ are precisely the marked semiSegal spaces.
\end{enumerate}

\begin{define}
We will refer to $\Seg_s$ as the \textbf{marked semiSegal model structure}. We will denote by \textbf{MS-equivalences}, \textbf{MS-fibrations} and \textbf{MS-cofibrations} the weak equivalences, fibrations and cofibrations in $\Seg_s$ respectively (to avoid confusion compare to the terminology in Definition~\ref{marked-notation}). Note that the notions of an MS-cofibration and a marked cofibration coincide.
\end{define}

The following kind of trivial MS-cofibrations will be useful to note:
\begin{define}\label{triangle-remarking}
Let $X$ be a marked semi-simplicial space and $B \subseteq C \subseteq X_1$ two subspaces. We will say that the map
$$ (X,B) \hrar (X,C) $$
is a \textbf{triangle remarking} if $(X,C)$ can be obtained from $(X,B)$ by a sequence of pushouts along maps of the form
\begin{equation}\label{e:remarking}
\left[L \otimes \left(\Del^2,A\right)\right] \coprod_{K \otimes \left(\Del^2,A\right)} \left[K \otimes \left(\Del^2\right)^{\sharp}\right] \hrar L \otimes \left(\Del^2\right)^{\sharp}
\end{equation}
for $f:K \hrar L$ an inclusion of \textbf{spaces} and $|A| = 2$ (here we consider spaces as marked semi-simplicial spaces which are concentrated in degree $0$).
\end{define}

\begin{lem}
Any triangle remarking is a trivial MS-cofibration.
\end{lem}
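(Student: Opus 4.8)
The statement to prove is that any triangle remarking (Definition~\ref{triangle-remarking}) is a trivial MS-cofibration. Since a triangle remarking is built from iterated pushouts of maps of the form~\eqref{e:remarking}, and the class of trivial MS-cofibrations is weakly saturated (closed under pushouts, transfinite composition, and retracts, being the saturation of a set in a left Bousfield localization of a combinatorial model category), it suffices to prove that each generating map
$$ \left[L \otimes \left(\Del^2,A\right)\right] \coprod_{K \otimes \left(\Del^2,A\right)} \left[K \otimes \left(\Del^2\right)^{\sharp}\right] \hrar L \otimes \left(\Del^2\right)^{\sharp} $$
is itself a trivial MS-cofibration, for $f: K \hrar L$ an inclusion of spaces and $|A| = 2$.

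First I would check that the map is an MS-cofibration, i.e.\ an honest marked cofibration. This is just the pushout-product of the cofibration $K \hrar L$ (levelwise injective, concentrated in degree $0$) with the map $\left(\Del^2,A\right) \hrar \left(\Del^2\right)^{\sharp}$, which is a cofibration since the underlying semi-simplicial map is the identity (hence injective). The monoidal model structure on $\Top^{\Del^{\op}_s}_+$ (the pushout-product axiom, established in the lemma preceding Definition~\ref{triangle-remarking}) then gives that the pushout-product is a cofibration. The main work is to show it is also an MS-equivalence.

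For the MS-equivalence part, the key observation is that $\left(\Del^2,A\right) \hrar \left(\Del^2\right)^{\sharp}$ is already an MS-equivalence: indeed it is exactly one of the generators in $\BS$ (Definition~\ref{local-generators}, part (2)), so it becomes a weak equivalence after the left Bousfield localization defining $\Seg_s$. The plan is then to deduce that tensoring with $K$ (resp.\ $L$) and forming the pushout-product preserves this. Concretely, since $\Seg_s$ is a left Bousfield localization of a symmetric monoidal simplicial model category and the localizing set $\BS$ is closed (up to the usual saturation) under tensoring with cofibrant objects — or more directly, since tensoring with a cofibrant object is a left Quillen functor which preserves trivial cofibrations, and $\left(\Del^2,A\right)\hrar(\Del^2)^\sharp$ can be taken as (a composite of pushouts of) generating trivial MS-cofibrations — the maps $K \otimes \left(\Del^2,A\right) \hrar K \otimes \left(\Del^2\right)^{\sharp}$ and $L \otimes \left(\Del^2,A\right) \hrar L \otimes \left(\Del^2\right)^{\sharp}$ are trivial MS-cofibrations. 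The map in question is the pushout-product $f \,\widehat\otimes\, g$ where $g$ denotes $\left(\Del^2,A\right) \hrar \left(\Del^2\right)^{\sharp}$, and by the (localized) pushout-product axiom the pushout-product of a cofibration $f$ with a trivial cofibration $g$ is a trivial cofibration. Hence each generator is a trivial MS-cofibration, and by weak saturation so is every triangle remarking.

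\textbf{Main obstacle.} The delicate point is verifying that the localized model structure $\Seg_s$ still satisfies the pushout-product axiom — i.e.\ that $\Seg_s$ is a symmetric monoidal model category — so that we may apply the pushout-product axiom with the \emph{MS}-trivial cofibration $g$. For a left Bousfield localization this requires checking that the localizing set $\BS$ is compatible with the monoidal structure, in the sense that tensoring a generator of $\BS$ with a representable (or a domain/codomain of a generating cofibration) again lies in (the saturation of) the localized trivial cofibrations. This compatibility is presumably addressed in \S\S~\ref{ss:marked-segal-model} when the monoidal structure on $\Seg_s$ is constructed; granting it, the argument above goes through, and indeed this lemma serves as one of the elementary stability facts needed there. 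If one wished to avoid invoking the monoidal structure on $\Seg_s$ directly, an alternative is a hands-on argument: filter $L \otimes (\Del^2)^\sharp$ by adjoining cells of $L$ over $K$ one at a time, and at each stage recognize the attaching map as a pushout of a product $|\partial\Del^k| \hrar |\Del^k|$ with $g$, which by inspection of small cases (a finite combinatorial check using the explicit shuffle description of $\otimes$ from Remark~\ref{concrete}) is a composite of pushouts of admissible remarkings and hence MS-trivial.
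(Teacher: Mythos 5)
Your argument is correct in substance, but it takes a genuinely different and less elementary route than the paper, and it carries an ordering caveat you flagged but did not resolve. The paper's own proof is a short direct argument that never leaves the definition of the localization: since the underlying semi-simplicial map of~\ref{e:remarking} is an isomorphism, only marked connected components matter; when $K=\emptyset$ and $L$ is discrete the map is a coproduct of copies of the generator $\left(\Del^2,A\right) \hrar \left(\Del^2\right)^{\sharp} \in \BS$ (hence the case $K \hrar K \coprod L'$ with $L'$ discrete follows), while when $K \hrar L$ is surjective on connected components the map~\ref{e:remarking} is already a \emph{marked} equivalence; a general inclusion is then factored as $K \hrar K \coprod L' \lrar L$ with $L'$ discrete and the second map surjective on components, and the pushout-products compose accordingly. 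Your main argument instead invokes the pushout-product axiom for $\Seg_s$, i.e. Theorem~\ref{marked-monoidality}, which the paper proves only \emph{after} this lemma. Fortunately this is not circular: the proof of Theorem~\ref{marked-monoidality} uses only pushouts along the generators of $\BS$ and admissible horn inclusions (via Lemma~\ref{horn-pushouts-admissible}) and never a triangle remarking, so your deduction becomes valid after reordering; but your parenthetical guess that this lemma ``serves as one of the elementary stability facts needed there'' is inaccurate, and had it been accurate your argument would have been circular -- this dependence needs to be checked rather than presumed, especially since the lemma must be available before the monoidality theorem for its other early uses (Lemma~\ref{horn-example}, Corollary~\ref{max-groupoid-qu}). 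Your fallback ``hands-on'' argument is essentially the paper's proof in a finer, cell-by-cell form: attaching $0$-cells of $L$ rel $K$ contributes pushouts of the generator itself, and for cells of dimension $k \geq 1$ no shuffle-level combinatorics is needed, because $|\partial\Del^k| \lrar |\Del^k|$ is surjective on connected components and the corresponding pushout-product is therefore already a marked equivalence. In short: your route buys brevity once the monoidality of $\Seg_s$ is in place; the paper's route buys a self-contained two-step verification that keeps the lemma independent of (and usable in advance of) the monoidal structure.
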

\begin{proof}
Note that the claim is clearly true if $K = \emptyset$ and $L$ is discrete. Hence the claim is also true for maps of the form $K \hrar K \coprod L'$ where $L'$ is discrete. Orthogonally, if the inclusion $K \hrar L$ is surjective on connected components then the map ~\ref{e:remarking} is a marked equivalence and so in particular an MS-equivalence. The result now follows by factoring a general inclusion $K \hrar L$ as $K \hrar K \coprod L' \lrar L$ where $L'$ is discrete and the map $K \coprod L' \lrar L$ is surjective on connected components.
\end{proof}

This notion is exemplified in the following lemma:

\begin{lem}\label{horn-example}
For every $i=0,...,n$ the map
$$ \left(\Lam^n_i\right)^{\sharp} \lrar \left(\Del^n\right)^{\sharp} $$
is a trivial MS-cofibration.
\end{lem}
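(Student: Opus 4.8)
The plan is to exhibit $(\Lam^n_i)^{\sharp} \hookrightarrow (\Del^n)^{\sharp}$ (we take $n \geq 2$, the case $n=1$ being both false and unneeded) as a composite of pushouts along maps already known to be trivial MS-cofibrations: the admissible marked horn inclusions, which lie in $\BS$, and — only when $n = 2$ — the maps $(\Del^2,A) \hookrightarrow (\Del^2)^{\sharp}$ with $|A| = 2$, which also lie in $\BS$. The combinatorial input is that the only non-degenerate simplices of $\Del^n$ not contained in $\Lam^n_i$ are the top simplex $\Del^n$ itself and the $i$-th face $\Del^{[n]\setminus\{i\}}$, and, crucially, that for $n \geq 3$ every edge of $\Del^n$ already lies in $\Lam^n_i$, so that $(\Lam^n_i)^{\sharp}$ equals $(\Lam^n_i,(\Del^n)_1)$: all edges of $\Del^n$ are present in it and all are marked.

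First I would dispose of the case $n \geq 3$. Let $e_i$ be the admissible marked horn inclusion at $i$, meaning $(\Lam^n_i,\emptyset) \hookrightarrow (\Del^n,\emptyset)$ if $0 < i < n$, the inclusion $(\Lam^n_0,\{\Del^{\{0,1\}}\}) \hookrightarrow (\Del^n,\{\Del^{\{0,1\}}\})$ if $i=0$, and $(\Lam^n_n,\{\Del^{\{n-1,n\}}\}) \hookrightarrow (\Del^n,\{\Del^{\{n-1,n\}}\})$ if $i=n$; in the two outer cases one checks that the distinguished edge indeed lies in $\Lam^n_i$, so these maps are genuinely admissible in the sense of Definition~\ref{admissible} and hence trivial MS-cofibrations. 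Since the marking on the source of $e_i$ is contained in $(\Lam^n_i)_1$, there is a canonical map from that source to $(\Lam^n_i)^{\sharp}$ which is the identity on underlying semi-simplicial spaces. Pushing $e_i$ out along this map yields an object whose underlying semi-simplicial space is $\Del^n$ and whose set of marked edges is the image of $(\Lam^n_i)_1$ together with at most one further edge; since $n \geq 3$ that image is already all of $(\Del^n)_1$, so the pushout is precisely $(\Del^n)^{\sharp}$, and $(\Lam^n_i)^{\sharp} \to (\Del^n)^{\sharp}$ is a pushout of $e_i$, hence a trivial MS-cofibration.

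For $n = 2$ the same pushout along the admissible marked horn inclusion at $i$ instead produces $(\Del^2,A)$, where $A$ is the two-element set of edges of $\Lam^2_i$ — the third edge of $\Del^2$ is simply not present in $\Lam^2_i$ and so cannot be marked yet. Composing with the inclusion $(\Del^2,A) \hookrightarrow (\Del^2)^{\sharp}$, which lies in $\BS$ and is therefore a trivial MS-cofibration, completes this case.

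The only point requiring genuine care is the bookkeeping in the last two paragraphs: checking that in the outer cases the admissible horn inclusion really is admissible (its single marked edge lies in $\Lam^n_i$), and correctly computing the marking of the pushout, where the clean identity $(\Lam^n_i)_1 = (\Del^n)_1$ holds exactly when $n \geq 3$ — this dichotomy is what forces the two-case split. Beyond this simplicial bookkeeping I expect no substantive difficulty.
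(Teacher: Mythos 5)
Your proof is correct and takes essentially the same route as the paper: a single pushout along the admissible marked horn inclusion (whose marked edge indeed lies in $\Lam^n_i$ for $n \geq 2$), followed by marking the one possibly missing edge. The paper phrases that last step uniformly as a triangle remarking (Definition~\ref{triangle-remarking}), which is the identity when $n \geq 3$ and is exactly your $\left(\Del^2,A\right) \hrar \left(\Del^2\right)^{\sharp}$ step when $n=2$, so your case split is just a more explicit rendering of the same argument.
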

\begin{proof}
Let $M \subseteq (\Del^n)_1$ be the set of edges that are contained in $\Lam^n_i$. Then $\left(\Del^n,M\right)$ is obtained from $\left(\Lam^n_i\right)^{\sharp}$ by performing a pushout along an admissible marked horn inclusion. The desired result now follows from the fact that the map
$$ \left(\Del^n,M\right) \hrar \left(\Del^n\right)^{\sharp} $$
is a triangle remarking.
\end{proof}

\begin{cor}\label{max-groupoid-qu}
If $W$ is a marked semiSegal space then $\wtl{W}$ (see Definition~\ref{sub-groupoid}) is a marked semiSegal space as well.
\end{cor}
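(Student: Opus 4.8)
The plan is to prove a little more than claimed: that $\wtl{W}$ is a \emph{marked semiKan space}. The key is the natural identification (for $[n] \in \Del_s$)
$$ \wtl{W}_n = \Map^{+}\bigl(\left(\Del^n\right)^{\sharp}, W\bigr), $$
where $\left(\Del^n\right)^{\sharp}$ is the levelwise-discrete standard $n$-simplex with all of its edges marked: a marked map $\left(\Del^n\right)^{\sharp} \lrar W$ is exactly an $n$-simplex of $W$ all of whose edges lie in $M$, which is the defining condition of $\wtl{W}$, and because all edges of $\left(\Del^n\right)^{\sharp}$ are marked this persists at the level of mapping spaces. Since $M$ is a union of connected components of $W_1$ (Lemma~\ref{marked-fibrant}, $W$ being marked-fibrant), $\wtl{W}_n$ is a union of connected components of $W_n$; consequently, for \emph{any} semi-simplicial set $K$ one gets $\Map\bigl(K, \wtl{W}\bigr) = \Map^{+}\bigl(K^{\sharp}, W\bigr)$, where on the left we take maps into the underlying semi-simplicial space of $\wtl{W}$ (a map $K \lrar W$ lands in $\wtl{W}$ precisely when it sends every edge of $K$ into $M$).

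First I would check that $\wtl{W}$ is Reedy fibrant --- equivalently, since all its edges are marked, that it is marked-fibrant. Its $n$-th matching object is $\Map\bigl(\partial\Del^n, \wtl{W}\bigr) = \Map^{+}\bigl(\left(\partial\Del^n\right)^{\sharp}, W\bigr)$, and the matching map is restriction along the marked cofibration $\left(\partial\Del^n\right)^{\sharp} \hrar \left(\Del^n\right)^{\sharp}$; since $W$ is marked-fibrant and the marked model structure on $\Top^{\Del^{\op}_s}_+$ is a simplicial model category, this restriction map is a Kan fibration.

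The main point is to show that $\wtl{W}$ is local with respect to every admissible marked horn inclusion $j\colon \left(\Lam^k_i, A\right) \hrar \left(\Del^k, A\right)$. Because all edges of $\wtl{W}$ are marked, a marked map into $\wtl{W}$ ignores the marking on its source; together with the identifications above this gives $\Map^{+}\bigl(\left(\Lam^k_i, A\right), \wtl{W}\bigr) = \Map^{+}\bigl(\left(\Lam^k_i\right)^{\sharp}, W\bigr)$ and $\Map^{+}\bigl(\left(\Del^k, A\right), \wtl{W}\bigr) = \Map^{+}\bigl(\left(\Del^k\right)^{\sharp}, W\bigr)$, compatibly, so that restriction along $j$ becomes restriction along $\left(\Lam^k_i\right)^{\sharp} \hrar \left(\Del^k\right)^{\sharp}$. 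By Lemma~\ref{horn-example} the latter is a trivial MS-cofibration between cofibrant objects, and $W$ is fibrant in $\Seg_s$ (being a marked semiSegal space), so the restriction map is a weak equivalence; hence $\wtl{W}$ is $j$-local.

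It remains to assemble the pieces. Since $\wtl{W}$ is marked-fibrant and local with respect to the inner admissible marked horn inclusions, Corollary~\ref{admissible-marked-horn-inclusion}(1) shows that $\wtl{W}$ satisfies the Segal condition, so $\wtl{W}$ is a semiSegal space; locality with respect to the non-inner admissible marked horn inclusions and Corollary~\ref{admissible-marked-horn-inclusion}(2) then show that every marked edge of $\wtl{W}$ is invertible (condition $(2)$ of Definition~\ref{marked-segal}); and closure of the marking under $2$-out-of-$3$ (condition $(3)$) holds trivially since every edge of $\wtl{W}$ is marked. Thus $\wtl{W}$ is a marked semiSegal space. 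The one step that requires genuine care --- and where I expect to spend most of the write-up --- is the bookkeeping identifying the various mapping spaces into $\wtl{W}$ with mapping spaces out of the \emph{sharp} horns and simplices into $W$; this is precisely what converts the statement into something Lemma~\ref{horn-example} and Corollary~\ref{admissible-marked-horn-inclusion} dispatch immediately, and everything else is formal once it is in place.
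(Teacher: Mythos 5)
Your proposal is correct and takes essentially the same route as the paper's proof: marked-fibrancy of $\wtl{W}$, locality with respect to all admissible marked horn inclusions deduced from Lemma~\ref{horn-example}, then Corollary~\ref{admissible-marked-horn-inclusion} for conditions $(1)$ and $(2)$ of Definition~\ref{marked-segal}, with the $2$-out-of-$3$ condition holding trivially because every edge of $\wtl{W}$ is marked. The only difference is that you make explicit the identification $\Map^{+}\bigl(K,\wtl{W}\bigr) \cong \Map^{+}\bigl(K^{\sharp},W\bigr)$, which the paper leaves implicit.
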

\begin{proof}
First of all it is clear that $\wtl{W}$ is marked-fibrant (see Lemma~\ref{marked-fibrant}). From Lemma~\ref{horn-example} it follows that $\wtl{W}$ is local with respect to all admissible marked horn inclusions and so by Proposition~\ref{admissible-marked-horn-inclusion} $\wtl{W}$ satisfies properties $(1)$ and $(2)$ of Definition~\ref{marked-segal}. Since clearly the marked edges in $\wtl{W}$ are closed under $2$-out-of-$3$ we get that $\wtl{W}$ is a marked semiSegal space.
\end{proof}

\begin{rem}\label{max-groupoid-2}
Since all the edges in $\wtl{W}$ are marked we see that $\wtl{W}$ is a \textbf{marked semiKan space}.
\end{rem}

\begin{rem}\label{max-groupoid-3}
If $W$ is quasi-unital then $\wtl{W}$ is quasi-unital as well. Furthermore, in this case $\wtl{W}$ contains all invertible edges of $W$.
\end{rem}

Now recall that $\Top^{\Del^{\op}_s}_+$ is a symmetric monoidal model category with respect to the marked monoidal product $\otimes$. We would like to show that this monoidality survives the localization:

\begin{thm}\label{marked-monoidality}
The marked Segal model structure is compatible with the marked monoidal product $\otimes$. In particular, the localization Quillen adjunction
$$ \xymatrix{
\Top_+^{\Del_s^{\op}} \ar@<0.5ex>[r]^{\Id} & \Seg_s \ar@<0.5ex>[l]^{\Id} \\
}$$
is strongly monoidal and $\Seg_s$ inherits the simplicial structure of $\Top^{\Del_s^{\op}}$.
\end{thm}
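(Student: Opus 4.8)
The plan is to verify the pushout-product axiom directly at the level of the localized model structure, reducing everything to the unmarked case plus an analysis of what happens with the marking. By the general machinery of left Bousfield localizations of symmetric monoidal model categories (see~\cite{hir}, or the criterion that a localization is monoidal provided it is compatible with the generating trivial cofibrations), it suffices to check: first, that the underlying marked model category $\Top^{\Del^{\op}_s}_+$ is a symmetric monoidal model category with respect to $\otimes$ --- this was already established in the previous lemma; and second, that for every MS-trivial cofibration $i: A \hrar B$ and every MS-cofibration $j: C \hrar D$, the pushout-product $i \wedge j$ is an MS-trivial cofibration. Since MS-cofibrations coincide with marked cofibrations, the pushout-product $i \wedge j$ is automatically a marked cofibration by the already-proven unmarked monoidality; the only content is that it is an \emph{MS-equivalence}.

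For this, I would first reduce to the case where $j$ ranges over the generating marked cofibrations $C_0$ from the proof of Theorem~\ref{marked-model}, using that MS-equivalences that are also cofibrations are closed under transfinite composition, pushout and retract (left properness and the usual closure properties of trivial cofibrations in any model category). Next, since the MS-trivial cofibrations between cofibrant objects are generated under the same operations by the union of the generating trivial marked cofibrations together with the localizing set $\BS$ (Definition~\ref{local-generators}), it suffices to treat $i$ ranging over this combined generating set. For $i$ a generating \emph{marked} trivial cofibration, $i \wedge j$ is already a marked equivalence by the unmarked (resp.\ marked) monoidality lemma, hence an MS-equivalence. The remaining, and essential, case is $i \in \BS$, i.e.\ $i$ an admissible marked horn inclusion $(\Lam^n_k, A) \hrar (\Del^n, A)$ or a map $(\Del^2, A) \hrar (\Del^2)^{\sharp}$ with $|A| = 2$.

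The key step is therefore: for $i$ an admissible marked horn inclusion or a $2$-simplex remarking, and $j$ one of the generating marked cofibrations $|\partial\Del^\ell| \otimes (\Del^m)^{\flat} \hrar |\Del^\ell| \otimes (\Del^m)^{\flat}$ or $|\partial\Del^\ell|\otimes(\Del^1)^{\sharp} \cup |\Del^\ell|\otimes(\Del^1)^{\flat} \hrar |\Del^\ell|\otimes(\Del^1)^{\sharp}$, the pushout-product $i \wedge j$ is an MS-equivalence. I expect this to be the main obstacle. The strategy here is combinatorial: analyze the product $\Del^n \otimes \Del^m$ via the shuffle description (Remark~\ref{concrete}) and exhibit $i \wedge j$ as built from a sequence of pushouts along admissible marked horn inclusions (using Lemma~\ref{induction-lem-1}) and triangle remarkings (Definition~\ref{triangle-remarking}, which we showed are MS-trivial cofibrations). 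Concretely, one filters the target by the dimension of shuffles, and for each new shuffle simplex being attached one checks that its intersection with the already-built subcomplex is the appropriate horn, with the correct boundary edge marked whenever the horn is outer --- exactly as in the proof of Lemma~\ref{induction-lem-1}. The marking bookkeeping is the delicate part: one must verify that in the outer-horn cases the extreme edge of the attached simplex lies in the marked set, which follows because an outer admissible horn carries its extreme edge marked and $\otimes$ distributes the marking over the factors as prescribed. Once the $i \wedge j$ case is settled, the conclusion that the localization adjunction $\Id: \Top^{\Del^{\op}_s}_+ \rightleftarrows \Seg_s: \Id$ is strongly monoidal is formal (it is the identity on underlying categories), and $\Seg_s$ inherits the simplicial structure since the simplicial structure on $\Top^{\Del^{\op}_s}_+$ is given by a strongly monoidal Quillen adjunction (Definition~\ref{d:simplicial}) whose left adjoint, being $(\bullet)^{\flat}$ composed with constant-in-degree-$0$, still lands in cofibrant objects and preserves the relevant (trivial) cofibrations after localization.
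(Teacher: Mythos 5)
Your reduction in the variable $i$ is where the argument breaks. You assert that the trivial cofibrations of $\Seg_s$ (at least between cofibrant objects) are generated, as a weakly saturated class, by the generating trivial marked cofibrations together with the localizing set $\BS$. This is false for left Bousfield localizations in general: the saturation of the old trivial cofibrations together with $S$ does detect the local fibrant objects, but it does not exhaust the trivial cofibrations of the localized structure (equivalently, the fibrations of a localization are not characterized by lifting against such an explicit set), and you give no argument special to $\Seg_s$. Consequently, proving the pushout-product property only for $i$ in your ``combined generating set'' does not, by saturation alone, yield it for all trivial MS-cofibrations. The standard repair is essentially the paper's route: once you know that the pushout-product of each $f \in \BS$ with the generating cofibrations is a trivial MS-cofibration, saturation in the \emph{second} variable gives that $f \otimes X$ (the pushout-product with $\emptyset \hrar X$) is an MS-equivalence for every $X$; by adjunction this says exactly that $W^X$ is again a marked semiSegal space for every marked semiSegal space $W$, and the full compatibility of $\Seg_s$ with $\otimes$ then follows by the argument of Rezk's Proposition 9.2 (mapping into local objects), not by a generation statement for trivial cofibrations. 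So the key step you isolate does suffice, but the deduction must pass through internal mapping objects into local objects, which your write-up omits.

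The key combinatorial step itself is also only sketched, and the sketch is too coarse as stated: if one filters $\Del^m \otimes \Del^n$ simply by the dimension of shuffles and attaches them in arbitrary order, the intersection of a new shuffle with the previously built subcomplex is in general \emph{not} a horn (it may be missing several faces, or none). The paper's Lemma~\ref{horn-pushouts-admissible} requires finer bookkeeping: only the ``special'' shuffles are attached (the regular ones arise as faces of special ones), and within a fixed dimension they are ordered by an index; this ordering is precisely what guarantees that each attachment is along an admissible horn, inner except in the extreme case where the last edge is marked. Your remark about the marking of the extreme edge points in the right direction, but the naive dimension filtration would fail without this refinement. Finally, the case of the pushout-product of a remarking map $(\Del^2,A) \hrar (\Del^2)^{\sharp}$ with $(\Del^1)^{\flat} \hrar (\Del^1)^{\sharp}$ needs its own short argument (the paper produces a single triangle of $\Del^1 \otimes \Del^2$ containing the one unmarked edge and performs a pushout along a triangle remarking); lumping it under the horn-filtration heuristic does not cover it.
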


\begin{proof}

Arguing as in~\cite{rez} Proposition 9.2, we see that it will be enough to show that for every marked semiSegal space $W$ the objects $W^{\left(\Del^n\right)^{\flat}}$ and $W^{\left(\Del^1\right)^{\sharp}}$ are marked semiSegal spaces for every $n \geq 0$. This, in turn, can be easily reduced to checking that for every $f: Y \lrar Z$ in $\BS$ the inclusions
$$ \left[ \left(\partial\Del^m\right)^{\flat} \otimes Z\right] \coprod_{\left(\partial\Del^m\right)^{\flat} \otimes Y} \left[\left(\Del^m\right)^{\flat} \otimes Y\right] \hrar \left(\Del^m\right)^{\flat} \otimes Z $$
and
$$ \left[\left(\Del^1\right)^{\flat} \otimes Z\right] \coprod_{\left(\Del^1\right)^{\flat} \otimes Y} \left[\left(\Del^1\right)^{\sharp} \otimes Y\right] \hrar \left(\Del^1\right)^{\sharp} \otimes Z $$
are trivial MS-cofibrations. Note that the case $m=0$ above is trivial so we can assume $m \geq 1$.

We begin by clearing up some trivial cases. Observe that for a pair of inclusions of the form
$$ f: (X,A) \hrar (Y,A) $$
$$ g: (Z,B) \hrar (Z,C) $$
such that $f_0: X_0 \lrar Y_0$ is \textbf{surjective} the resulting map
$$ \left[(Z,C) \otimes (X,A)\right] \coprod_{(Z,B) \otimes (X,A)} \left[(Z,B) \otimes (Y,A)\right] \lrar (Z,C) \otimes (Y,A) $$
is in fact an \textbf{isomorphism} (and in particular a trivial MS-cofibration). Considering the various types of maps in $\BS$ one sees that the only cases which are not covered by the above argument are the following:

\begin{enumerate}
\item
The maps of the form
$$ \left[\left(\Del^1\right)^{\sharp} \otimes \left(\Del^2,A\right)\right] \coprod_{\left(\Del^1\right)^{\flat} \otimes \left(\Del^2,A\right)} \left[\left(\Del^1\right)^{\flat} \otimes \left(\Del^2\right)^{\sharp}\right] \lrar \left(\Del^1\right)^{\sharp} \otimes \left(\Del^2\right)^{\sharp} = \left(\Del^1 \otimes \Del^2\right)^{\sharp} $$
where $|A| = 2$.
\item
The maps of the form
$$ \left(\partial \Del^m\right)^{\flat} \otimes \left(\Del^n,A\right)  \coprod_{\left(\partial \Del^m\right)^{\flat} \otimes \left(\Lam^n_l,A\right) }  \left(\Del^m\right)^{\flat} \otimes \left(\Lam^n_l,A\right) \hrar \left(\Del^m\right)^{\flat} \otimes \left(\Del^n,A\right) $$
where $\left(\Lam^n_l,A\right) \hrar \left(\Del^n,A\right)$ is an admissible marked horn inclusion.
\end{enumerate}

For case $(1)$, note that this map induces an isomorphism on the underlying semi-simplicial sets. Furthermore, the marking on the left hand side contains all edges except exactly \textbf{one} edge $e \in \left(\Del^1 \otimes \Del^2\right)_1$.

Note that each triangle in $\Del^1 \otimes \Del^2$ has three distinct edges. Furthermore every edge in $\Del^1 \otimes \Del^2$ lies on some triangle. Hence one can find a triangle which lies on $e$ such that its other two edges are not $e$. This means that there exists a pushout diagram of marked semi-simplicial sets of the form
$$ \xymatrix{
\left(\Del^2,A\right) \ar[d]\ar[r] & \left(\Del^2\right)^{\sharp} \ar[d] \\
\left[\left(\Del^1\right)^{\sharp} \otimes \left(\Del^2,A\right)\right] \coprod_{\left(\Del^1\right)^{\flat} \otimes \left(\Del^2,A\right)} \left[\left(\Del^1\right)^{\flat} \otimes \left(\Del^2\right)^{\sharp}\right] \ar[r] & \left(\Del^1 \otimes \Del^2\right)^{\sharp}
}$$
Since the upper horizontal row is a trivial MS-cofibration we get that the lower horizontal map is an MS-cofibration as well.

We shall now prove case $(2)$:

\begin{lem}\label{horn-pushouts-admissible}
Let $\left(\Lam^n_l,A\right) \hrar \left(\Del^n,A\right)$ be an admissible marked horn inclusion. Then the marked semi-simplicial set $\left(\Del^m\right)^{\flat} \otimes \left(\Del^n,A\right)$ can be obtained from the marked semi-simplicial set
$$ X =  \left(\partial \Del^m\right)^{\flat} \otimes \left(\Del^n,A\right)  \coprod_{\left(\partial \Del^m\right)^{\flat} \otimes \left(\Lam^n_l,A\right) }  \left(\Del^m\right)^{\flat} \otimes \left(\Lam^n_l,A\right) $$
by successively performing pushouts along admissible marked horn inclusions. In particular, the inclusion
$$ X \subseteq \left(\Del^m\right)^{\flat} \otimes \left(\Del^n,A\right) $$
is a trivial MS-cofibration.
\end{lem}

\begin{proof}

If $m=0$ then the claim is immediate, so we can assume $m > 0$. In this case the marking of $\left(\Del^m\right)^{\flat} \otimes \left(\Del^n,A\right)$ is the same as the marking of $X$, so that we don't need to worry about adding marked edges in the course of performing the desired pushouts. Note that we can harmlessly assume that $0 < l \leq n$ (as the case $0 \leq l < n$ follows from symmetry).

According to Remark~\ref{concrete} the $k$-simplices of $\Del^m \otimes \Del^n$ are in one-to-one correspondence with injective order preserving maps
$$ \sig = (f,g): [k] \lrar [m] \times [n] .$$
We will consider a $k$-simplex $\sig = (f,g)$ as above as an injective marked map
$$ \sig: (\Del^k,B) \hrar \left(\Del^m\right)^{\flat} \otimes \left(\Del^n,A\right) $$
where $B$ is defined to be $\left\{\Del^{\{k-1,k\}}\right\}$ be $\sig\left(\Del^{\{k-1,k\}}\right)$ is marked and $\emptyset$ otherwise.

We will say that a $k$-simplex of $\Del^m \otimes \Del^n$ is \textbf{full} if it is \textbf{not} contained in $X$. If we describe our $k$-simplex by a map $\sig = (f,g)$ as above this translates to the condition that $f$ is surjective and that the image of $g$ contains $\{0,...,n\} \bksl \{l\}$ (so that $g$ is either surjective or misses $l$). Our purpose is to add all the full simplices to $X$ in a way that involves only pushouts along admissible horn inclusions. For this we distinguish between two kinds of $k$-simplices of $\Del^m \otimes \Del^n$:
\begin{define}
Let
$$ \sig = (f,g): [k] \lrar [m] \times [n] $$
be a $k$-simplex of $\Del^m \otimes \Del^n$. We will say that $\sig$ is \textbf{special} if
\begin{enumerate}
\item
$\sig$ is full.
\item
$g^{-1}(l) \neq \emptyset$.
\item
$ f\left(\min g^{-1}(l)\right) = f\left(\max g^{-1}(l-1)\right)$.
\end{enumerate}
If $\sig$ is full but not special then we will say that $\sig$ is \textbf{regular}.
\end{define}

Now for $i=0,...,m+1$ let $X_i$ denote the union of $X$ and all \textbf{special} $(i+n-1)$-simplices of $\Del^m \otimes \Del^n$. We now claim the following:
\begin{enumerate}
\item
$X_0 = X$.
\item
For $i=0,...,m$ the semi-simplicial set $X_{i+1}$ is obtained from $X_i$ by a sequence of pushouts along admissible horn inclusions of dimension $i+n$.
\item
$X_{m+1} = \Del^m \otimes \Del^n$.
\end{enumerate}
The first claim just follows from the fact that there are no special simplices of dimension less than $n$. Now $X_{i+1}$ is the union of $X_i$ and all special $(i+n)$-simplices. Hence in order to prove the second claim we will need to find the right \textbf{order} in which to add these special $(i+n)$-simplices to $X_i$. We will do this by sorting them according to the following quantity:
\begin{define}
Let
$$ \sig = (f,g): [k] \lrar [m] \times [n] $$
be a \textbf{full} $k$-simplex of $\Del^m \otimes \Del^n$. We define the \textbf{index} of $\sig$ to be the quantity
$$ \ind(\sig) = k+1-n-|g^{-1}(l)| .$$
\end{define}
Note that for a general full simplex the index is a number between $0$ and $k+1-n$. By definition we see that for a \textbf{special} $k$-simplex the index is a number between $0$ and $k-n$. In particular, the index of a special $(i+n)$-simplex is a number between $0$ and $i$.

Now fix an $i = 0,...,m$ and for each $j=0,...,i+1$ define $X_{i,j}$ to be the union of $X_{i}$ and all special $(i+n)$-simplices $\sig$ whose index is strictly less than $j$. We obtain a filtration of the form
$$ X_{i} = X_{i,0} \subseteq X_{i,1} \subseteq ... \subseteq X_{i,i+1} = X_{i+1} .$$
We will show that if $\sig$ is a special $(i+n)$-simplex of index $j$ then the intersection
$$ \sig \cap X_{i,j} $$
is an admissible horn of $\sig$ (with respect to the marking induced from $\sig$). This means that $X_{i,j+1}$ can be obtained from $X_{i,j}$ by performing pushouts along admissible horn inclusions of dimension $m+i$, implying the second claim above. We start by noting that if $\tau = (f,g)$ is a \textbf{regular} $k$-simplex then $\tau$ is a face of the special $(k+1)$-simplex
$$ \sig = (f \circ s_{\max g^{-1}(l-1)},g \circ s_{\min g^{-1}(l)}) $$
where $s_r: [k+1] \lrar [k]$ is the degeneracy map hitting $r$ twice. Furthermore we see that $\ind(\sig) = \ind(\tau)$. This means that $X_{i,j}$ contains in particular all regular $(i+n-1)$-simplices whose index is $<j$. Since taking faces cannot increase the index we see that an $(i+n-1)$-simplex $\tau$ is contained in $X_{i,j}$ exactly when $\tau$ is not regular of index $\geq j$.

Now let $\sig = (f,g)$ be a \textbf{special} $(i+n)$-simplex of index $j$ and let $\tau$ be the $(i+n-1)$-face of $\sig$ which is opposed to the $v$'th vertex for $v = 0,...,i+n$. Then we see that $\tau$ will be regular of index $\geq j$ if and only if $v = \min g^{-1}(l)$, in which case $\ind(\tau) = \ind(\sig) = j$. Since $g$ is surjective we get that
$$ 0 < \min g^{-1}(l) \leq i+l \leq i+n $$
and so $X_{i,j} \cap \sig$ is a right horn of $\sig$ which is inner if $l < n$. In fact, the only case where this right horn inclusion is not inner is when $\min g^{-1}(l) = k$. By the definition of special we then have
$$ f\left(k\right) = f\left(k-1\right) $$
and so the $\{k-1,k\}$-edge of $\sig$ is mapped to a \textbf{marked} edge in $\left(\Del^m\right)^{\flat} \otimes \left(\Del^n,A\right)$. This means that indeed the addition of $\sig$ can be done by a pushout along an \textbf{admissible} horn inclusion.

It is left to prove the third claim, i.e., that $X_{m+1} = \Del^m \otimes \Del^n$. From the considerations above we see that $X_{i+1}$ contains all full $k$-simplices for $k < n+i$ (as well as all special $(n+i)$-simplices). Since all the full $(m+n)$-simplices are special we get that $X_{m+1}$ contains all full simplices of $\Del^m \otimes \Del^n$ of dimension up to $m+n$, yielding the desired result.

\end{proof}

This finishes the proof of Theorem~\ref{marked-monoidality}.
\end{proof}

\begin{cor}
Let $W$ be a marked semiSegal space and $X$ a marked semi-simplicial space. Then $W^X$ is a marked semiSegal space and $\wtl{W^X}$ is a marked semiKan space.
\end{cor}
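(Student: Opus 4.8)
The plan is to obtain both assertions formally from the monoidal structure on $\Seg_s$ established in Theorem~\ref{marked-monoidality}, together with Corollary~\ref{max-groupoid-qu} and Remark~\ref{max-groupoid-2}.

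First I would observe that every object of $\Seg_s$ is cofibrant: the MS-cofibrations are exactly the levelwise injective maps, and $\emptyset \hrar X$ is always levelwise injective. By Theorem~\ref{marked-monoidality} the model category $\Seg_s$ is a symmetric monoidal model category with respect to $\otimes$, so the pushout-product axiom applied to the cofibration $\emptyset \hrar X$ shows that $X \otimes (-)$ sends cofibrations to cofibrations and trivial cofibrations to trivial cofibrations; that is, $X \otimes (-)$ is a left Quillen endofunctor of $\Seg_s$. Its right adjoint is the internal mapping object functor $(-)^X$, which is therefore right Quillen. In particular $(-)^X$ preserves fibrant objects, and since the fibrant objects of $\Seg_s$ are precisely the marked semiSegal spaces, it follows that $W^X$ is a marked semiSegal space.

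For the second assertion I would feed the marked semiSegal space $W^X$ into Corollary~\ref{max-groupoid-qu}, which asserts that $\wtl{V}$ is a marked semiSegal space whenever $V$ is. Taking $V = W^X$ we conclude that $\wtl{W^X}$ is a marked semiSegal space, and since by Definition~\ref{sub-groupoid} every edge of $\wtl{W^X}$ is marked, Remark~\ref{max-groupoid-2} gives that $\wtl{W^X}$ is a marked semiKan space.

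There is essentially no obstacle here: the substantive content --- that passing to the left Bousfield localization $\Seg_s$ preserves the monoidality of $\otimes$, and that the maximal sub-semiKan construction $\wtl{\bullet}$ preserves the semiSegal conditions --- has already been carried out in Theorem~\ref{marked-monoidality} and Corollary~\ref{max-groupoid-qu}. The only points requiring care are the (immediate) verifications that $X$ is cofibrant, so that $(-)^X$ is genuinely right Quillen, and the standard fact that right Quillen functors preserve fibrant objects.
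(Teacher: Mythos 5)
Your argument is correct and is exactly the (implicit) justification the paper intends: the corollary is a formal consequence of Theorem~\ref{marked-monoidality} (every object is cofibrant, so $(-)^X$ is right Quillen and preserves the fibrant objects of $\Seg_s$, which are the marked semiSegal spaces) together with Corollary~\ref{max-groupoid-qu} and Remark~\ref{max-groupoid-2} applied to $W^X$. No gaps.
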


\subsection{ Fully-faithful maps and Dwyer-Kan equivalences }\label{ss-fully-faithful}

The purpose of this section is to study the notion of \textbf{fully-faithful maps} and \textbf{Dwyer-Kan equivalences} in the setting of marked semi-simplicial spaces. We begin with the basic definition:

\begin{define}
Let $f: (W,M) \lrar (Z,N)$ be a map of semi-simplicial spaces. We will say that $f$ is \textbf{fully-faithful} if the squares
\begin{equation}
\begin{split}
\xymatrix{
W_n \ar[r]\ar[d] & Z_n \ar[d] \\
\left(W_0\right)^{n+1} \ar[r] & \left(Z_0\right)^{n+1} \\
}\;\;\;\;
\xymatrix{
M \ar[r]\ar[d] & N \ar[d] \\
W_0 \times W_0 \ar[r] & Z_0 \times Z_0 \\
}
\end{split}
\end{equation}
are homotopy Cartesian for every $n \geq 1$.

\end{define}

In this paper we will often encounter maps $f: W \lrar Z$ of marked semiSegal spaces which are simultaneously fully-faithful and a marked fibration. This case admits a particularly nice description. It will be convenient to employ the following terminology (which makes sense in any simplicial category):

\begin{define}\label{d:contractible}
Let $g: X \lrar Y$, $f: W \lrar Z$ be two maps in $\Top^{\Del^{\op}_s}_+$. We will say that $f$ has the \textbf{contractible} right lifting property with respect to $g$ if the map
\begin{equation}\label{e:contractible}
\Map_+(Y,W) \lrar \Map_+(Y,Z) \times_{\Map_+(X,Z)} \Map_+(X,W)
\end{equation}
is a trivial Kan fibration.
\end{define}

\begin{rem}\label{r:contractible}
Definition~\ref{d:contractible} is equivalent to saying that $p$ has the right lifting property with respect to the maps
$$ \left|\partial \Del^m\right| \otimes Y \coprod_{\left|\partial \Del^m\right| \otimes X }  \left|\Del^m\right| \otimes X \hrar \left|\Del^m\right| \otimes Y $$
for every $m \geq 0$.
\end{rem}

\begin{rem}\label{r:fib-fully-faithful}
When $f:W \lrar Z$ is a marked fibration and $g: X \lrar Y$ is a marked cofibration then the map~\ref{e:contractible} is a fibration. Furthermore, in this case the fiber product on the right hand side coincides with the homotopy fiber product. From this observation we see that a marked fibration $f: W \lrar Z$ is fully-faithful if and only if is satisfies the contractible right lifting property with respect to the maps
$$ \overbrace{\Del^0 \coprod ... \coprod \Del^0}^{n+1} \hrar \left(\Del^n\right)^{\flat} $$
for every $n \geq 1$ and the map
$$ \Del^0 \coprod \Del^0 \hrar \left(\Del^1\right)^{\sharp} .$$
\end{rem}

We are now ready to prove our main characterization theorem concerning fully-faithful marked fibrations:

\begin{prop}\label{lift-character}
Let $f: W \lrar Z$ be a map of semi-simplicial spaces. Then the following assertions are equivalent:
\begin{enumerate}
\item
$f$ is a fully-faithful marked fibration.
\item
$f$ is a marked fibration and satisfies the contractible right lifting property with respect to the maps
$ \left(\partial \Del^n\right)^{\flat} \hrar \left(\Del^n\right)^{\flat} $
for every $n \geq 1$ and the map
$ \partial \Del^1  \hrar \left(\Del^1\right)^{\sharp} $
\item
$f$ satisfies the right lifting property with respect to every marked cofibration $g: X \hrar Y$ such that $g_0: X_0 \hrar Y_0$ is a weak equivalence.
\item
$f$ satisfies the contractible right lifting property with respect to every marked cofibration $g: X \hrar Y$ such that $g_0: X_0 \hrar Y_0$ is a weak equivalence.
\item
$f$ is a fully-faithful MS-fibration.
\item
$f$ is an MS-fibration and satisfies the contractible right lifting property with respect to the maps
$ \partial\Del^1  \hrar \left(\Del^1\right)^{\flat} $
and
$ \partial \Del^1  \hrar \left(\Del^1\right)^{\sharp}  $
\end{enumerate}
\end{prop}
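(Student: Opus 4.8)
The plan is to prove the equivalences in three clusters: $(1)\Leftrightarrow(2)$, then $(2)\Leftrightarrow(3)\Leftrightarrow(4)$, and finally $(1)\Leftrightarrow(5)\Leftrightarrow(6)$. I will use repeatedly that, for a fixed map $f$, the class of maps against which $f$ has the contractible right lifting property is closed under pushouts, transfinite compositions and retracts, and — whenever the induced maps on mapping spaces in sight are fibrations, which holds in our situation since $f$ is always at least a marked fibration and the source maps are marked cofibrations — also admits left cancellation: if $f$ has the contractible RLP against $b\circ a$ and against $a$, then it has it against $b$ (the map on mapping spaces induced by $b\circ a$ factors through the one induced by $b$, so two-out-of-three for trivial Kan fibrations applies). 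I also use the bookkeeping of Remark~\ref{r:contractible}: $f$ has the contractible RLP against $g$ iff $f$ has the ordinary RLP against $g\,\hat\otimes\,(|\partial\Del^m|\hookrightarrow|\Del^m|)$ for all $m\geq 0$.

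For $(1)\Leftrightarrow(2)$, Remark~\ref{r:fib-fully-faithful} identifies ``$f$ is a fully-faithful marked fibration'' with ``$f$ is a marked fibration having the contractible RLP against $\Del^0\coprod\cdots\coprod\Del^0\hookrightarrow(\Del^n)^{\flat}$ ($n\geq1$) and against $\Del^0\coprod\Del^0\hookrightarrow(\Del^1)^{\sharp}$''. For $n=1$ these are exactly the maps of $(2)$, and for $n\geq 2$ one passes between the two presentations via the skeletal filtration of $(\Del^n)^{\flat}$: it is obtained from its $n+1$ vertices by successively attaching copies of $(\partial\Del^k)^{\flat}\hookrightarrow(\Del^k)^{\flat}$ with $k<n$ and then one copy of $(\partial\Del^n)^{\flat}\hookrightarrow(\Del^n)^{\flat}$. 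An induction on $n$ — using closure under composition and pushouts for $(2)\Rightarrow(1)$, and additionally left cancellation for $(1)\Rightarrow(2)$ — shows the two lists of lifting conditions are equivalent, hence $(1)\Leftrightarrow(2)$.

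For $(2)\Leftrightarrow(3)\Leftrightarrow(4)$, let $\mathcal{A}$ denote the class of marked cofibrations $g\colon X\hookrightarrow Y$ whose component $g_0\colon X_0\to Y_0$ is a weak equivalence. Then $\mathcal{A}$ is weakly saturated, contains every trivial marked cofibration, and contains the maps $(\partial\Del^n)^{\flat}\hookrightarrow(\Del^n)^{\flat}$ and $\partial\Del^1\hookrightarrow(\Del^1)^{\sharp}$ (all isomorphisms on $0$-th spaces); so $(4)\Rightarrow(3)$ and $(4)\Rightarrow(2)$ are immediate. For $(3)\Rightarrow(4)$ one checks that $\mathcal{A}$ is stable under $-\,\hat\otimes\,(|\partial\Del^m|\hookrightarrow|\Del^m|)$ — on $0$-th spaces this is the pushout-product of $g_0$ with $|\partial\Del^m|\hookrightarrow|\Del^m|$ in $\Top$, hence a trivial cofibration — so ordinary RLP against $\mathcal{A}$ already entails contractible RLP against $\mathcal{A}$. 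The substance is $(2)\Rightarrow(3)$: given $g\colon X\hookrightarrow Y$ in $\mathcal{A}$, decompose it in three stages. First enlarge $X$ to $X\cup_{X_0}Y_0$ by attaching $0$-cells along the trivial cofibration of spaces $X_0\hookrightarrow Y_0$ concentrated in degree $0$ (a trivial marked cofibration, handled since $f$ is a marked fibration), reducing to the case where $g$ is an isomorphism on $0$-th spaces. Second, attach all the remaining simplices of $Y$ in increasing dimension as \emph{unmarked} cells, i.e.\ through the space-level pushout-products of the maps $(\partial\Del^n)^{\flat}\hookrightarrow(\Del^n)^{\flat}$ (these are among the generating marked cofibrations of Theorem~\ref{marked-model}, and $f$ lifts against them by $(2)$ and Remark~\ref{r:contractible}); afterwards the underlying semi-simplicial space is that of $Y$, but some edges of $Y$ may still be unmarked. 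Third, correct the marking by a sequence of remarkings along the space-level pushout-products of $(\Del^1)^{\flat}\hookrightarrow(\Del^1)^{\sharp}$; $f$ has the contractible RLP against this last map by left cancellation applied to $\partial\Del^1\hookrightarrow(\Del^1)^{\flat}\hookrightarrow(\Del^1)^{\sharp}$, both factors of which appear in $(2)$. This gives $(2)\Rightarrow(3)$ and closes the triangle.

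For $(1)\Leftrightarrow(5)\Leftrightarrow(6)$, recall that $\Seg_s$ is the left Bousfield localization of the combinatorial, left proper, simplicial model category $\Top^{\Del^{\op}_s}_+$ at $\BS$; hence $f$ is an MS-fibration iff it is a marked fibration with the contractible RLP against every element of $\BS$ (the ``only if'' follows from Theorem~\ref{marked-monoidality}, since then $s\,\hat\otimes\,(|\partial\Del^m|\hookrightarrow|\Del^m|)$ is a trivial MS-cofibration for $s\in\BS$; the ``if'' is the standard description of generating trivial cofibrations of an enriched left Bousfield localization, cf.\ the analogous arguments in~\cite{rez}). Every map in $\BS$ lies in $\mathcal{A}$ — admissible marked horn inclusions and the maps $(\Del^2,A)\hookrightarrow(\Del^2)^{\sharp}$ are isomorphisms on $0$-th spaces — so condition $(4)$, equivalently $(1)$--$(3)$, forces the contractible RLP against $\BS$; combined with ``$f$ is a marked fibration'' this gives $(1)\Rightarrow(5)$, while $(5)\Rightarrow(1)$ is just that MS-fibrations are marked fibrations. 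Finally $(5)\Leftrightarrow(6)$: $(5)\Rightarrow(6)$ is the $n=1$ case of Remark~\ref{r:fib-fully-faithful}, and for $(6)\Rightarrow(5)$ one recovers full-faithfulness via Remark~\ref{r:fib-fully-faithful} by building $(\Del^n)^{\flat}$ from its vertices first to the spine $\Sp^n$ (using only copies of $\partial\Del^1\hookrightarrow(\Del^1)^{\flat}$, supplied by $(6)$) and then from $\Sp^n$ to $(\Del^n)^{\flat}$ (a composition of inner admissible horn inclusions by Corollary~\ref{admissible-marked-horn-inclusion}, against which the MS-fibration $f$ has the contractible RLP). I expect the main obstacle to be the three-stage decomposition in $(2)\Rightarrow(3)$: one must verify that an arbitrary marked cofibration which is a weak equivalence on $0$-th spaces genuinely assembles, compatibly with the marking, out of the short list of cells in $(2)$, and in particular that the remarking cells $(\Del^1)^{\flat}\hookrightarrow(\Del^1)^{\sharp}$ are accessible — this is exactly where the left-cancellation trick, and hence the hypothesis that $f$ lifts against $\partial\Del^1\hookrightarrow(\Del^1)^{\flat}$ \emph{and} $\partial\Del^1\hookrightarrow(\Del^1)^{\sharp}$ separately, is needed.
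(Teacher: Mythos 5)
Most of your argument tracks the paper's proof (which runs the cycle $(1)\Rightarrow(2)\Rightarrow\cdots\Rightarrow(6)\Rightarrow(1)$): the skeletal induction for $(1)\Leftrightarrow(2)$, the cell-attachment argument for $(2)\Rightarrow(3)$ (your three-stage version, with the explicit left-cancellation step producing the contractible RLP against $(\Del^1)^{\flat}\hookrightarrow(\Del^1)^{\sharp}$, is if anything more careful about the remarking cells than the paper's one-step decomposition), the stability of the class $\mathcal{A}$ under pushout-product with $|\partial\Del^m|\hookrightarrow|\Del^m|$ for $(3)\Rightarrow(4)$, and the spine decomposition for $(6)\Rightarrow(1)$.

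The genuine gap is in your proof that $(1)$--$(4)$ imply $(5)$, i.e.\ that $f$ is an MS-fibration. You assert that ``$f$ is an MS-fibration iff it is a marked fibration with the contractible RLP against every element of $\BS$'' and call the ``if'' direction standard. It is not: that direction is equivalent to the claim that every trivial MS-cofibration lies in the weak saturation of the generating trivial marked cofibrations together with the pushout-products of $\BS$ with the maps $|\partial\Del^m|\hookrightarrow|\Del^m|$, and for a general left Bousfield localization the trivial cofibrations admit no such description (only fibrations with \emph{fibrant target} are controlled by locality). Indeed, if the characterization failed, the small object argument applied to a recalcitrant trivial MS-cofibration would manufacture exactly a marked fibration with the contractible RLP against $\BS$ that is not an MS-fibration; so you are assuming essentially what must be proved. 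The paper closes this step differently: it shows that every trivial MS-cofibration $g\colon X\hookrightarrow Y$ has $g_0\colon X_0\to Y_0$ a weak equivalence --- because $\cosk_0^+(Z)$ is a marked semiSegal space for every Kan complex $Z$, so $\Map_{\Top}(Y_0,Z)\to\Map_{\Top}(X_0,Z)$ is a weak equivalence for all such $Z$ --- hence every trivial MS-cofibration already lies in the class $\mathcal{A}$ of condition $(3)$, and $(3)$ then yields the MS-fibration property directly. You need this observation (or some substitute for it) to make your third cluster go through.
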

\begin{proof}
\
\begin{enumerate}
\item [$(1) \Rightarrow (2)$]
Invoking Remark~\ref{r:fib-fully-faithful} we note that the semi-simplicial set $\left(\partial \Del^n\right)^{\flat}$ can be obtained from $\overbrace{\Del^0 \coprod ... \coprod \Del^0}^{n+1}$ by successively performing pushouts along maps of the form
$$ \left(\partial \Del^k\right)^{\flat} \hrar \left(\Del^k\right)^{\flat} $$
for $k < n$. Hence the claim follows by induction on $n$.
\item[$(2) \Rightarrow (3)$]
Assume $f$ satisfies $(2)$ and let $g: X \hrar Y$ be a marked cofibration such that $g_0: X_0 \lrar Y_0$ is a weak equivalence. Then one can factor $g$ as
$$ X \x{g'}{\lrar} X' \x{g''}{\lrar} Y $$
such that $g'$ is a trivial marked cofibration and $g''$ induces an isomorphism $g''_0: X_0' \lrar Y_0$. Since $f$ is a marked fibration it will suffice to show that $f$ satisfies the right lifting property with respect to $g''$. But this follows by Remark~\ref{r:contractible} from the fact that $g''$ can be written as a (transfinite) composition of pushouts along maps of the form
$$ \left[\left|\Del^m\right| \otimes \left(\partial \Del^n\right)^{\flat}\right] \coprod_{\left|\partial\Del^m\right| \otimes \left(\partial\Del^n\right)^{\flat}}\left[\left|\partial\Del^m\right| \otimes \left(\Del^n\right)^{\flat}\right]  \hrar \left|\Del^m\right| \otimes \left(\Del^n\right)^{\flat} $$
and
$$ \left[\left|\Del^m\right| \otimes \partial \Del^1\right] \coprod_{\left|\partial\Del^m\right| \otimes \partial\Del^1}\left[\left|\partial\Del^m\right| \otimes \left(\Del^1\right)^{\sharp}\right]  \hrar \left|\Del^m\right| \otimes \left(\Del^1\right)^{\sharp} $$
for $m \geq 0$.
\item [$(3) \Rightarrow (4)$]
This implication follows from the fact that the family of cofibrations $g: X \lrar Y$ such that $g_0: X_0 \lrar Y_0$ is a weak equivalence is stable under replacing $g$ with
$$ \left|\partial \Del^m\right| \otimes Y \coprod_{\left|\partial \Del^m\right| \otimes X }  \left|\Del^m\right| \otimes X \hrar \left|\Del^m\right| \otimes Y .$$
\item [$(4) \Rightarrow (5)$]
Assume that $f$ satisfies $(4)$. Then it is straightforward (using Remark~\ref{r:fib-fully-faithful}) to deduce that $f$ is fully-faithful. To show that $f$ is an MS-fibration it will be enough to show that if $g: X \lrar Y$ is a trivial MS-cofibration then $g_0: X_0 \hrar Y_0$ is a weak equivalence. This in turn follows from the fact that if $Z$ is a Kan simplicial set then $\cosk^+_0(Z)$ is a marked semiSegal space (see Example~\ref{e:cosk^+_0}) which means that $\Map_{\Top}(Y_0,Z) \lrar \Map_{\Top}(X_0,Z)$ is a weak equivalence for every $Z$.
\item [$(5) \Rightarrow (6)$]
Follows from Remark~\ref{r:fib-fully-faithful}.
\item [$(6) \Rightarrow (1)$]
First note that if $f$ satisfies $(6)$ then it has the contractible right lifting property with respect to the map
$$ \overbrace{\Del^0 \coprod ... \coprod \Del^0}^{n+1} \hrar \left(\Sp^n\right)^{\flat} $$
where $\Sp^n \subseteq \Del^n$ is the $n$-spine (see \S\S~\ref{ss-semi-simplicial}). The desired result now follows from the fact that the inclusion
$$ \left(\Sp^n\right)^{\flat} \hrar \left(\Del^n\right)^{\flat} $$
is a trivial MS-cofibration and that $f$ is an MS-fibration.
\end{enumerate}
\end{proof}

\begin{cor}\label{lifting-criterion}
Let $f: W \lrar Z$ be a fully-faithful marked fibration. Let $X$ be a marked semi-simplicial space and $g: X \lrar Z$ a map. Then every lift $\wtl{g_0}:X_0 \lrar W_0$ of $g_0$ extends to a lift $\wtl{g}: X \lrar W$ of $g$.
\end{cor}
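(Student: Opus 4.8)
The plan is to deduce the statement directly from the lifting characterization of fully-faithful marked fibrations in Proposition~\ref{lift-character}, by exhibiting the obvious lifting square and applying the implication $(1) \Rightarrow (3)$ there.

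First I would regard the space $X_0$ of $0$-simplices of $X$ as a marked semi-simplicial space concentrated in degree zero (with empty marking, since it has no $1$-simplices), and consider the natural map $\iota: X_0 \hrar X$ which sends $X_0$ identically onto the space of $0$-simplices of $X$. Since $\iota$ is levelwise injective (it is the identity in degree $0$ and the inclusion of $\emptyset$ in all higher degrees) it is a marked cofibration, and it trivially respects markings because $(X_0)_1 = \emptyset$. The key observation is that the induced map $\iota_0: X_0 \lrar X_0$ is the identity, hence a weak equivalence. A lift $\wtl{g_0}: X_0 \lrar W_0$ of $g_0$ determines, after composing with the inclusion $W_0 \hrar W$ of $0$-simplices, a marked map which I will still denote $\wtl{g_0}: X_0 \lrar W$ (again, marked-ness is automatic as there are no edges in the source). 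This fits into a commutative square
$$ \xymatrix{
X_0 \ar^{\wtl{g_0}}[r]\ar^{\iota}[d] & W \ar^{f}[d] \\
X \ar^{g}[r] & Z \\
} $$
which commutes because the restriction of $g$ to $0$-simplices is $g_0$ and $\wtl{g_0}$ is, by hypothesis, a lift of $g_0$ along $f_0$.

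Now I would invoke Proposition~\ref{lift-character}: since $f$ is a fully-faithful marked fibration, condition $(3)$ of that proposition holds, so $f$ has the right lifting property with respect to every marked cofibration whose degree-$0$ part is a weak equivalence. The map $\iota$ is precisely such a cofibration, so the square above admits a diagonal filler $\wtl{g}: X \lrar W$. The lower triangle gives $f \circ \wtl{g} = g$, so $\wtl{g}$ is a lift of $g$, and the upper triangle gives $\wtl{g} \circ \iota = \wtl{g_0}$, so $\wtl{g}$ extends $\wtl{g_0}$, as required.

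The only things that require (minor) care are verifying that $\iota$ is genuinely a morphism of marked semi-simplicial spaces and that it qualifies for condition $(3)$ of Proposition~\ref{lift-character}; both are immediate from the definitions. There is no real obstacle here — all the substance has already been absorbed into the characterization of fully-faithful marked fibrations.
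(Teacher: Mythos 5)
Your argument is correct and is exactly the intended deduction: the paper states this corollary without proof as an immediate consequence of Proposition~\ref{lift-character}, and your route --- viewing $X_0$ as a marked semi-simplicial space concentrated in degree zero, noting that $\iota: X_0 \hrar X$ is a marked cofibration with $\iota_0$ the identity, and applying condition $(3)$ of that proposition to the evident lifting square --- is precisely how it is meant to follow.
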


Proposition~\ref{lift-character} allows us in particular to obtain the following description of fully-faithful maps between marked semiSegal spaces, relating them to the classical meaning of the notion:
\begin{cor}
Let $f: W \lrar Z$ be a map of marked semiSegal spaces. Then $f$ is fully-faithful if and only if $f$ induces weak equivalences
$$ \Map_{W}(x,y) \x{\simeq}{\lrar} \Map_{Z}(f_0(x),f_0(y)) $$
and
$$ \Map^+_{W}(x,y) \x{\simeq}{\lrar} \Map^+_{Z}(f_0(x),f_0(y)) .$$
\end{cor}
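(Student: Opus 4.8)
The plan is to unwind both sides of the statement into assertions about homotopy fibres and then to use the Segal condition to reduce everything to the case $n=1$ (together with the single marked square). Throughout, recall that since $W$ and $Z$ are marked semiSegal spaces they are in particular Reedy fibrant, so $W_1 \x{(d_0,d_1)}{\lrar} W_0 \times W_0$ and $Z_1 \lrar Z_0 \times Z_0$ are Kan fibrations with honest fibres $\Map_W(x,y)$, $\Map_Z(f_0(x),f_0(y))$; and by Lemma~\ref{marked-fibrant} the markings $M \subseteq W_1$, $N \subseteq Z_1$ are unions of connected components, so $M \lrar W_0\times W_0$ and $N \lrar Z_0 \times Z_0$ are also Kan fibrations with fibres $\Map^+_W(x,y)$, $\Map^+_Z(f_0(x),f_0(y))$.

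For the direction ``fully-faithful $\Rightarrow$ equivalences on mapping spaces'': if the $n=1$ square in the definition of fully-faithful is homotopy Cartesian, then the comparison map $W_1 \lrar Z_1 \times_{Z_0^2} (W_0^2)$ into the homotopy fibre product is a weak equivalence over $W_0 \times W_0$, and passing to fibres over a vertex $(x,y)$ (which by the fibrancy noted above are the homotopy fibres) yields the weak equivalence $\Map_W(x,y) \x{\simeq}{\lrar} \Map_Z(f_0(x),f_0(y))$. The same argument applied to the marked square gives the equivalence on $\Map^+$.

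For the converse, I would use the standard fiberwise criterion: a commutative square whose two downward legs are Kan fibrations is homotopy Cartesian iff the induced map on fibres over every vertex of the lower-left corner is a weak equivalence. The hypothesis on $\Map_W \lrar \Map_Z$ is exactly this criterion for the $n=1$ square, and the hypothesis on $\Map^+_W \lrar \Map^+_Z$ is exactly this criterion for the marked square. For $n \geq 2$ I would invoke the Segal condition for $W$ and for $Z$: the spine inclusion induces a weak equivalence $W_n \x{\simeq}{\lrar} W_1 \times_{W_0} \cdots \times_{W_0} W_1$ ($n$ factors) compatible with the projection to $W_0^{n+1}$, and the iterated fibre product maps to $W_0^{n+1}$ by a composite of pullbacks of Kan fibrations, whose fibre over $(x_0,\dots,x_n)$ is $\Map_W(x_0,x_1) \times \cdots \times \Map_W(x_{n-1},x_n)$. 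Hence the fibre of $W_n \lrar W_0^{n+1}$ over $(x_0,\dots,x_n)$ is weakly equivalent to this product, and similarly for $Z$; the induced map on fibres is $\prod_i \Map_W(x_{i-1},x_i) \lrar \prod_i \Map_Z(f_0(x_{i-1}),f_0(x_i))$, a weak equivalence because each factor is one by hypothesis. Thus every square in the definition of fully-faithful is homotopy Cartesian, so $f$ is fully-faithful.

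The only real care needed — the ``hard part'', such as it is — lies in the bookkeeping of homotopy fibres: one must check that the spine equivalence $W_n \x{\simeq}{\lrar} W_1 \times_{W_0} \cdots \times_{W_0} W_1$ genuinely lives over $W_0^{n+1}$, and that all the maps in play (the face map $W_1 \lrar W_0^2$, the restriction $M \lrar W_0^2$, and the iterated fibre product over $W_0$) are honest Kan fibrations so that fibres compute homotopy fibres. This is precisely where Reedy fibrancy of $W$ and $Z$ and Lemma~\ref{marked-fibrant} for the markings are used; once it is in place, every step is formal.
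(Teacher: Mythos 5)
Your argument is correct, but it follows a different route from the paper. The paper does not verify the homotopy Cartesian squares by hand: it factors $f$ as a trivial marked cofibration $W \lrar W'$ followed by a marked fibration $W'' \lrar Z$, observes that fully-faithfulness only depends on $f''$, and then quotes the lifting-theoretic characterization of fully-faithful marked fibrations (Proposition~\ref{lift-character}), so the corollary falls out of machinery already in place and meshes with how fully-faithfulness is used elsewhere (via contractible lifting properties). You instead give a direct fibrewise verification: Reedy fibrancy makes $W_n \lrar W_0^{n+1}$ and (via Lemma~\ref{marked-fibrant}) $M \lrar W_0\times W_0$ Kan fibrations whose honest fibres are the (marked) mapping spaces, the criterion ``a square with fibrant legs is homotopy Cartesian iff it is an equivalence on fibres over every vertex of the lower-left corner'' handles $n=1$ and the marked square, and the Segal condition (the spine equivalence $W_n \x{\simeq}{\lrar} W_1\times_{W_0}\cdots\times_{W_0}W_1$ over $W_0^{n+1}$) reduces $n\geq 2$ to products of the $n=1$ case. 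This is more elementary and self-contained -- it needs neither the factorization nor Proposition~\ref{lift-character}, and it works for an arbitrary map $f$ since only the vertical legs of the squares need to be fibrations -- at the cost of the bookkeeping you rightly flag: that the spine comparison lives over $W_0^{n+1}$, that iterated fibre products of Kan fibrations over $W_0$ are again Kan fibrations over $W_0^{n+1}$, and the standard (but worth citing) fact that a map of Kan fibrations over a fixed base which is an equivalence on all vertex fibres is an equivalence of total spaces. The paper's detour through the fibration replacement buys exactly the right to ignore all of this, since for a marked fibration the relevant strict pullbacks are automatically homotopy pullbacks and the lifting characterization has already been proved.
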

\begin{proof}
Factor $f$ as $W \x{f'}{\lrar} W' \x{f''}{\lrar} Z$ where $f'$ is a trivial marked cofibration and $f''$ is a marked fibration. Then $W'$ is also a marked semiSegal space and $f$ is fully-faithful if and only if $f''$ is fully-faithful. Applying Proposition~\ref{lift-character} to $f''$ one obtains the desired result.
\end{proof}

The following corollary of Proposition~\ref{lift-character} will be useful later:
\begin{cor}\label{c:lift-character-2}
Let $f: X \lrar Y$ be a map of marked semi-simplicial spaces. Then the following assertions are equivalent:
\begin{enumerate}
\item
For every MS-fibration $p:W \lrar Z$ and every MS-cofibration $g: X' \hrar Y'$ such that $g_0: X_0' \hrar Y_0'$ is a weak equivalence the induced map
$$ W^{Y'} \lrar Z^{Y'} \times_{Z^{X'}} W^{X'} $$
satisfies the right lifting property with respect to $f$.
\item
For every MS-cofibration $g: X' \hrar Y'$ such that $g_0: X_0' \hrar Y_0'$ is a weak equivalence the induced map
$$ \left[X \otimes Y'\right] \coprod_{X \otimes X'} \left[Y \otimes X'\right] \lrar Y \otimes Y' $$
is a trivial MS-cofibration.
\item
For every MS-fibration $p:W \lrar Z$ the induced map
$$ W^Y \lrar Z^Y \times_{Z^X} W^X $$
is a fully-faithful MS-fibration.
\item
$f$ is an MS-cofibration and the maps
\begin{equation}\label{e:char4a}
\left[X \otimes \left(\Del^1\right)^{\flat}\right] \coprod_{X \otimes \partial\Del^1} \left[Y \otimes \partial\Del^1\right] \lrar \left[Y \otimes \left(\Del^1\right)^{\flat}\right]
\end{equation}
\begin{equation}\label{e:char4b}
\left[X \otimes \left(\Del^1\right)^{\sharp}\right] \coprod_{X \otimes \partial\Del^1} \left[Y \otimes \partial\Del^1\right] \lrar \left[Y \otimes \left(\Del^1\right)^{\sharp}\right]
\end{equation}
are trivial MS-cofibrations.
\end{enumerate}
\end{cor}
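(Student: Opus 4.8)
The plan is to derive all four equivalences formally from the two‑variable adjunction carried by the closed symmetric monoidal structure on $\Top^{\Del^{\op}_s}_+$, together with Proposition~\ref{lift-character}. For maps $f\colon X\lrar Y$ and $g\colon X'\lrar Y'$ of marked semi‑simplicial spaces I write $f\boxtimes g$ for the pushout‑product $[X\otimes Y']\coprod_{X\otimes X'}[Y\otimes X']\lrar Y\otimes Y'$, and for $p\colon W\lrar Z$ I write $\langle g,p\rangle$ for the pullback‑hom $W^{Y'}\lrar Z^{Y'}\times_{Z^{X'}}W^{X'}$. The one formal input is the lifting transposition valid in any closed symmetric monoidal category: $f$ has the left lifting property against $\langle g,p\rangle$ iff $f\boxtimes g$ has the left lifting property against $p$, and, using that $\otimes$ is symmetric (so $f\boxtimes g\cong g\boxtimes f$) and $\boxtimes$ associative, also iff $g$ has the left lifting property against $\langle f,p\rangle$. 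Since $\Seg_s$ is a symmetric monoidal model category (Theorem~\ref{marked-monoidality}), the pushout‑product of two MS‑cofibrations is an MS‑cofibration and is a trivial MS‑cofibration whenever one factor is; I also use that a map of $\Seg_s$ has the left lifting property against every MS‑fibration precisely when it is a trivial MS‑cofibration.

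For $(1)\Leftrightarrow(2)$: transposing, condition $(1)$ says that $f\boxtimes g$ has the left lifting property against $p$ for every MS‑fibration $p$ and every MS‑cofibration $g$ with $g_0$ a weak equivalence; letting $p$ range over all MS‑fibrations this is equivalent to $f\boxtimes g$ being a trivial MS‑cofibration, and letting $g$ range this is precisely $(2)$. For $(1)\Leftrightarrow(3)$: by Proposition~\ref{lift-character} (equivalence of its items $(3)$ and $(5)$) a map is a fully‑faithful MS‑fibration exactly when it has the right lifting property against every MS‑cofibration $g$ with $g_0$ a weak equivalence; applying this to $\langle f,p\rangle$ and transposing, the assertion that $\langle f,p\rangle$ has the right lifting property against $g$ is equivalent to the assertion that $\langle g,p\rangle$ has the right lifting property against $f$, so $(3)$ says precisely that $\langle g,p\rangle$ has the right lifting property against $f$ for every MS‑fibration $p$ and every such $g$, which is $(1)$.

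For $(2)\Rightarrow(4)$: the two maps in $(4)$ are $f\boxtimes g$ for $g$ equal to $\partial\Del^1\hrar(\Del^1)^{\flat}$ and to $\partial\Del^1\hrar(\Del^1)^{\sharp}$; in both cases $g_0=\mathrm{id}_{\{0,1\}}$ is a weak equivalence, so $(2)$ makes them trivial MS‑cofibrations, and it remains only to see that $(2)$ forces $f$ to be an MS‑cofibration. This follows from $f\boxtimes(\partial\Del^1\hrar(\Del^1)^{\flat})$ being levelwise injective: unwinding the description of $\otimes$ in Remark~\ref{concrete}, for each $k$ the map $f_k\colon X_k\lrar Y_k$ occurs as the restriction of this pushout‑product to a coproduct summand of its source --- one indexed by a shuffle $[k]\lrar[n]\times[1]$ with surjective $[1]$‑coordinate, hence not identified under the pushout --- so injectivity of $f\boxtimes g$ forces injectivity of every $f_k$. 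For $(4)\Rightarrow(3)$: fix an MS‑fibration $p\colon W\lrar Z$; I verify that $\langle f,p\rangle$ satisfies item $(6)$ of Proposition~\ref{lift-character}, which then gives item $(5)$ there, i.e.\ $(3)$. First, $\langle f,p\rangle$ is an MS‑fibration: for a trivial MS‑cofibration $\gamma$, transposition shows $\langle f,p\rangle$ has the right lifting property against $\gamma$ iff $p$ has the right lifting property against $f\boxtimes\gamma$, and $f\boxtimes\gamma$ is a trivial MS‑cofibration by the pushout‑product axiom since $f$ is an MS‑cofibration by hypothesis $(4)$. Second, $\langle f,p\rangle$ has the contractible right lifting property against $\partial\Del^1\hrar(\Del^1)^{\flat}$ and $\partial\Del^1\hrar(\Del^1)^{\sharp}$: writing $\iota_m\colon|\partial\Del^m|\hrar|\Del^m|$ and letting $g$ be either of these maps, Remark~\ref{r:contractible} says this amounts to $\langle f,p\rangle$ having the right lifting property against $\iota_m\boxtimes g$ for all $m\geq0$; transposing, that is equivalent to $p$ having the right lifting property against $\iota_m\boxtimes(f\boxtimes g)$, which is a trivial MS‑cofibration by the pushout‑product axiom (as $f\boxtimes g$ is a trivial MS‑cofibration by $(4)$ and $\iota_m$ an MS‑cofibration), and so $p$ lifts against it.

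Combining these gives $(4)\Rightarrow(3)\Leftrightarrow(1)\Leftrightarrow(2)\Rightarrow(4)$, so the four conditions are equivalent. Everything except $(4)\Rightarrow(3)$ is pure transposition together with Proposition~\ref{lift-character}; the substance of $(4)\Rightarrow(3)$ is the observation that, via Remark~\ref{r:contractible}, the infinite family of conditions packaged in Proposition~\ref{lift-character}$(6)$ for the pullback‑hom $\langle f,p\rangle$ collapses, under the two‑variable adjunction, into statements about pushout‑products of the two given trivial MS‑cofibrations with the generating cofibrations $\iota_m$ of spaces --- all of which are trivial MS‑cofibrations. I expect the bookkeeping of adjoint transpositions, together with the elementary but slightly fiddly verification in $(2)\Rightarrow(4)$ that $f$ is levelwise injective, to be the only points requiring real care.
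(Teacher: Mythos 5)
Your proposal is correct and follows essentially the same route as the paper: every equivalence is reduced, via the exponential law (adjoint transposition of lifting problems) and the pushout-product axiom of Theorem~\ref{marked-monoidality}, to Proposition~\ref{lift-character}. The only divergence is in how cofibrancy of $f$ is extracted: you deduce levelwise injectivity of $f$ combinatorially from the levelwise injectivity of the pushout-product appearing in $(2)$, whereas the paper obtains it from $(3)$ by observing that the pullback-hom $W^Y \lrar Z^Y \times_{Z^X} W^X$ is a trivial MS-fibration whenever $W \lrar Z$ is, so that $f$ has the left lifting property against all trivial MS-fibrations --- both arguments are sound.
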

\begin{proof}
The equivalence $(1) \Leftrightarrow (2)$ follows directly from the exponential law. The equivalence $(2) \Leftrightarrow (3)$ follows from the exponential law together with Proposition~\ref{lift-character}. We will now prove that $(3) \Leftrightarrow (4)$. The direction $(4) \Rightarrow (3)$ follows from the exponential law and Proposition~\ref{lift-character}. Now assume $f$ satisfies $(3)$. Then for every trivial MS-fibration $W \lrar Z$ the induced map
$$ W^Y \lrar Z^Y \times_{Z^X} W^X $$
is a trivial MS-fibration. This implies that $f$ has the left lifting property with respect to every trivial MS-fibration and so is an MS-cofibration. The second part of $(4)$ then follows from Proposition~\ref{lift-character}.
\end{proof}

\begin{rem}\label{r:satur}
The class of maps $f: X \lrar Y$ satisfying the equivalent conditions of Corollary~\ref{c:lift-character-2} is weakly saturated in view of characterization $(1)$ and contains all trivial MS-cofibrations.
\end{rem}

\begin{rem}\label{r:iso-under}
If $f: X \lrar Y$ is a map whose underlying map of unmarked semi-simplicial spaces is an isomorphism then $f$ satisfies condition $(4)$ of Corollary~\ref{c:lift-character-2}. To see this, note that such an $f$ is automatically an MS-cofibration. In addition, the map~\ref{e:char4a} is an isomorphism and the map~\ref{e:char4b} is a triangle remarking (see Definition~\ref{triangle-remarking}).
\end{rem}

We now turn to the notion of \textbf{Dwyer-Kan equivalences}. This notion will be obtained from the notion of fully-faithful maps by requiring an appropriate analogue of "essential surjectivity". This notion is most well behaved for quasi-unital marked semiSegal spaces (see Lemma~\ref{equiv-relation}), but it will be convenient to have it defined in more generality.

\begin{define}
Let $X$ be a marked semi-simplicial space. Let $x \simeq y$ denote the weakest equivalence relation satisfying the following properties:
\begin{enumerate}
\item
If $x,y$ are in the same connected component of $X_0$ then $x \simeq y$.
\item
If there exists a marked edge $f \in X_1$ such that $d_0(f) = x$ and $d_1(f) = y$ then $x \simeq y$.
\end{enumerate}
\end{define}

\begin{lem}\label{equiv-relation}
Let $X$ be a quasi-unital marked semiSegal space and $x,y \in X_0$ two points. Then $x \simeq y$ if and only if there exists a marked morphism $x \lrar y$.
\end{lem}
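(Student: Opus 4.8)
I would begin by disposing of the easy direction. If there is a marked morphism $f\colon x\lrar y$, then $f$ is a marked edge of $(X,M)$ with $d_0(f)=x$, $d_1(f)=y$, so condition (2) in the definition of $\simeq$ applies verbatim and gives $x\simeq y$. (No hypothesis beyond $(X,M)$ being a marked semi-simplicial space is used here, and $\simeq$ is symmetric, so the orientation convention plays no role.)

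For the converse the plan is a standard reduction. Let $R$ be the relation on $X_0$ defined by $x\mathbin R y$ iff $\Map^+_X(x,y)\neq\emptyset$, i.e.\ iff there is a marked morphism $x\lrar y$. It suffices to show that $R$ is an equivalence relation satisfying the two defining properties of $\simeq$: since $\simeq$ is by construction the \emph{weakest} such relation, this yields $\simeq\,\subseteq\,R$, which is exactly the implication "$x\simeq y\ \Rightarrow$ there is a marked morphism $x\lrar y$". Property (2) holds for $R$ tautologically, so the real content is (a) that $R$ is an equivalence relation, and (b) that $R$ satisfies property (1).

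For (a) I would pass to the marked semiKan space $\wtl X$ of Definition~\ref{sub-groupoid}. By Corollary~\ref{max-groupoid-qu} together with Remarks~\ref{max-groupoid-2} and~\ref{max-groupoid-3}, $\wtl X$ is a quasi-unital marked semiKan space with $\wtl X_0=X_0$ and $\wtl X_1=M$ (here one uses that, $X$ being quasi-unital, $M$ is precisely the set $X^{\inv}_1$ of invertible edges of $X$), so that $\Map_{\wtl X}(x,y)=\Map^+_X(x,y)$. Invoking the canonical decomposition of a quasi-unital semiKan space into connected components (\S\S~\ref{ss-qu-groupoids}) — within which all mapping spaces are non-empty and between which all mapping spaces are empty — one sees that $x\mathbin R y$ holds exactly when $x$ and $y$ lie in the same connected component of $\wtl X$, and "lying in the same connected component" is visibly an equivalence relation. (Alternatively, one can check reflexivity, symmetry and transitivity of $R$ directly: reflexivity because every object carries a quasi-unit, which is in particular an invertible edge and hence marked; symmetry by filling a horn $\Lam^2_2\lrar X$ whose two edges through the last vertex are $f$ and a quasi-unit at $y$, and observing by the $2$-out-of-$3$ closure of invertibility, Lemma~\ref{inv-closed-composition}, that the resulting edge $y\lrar x$ is invertible hence marked; transitivity by using the Segal condition to produce a $2$-simplex whose two outer edges are the two given marked edges and applying the same closure property to its long edge.)

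For (b), suppose $x$ and $y$ lie in the same connected component of $X_0$. Choose a quasi-unit $q\colon x\lrar x$ — it exists because $X$ is quasi-unital, and it is an invertible edge, hence marked — and a path $p$ in $X_0$ from $x$ to $y$. Since $X$ is Reedy fibrant, $(d_0,d_1)\colon X_1\lrar X_0\times X_0$ is a Kan fibration; lifting the path $t\mapsto(x,p(t))$ (which runs from $(x,x)$ to $(x,y)$) starting at $q$ produces an edge $q'\colon x\lrar y$ lying in the same connected component of $X_1$ as $q$. As $X^{\inv}_1$ is a union of connected components of $X_1$ and contains $q$, the edge $q'$ is invertible, hence marked, so $x\mathbin R y$. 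This finishes the plan. The only mildly fiddly points, neither of them deep, will be the identification $\Map^+_X(x,y)=\Map_{\wtl X}(x,y)$ and the clean invocation of the structure theory of quasi-unital semiKan spaces to conclude that $R$ is an equivalence relation (or, in the by-hand alternative, keeping track of orientations when filling the horn for symmetry); everything else is formal.
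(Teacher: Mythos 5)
Your proposal is correct and follows essentially the same route as the paper: the paper's (very terse) proof likewise observes that the relation ``there exists a marked morphism $x\lrar y$'' is an equivalence relation for a quasi-unital marked semiSegal space and contains the same-connected-component relation, whence it contains $\simeq$ by minimality, the converse being trivial. Your argument simply supplies the details the paper omits (horn filling via Remark~\ref{inv-concrete}, closure under $2$-out-of-$3$, and path lifting along the Kan fibration $(d_0,d_1)$ using that $X^{\inv}_1$ is a union of components), all of which are sound.
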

\begin{proof}
This follows from the fact that the relation of having a marked morphism from $x$ to $y$ is already an equivalence relation when $X$ is a quasi-unital marked semiSegal space, and that this relation contains the relation of being in the same connected component.
\end{proof}

\begin{define}
Let $f: X \lrar Y$ be a map between semi-simplicial spaces. We will say that $f$ is a \textbf{Dwyer-Kan equivalence} (DK for short) if it is fully faithful and induces a surjective map on the set of equivalence-classes of $\simeq$.
\end{define}

We will now show that under a mild additional hypothesis a marked-fibrant semi-simplcial space which is close enough to a quasi-unital marked semiSegal space is itself a quasi-unital marked semiSegal space. This will be useful for us when constructing completions (see \S\S~\ref{ss-completion}).
\begin{lem}\label{qu-criterion}
Let $X$ be a quasi-unital marked semiSegal space, $W$ a marked-fibrant semi-simplcial space and $f: X \lrar W$ a fully-faithful map such that $f_0:X_0 \lrar W_0$ is surjective on connected components. Then $W$ is a quasi-unital marked semiSegal space and $f$ is a DK-equivalence.
\end{lem}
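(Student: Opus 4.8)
The plan is to transfer each defining property of a quasi-unital marked semiSegal space from $X$ to $W$, exploiting the two hypotheses on $f$: full-faithfulness identifies the mapping spaces and $2$-simplex fibres of $W$ lying over objects in the image of $f_0$ with the corresponding data of $X$, while $\pi_0$-surjectivity of $f_0$ — combined with the fact that $W^{\inv}_1$ and $M_W$ are unions of connected components of $W_1$ (Lemma~\ref{marked-fibrant}) — reduces statements quantified over all objects or edges of $W$ to statements about the image of $f$. The basic tool is the standard principle that if $S' \to S$ is surjective on $\pi_0$, then a map of spaces over $S$ is a weak equivalence if and only if its homotopy base change to $S'$ is (since homotopy pullbacks preserve weak equivalences, and a map over a base is an equivalence iff it is so on all homotopy fibres).

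I would first check that $W$ is a semiSegal space. Reedy fibrancy is contained in marked-fibrancy (Lemma~\ref{marked-fibrant}), so only the Segal condition needs proof. View the comparison map $W_{m+n} \to W_m \times^h_{W_0} W_n$ as a map of spaces over $(W_0)^{m+n+1} = (W_0)^{m+1} \times_{W_0} (W_0)^{n+1}$ and base change it along the $\pi_0$-surjection $(X_0)^{m+n+1} \to (W_0)^{m+n+1}$; using full-faithfulness of $f$ at levels $m+n$, $m$ and $n$ and chasing the iterated homotopy pullbacks (the middle $W_0$ turning into $X_0$), the result is identified with $X_{m+n} \to X_m \times^h_{X_0} X_n$, which is an equivalence since $X$ is semiSegal. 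Hence $W$ is a semiSegal space, and in particular invertibility of edges of $W$ is defined.

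Next I would establish that $M_W = W^{\inv}_1$ and that $d_0 \colon M_W \to W_0$ is surjective. The crucial sublemma is that for $x,y \in X_0$, full-faithfulness identifies $\Map_W(f_0 x, f_0 y) \simeq \Map_X(x,y)$, $\Map^+_W(f_0 x, f_0 y) \simeq \Map^+_X(x,y)$, and (using full-faithfulness at level $2$) the fibres $C^R_{e,z}$ and their pre-composition analogues over such objects, compatibly with the maps $\varphi,\psi$; hence $f$ both preserves and reflects invertibility of edges with endpoints in the image of $f_0$. For $M_W \subseteq W^{\inv}_1$: each component of $M_W$ contains an edge $f(\tilde e)$ with $\tilde e$ a marked, hence invertible, edge of $X$ — obtained from an arbitrary $e$ in the component by transporting along a path in $W_0 \times W_0$ into the image of $f_0 \times f_0$ and using that the marking square is homotopy Cartesian — and $f(\tilde e)$ is invertible because $f$ preserves invertibility, so the whole component of $e$ consists of invertible edges. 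For $W^{\inv}_1 \subseteq M_W$: transport an invertible edge $g$ of $W$ along paths in $W_0$ to an invertible edge between objects in the image of $f_0$, realise it up to homotopy as $f(\tilde g)$, note that $\tilde g$ is invertible (since $f$ reflects invertibility) hence marked (since $X$ is quasi-unital), so $f(\tilde g)$ and with it $g$ are marked. Surjectivity of $d_0$ is similar: an object $a$ of $W$ lies in the component of some $f_0 x$, the object $x$ admits a marked edge out of it, its image is a marked edge out of $f_0 x$, and transporting along a path $a \rightsquigarrow f_0 x$ yields a marked edge out of $a$. Closure of $M_W$ under $2$-out-of-$3$ is then automatic from $M_W = W^{\inv}_1$ and Lemma~\ref{inv-closed-composition}(1). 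Thus $W$ is a quasi-unital marked semiSegal space, and $f$ is a DK-equivalence because it is fully-faithful and the $\simeq$-classes of $W_0$ are unions of components, so $\pi_0$-surjectivity of $f_0$ forces surjectivity on $\simeq$-classes.

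The main obstacle is the bookkeeping in the Segal-condition step — arranging the iterated homotopy pullbacks so that base change along $(X_0)^{m+n+1}$ visibly converts the $W$-Segal map into the $X$-Segal map (in particular that the $W_0$ in the middle of $W_m \times^h_{W_0} W_n$ becomes $X_0$) — and, relatedly, verifying that the constructions $[e]_*$, $[e]^*$ defining invertibility are compatible with the full-faithfulness equivalences, which requires full-faithfulness at level $2$ to match the spaces $C^R_{e,z}$ and uses naturality of $\varphi,\psi$. Everything else reduces to routine applications of the $\pi_0$-surjection principle on unions of connected components.
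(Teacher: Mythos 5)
Your proposal is correct in outline, but it takes a genuinely different route from the paper's. The paper first factors $f$ so as to assume it is a marked fibration with $X_0 \lrar W_0$ surjective (marked equivalences preserve the property of being a quasi-unital marked semiSegal space), and then runs everything through lifting theory: by Proposition~\ref{lift-character}, a fully-faithful marked fibration has the contractible right lifting property against every marked cofibration which is an equivalence in degree $0$; since each generating map $Y \lrar Z$ of the set $\BS$ (Definition~\ref{local-generators}) is an isomorphism in degree $0$, the square formed by $\Map^+(Z,X), \Map^+(Y,X), \Map^+(Z,W), \Map^+(Y,W)$ is homotopy Cartesian, and surjectivity of $X_0 \lrar W_0$ (via Corollary~\ref{lifting-criterion}, the domains being discrete in degree $0$) makes the map $\Map^+(Y,X) \lrar \Map^+(Y,W)$ surjective, so triviality of the fibration on the $X$-side (locality of $X$) forces triviality on the $W$-side; quasi-unitality is then the right lifting property against $C_0,C_1,C_2$ (Lemma~\ref{qu-rlp}), obtained by lifting maps from their levelwise discrete domains through $X$. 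Thus the paper never verifies the Segal condition or the identity $M=W^{\inv}_1$ by hand, and it treats the Segal-type and marking-type locality conditions uniformly. Your argument instead checks the defining conditions fibrewise: base change of the Segal map along the $\pi_0$-surjection $(X_0)^{m+n+1}\lrar (W_0)^{m+n+1}$, and path-transport together with preservation/reflection of invertibility to get $M_W=W^{\inv}_1$ and surjectivity of $d_0$ on marked edges. This is more elementary and bypasses Proposition~\ref{lift-character}, at the cost of the bookkeeping you acknowledge; two points there should be made explicit: (i) the compatibility of $[e]_*,[e]^*$ with $f$ follows from full-faithfulness in degrees $1$ and $2$ plus the pasting lemma for homotopy pullbacks, so that $X_2 \lrar W_2$ is also homotopy Cartesian over $X_1\times X_0 \lrar W_1\times W_0$ and the spaces $C^R_{e,z}$ match; and (ii) invertibility of an edge of $W$ must a priori be tested against all objects of $W_0$, so preservation needs the (routine, Reedy-fibrancy) observation that $[g]_*,[g]^*$ being isomorphisms depends only on the connected component of the test object, since your full-faithfulness identification only controls test objects in the image of $f_0$. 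With these spelled out, your proof goes through and yields the same conclusion.
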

\begin{proof}
The fact that $f$ is a DK-equivalence follows directly from the definition. Hence it will suffice to prove that $W$ is a quasi-unital marked semiSegal space.

We can factor $f$ as $X \x{f'}{\lrar} X' \x{f''}{\lrar} W$ such that $f'$ is a trivial marked cofibration and $f''$ is a marked fibration. Then $X'$ is marked-fibrant and marked-equivalent to $X$, so that $X'$ is necessarily a quasi-unital marked semi-simplicial space and $f''$ is fully-faithful. Hence we can assume without loss of generality that $f$ itself is a marked fibration and that $f_0: X_0 \lrar W_0$ is surjective.

We start by showing that $W$ is a marked semiSegal space. Let $f: Y \lrar Z$ be a map in $\BS$ (see Definition~\ref{local-generators}). We need to show that the map
$$ \Map^+\left(Z,W\right) \lrar \Map^+\left(Y,W\right) $$
is a weak equivalence. Note that in all cases the $0$'th level map
$ f_0: Y_0 \lrar Z_0 $
is an isomorphism. Condition $(4)$ of Proposition~\ref{lift-character} then tells us that
$$ \xymatrix{
\Map^+(Z,X) \ar[r]\ar[d] &  \Map^+\left(Z,W\right)\ar[d] \\
\Map^+(Y,X) \ar[r] & \Map^+\left(Y,W\right) \\
}$$
is homotopy Cartesian. Furthermore, all maps appearing in this diagrams are fibrations. We wish to show that the right vertical fibration is trivial. Since $X$ is a marked semiSegal space the left vertical Kan fibration is trivial. Now since Kan fibrations are trivial if and only if all their fibers are contractible it will be enough to show that the lower horizontal map is surjective. In light of Corollary~\ref{lifting-criterion} it will be enough to show that the map
$$ \Map_{\Top}(Y_0,X_0) \lrar \Map_{\Top}\left(Y_0,W_0\right) $$
is surjective. But this just follows from the fact that $Y_0$ is discrete and the map
$ X_0 \lrar W_0 $
is surjective.

We now show that $W$ is quasi-unital. According to Lemma~\ref{qu-rlp} we need to show that the map $W \lrar T$ has the right lifting property with respect to the maps $C_0,C_1,C_2$. Let $D_i$ be the domain of $C_i$. Since the $D_i$'s are levelwise discrete and the map $X_0 \lrar W_0$ is surjective we can use Corollary~\ref{lifting-criterion} in order to lift any map $D_i \lrar W$ to a map $D_i \lrar X$. Since $X$ is quasi-unital it satisfies the right lifting property with respect to each $C_i$ and so the result follows.
\end{proof}

\subsection{ Q-fibrations and Q-anodyne maps }\label{ss-Q-anodyne}

In the beginning of \S~\ref{ss-marked-semiSegal} we saw that the property of being quasi-unital can be expressed as a certain right lifting property (see Lemma~\ref{qu-rlp}). This idea leads one to define the following relative version of quasi-unitality:

\begin{define}
Let $f: W \lrar Z$ be an MS-fibration of marked semi-simplicial spaces. We will say that $f$ is a \textbf{Q-fibration} if it satisfies the right lifting property with respect to the maps $C_0,C_1,C_2$ above.
\end{define}

\begin{example}
Let $W$ be a quasi-unital marked semiSegal space. Then the natural map $W \lrar T$ is a Q-fibration.
\end{example}

The notion of Q-fibrations has a left-hand-side counterpart:
\begin{define}
Let $f: X \lrar Y$ be an MS-cofibration. We will say that $f$ is \textbf{Q-anodyne} if it satisfies the left lifting property with respect to all Q-fibrations.
\end{define}

\begin{example}
Any trivial MS-cofibration is Q-anodyne.
\end{example}

\begin{rem}\label{r:saturated}
The maps $C_0,C_1$ and $C_2$ are Q-anodyne. In fact, since $\Top^{\Del^{\op}_s}_+$ is presentable one can identify the collection of all Q-anodyne maps with the weakly saturated class of maps generated from $C_0,C_1,C_2$.
\end{rem}

\begin{rem}\label{cofully-pushouts}
Since the class of Q-anodyne maps is weakly saturated and contains all trivial MS-cofibrations it is also closed under certain \textbf{homotopy pushouts}. More precisely, if we have a homotopy pushout square
$$ \xymatrix{
X \ar[r]\ar[d] & Z \ar[d] \\
Y \ar[r] & W \\
}$$
in $\Seg_s$ such that the left vertical map is Q-anodyne then the lower right vertical map will be Q-anodyne as well as long as the square is \textbf{Reedy cofibrant}, i.e., as long as the induced map
$ Y \coprod_{X} Z \lrar W $
is an MS-cofibration.
\end{rem}

\begin{rem}
Although one cannot identify $\QsS$ with the subcategory of fibrant objects in some localization of $\Top^{\Del^{\op}_s}_+$, one can still associate with $\QsS$ the \textbf{weak factorization system} formed by Q-fibrations and Q-anodyne maps. Although not part of a model category, it enables many model categorical arguments and manipulations. The purpose of this section is to exploit this point of view to obtain results which will be used in the next section.
\end{rem}

\begin{lem}\label{l:q-is-lift}
Let $f: X \lrar Y$ be a Q-anodyne map. Then $f$ satisfies the equivalent conditions of Corollary~\ref{c:lift-character-2}.
\end{lem}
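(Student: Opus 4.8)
The plan is to reduce the statement to a check on the three generating maps and then to an explicit combinatorial computation with the square $\Del^{1}\otimes\Del^{1}$. By Remark~\ref{r:satur} the class $\mathcal{C}$ of maps satisfying the equivalent conditions of Corollary~\ref{c:lift-character-2} is weakly saturated, and by Remark~\ref{r:saturated} the class of Q-anodyne maps is precisely the weakly saturated class generated by $C_{0},C_{1},C_{2}$. Hence it is enough to prove that each of $C_{0},C_{1},C_{2}$ lies in $\mathcal{C}$.

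For $C_{2}\colon(\Del^{3},A)\hrar(\Del^{3})^{\sharp}$ this is immediate: its underlying map of unmarked semi-simplicial spaces is the identity on $\Del^{3}$, so by Remark~\ref{r:iso-under} it already satisfies condition $(4)$ of Corollary~\ref{c:lift-character-2}.

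The real work is with $C_{0}\colon\Del^{\{0\}}\hrar(\Del^{1})^{\sharp}$ (and then with $C_{1}$, which is handled by the evident symmetry $0\leftrightarrow 1$ of $\Del^{1}$). Here I would verify condition $(4)$ of Corollary~\ref{c:lift-character-2} by hand. The map $C_{0}$ is visibly an MS-cofibration, so it remains to see that the two associated pushout-product maps --- the maps~\ref{e:char4a} and~\ref{e:char4b} with $(X,Y)=(\Del^{\{0\}},(\Del^{1})^{\sharp})$ --- are trivial MS-cofibrations. Unwinding the formula for $\otimes$ on marked semi-simplicial spaces (Remark~\ref{concrete} together with the description of the marking on a product), the source of each of these two maps is the three-edge path $10 - 00 - 01 - 11$ sitting inside the square $\Del^{1}\otimes\Del^{1}$, and the target is the whole square, carrying the marking induced from $(\Del^{1})^{\sharp}\otimes(\Del^{1})^{\flat}$ in the first case and from $(\Del^{1})^{\sharp}\otimes(\Del^{1})^{\sharp}=(\Del^{1}\otimes\Del^{1})^{\sharp}$ in the second. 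One then builds the square out of this path in two stages: first attach the triangle $\Del^{\{00,01,11\}}$ along its inner spine horn $\Lam^{2}_{1}$, which also adjoins the diagonal $\Del^{\{00,11\}}$; then attach the triangle $\Del^{\{00,10,11\}}$ along the outer horn $\Lam^{2}_{0}$. The key point is that in both target markings the edge $\Del^{\{00,10\}}$ is marked, so this second step is a pushout along an \emph{admissible} marked horn inclusion in the sense of Definition~\ref{admissible}; in the $\sharp$ case one additionally interleaves two triangle remarkings (Definition~\ref{triangle-remarking}) to upgrade $\Del^{\{00,11\}}$ and $\Del^{\{10,11\}}$ to marked edges once the relevant triangles are present. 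Since admissible marked horn inclusions and triangle remarkings are trivial MS-cofibrations, and trivial MS-cofibrations are closed under pushout and composition, the maps~\ref{e:char4a} and~\ref{e:char4b} for $C_{0}$ are trivial MS-cofibrations, so $C_{0}\in\mathcal{C}$ and we are done.

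The step I expect to be the main obstacle is precisely this last computation. Since $C_{0}$ is itself \emph{not} a trivial MS-cofibration, the marking cannot be ignored: with all edges flat the same three-edge path does not generate the square through admissible horns at all, and the argument works only because the ``vertical'' edge $\Del^{\{00,10\}}$ of the square is marked, which is exactly what makes the outer horn $\Lam^{2}_{0}\hrar\Del^{2}$ admissible. Thus the delicate part is to order the two triangle attachments, and to place the remarkings in the $\sharp$ case, so that at every stage one is pushing out along an honest admissible marked horn inclusion or triangle remarking with the correct marking on each newly adjoined face --- a routine but careful check on the combinatorics of $\Del^{1}\otimes\Del^{1}$, entirely in the spirit of Lemma~\ref{horn-pushouts-admissible}.
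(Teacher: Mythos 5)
Your proposal is correct and follows the paper's own proof essentially verbatim: reduce to the generators $C_0,C_1,C_2$ via weak saturation, dispose of $C_2$ by Remark~\ref{r:iso-under}, and verify condition $(4)$ for $C_0,C_1$ by building $\Del^1\otimes\Del^1$ from the three-edge path via two pushouts along admissible marked horn inclusions (plus triangle remarkings in the $\sharp$ case). The explicit ordering of the two triangle attachments and the observation that the marked edge $\Del^{\{00,10\}}$ is what makes the outer horn admissible are exactly the details the paper leaves implicit.
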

\begin{proof}
In view of Remark~\ref{r:satur} it will suffice to prove that the $C_i$' satisfy condition $(4)$ of Corollary~\ref{c:lift-character-2}. For $C_2$ this is a special case of Remark~\ref{r:iso-under}. For $C_i,i=0,1$ we need to check that the maps
$$ \left[\left(\Del^1\right)^{\sharp} \times \partial\Del^1\right] \coprod_{\Del^{\{i\}} \times \partial\Del^1} \left[\Del^{\{i\}} \times \left(\Del^1\right)^{\flat}\right] \lrar \left(\Del^1\right)^{\sharp} \otimes \left(\Del^1\right)^{\flat} $$
and
$$ \left[\left(\Del^1\right)^{\sharp} \times \partial \Del^1\right] \coprod_{\Del^{\{i\}} \times \partial \Del^1} \left[\Del^{\{i\}} \times \left(\Del^1\right)^{\sharp}\right] \lrar \left(\Del^1\right)^{\sharp} \otimes \left(\Del^1\right)^{\sharp} $$
are trivial MS-cofibrations. Now in the first map the right hand side can be obtained from the left hand side by performing two pushouts along admissible horn inclusions of dimension $2$. In the second map one needs to perform in addition a triangle remarking (see Definition~\ref{triangle-remarking}).

\end{proof}

\begin{cor}\label{Q-anodyne-lem}
Let $f: X \lrar Y$ be a Q-anodyne map and let $p:W \lrar Z$ be a Q-fibration. Then the induced map
$$ f^p: W^Y \lrar Z^Y \times_{Z^X} W^X $$
is a DK-equivalence.
\end{cor}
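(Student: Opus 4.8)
The plan is to split the claim along the definition of a DK-equivalence: $f^p$ must be fully-faithful and must induce a surjection on $\simeq$-classes, and each half will come essentially for free from results already established.

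For fully-faithfulness, I would first observe that a Q-fibration is in particular an MS-fibration. Since $f$ is Q-anodyne, Lemma~\ref{l:q-is-lift} places $f$ among the maps satisfying the equivalent conditions of Corollary~\ref{c:lift-character-2}. Applying condition $(3)$ of that corollary to the MS-fibration $p$ then shows that $f^p: W^Y \lrar Z^Y \times_{Z^X} W^X$ is a fully-faithful MS-fibration; in particular it is fully-faithful, which is the first half of the definition of a DK-equivalence.

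For the surjectivity half, I would show that $f^p$ is in fact surjective on vertices of the object-space, which a fortiori gives surjectivity on $\simeq$-classes. Unwinding the definitions of the mapping objects (and using that $\left(\Del^0\right)^{\flat}$ is the monoidal unit for $\otimes$), a vertex of $W^Y$ is a marked map $h: Y \lrar W$, a vertex of $Z^Y \times_{Z^X} W^X$ is a commutative square with left edge $f$ and right edge $p$, and $f^p$ sends $h$ to the square $(p \circ h, h \circ f)$. Hence surjectivity of $f^p$ on vertices is precisely the assertion that every lifting problem of $f$ against $p$ admits a solution, and this holds because $f$ is Q-anodyne and $p$ is a Q-fibration. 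Combining the two halves yields that $f^p$ is a DK-equivalence.

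I do not anticipate a genuine obstacle here; the work is entirely in matching hypotheses correctly — that $p$, being a Q-fibration, is an MS-fibration so that Corollary~\ref{c:lift-character-2}$(3)$ applies, and that $f$, being Q-anodyne, lies in the class of Corollary~\ref{c:lift-character-2} via Lemma~\ref{l:q-is-lift} — together with the routine identification of vertices of the fiber-product mapping object $Z^Y \times_{Z^X} W^X$ with square-shaped lifting data for $f$ against $p$.
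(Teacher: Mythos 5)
Your argument is correct and matches the paper's own proof: fully-faithfulness of $f^p$ (as an MS-fibration) via Lemma~\ref{l:q-is-lift} and Corollary~\ref{c:lift-character-2}, and surjectivity on the object level by identifying vertices of $Z^Y \times_{Z^X} W^X$ with lifting problems of $f$ against $p$, which are solvable since Q-anodyne maps have the left lifting property with respect to Q-fibrations. No gaps to report.
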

\begin{proof}
From Lemma~\ref{l:q-is-lift} it follows that $f^p$ is a fully-faithful MS-fibration. Since $Q$-anodyne maps satisfy the left lifting property with respect to $Q$-fibrations we get that the fibration
$$ f^p_0: \left(W^Y\right)_0 \lrar \left(Z^Y \times_{Z^X} W^X\right)_0 = \left(Z^Y\right)_0 \times_{\left(Z^X\right)_0} \left(W^X\right)_0 $$
is surjective and so the result follows.
\end{proof}

We will now apply some of the ideas collected so far in order to prove that the full subcategory $\QsS \subseteq \Seg_s^{\fib}$ is closed under mapping objects. In fact, we will prove that for any semi-simplicial space $A$ and any quasi-unital marked semiSegal space $W$ the mapping object $W^A$ is quasi-unital.

\begin{prop}\label{mapping-is-qu}
Let $W$ be a quasi-unital marked semiSegal space and $A$ a marked semi-simplicial space. Then $W^A$ is quasi-unital.
\end{prop}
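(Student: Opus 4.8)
The plan is to transpose the quasi-unitality of $W^A$ into a right lifting property of $W$ itself and then verify that the maps involved are Q-anodyne.

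First, by the corollary following Theorem~\ref{marked-monoidality}, $W^A$ is already a marked semiSegal space; hence by Lemma~\ref{qu-rlp} it suffices to show that the terminal map $W^A \lrar T$ has the right lifting property with respect to the maps $C_0, C_1, C_2$. Since $T$ is terminal one has $T^A \cong T$ (the object $T^A$ is equal to a point in each degree, with its edge marked), so that $W^A \lrar T$ is canonically identified with $W^A \lrar T^A$. By the exponential law for the adjunction $(-) \otimes A \dashv (-)^A$, a lifting problem of $C_i \colon P \hrar Q$ against $W^A \lrar T^A$ is equivalent to a lifting problem of the map
$$ C_i \otimes \Id_A \colon P \otimes A \hrar Q \otimes A $$
against $W \lrar T$. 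As $W$ is quasi-unital, $W \lrar T$ is a Q-fibration, and therefore has the right lifting property against every Q-anodyne map (Remark~\ref{r:saturated}). Consequently the proposition reduces to the assertion that, for $i = 0, 1, 2$ and every marked semi-simplicial space $A$, the map $C_i \otimes \Id_A$ is Q-anodyne.

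To prove this, note that $C_i \otimes \Id_A$ is the pushout-product of $C_i$ with the marked cofibration $\emptyset \hrar A$ (here one uses $\emptyset \otimes B = \emptyset$). Pushout-product with a fixed map is a functor on arrows preserving pushouts, transfinite compositions and retracts, and Q-anodyne maps form a weakly saturated class (Remark~\ref{r:saturated}); hence the class of marked cofibrations $u$ for which the pushout-product of $C_i$ with $u$ is Q-anodyne is itself weakly saturated. Since $\emptyset \hrar A$ lies in the weakly saturated class generated by the generating marked cofibrations of Theorem~\ref{marked-model}, it is enough to treat $u$ ranging over those generators. Each such generator is itself a pushout-product of $|\partial\Del^k| \hrar |\Del^k|$ with one of $\left(\partial\Del^n\right)^{\flat} \hrar \left(\Del^n\right)^{\flat}$ or $\left(\Del^1\right)^{\flat} \hrar \left(\Del^1\right)^{\sharp}$, so by associativity and symmetry of the pushout-product the assertion reduces, for each $k, n \geq 0$ and each of these two maps, to the single statement that the corresponding iterated pushout-product of $C_i$ with $|\partial\Del^k| \hrar |\Del^k|$ and with that map is Q-anodyne.

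This last point is the combinatorial heart of the proof, and the step I expect to be the main obstacle. Using the shuffle description of $\otimes$ in Remark~\ref{concrete}, one filters the target of the relevant iterated pushout-product by its non-degenerate simplices and chooses an order in which to attach them, in the spirit of (and partly reusing) Lemmas~\ref{induction-lem-1} and~\ref{horn-pushouts-admissible}, so that each simplex is attached either along a trivial MS-cofibration -- an admissible marked horn inclusion, which absorbs the Segal and invertibility conditions already present in $W$ -- or along a pushout of one of $C_0, C_1, C_2$. The extra bookkeeping, relative to Lemma~\ref{horn-pushouts-admissible}, concerns the marked edge contributed by the $C_i$-factor: for $i = 0, 1$ one must check that the extremal edge of each attaching simplex is marked, so that the relevant horn is admissible in the sense of Definition~\ref{admissible}; for $i = 2$, where every pushout-product of $C_2$ with a marked cofibration is an isomorphism on underlying semi-simplicial spaces, the remarking is instead realized by successive pushouts of $C_2$ applied to copies of $\Del^3$ sitting inside $\Del^3 \otimes (\cdots)$ whose $2$-out-of-$6$ pair of edges is already marked, together with triangle remarkings. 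Granting this, the reductions above show that $W^A$ is quasi-unital.
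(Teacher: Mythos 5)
Your first reduction (via the exponential law, to showing that $C_i \otimes \Id_A$ has the left lifting property against $W \lrar T$) is exactly how the paper begins. The fatal problem is the next step: the pushout-products of $C_0,C_1$ with the generating marked cofibrations are \emph{not} Q-anodyne, and do not even have the left lifting property against $W \lrar T$ in general, so the weak-saturation reduction collapses and no combinatorial filtration can rescue it. Concretely, take the generator $u: |\partial \Del^1| \otimes (\Del^0)^{\flat} \hrar |\Del^1| \otimes (\Del^0)^{\flat}$ (the case $k=1$, $n=0$ of the first family in Theorem~\ref{marked-model}). The left lifting property of the pushout-product $C_0 \square u$ against $W \lrar T$ asserts that a marked edge with prescribed source can be extended from $|\partial\Del^1|$ to $|\Del^1|$; running over all $k$ this is precisely the assertion that $d_1: W^{\inv}_1 \lrar W_0$ is a \emph{trivial} Kan fibration. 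That is false already for $W = $ the nerve of a one-object groupoid with nontrivial discrete automorphism group $G$ (all edges marked): there $d_1: W^{\inv}_1 \lrar W_0$ is $G \lrar \ast$, and two distinct elements of $G$ over the endpoints of $|\Del^1|$ admit no connecting path. So the statement you reduce to is simply not true, even though $\emptyset \hrar A$ does lie in the saturation of the generators.

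The proposition holds for a global, non-cellwise reason, and this is the real content of the paper's proof. After reducing (as you do) to $C_i \otimes A$ versus $W \lrar T$, the paper uses Lemma~\ref{l:q-is-lift} to replace $A$ by its degree-zero space $A_0$ --- note this replacement exploits fully-faithfulness, not quasi-unitality --- and then, for $C_0,C_1$, proves the \emph{absolute} statement that the surjective fibration $d_i: W^{\inv}_1 \lrar W_0$ admits a section (which gives lifts against $\emptyset \hrar A_0$ with no relative boundary condition, so no cell-by-cell extension is ever needed). The section is produced by passing to $W^{\aut}_1 \lrar W_0$, restricting to the maximal semiKan space $Z = \wtl{W}$, and invoking Theorem~\ref{realization-of-groupoid} to exhibit $Z^{\aut}_1 \lrar Z_0$ as a homotopy pullback of the free loop fibration $\what{|Z|}^{S^1} \lrar \what{|Z|}$, which has the constant-loop section. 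This Segal--Puppe input is exactly the ``coherent choice of quasi-units in families'' that a filtration by attaching simplices cannot manufacture; only the $C_2$ case (a marking condition rather than an existence-of-structure condition) behaves well under your pushout-product reduction.
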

\begin{proof}
Let $f: X \lrar Y$ be a Q-anodyne map. To show that the terminal map $W^A \lrar T$ satisfies the right lifting property with respect to $f$ is equivalent to showing that the map $f \otimes A: X \otimes A \lrar Y \otimes A$ satisfy the left lifting property with respect to $W \lrar T$.

Consider the space $A_0$ as a marked semi-simplicial space concentrated in degree $0$. We have a natural inclusion $g:A_0 \hrar A$ such that $g_0$ is an isomorphism. By Lemma~\ref{l:q-is-lift} we get that the natural map
$$ \left[X \otimes A\right] \coprod_{X \otimes A_0} \left[Y \otimes A_0\right] \lrar Y \otimes A $$
is a trivial MS-cofibration. Hence it will suffice to prove that the map $f \otimes A_0$ satisfies the left lifting property with respect to $W \lrar T$, and it will suffice to do so for $f=C_i$,

Let us start with $C_2$. Let $A_{0,0}$ be the set of vertices of $A_0$. Then $C_2 \otimes A_{0,0}$ satisfies the left lifting property with respect to $W \lrar T$. The same claim for $C_2 \otimes A_0$ follows from the fact that
$$ \left[A_0 \otimes \left(\Del^3,A\right)\right] \coprod_{A_{0,0} \otimes \left(\Del^3,A\right)} \left[A_{0,0} \otimes \left(\Del^3\right)^{\sharp}\right] \hrar A_0 \otimes \left(\Del^3\right)^{\sharp} $$
is a marked equivalence (where $A = \{\Del^{\{0,2\}},\Del^{\{1,3\}}\}$).

Let us now prove the case $C_i$ for $i=0,1$. Unwinding the definitions we need to show that the map of spaces $d_i:W^{\inv}_1 \lrar W_0$ satisfies the right lifting property with respect to any map of the form $\emptyset \hrar A_0$, or, equivalently, admits a section.

Let
$$ W^{\aut}_1  = \left\{f \in  W^{\inv}_1 | d_0(f) = d_1(f)\right\} \subseteq W^{\inv}_1 $$
be the subspace of self equivalences. It will be enough to show that the map $d:W^{\aut}_1 \lrar W_0$ (induced by either $d_0$ or $d_1$) admits a section. Since this claim involves only marked edges it will convenient to switch to the maximal semiKan space $Z = \wtl{W} \subseteq W$. In particular, we want to show that the natural map $Z^{\aut}_1 \lrar Z$ admits a section.

Consider the Kan replacement $\what{|Z|}$ of the realization of $Z$. Let $\what{|Z|}^{S^1}$ be the space of continuous paths
$ \gam: S^1 \lrar \what{|Z|} $
and let $p: \what{|Z|}^{S^1} \lrar \what{|Z|}$ be the map $p(\gam) = \gam(1)$. Consider the commutative diagram
$$ \xymatrix{
Z^{\aut}_1 \ar[r]\ar^{d}[d] & \what{|Z|}^{S^1} \ar^{p}[d] \\
Z_0  \ar[r] & \what{|Z|}\\
}$$
The vertical maps in this square are fibrations and by Theorem~\ref{realization-of-groupoid} the square is homotopy Cartesian. Now since the right vertical map admits a section (given by choosing for each $x$ the constant path at $x$) we get that the left vertical map admits a section as well. This finishes the proof of Proposition~\ref{mapping-is-qu}.
\end{proof}

\begin{cor}\label{c:section}
Let $f: X \lrar Y$ be a Q-anodyne map and $W$ a quasi-unital marked semiSegal space. Then the map $W^Y \lrar W^X$ admits a section.
\end{cor}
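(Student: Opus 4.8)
The plan is to avoid constructing a section of $W^Y \to W^X$ directly, and instead to produce it, via the closed monoidal structure, as the exponential transpose of a single lifting problem against the Q-fibration $W^{W^X} \to T$. Indeed, under the exponential law a section of the restriction map $W^f: W^Y \to W^X$ is the same datum as an extension of a suitable map $j: X \to W^{W^X}$ along $f: X \to Y$, and the latter is exactly the kind of extension the notion of Q-anodyne map is designed to provide.

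The first step is to observe that $W^{W^X}$ is a quasi-unital marked semiSegal space: it is a marked semiSegal space by the corollary to Theorem~\ref{marked-monoidality} (applied to the marked semi-simplicial space $W^X$), and it is quasi-unital by Proposition~\ref{mapping-is-qu} (again with $A = W^X$). Since it is moreover MS-fibrant, the terminal map $W^{W^X} \to T$ is a Q-fibration.

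The second step is to set up the lifting problem. Let $j: X \to W^{W^X}$ be the marked map adjoint to the counit $\nu_{X,W}: W^X \otimes X \to W$; equivalently, $j$ is the map corresponding to $\Id_{W^X}$ under the natural isomorphisms $\Map^+(X, W^{W^X}) \cong \Map^+(W^X \otimes X, W) \cong \Map^+(W^X, W^X)$. Since $f$ is Q-anodyne and $W^{W^X} \to T$ is a Q-fibration, the commuting square with $j$ along the top, $f$ along the left and the unique map $Y \to T$ along the bottom admits a lift $\hat{j}: Y \to W^{W^X}$ with $\hat{j} \circ f = j$.

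The last step is pure bookkeeping. Transpose $\hat{j}$ back through the exponential law to obtain a marked map $s: W^X \to W^Y$. Combining the exponential law with the symmetry of $\otimes$, one checks that the natural isomorphisms $\Map^+(Y, W^{W^X}) \cong \Map^+(W^X, W^Y)$ and $\Map^+(X, W^{W^X}) \cong \Map^+(W^X, W^X)$ intertwine precomposition with $f$ on the source side with postcomposition with $W^f$ on the target side; hence the relation $\hat{j} \circ f = j$ becomes precisely $W^f \circ s = \Id_{W^X}$, so $s$ is the desired section. I do not expect a genuine obstacle here: the only point requiring care is that the naive attempt to lift $\Id_{W^X}$ directly through $W^f$ fails (since $W^f$ need not be a trivial fibration on $0$-simplices), so one really must route through $W^{W^X}$, and the final identification $W^f \circ s = \Id_{W^X}$ rests on the naturality of the exponential-law isomorphisms rather than on any property specific to $W$ or $f$.
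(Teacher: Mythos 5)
Your proof is correct and follows essentially the same route as the paper: the paper reduces the existence of a section to the right lifting property of $W^Y \lrar W^X$ against maps $\emptyset \hrar A$, which by the exponential law is the lifting of $f$ against the Q-fibration $W^A \lrar T$ guaranteed by Proposition~\ref{mapping-is-qu}; your argument is exactly this with $A = W^X$ and the identity map, spelled out in more detail.
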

\begin{proof}
It is enough to prove that the map $W^Y \lrar W^X$ satisfies the right lifting property with respect to every map of the form $\emptyset \hrar A$. But this follows from Proposition~\ref{mapping-is-qu} as $W^A$ is quasi-unital.
\end{proof}

Our final goal of this subsection is to show that the following types of maps are Q-anodyne. Let $f: [n] \lrar [k]$ be a \textbf{surjective} map in $\Del$ and let $h: [k] \lrar [n]$ be a section of $f$. Let $M \subseteq (\Sp^k)_1$ be a marking on the $k$-spine and let $\wtl{M} \subseteq (\Del^k)_1$ be the marking generated from it, i.e., the smallest set containing $M$ which is closed under $2$-out-of-$3$. Let $M_f \subseteq \left(\Sp^n\right)_1$ be the set of all pairs $\{i,i+1\}$ such that either $f(i) = f(i+1)$ or $\Del^{\{f(i),f(i+1)\}}$ is in $M$ and let $\wtl{M}_f \subseteq (\Del^n)_1$ be the marking generated from it. We wish to prove the following:

\begin{prop}\label{Q-anodyne}
In the notation above, the map
$$ h: \left(\Del^k,\wtl{M}\right) \lrar \left(\Del^n,\wtl{M}_f\right) $$
is Q-anodyne.
\end{prop}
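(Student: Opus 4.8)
We plan to realize $h$ as a composite of a single pushout of $C_0$ followed by a chain of trivial MS-cofibrations. Since trivial MS-cofibrations and $C_0$ are Q-anodyne and the Q-anodyne maps form a weakly saturated class (Remark~\ref{r:saturated}), this will prove the proposition. The first move is to reduce to the case where $f$ is an elementary degeneracy, by induction on $n-k$. If $n=k$ then $h$ is the identity and $M_f=M$, so there is nothing to do. If $n>k$, choose $i_0$ minimal with $f(i_0)=f(i_0+1)$ and factor $f=g\circ\sigma$ through the elementary codegeneracy $\sigma\colon[n]\lrar[n-1]$ collapsing $\{i_0,i_0+1\}$ and the induced surjection $g\colon[n-1]\lrar[k]$. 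A section of $\sigma$ contains at most one of $i_0,i_0+1$, so $h=\delta\circ h'$ where $\delta\colon[n-1]\lrar[n]$ is a coface missing $i_0$ or missing $i_0+1$ (a section of $\sigma$) and $h'$ is a section of $g$; a short computation shows $M_g$ is again supported on the spine and $(M_g)_\sigma=M_f$, using that $\sigma(a)=\sigma(b)$ already forces $g\sigma(a)=g\sigma(b)$. By the inductive hypothesis $h'\colon(\Del^k,\wtl M)\lrar(\Del^{n-1},\wtl{M_g})$ is Q-anodyne, so only $\delta$ remains. The order-reversing automorphism of $\Del$ fixes the set $\{C_0,C_1,C_2\}$ (it interchanges the marked edges $\Del^{\{0,2\}}$ and $\Del^{\{1,3\}}$ in $C_2$) and hence preserves Q-anodyne maps, so we may assume $\delta$ is the coface missing the larger of the two collapsed vertices.

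Writing this collapsed pair as $\{j,j+1\}$, the reduced situation is this: $h=\delta$ realizes $\Del^{n-1}\cong\Del^{V}$ as the face on $V=[n]\setminus\{j+1\}$, and $\wtl M_f$ is obtained from $\wtl M$ by inserting a new vertex $j+1$ directly after $j$ and additionally marking every edge of $\Del^n$ lying in a single fibre of $f$ --- in particular the edge $e=\{j,j+1\}$. A single pushout of $C_0\colon\Del^{\{0\}}\hrar(\Del^1)^{\sharp}$ along the map carrying the source vertex to $j$ then adjoins the vertex $j+1$ and the marked edge $e$, producing $Y_0=(\Del^{V}\cup\Del^{\{j,j+1\}},\,\wtl M\cup\{e\})$; as $(\Del^{n-1},\wtl M)\hrar Y_0$ is a pushout of $C_0$ it is Q-anodyne, and it remains to show that $Y_0\hrar(\Del^n,\wtl M_f)$ is a composite of trivial MS-cofibrations.

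For this we pair the simplices still to be attached: each $\tau$ with $\{j,j+1\}\subseteq\tau$ and $|\tau|\geq3$ is matched with $\tau\setminus\{j\}$, and conversely each $\rho$ with $j+1\in\rho$, $j\notin\rho$, $|\rho|\geq2$ is matched with $\hat\rho:=\rho\cup\{j\}$; this is a bijection between $d$-simplices of the first type and $(d-1)$-simplices of the second, and together these account for every simplex of $\Del^n$ not already in $Y_0$. Processing the pairs in order of increasing $\dim\hat\rho$, one attaches $\hat\rho$ together with its face $\rho=\hat\rho\setminus\{j\}$ by filling the horn of $\Del^{\hat\rho}$ opposite the vertex $j$ (one checks that at this stage $\rho$ is precisely the unique missing face of $\hat\rho$). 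Because $j+1$ immediately follows $j$ inside $\hat\rho$, the vertex $j$ sits in position $|\rho\cap[0,j-1]|\leq\dim\hat\rho-1$, so this horn is inner unless $\rho\subseteq[j+1,n]$, in which case it is a left horn whose pivot edge is the marked edge $e$, i.e.\ an admissible marked horn inclusion. The only new edges produced are those coming from the two-dimensional pairs --- edges $\{a,j+1\}$ with $a<j$ and $\{j+1,b\}$ with $b>j+1$ --- and their $\wtl M_f$-status coincides with that of $\{a,j\}$, respectively $\{j,b\}$, which is exactly the marking forced through the relevant horn; because $\wtl M_f$ is closed under passing to subintervals, no step ever needs the $2$-out-of-$6$ generator $C_2$, and $2$-out-of-$3$ always suffices. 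Concretely, each attachment is a pushout of one of: an admissible marked horn inclusion (a member of $\BS$); the all-marked horn inclusion $(\Lam^{\hat\rho})^{\sharp}\hrar(\Del^{\hat\rho})^{\sharp}$ of Lemma~\ref{horn-example}; or an unmarked admissible inner horn inclusion $(\Lam^m_i)^{\flat}\hrar(\Del^m)^{\flat}$, with any extra marked edges carried along by forming the pushout over $(\Lam^m_i)^{\flat}$ (cf.\ Definition~\ref{triangle-remarking}). All of these are trivial MS-cofibrations, so $Y_0\hrar(\Del^n,\wtl M_f)$ --- and hence $h$ --- is Q-anodyne.

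The part requiring the most care is the bookkeeping in the last step: checking that for the order on the matched pairs (increasing $\dim\hat\rho$) exactly one face of $\hat\rho$ is missing at every stage, so that a genuine horn is being filled, while verifying simultaneously that the accumulated marking is exactly $\wtl M_f$ at each step. This is the same flavour of index-sorting argument used to prove Lemma~\ref{horn-pushouts-admissible}, and the combinatorics there would serve as the template.
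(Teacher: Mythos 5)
Your argument is correct, and it takes a genuinely different route from the paper's. The paper keeps the general surjection $f$ and introduces the auxiliary subobjects $T(h_1,h_2)\subseteq\left(\Del^n,\wtl{M}_f\right)$ (the marked chains $S_i$ over each fibre together with connecting edges $\Del^{\{h_1(i),h_2(i+1)\}}$); it shows each $T(h_1,h_2)\hrar\left(\Del^n,\wtl{M}_f\right)$ is a trivial MS-cofibration by a neighbour-swapping argument that reduces to the spine inclusion $\left(\Sp^n,M_f\right)\hrar\left(\Del^n,\wtl{M}_f\right)$ of Lemma~\ref{spine-example}, and then concludes via a Reedy-cofibrant homotopy pushout square (Remark~\ref{cofully-pushouts}) in which the Q-anodyne content is isolated in the inclusion of the vertices $h(i)$ into the chains $S_i$ (pushouts of $C_0$ and $C_1$). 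You instead induct down to elementary degeneracies and perform an explicit shelling of $\Del^n$ relative to $\Del^{V}\cup\Del^{\{j,j+1\}}$, pairing $\hat\rho\leftrightarrow\hat\rho\setminus\{j\}$ and filling horns opposite $j$; I checked the key points (the factorization $h=\delta\circ h'$ with $(M_g)_\sigma=M_f$, that the horn opposite $j$ is never a right horn and is admissible when it is a left horn because its pivot edge is $e$, and that the interval characterization of $\wtl{M}_f$ makes the $2$-dimensional remarkings come out exactly right) and they all hold. What the paper's route buys is that all the higher-dimensional combinatorics is outsourced to the already-established triviality of spine inclusions, leaving only a two-dimensional swap argument; what your route buys is a completely transparent accounting of the quasi-unital content (a single pushout of $C_0$ per elementary degeneracy, with everything else a trivial MS-cofibration) at the cost of redoing a shelling of the flavour of Lemma~\ref{horn-pushouts-admissible}. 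Two small points worth tightening: the sentence ``a section of $\sigma$ contains at most one of $i_0,i_0+1$'' should read that the \emph{image of $h$} contains at most one of them (which is why $h$ factors through some section $\delta$ of $\sigma$), and your one-line description of $\wtl{M}_f$ in the reduced situation omits that the edges $\{a,j+1\}$ and $\{j+1,b\}$ inherit the marking status of $\{a,j\}$ and $\{j,b\}$ --- though your subsequent analysis states and uses exactly this, so nothing is actually missing.
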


\begin{proof}

For each $i = 0,...,n$ let $S_i \subseteq \Del^n$ be the $1$-dimensional sub semi-simplicial set containing all the vertices and all the edges of the form $\Del^{\{j,j+1\}}$ such that
$ f(j) = f(j+1) = i $.
Then clearly the inclusion
$ \Del^{\{h(i)\}} \subseteq S_i $
is Q-anodyne. Let $S \subseteq \Del^n$ be the (disjoint) union of all the $S_i$'s.

Let $h_1,h_2$ be two sections of $f$. We will define the sub marked semi-simplicial set $T(h_1,h_2) \subseteq \left(\Del^n,\wtl{M}_f\right)$ to be the (not necessarily disjoint) union of $S^{\sharp} \subseteq \left(\Del^n,\wtl{M}_f\right)$ and all edges of the form $\Del^{\{h_1(i),h_2(i+1)\}}$ for $i=0,...,n$. In particular,
$$ T(h_1,h_2) = \left(S \bigcup_i \Del^{\{h_1(i),h_2(i+1)\}},N\right) \subseteq \left(\Del^n,\wtl{M}_f\right) $$
where $N$ is the marking induced from $\wtl{M}_f$, i.e., $N$ contains all the edges of $S$ and the edges $\Del^{\{h_1(i),h_2(i+1)\}}$ for which $\Del^{\{i,i+1\}}$ is in $M$. Note that when $h_1=h_2=h$ we have
$$ T(h,h) = S^{\sharp} \coprod_{\Del^0 \times \{0,...,k\}} \left(h(\Sp^k),h(M)\right) .$$
Now consider the commutative square
$$ \xymatrix{
\left(\Sp^k,M\right) \ar^{h}[r]\ar[d] & \left(\Del^k,\wtl{M}\right)  \ar[d] \\
T(h,h)\ar[r] & \left(\Del^n,\wtl{M}_f\right) \\
}$$
Since $h:\Del^0 \times \{0,...,k\} \hrar S^{\sharp}$ is Q-anodyne we get that the top horizontal row is Q-anodyne. In light of Remark~\ref{cofully-pushouts} it will now be enough to show that this square is a Reedy cofibrant homotopy pushout square in the marked Segal model structure. As Reedy cofibrancy is immediate it will be enoush to show that both horizontal maps are trivial MS-cofibrations. The top horizontal map is very easy:

\begin{lem}\label{spine-example}
The inclusion
$ \iota:\left(\Sp^k,M\right) \hrar \left(\Del^k,\wtl{M}\right) $
is a trivial MS-cofibration.
\end{lem}
\begin{proof}
Factor $\iota$ as
$$ \left(\Sp^k,M\right) \x{\iota'}{\hrar} \left(\Del^k,M\right) \x{\iota''}{\hrar} \left(\Del^k,\wtl{M}\right) .$$
Then $\iota'$ is a pushout along the trivial MS-cofibration $\left(\Sp^k\right)^{\flat} \hrar \left(\Del^k\right)^{\flat}$ and $\iota''$ is a triangle remarking (Definition~\ref{triangle-remarking}).
\end{proof}

To show that bottom horizontal map is a trivial MS-cofibration it will be convenient to prove a slightly stronger lemma:
\begin{lem}\label{T-h1-h2}
For every two sections $h_1,h_2$ of $f$ the inclusion
$$ T(h_1,h_2) \subseteq \left(\Del^m,\wtl{M}_f\right) $$
is a trivial MS-cofibration.
\end{lem}
\begin{proof}
We begin by arguing that that it is enough to prove the lemma for just one pair of sections $h_1,h_2$. We say that two pairs $(h_1,h_2),(h_1',h_2')$ are \textbf{neighbours} if
$$ \sum_{i=0}^{n} |h_1(i)-h_1'(i)| + |h_2(i)-h_2'(i)| = 1 .$$
It is not hard to see that the resulting neighbouring graph is connected, i.e., that we can get from any pair $(h_1,h_2)$ to any other pair $(h_1',h_2')$ by a sequence of pairs such that each consecutive couple of pairs are neighbours. Hence it is enough to show that property of
$ T(h_1,h_2) \hrar \left(\Del^m,\wtl{M}_f\right) $
being a trivial MS-cofibration respects the neighbourhood relation. To see why this is true observe that if $(h_1,h_2)$ and $(h_1,h_2')$ are neighbours then one can add to $T(h_1,h_2)$ a single triangle $\sig \subseteq \Del^m$ such that
$ R \x{\df}{=} T(h_1,h_2) \cup \sig $
contains $T(h_1',h_2')$ and such that $R$ can be obtained from either $T(h_1,h_2),T(h_1',h_2')$ by performing a pushout along a $2$-dimensional admissible marked horn inclusion and possibly a remarking. Hence the claim for either $T(h_1,h_2)$ or $T(h_1',h_2')$ is equivalent to
$ R \hrar \left(\Del^m,\wtl{M}_f\right) $
being a trivial MS-cofibration.

Now that we know that it is enough to prove for a single choice of $(h_1,h_2)$ let us choose the pair
$ h_{\max}(i) = \max(f^{-1}(i))$ and $h_{\min}(i) = \min(f^{-1}(i))$. Then we see that
$ T(h_{\max},h_{\min}) = \left(\Sp^m,M_f\right)  $
and the map
$ \left(\Sp^m,M_f\right) \hrar \left(\Del^m,\wtl{M}_f\right) $
is a trivial MS-cofibration from Lemma~\ref{spine-example}.
\end{proof}

This finishes the proof of Proposition~\ref{Q-anodyne}.
\end{proof}

\section{ Complete marked semiSegal spaces }\label{s-complete-semiSegal}

In this section we will further localize the model category $\Seg_s$ to obtain our target model category $\Comp_s$. We will then show that the full subcategory of fibrant objects in $\Comp_s$ is a model for the localization of $\QsS$ by DK-equivalence, and hence $\Comp_s$ is a model category for the homotopy theory of quasi-unital $\infty$-categories. Finally, we will prove the main theorem of this paper by showing that $\Comp_s$ is Quillen euqivalent to the Rezk's model category $\Comp$, and that this Quillen equivalence preserves mapping objects.

We begin with a description of the fibrant objects in $\Comp_s$, which are called \textbf{complete marked semiSegal spaces}. The notion of completeness, the construction of the completion functor and many of the related proofs are inspired by their respective analogues in~\cite{rez}. We begin with the basic definition:

\begin{define}\label{d:complete}
Let $(X, M)$ be a marked semiSegal space. We will say that $X$ is \textbf{complete} if the following two conditions are satisfied:
\begin{enumerate}
\item
The inclusion $M \subseteq X^{\inv}_1$ is a weak equivalence (hence an isomorphism of in view of Lemma~\ref{marked-fibrant}).
\item
The restricted maps $d_0:M \lrar X_0$ and $d_1:M \lrar X_0$ are both homotopy equivalences.
\end{enumerate}

\end{define}

\begin{rem}
If $W$ is a complete marked semiSegal space then $\wtl{W}$ is complete as well.
\end{rem}

\begin{rem}\label{r:homotopy-constant}
If $W$ is a marked semiKan space then $W$ is complete if and only if it is homotopy-constant as a semi-simplicial space. This follows from the Segal condition and the fact that $\Del_s$ is weakly contractible.
\end{rem}

An important observation is that any complete semiSegal space is quasi-unital: by condition $(1)$ of Definition~\ref{d:complete} all the equivalences are marked and from condition $(2)$ we get that every object $x \in X_0$ admits a marked morphism of the form $f: x \lrar y$ for some $y$.

Let $\CsS \subseteq \QsS$ denote the full topological subcategory spanned by complete marked semiSegal spaces. We will show in the following sections that the topological category $\CsS$ serves as a model for the (left) localization of $\QsS$ by DK-equivalences. In particular, $\CsS$ is a model for the \textbf{homotopy theory of quasi-unital $\infty$-categories}.

\subsection{ The complete semiSegal model structure }
Our purpose in this section is to show that the topological category $\CsS$ of complete marked semiSegal spaces can be identified with the full subcategory of fibrant objects in a suitable localization of $\Seg_s$.

Recall the maps $C_0,C_1,C_2$ introduced in~\ref{ss-Q-anodyne}. Then we have the following simple observation.
\begin{lem}
Let $(X,M)$ be a marked semiSegal space. Then $X$ is local with respect to $C_0,C_1,C_2$ if and only if $X$ is complete.
\end{lem}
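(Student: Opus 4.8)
The plan is to unwind both conditions into concrete lifting/localization statements and check they match. Recall that $X$ being local with respect to $C_0, C_1, C_2$ means (since $X$ is already fibrant in $\Seg_s$) that the maps
$$ \Map^+\left(\left(\Del^1\right)^{\sharp}, X\right) \lrar \Map^+\left(\Del^{\{i\}}, X\right) = X_0 \qquad (i=0,1) $$
and
$$ \Map^+\left(\left(\Del^3\right)^{\sharp}, X\right) \lrar \Map^+\left(\left(\Del^3,A\right), X\right) $$
are weak equivalences, where $A = \{\Del^{\{0,2\}}, \Del^{\{1,3\}}\}$. Now $\Map^+\left(\left(\Del^1\right)^{\sharp}, X\right)$ is exactly the space $M$ of marked edges, and the two maps to $X_0$ are precisely $d_0$ and $d_1$ restricted to $M$. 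So locality with respect to $C_0$ and $C_1$ says exactly that $d_0: M \lrar X_0$ and $d_1: M \lrar X_0$ are homotopy equivalences, which is condition $(2)$ of Definition~\ref{d:complete}.

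Next I would analyze locality with respect to $C_2$. Using the Segal condition on $X$ (valid since $X$ is a marked semiSegal space) together with the identification from Remark~\ref{inv-concrete} of invertible edges via contractibility of horn extensions, I would show that $\Map^+\left(\left(\Del^3,A\right), X\right)$ is weakly equivalent to the subspace of $X_3$ whose $\{0,2\}$ and $\{1,3\}$ edges are marked, and by the $2$-out-of-$6$ closure of invertible edges (Lemma~\ref{inv-closed-composition}) all such edges are automatically invertible. Meanwhile $\Map^+\left(\left(\Del^3\right)^{\sharp}, X\right)$ is the subspace where all six edges are marked. So the comparison map is a weak equivalence precisely when the inclusion "$\{0,2\}$ and $\{1,3\}$ edges marked" $\hookrightarrow$ "all edges marked" is a weak equivalence at the level of $X_3$; using the Segal decomposition $X_3 \simeq X_1 \times_{X_0} X_1 \times_{X_0} X_1$ and the fact that $2$-out-of-$3$ already holds for $M$ in a marked semiSegal space, this reduces to the statement that $M$ contains all invertible edges whose source and target are connected to an object admitting a marked edge — i.e., combined with condition $(2)$, that $M \subseteq X^{\inv}_1$ is an equality, which is condition $(1)$ of Definition~\ref{d:complete}.

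Finally I would assemble: locality with respect to $C_0, C_1$ is equivalent to condition $(2)$, and given condition $(2)$, locality with respect to $C_2$ is equivalent to condition $(1)$; conversely a complete marked semiSegal space visibly satisfies all three locality conditions since conditions $(1)$ and $(2)$ make the relevant restriction maps weak equivalences directly. The main obstacle I anticipate is the bookkeeping in the $C_2$ step: one must carefully set up the fibrant-replacement-free computation of the mapping spaces $\Map^+\left(\left(\Del^3,A\right), X\right)$ and $\Map^+\left(\left(\Del^3\right)^{\sharp}, X\right)$ as honest subspaces of $X_3$ (this uses that the domains are levelwise discrete and $X$ is Reedy fibrant, so these mapping spaces compute correctly), and then correctly invoke the $2$-out-of-$6$ rule to see that the only obstruction to the comparison map being an equivalence is the failure of some invertible edge to be marked. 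The $C_0, C_1$ step and the converse direction are essentially immediate once the mapping spaces are identified.
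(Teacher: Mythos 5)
Your decomposition of the problem matches the paper's: locality with respect to $C_0,C_1$ is indeed equivalent to condition $(2)$ of Definition~\ref{d:complete} (the identification $\Map^+\left(\left(\Del^1\right)^{\sharp},X\right)=M$ with the two restriction maps being $d_0,d_1$ is exactly the paper's first observation), and your treatment of the easy direction for $C_2$ is also fine: since $M$ is a union of components of $X_1$ (Lemma~\ref{marked-fibrant}), both $\Map^+\left(\left(\Del^3\right)^{\sharp},X\right)$ and $\Map^+\left(\left(\Del^3,A\right),X\right)$ are unions of components of $X_3$, locality with respect to $C_2$ amounts to the $2$-out-of-$6$ property for $M$, and when $M=X^{\inv}_1$ this follows from Lemma~\ref{inv-closed-composition}.

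The gap is in the converse for $C_2$, which is the heart of the lemma. You claim that, ``using the Segal decomposition $X_3\simeq X_1\times_{X_0}X_1\times_{X_0}X_1$ and the fact that $2$-out-of-$3$ already holds for $M$,'' locality with respect to $C_2$ reduces to condition $(1)$. But those ingredients only let you propagate markings among the edges of a $3$-simplex that is already given; they do not explain why a given invertible edge $f\colon x\lrar y$ appears as an edge of \emph{any} $3$-simplex to which $C_2$-locality can be applied. Producing such a simplex is the essential construction, and it uses the invertibility of $f$ in a way your sketch never invokes: choose (using $C_0,C_1$-locality) a marked edge $g\colon x\lrar z$ out of the source and a marked edge $h\colon w\lrar y$ into the target, fill the two outer horns having $f$ as pivot edge using the contractible extension property of Remark~\ref{inv-concrete} (this is where invertibility enters, not merely the Segal condition), and then assemble the result into a map $\left(\Del^3,A\right)\lrar X$ sending $\Del^{\{1,2\}}$ to $f$, $\Del^{\{0,2\}}$ to $h$ and $\Del^{\{1,3\}}$ to $g$; only then does $C_2$-locality force $f$ to be marked. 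In your write-up Remark~\ref{inv-concrete} is cited only for the identification of the mapping spaces as subspaces of $X_3$ (where it is not needed), and is absent from the step where it is indispensable; relatedly, your reduced statement should ask for a marked edge \emph{out of the source} and a marked edge \emph{into the target} of $f$, which is exactly what $C_0,C_1$-locality supplies. (A minor slip: the comparison map is the inclusion of the ``all edges marked'' subspace into the ``$\{0,2\}$ and $\{1,3\}$ marked'' subspace, not the reverse, though this does not affect the argument.)
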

\begin{proof}
First of all it is clear that $X$ is local with respect to $C_0,C_1$ if and only if $X$ satisfies condition $(2)$ of definition~\ref{d:complete}. Now assume that $X$ also satisfies condition $(1)$ of that definition. Then by Lemma~\ref{inv-closed-composition} we get that $M = X^{\inv}_1$ satisfies the $2$-out-of-$6$ property.

Now assume that $X$ is local with respect to $C_0,C_1,C_2$ so that $d_0,d_1: M \lrar X_0$ are equivalences (in particular, every object admits a marked edge into it and a marked edge out of it). It will suffice to show that every equivalence in $X$ is marked. Let $f: x \lrar y$ be an equivalence in $X$ and let $g: x \lrar z$ and $h: w \lrar y$ be arbitrary marked edges. Since $f$ is invertible we can embed these edges in a diagram of the form
$$
\xymatrix{
& x \ar^{g}[rr]\ar^{f}[dr] && z  \\
w \ar_{h}[rr]\ar@{-->}[ur] && y \ar@{-->}[ur] &
}$$
This diagram can in turn be extended to a map $\vphi:\left(\Del^3,A\right) \lrar X$ which sends $\Del^{\{1,2\}}$ to $f$. Since $X$ is local with respect to $C_2$ this implies that $f$ must be marked.
\end{proof}

Since $\Seg_s$ is combinatorial and left proper the left Bousfield localization of $\Seg_s$ with respect to the maps $C_0,C_1,C_2$ exists. We will denote the resulting localization by $\Comp_s$. Then $\Comp_s$ is a combinatorial model category satisfying the following properties:
\begin{enumerate}
\item
A map $f: X \lrar Y$ of marked semi-simplicial spaces is an equivalence in $\Comp_s$ if and only if for every complete marked semiSegal space $W$ the induced map
$$ \Map_+(Y,W) \lrar \Map_+(X,W) $$
is a weak equivalence.
\item
A map $f: X \lrar Y$ of marked semi-simplicial spaces is a cofibration in $\Comp_s$ if and only if it is a cofibration in $\Seg_s$ (i.e. a levelwise inclusion).
\item
An marked semi-simplicial space $W$ is fibrant in $\Comp_s$ if and only if it is a complete marked semiSegal space.
\end{enumerate}

\begin{thm}
The complete model structure is compatible with the marked monoidal product $\otimes$. In particular, the localization Quillen adjunction
$$ \xymatrix{
\Seg_+^{\Del_s^{\op}} \ar@<0.5ex>[r]^{\Id} & \Comp_s \ar@<0.5ex>[l]^{\Id} \\
}$$
is strongly monoidal and $\Comp_s$ inherits the simplicial structure of $\Seg_s$.
\end{thm}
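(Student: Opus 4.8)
The plan is to imitate, almost verbatim, the proof of Theorem~\ref{marked-monoidality}, which follows Rezk's Proposition~$9.2$. Since $\Comp_s$ is the left Bousfield localization of the symmetric monoidal simplicial model category $\Seg_s$ along the maps $C_0,C_1,C_2$, and this localization changes neither the class of cofibrations nor the simplicial-structure adjunction, it is enough to verify the pushout-product axiom for the new trivial cofibrations. Unwinding this via the exponential law exactly as in the proof of Theorem~\ref{marked-monoidality} — the ``$|\Del^k|$-direction'' being already handled by $\Seg_s$ being simplicial — reduces the problem to showing that for every complete marked semiSegal space $W$ the internal mapping objects $W^{(\Del^n)^\flat}$ and $W^{(\Del^1)^\sharp}$ are again \emph{complete} marked semiSegal spaces, for every $n\geq 0$. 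Granting this, the strong monoidality of the localization adjunction and the inheritance of the simplicial structure follow formally, precisely as at the end of the proof of Theorem~\ref{marked-monoidality}.

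So fix a complete marked semiSegal space $W$ and let $X=(\Del^n)^\flat$ or $X=(\Del^1)^\sharp$. By the corollary immediately following Theorem~\ref{marked-monoidality}, $W^X$ is a marked semiSegal space, and by Proposition~\ref{mapping-is-qu} it is quasi-unital; hence its marked edges are exactly its invertible edges, so condition $(1)$ of Definition~\ref{d:complete} holds automatically. It remains to verify condition $(2)$: that the two restricted face maps from the space of marked edges of $W^X$ to its space of objects are homotopy equivalences. Both are Kan fibrations, so this is equivalent to asserting that the maximal marked semiKan space $\wtl{W^X}$ — itself a marked semiKan space by the same corollary — is complete, i.e. (Remark~\ref{r:homotopy-constant}) homotopy-constant as a semi-simplicial space.

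Thus everything comes down to one claim: if $W$ is complete then $\wtl{W^X}$ is homotopy-constant. Since $W$ is complete, $\wtl W$ is complete, and being a marked semiKan space it is homotopy-constant (Remark~\ref{r:homotopy-constant}), so the inclusion $(\wtl W)_0\hrar\what{|\wtl W|}$ is a weak equivalence. The point is to propagate this through the mapping-object construction. One convenient way is via the functor $\Pi$ of \S\S~\ref{ss-qu-groupoids}: because geometric realization $|\bullet|$ is monoidal (it commutes with $\otimes$, hence by Corollary~\ref{products} with Cartesian products of semiKan spaces), its right adjoint $\Pi$ satisfies a compatibility $\Pi(K)^{X}\cong\Pi\big(\Map_{\Top}(|X|,K)\big)$, and any object in the image of $\Pi$ is homotopy-constant. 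Combining this with the fact that $\wtl W$ is DK-equivalent to $\Pi(\what{|\wtl W|})$ (Corollary~\ref{c:kan-counit}) and with the Segal--Puppe realization theorem (Theorem~\ref{realization-of-groupoid}), which makes the relevant comparison squares homotopy Cartesian, one concludes that $\wtl{W^X}$ is homotopy-constant. I expect this propagation step to be the main obstacle: in contrast to the quasi-unitality of $W^X$ (Proposition~\ref{mapping-is-qu}), which is a pure right-lifting statement against Q-fibrations, completeness is a statement about honest weak equivalences of spaces and genuinely relies on the homotopy theory of semiKan spaces developed in \S\S~\ref{ss-qu-groupoids}, not on formal model-categorical manipulations. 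Once it is in place, both $W^{(\Del^n)^\flat}$ and $W^{(\Del^1)^\sharp}$ are complete and the theorem follows.
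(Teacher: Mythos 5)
Your reduction is the same as the paper's: following Rezk's Proposition $9.2$, everything comes down to showing that $W^X$ is a complete marked semiSegal space whenever $W$ is one, and condition $(1)$ of Definition~\ref{d:complete} is indeed immediate from Proposition~\ref{mapping-is-qu}. The problem is your treatment of condition $(2)$, which you correctly identify as the crux but do not actually prove. Your plan is to reduce it to the homotopy-constancy of $\wtl{W^X}$ and then ``propagate'' through $\Pi$ and geometric realization. This cannot work as stated, because $\wtl{W^X}$ is not expressible in terms of $\wtl{W}$: its space of $n$-simplices is $\Map^+\bigl(X \otimes (\Del^n)^{\sharp},W\bigr)$, whose restriction to each vertex of $\Del^n$ is an \emph{arbitrary} marked map $X \lrar W$, not a map into $\wtl{W}$. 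So $\wtl{W^X} \neq \bigl(\wtl{W}\bigr)^X$, it is not in the image of $\Pi$, and neither the compatibility $\Pi(K)^Y \cong \Pi\bigl(K^{|Y|}\bigr)$ nor the DK-equivalence $\wtl{W} \simeq \Pi\bigl(\what{|\wtl{W}|}\bigr)$ gives any purchase on it. The realization results of \S\S~\ref{ss-qu-groupoids} only control semiKan spaces, and the object you must control here genuinely depends on all of $W$.

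What is actually needed is that the two restriction maps $p^i: W^{(\Del^1)^{\sharp}} \lrar W$ are \emph{levelwise} (marked) equivalences; condition $(2)$ for $W^X$ then follows by two applications of the exponential law, since $d_i: M_{W^X} \lrar (W^X)_0$ is obtained by applying $\Map^+(X,-)$ to $p^i$. Completeness of $W$ only gives that $p^i_0$ is a weak equivalence; to upgrade this to a levelwise equivalence one must know that $p^i$ is \emph{fully-faithful}, and this is the ingredient your argument never supplies. The paper gets it from Corollary~\ref{Q-anodyne-lem} (applied to the Q-anodyne maps $C_0,C_1$ and the Q-fibration $W \lrar T$), which rests on Lemma~\ref{l:q-is-lift} and Proposition~\ref{lift-character} rather than on the Segal--Puppe realization theorem. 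Until you establish fully-faithfulness of $p^i$ (or some equivalent statement), the key step of your proof is an assertion, not an argument.
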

\begin{proof}
Arguing as in~\cite{rez} Proposition 9.2, we see that it will be enough to establish the following:
\begin{prop}\label{mapping-into-complete}
Let $X$ be a marked semi-simplicial space and $W$ a complete marked semiSegal space. Then $W^X$ is complete.
\end{prop}
\begin{proof}
From Proposition~\ref{mapping-is-qu} we get that $W^X$ is quasi-unital. In particular, the marked edges of $W^X$ are exactly the equivalences. Hence it will suffice to prove that $W^X$ satisfies condition $(2)$ of Definition~\ref{d:complete}.

For $i=0,1$ consider the restriction map
$$ p^i: W^{\left(\Del^1\right)^{\sharp}} \lrar W^{\left(\Del^0\right)} = W .$$
Since $W$ is complete we get by definition that the maps
$$ p^i_0: \left(W^{\left(\Del^1\right)^{\sharp}}\right)_0 \lrar W_0 $$
are weak equivalences. By Corollary~\ref{Q-anodyne-lem} we get that $p^i$ is also a DK-equivalence and hence a marked equivalence. Using the exponential law this implies that the restriction map
$$ \Map^+\left(X \otimes \left(\Del^1\right)^{\sharp},W\right) \lrar \Map^+\left(X
\otimes \Del^{\{i\}},W\right) $$
is a weak equivalence. Applying the exponential law again we get that $W^X$ satisfies condition $(2)$ of Definition~\ref{d:complete}.
\end{proof}
\end{proof}

\begin{rem}
In the above notation, since $W^X$ is complete we get that $\wtl{W^X}$ is complete as well. In particular, $\wtl{W^X}$ is a homotopy-constant marked semi-simplicial space (see Remark~\ref{r:homotopy-constant}).
\end{rem}

\subsection{ Completion }\label{ss-completion}

In this section we will prove that $\CsS$ is a model for the localization of $\QsS$ by DK-equivalences. Formally speaking (see Definition $5.2.7.2$ and Proposition $5.2.7.12$ of~\cite{higher-topos}) what we need to show is that there exists a functor
$$ \what{\bullet}: \QsS \lrar \CsS $$
such that:
\begin{enumerate}
\item
$\what{\bullet}$ is homotopy left adjoint to the inclusion $\CsS \subseteq \QsS$.
\item
A map in $\QsS$ is a DK-equivalence if and only if its image under $\what{\bullet}$ is a homotopy equivalence.
\end{enumerate}
The functor $\what{\bullet}$ will be called the \textbf{completion} functor, and can be constructed as follows. Let $X$ be a quasi-unital marked semiSegal space. Consider the bi-semi-simplicial spaces $X_{\bullet,\bullet},Y_{\bullet,\bullet}: \Del^{\op}_s \times \Del^{\op}_s \lrar \Top $ given by
$$  X_{n,m} = \Map^{+}\left(\left(\Del^n\right)^{\flat} \otimes \left(\Del^m\right)^{\sharp}, X\right) $$
and
$$ Y_{n,m} = \Map((\Del^n)^{\sharp} \otimes (\Del^m)^{\sharp},X) .$$
We define the marked semi-simplicial space $\left(\ovl{X},M\right)$ by setting
$$ \ovl{X}_n = |X_{n,\bullet}| $$
and
$$ M = |Y_{1,\bullet}| \subseteq |X_{1,\bullet}| .$$
We then define the \textbf{completion} $\what{X}$ of $X$ to be the \textbf{marked-fibrant replacement} of $\left(\ovl{X},M\right)$.

\begin{thm}\label{completion}
Let $X$ be a quasi-unital marked semiSegal space. Then
\begin{enumerate}
\item
$\what{X}$ is a complete marked semiSegal space.
\item
The natural map $X \lrar \what{X} $ is a DK-equivalence.
\end{enumerate}
\end{thm}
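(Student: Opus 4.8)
The plan is to prove the two assertions by analyzing the bi-semi-simplicial spaces $X_{\bullet,\bullet}$ and $Y_{\bullet,\bullet}$ and their geometric realizations in the second variable, mimicking Rezk's treatment of completion in~\cite{rez}. First I would observe that for each fixed $n$, the assignment $m \mapsto X_{n,m}$ is the space of $m$-simplices of the marked semiKan space $\wtl{W^{(\Del^n)^{\flat}}}$ attached to the marked semiSegal space $W = X^{(\Del^n)^{\flat}}$ (using Proposition~\ref{mapping-is-qu} to know $W$ is quasi-unital, hence $\wtl{W}$ is a marked semiKan space by Corollary~\ref{max-groupoid-qu} and Remark~\ref{max-groupoid-2}). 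Thus $\ovl{X}_n = |X_{n,\bullet}|$ is the realization of a semiKan space, and the tools from \S\S~\ref{ss-qu-groupoids} apply: by Theorem~\ref{realization-of-groupoid} the mapping spaces of $X_{n,\bullet}$ compute the based loop spaces of $\ovl{X}_n$, and by Corollary~\ref{products} realization commutes with the relevant Cartesian products. I would also note that the $Y_{\bullet,\bullet}$ analysis is the same with $(\Del^n)^{\flat}$ replaced by $(\Del^n)^{\sharp}$, so $M = |Y_{1,\bullet}|$ is the realization of the maximal semiKan space inside $X^{(\Del^1)^{\sharp}}$.

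For part (1), I would proceed in stages. First, show $\left(\ovl{X},M\right)$ satisfies the Segal condition: since realization of semiKan spaces commutes with the fiber products appearing in the Segal squares (Corollary~\ref{products} together with the fact that the Segal maps for $X$ are equivalences), the diagram $\ovl{X}_{n+m} \to \ovl{X}_n \times_{\ovl{X}_0} \ovl{X}_m$ remains a weak equivalence after realizing in the $\bullet$-direction. Second, show that every edge of $M$ is invertible in $\ovl{X}$ and that $M$ is closed under $2$-out-of-$3$: the invertibility follows because an edge in $|Y_{1,\bullet}|$ is, up to homotopy, represented by an actual invertible edge of $X$, and invertibility is detected by the contractibility of certain spaces of horn fillers (Remark~\ref{inv-concrete}), a condition preserved under the realization. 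Third, pass to the marked-fibrant replacement $\what{X}$, which does not change any of these homotopy-invariant properties, so $\what{X}$ is a marked semiSegal space; since it is quasi-unital (every object has an invertible edge out of it, coming from the constant path, and all invertibles are marked) this also records quasi-unitality. Finally, check completeness: condition (1) of Definition~\ref{d:complete} holds because $M$ was built to contain exactly the invertible edges; condition (2), that $d_0, d_1 : M \to \ovl{X}_0$ are homotopy equivalences, is the key point — I would deduce it from the fact that $M = |Y_{1,\bullet}|$ is the realization of a \emph{homotopy-constant} semiKan space (namely $\wtl{W^{(\Del^1)^{\sharp}}}$ for the complete object, which via Remark~\ref{r:homotopy-constant} is homotopy constant after completion), so $|Y_{1,\bullet}| \to |Y_{0,\bullet}| = \ovl{X}_0$ is an equivalence on both faces.

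For part (2), the natural map $X \to \what{X}$ factors through $X \to \left(\ovl{X},M\right)$, the map induced by the inclusion of the $0$-skeleton (constant simplices) in the $\bullet$-direction. To see this is a DK-equivalence I would invoke Lemma~\ref{qu-criterion}: it suffices to check that $X \to \left(\ovl{X},M\right)$ is fully faithful and surjective on connected components of $X_0$. Surjectivity on $\pi_0$ is clear since $X_0 \to \ovl{X}_0 = |X_{0,\bullet}|$ hits every component. Fully faithfulness amounts to showing the squares in the definition of fully-faithful become homotopy Cartesian; concretely, for $x,y \in X_0$ one must show $\Map_X(x,y) \to \Map_{\ovl{X}}(x,y)$ and the marked analogue are equivalences, and this is exactly the statement that the fiber of $X_{n,\bullet}$ over a pair of vertices does not change homotopy type upon realizing — which follows because that fiber is itself a homotopy-constant semiKan space (being the nerve-type object of a space of maps into which completeness has been forced), so its realization is equivalent to its $0$-th space.

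The main obstacle I expect is the verification that $d_0, d_1: M \to \ovl{X}_0$ are homotopy equivalences, i.e.\ genuine completeness of $\what{X}$. This requires identifying $|Y_{1,\bullet}|$ with a space that is manifestly homotopy equivalent to $\ovl{X}_0$ via both face maps, and the honest way to do this is to show $Y_{1,\bullet}$ is levelwise equivalent to a constant semi-simplicial space in the $\bullet$-direction — which in turn rests on the delicate interplay between the "$\sharp$ in the first variable" construction and the already-present markings of $X$, plus an application of Corollary~\ref{c:section} (the existence of sections along Q-anodyne maps) to move between the various spaces of invertible edges coherently. Carrying this out carefully, while keeping track of which realizations are over semiKan spaces so that Theorem~\ref{realization-of-groupoid} and Corollary~\ref{products} are applicable, is the technical heart of the argument.
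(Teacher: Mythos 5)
Your overall architecture (the bi-semi-simplicial spaces, realization in the second variable, Lemma~\ref{qu-criterion} for part (2), and realization-detects-DK for the markings) points in the right direction, but two central steps have genuine gaps, and both stem from never establishing the key technical input of the paper's proof: that for every $f:[k]\lrar [m]$ in $\Del_s$ the map $f^*:X_{\bullet,m}\lrar X_{\bullet,k}$ is fully-faithful (indeed a DK-equivalence). This is supplied by Proposition~\ref{Q-anodyne} together with Corollary~\ref{Q-anodyne-lem} (the relevant section maps are Q-anodyne, and mapping out of them into the quasi-unital $X$ gives DK-equivalences). Without it, your claim that the Segal condition survives realization ``by Corollary~\ref{products}'' does not stand: Corollary~\ref{products} only says realization commutes with Cartesian \emph{products} of semiKan spaces, whereas the Segal squares involve homotopy \emph{fiber} products over $\ovl{X}_0=|X_{0,\bullet}|$, and realization does not commute with homotopy pullbacks in general. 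The mechanism that makes it work is Puppe's theorem (Theorem~\ref{pupe}), whose hypothesis is exactly the levelwise homotopy-Cartesianness coming from the fully-faithfulness above; the same gap undermines your assertions that invertibility and $2$-out-of-$3$ are ``preserved under the realization'' and your part (2) argument that ``the fiber of $X_{n,\bullet}$ over a pair of vertices'' is homotopy-constant (there are no degeneracies, so there is no canonical constant lift of a vertex tuple to level $m$, and the claim is unproved). Note also that the paper does not verify the Segal condition for $\ovl{X}$ directly at all: once $X\lrar\ovl{X}$ is fully-faithful (Puppe plus the Q-anodyne input) and $X_0\lrar\what{X}_0$ is surjective on components, Lemma~\ref{qu-criterion} already yields that $\what{X}$ is a quasi-unital marked semiSegal space and that $X\lrar\what{X}$ is a DK-equivalence, so your separate verification is both unnecessary and the place where the unsupported claims accumulate.

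The completeness step is also circular as you set it up: you propose to deduce condition (2) of Definition~\ref{d:complete} from $Y_{1,\bullet}$ being homotopy-constant ``after completion'', but homotopy-constancy of these maximal semiKan objects is essentially equivalent to completeness, which is what is being proved; for a general quasi-unital $X$ the spaces $Y_{1,m}$ genuinely vary with $m$. The correct statement, and the one the paper uses, is weaker: $d_0,d_1:Y_{1,\bullet}\lrar Y_{0,\bullet}$ are DK-equivalences of quasi-unital marked semiKan spaces (again because $C_0,C_1$ are Q-anodyne, via Corollary~\ref{Q-anodyne-lem}), and Corollary~\ref{equiv-preserves-realization} then says precisely that $|d_0|,|d_1|:M\lrar\ovl{X}_0$ are weak equivalences; no levelwise constancy is needed. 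Finally, ``condition (1) holds because $M$ was built to contain exactly the invertible edges'' is not automatic for $\ovl{X}$: in the paper it follows from quasi-unitality of $\what{X}$, which is itself an output of Lemma~\ref{qu-criterion}, not of the construction of $M$.
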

\begin{proof}
Let $n \geq 0$ be an integer. From Proposition~\ref{Q-anodyne} we conclude that for each map $f: [k] \lrar [m]$ in $\Del_s$ the map
$$ f^*:X_{\bullet,m} \lrar X_{\bullet,k} $$
is a DK-equivalence and in particular fully-faithful. Hence the induced square
$$ \xymatrix{
X_{n,m} \ar[r]\ar^{f_n^*}[d] & (X_{0,m})^{n+1} \ar^{(f_0^*)^{n+1}}[d] \\
X_{n,k} \ar[r]       & (X_{0,k})^{n+1} \\
}$$
is homotopy Cartesian. From Corollary~\ref{products} the natural map
$$ \left|X_{0,\bullet}^{n+1}\right| \lrar \left|X_{0,\bullet}\right|^{n+1} $$
is a weak equivalence and so Puppe's Theorem (see Theorem~\ref{pupe}) implies that the square
$$ \xymatrix{
X_{n,0} \ar[r]\ar[d] & (X_{0,0})^{n+1}  \ar[d] \\
|X_{n,\bullet}| \ar[r] & \left|X_{0,\bullet}\right|^{n+1} \\
}$$
is homotopy Cartesian. The same argument with $Y_{\bullet,\bullet}$ instead of $X_{\bullet,\bullet}$ shows that the square
$$ \xymatrix{
Y_{1,0} \ar[r]\ar[d] & (Y_{0,0})^{2}  \ar[d] \\
|Y_{1,\bullet}| \ar[r] & \left|X_{0,\bullet}\right|^{2} \\
}$$
is homotopy Cartesian. This implies that the map $X \lrar \ovl{X}$ (and hence also the map $X \lrar \what{X}$) is fully-faithful. We now observe that the map $X_0 \lrar \what{X}_0$ is surjective on connected components. Since $\what{X}$ is marked-fibrant we deduce from Lemma~\ref{qu-criterion} that $\what{X}$ is a quasi-unital marked semiSegal space and the map $X \lrar \what{X}$ is a DK-equivalence.

It is left to show that $\what{X}$ is complete. Since $\what{X}$ is quasi-unital we know that all invertible edges in $\what{X}$ are marked. Hence it will suffice to show that the maps
$$ |d_0|,|d_1|: \left|Y_{1,\bullet}\right| \lrar \left|Y_{0,\bullet}\right| = \ovl{X}_0 $$
are weak equivalences. But this follows from Corollary~\ref{equiv-preserves-realization} since the maps $d_0,d_1: Y_{1,\bullet} \lrar Y_{0,\bullet}$ are DK-equivalences of marked semiKan spaces.

\end{proof}

Our goal now is to show that the completion functor is a localization functor with respect to DK-equivalences. We start by showing that the notions of DK-equivalence and marked equivalence coincide in $\CsS$. Since complete marked semiSegal spaces are marked-fibrant the notion of a marked equivalence is the same as the notion of an equivalence in $\CsS$ as a \textbf{topological category}:
\begin{prop}\label{complete-DK}
Let $f: X \lrar Y$ be a DK-equivalence between complete marked semiSegal spaces. Then $f$ is a marked equivalence.
\end{prop}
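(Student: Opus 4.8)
The plan is to reduce the statement to the groupoid case already handled in \S\S~\ref{ss-qu-groupoids}, by showing that a DK-equivalence between complete marked semiSegal spaces is automatically a levelwise equivalence. First I would observe that since $f$ is fully-faithful, it induces weak equivalences $\Map_X(x,y) \x{\simeq}{\lrar} \Map_Y(f(x),f(y))$ and $\Map^+_X(x,y) \x{\simeq}{\lrar} \Map^+_Y(f(x),f(y))$ on all mapping spaces. Combined with the Segal condition, this will imply that $f_n: X_n \lrar Y_n$ is a weak equivalence provided $f_0: X_0 \lrar Y_0$ is a weak equivalence: indeed $X_n \simeq X_1 \times_{X_0} \cdots \times_{X_0} X_1$, and one can rewrite this homotopy fiber product in terms of the spaces $\Map_X(x,y)$ fibered over $(X_0)^{n+1}$, so once $f_0$ is a weak equivalence the fully-faithfulness forces $f_n$ to be one too. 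Likewise $f$ will induce a weak equivalence on marked edges. Hence the entire content reduces to showing $f_0: X_0 \lrar Y_0$ is a weak equivalence.

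To handle $f_0$, the key is to exploit \textbf{completeness}. By Definition~\ref{d:complete}, the restricted maps $d_0, d_1: M_X \lrar X_0$ are homotopy equivalences (and likewise for $Y$), and the marked edges are precisely the invertible ones. This means $X_0$ is weakly equivalent to the space of marked edges $M_X = \wtl{X}_1$, and in fact $X_0 \simeq |\wtl{X}|$ via the completeness condition together with Remark~\ref{r:homotopy-constant}: $\wtl{X}$ is a marked semiKan space, and completeness of $X$ makes $\wtl{X}$ complete hence homotopy-constant, so the inclusion $X_0 = \wtl{X}_0 \hrar |\wtl{X}|$ is a weak equivalence. The same holds for $Y$. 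Now $f$ restricts to a map $\wtl{f}: \wtl{X} \lrar \wtl{Y}$ of quasi-unital semiKan spaces, and since $f$ is a DK-equivalence (fully-faithful plus surjective on $\simeq$-classes, which for complete objects are the path components of $X_0$ by Lemma~\ref{equiv-relation}), the map $\wtl{f}$ is a DK-equivalence of semiKan spaces in the sense of \S\S~\ref{ss-qu-groupoids}: it is fully-faithful on the level of the automorphism mapping spaces and a bijection on connected components. Then Corollary~\ref{equiv-preserves-realization} tells us $|\wtl{f}|: |\wtl{X}| \lrar |\wtl{Y}|$ is a weak equivalence, and chasing through the homotopy-constancy equivalences $X_0 \simeq |\wtl{X}|$, $Y_0 \simeq |\wtl{Y}|$ we conclude $f_0: X_0 \lrar Y_0$ is a weak equivalence.

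Putting the two parts together: $f_0$ is a weak equivalence by the completeness argument, hence $f_n$ and the map on marked edges are weak equivalences by the fully-faithfulness-plus-Segal argument, so $f$ is a levelwise equivalence inducing an isomorphism on marked components, i.e. a marked equivalence. I expect the main obstacle to be the bookkeeping in the first paragraph: carefully checking that fully-faithfulness together with the Segal condition really upgrades a weak equivalence on $X_0$ to a weak equivalence on all $X_n$, and that the marked-edge space transforms correctly. One has to be a little careful because the homotopy fiber products involved are over $(X_0)^{n+1}$, so one genuinely uses that $f_0$ is an equivalence on the base; this is where the completeness input is essential and where a naive "2-out-of-3" argument would fail. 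The groupoid-reduction step itself should be routine given Corollary~\ref{equiv-preserves-realization} and Remark~\ref{r:homotopy-constant}.
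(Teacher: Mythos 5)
Your proposal is correct and follows essentially the same route as the paper: reduce via fully-faithfulness to showing $f_0$ is a weak equivalence, then pass to the maximal sub semiKan spaces $\wtl{X},\wtl{Y}$, apply Corollary~\ref{equiv-preserves-realization} to $\wtl{f}$, and use completeness (homotopy-constancy of $\wtl{X},\wtl{Y}$) to identify their realizations with $X_0,Y_0$. The only difference is that you spell out the bookkeeping showing fully-faithfulness plus an equivalence on $X_0$ gives a levelwise and marked equivalence, which the paper leaves implicit.
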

\begin{proof}
Since $f$ is in particular fully-faithful it will be enough to show that the map
$ f_0: X_0 \lrar Y_0 $
is a weak equivalence. Let
$ \wtl{f}: \wtl{X} \lrar \wtl{Y} $
be the induced map between the corresponding maximal sub semiKan spaces. Then clearly $\wtl{f}$ is a DK-equivalence as well. From Corollary~\ref{equiv-preserves-realization} it follows that the induced map
$$ \left|\wtl{f}\right|: \left|\wtl{X}\right| \lrar \left|\wtl{Y}\right| $$
is a weak equivalence. But since $X,Y$ are complete their corresponding maximal semiKan space are homotopy-constant and so their realization is naturally equivalent to their space of objects. It follows that $f_0$ is an equivalence and we are done.
\end{proof}

We are now ready to prove the main theorem of this subsection:

\begin{thm}\label{t:localization}
The completion functor $\what{\bullet}: \QsS \lrar \CsS$ exhibits $\CsS$ as the left localization of $\QsS$ with respect to DK-equivalences.
\end{thm}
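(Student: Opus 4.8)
The plan is to verify the two conditions the text has already isolated: that $\what{\bullet}$ is homotopy left adjoint to the inclusion $\iota\colon\CsS\hookrightarrow\QsS$, and that a morphism of $\QsS$ is a DK-equivalence exactly when $\what{\bullet}$ carries it to an equivalence of the topological category $\CsS$, i.e.\ to a marked equivalence. I would establish the second condition first and then deduce the adjunction from it by formal means.

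For the second condition the inputs are Theorem~\ref{completion} --- each unit map $\eta_X\colon X\lrar\what{X}$ is a DK-equivalence whose target is complete --- and Proposition~\ref{complete-DK} --- a DK-equivalence between complete marked semiSegal spaces is already a marked equivalence. Given $f\colon X\lrar Y$ in $\QsS$ I would work in the naturality square of $\eta$, whose horizontal edges are $\eta_X,\eta_Y$ and whose vertical edges are $f$ and $\what{f}$. If $\what{f}$ is a marked equivalence it is in particular a DK-equivalence, so $\eta_Y\circ f=\what{f}\circ\eta_X$ is a composite of DK-equivalences and $f$ is a DK-equivalence once $\eta_Y$ is cancelled; conversely, if $f$ is a DK-equivalence then $\what{f}\circ\eta_X=\eta_Y\circ f$ is one, so $\what{f}$ is a DK-equivalence once $\eta_X$ is cancelled, and then Proposition~\ref{complete-DK} upgrades it to a marked equivalence. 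The real content sits in these cancellation steps, which require two-out-of-three for DK-equivalences between objects of $\QsS$. The maps being cancelled are, however, better than arbitrary DK-equivalences: from the proof of Theorem~\ref{completion} one extracts that each $\eta_X$ is fully faithful and surjective on $\pi_0$ of the space of objects. Reducing fully-faithfulness to the induced maps on mapping spaces via the Segal condition, the pasting lemma for homotopy-cartesian squares allows one to cancel a square whose ``base'' map is $\pi_0$-surjective; and for the essential-surjectivity half of being a DK-equivalence one uses that, by fully-faithfulness together with Lemma~\ref{equiv-relation}, a DK-equivalence between quasi-unital marked semiSegal spaces induces a bijection on $\simeq$-equivalence classes, so it may be cancelled on either side.

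Granting the second condition, I would finish formally. Set $L=\iota\circ\what{\bullet}\colon\QsS\lrar\QsS$ with the natural transformation $\alpha=\eta\colon\Id\lrar L$. For every $X$ the map $\alpha_{LX}=\eta_{\what{X}}$ is a DK-equivalence (Theorem~\ref{completion}) between complete objects, hence a marked equivalence by Proposition~\ref{complete-DK}; and $L\alpha_X=\what{(\eta_X)}$ is a marked equivalence, since $\eta_X$ is a DK-equivalence and, by the ``only if'' proved above, $\what{\bullet}$ carries DK-equivalences to marked equivalences. Thus $L\alpha$ and $\alpha L$ are natural equivalences, so $L$ is a localization functor in the sense of \S 5.2.7 of~\cite{higher-topos}. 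Its essential image is the reflective subcategory $\CsS$ (every $LX=\what{X}$ is complete, and conversely each complete $W$ is marked-equivalent to $\what{W}=LW$ via $\eta_W$), so $\iota$ admits $\what{\bullet}$ as a left adjoint, and the strongly saturated class of morphisms that $L$ inverts is, by the equivalence just established, the one generated by the DK-equivalences. This is exactly the statement that $\what{\bullet}$ exhibits $\CsS$ as the left localization of $\QsS$ at the DK-equivalences. The principal obstacle is therefore the two-out-of-three / cancellation lemma for DK-equivalences in $\QsS$ of the previous paragraph; a complementary and more hands-on route to the adjunction --- also convenient for the subsequent monoidal refinement --- is to show directly that each $\eta_X$ is a weak equivalence in $\Comp_s$, exploiting the bar-construction shape of $\what{X}$ and the weak contractibility of $\Del_s$.
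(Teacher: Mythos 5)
Your proposal is correct, and it reaches the adjunction by a genuinely different and more formal route than the paper. The first half --- that $\what{\bullet}$ inverts precisely the DK-equivalences, via Theorem~\ref{completion}, Proposition~\ref{complete-DK}, and cancellation in the naturality square of $\eta$ --- is the same as the paper's, and you are more careful than the paper here: the text simply asserts that $f$ is a DK-equivalence iff $\what{f}$ is, whereas you isolate the needed two-out-of-three/cancellation properties of DK-equivalences and sketch how to prove them (pasting of homotopy-cartesian squares over a $\pi_0$-surjective base for fully-faithfulness; bijectivity on $\simeq$-classes via Lemma~\ref{equiv-relation} for essential surjectivity). Where you diverge is the adjunction itself: you invoke the recognition criterion of \cite{higher-topos}~\S 5.2.7 --- a functor $L$ with a transformation $\alpha:\Id\lrar L$ such that $L\alpha$ and $\alpha L$ are equivalences is a localization --- and both conditions follow immediately from what was already proved ($\alpha_{LX}=\eta_{\what{X}}$ is a DK-equivalence between complete objects, and $L\alpha_X=\what{\eta_X}$ is the completion of a DK-equivalence). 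The paper instead verifies the universal property of the unit directly, showing that $X\lrar\what{X}$ is a weak equivalence in $\Comp_s$; this forces it through the homotopy colimit decomposition of $\what{X}$ over $\Del^{\op}_s$, Proposition~\ref{Q-anodyne}, and the substantial Theorem~\ref{Q-anodyne-lem-2} (mapping out of Q-anodyne maps into quasi-unital targets yields $\Comp_s$-equivalences), whose proof requires the $\left(\Del^1\right)^{\sharp}$-cylinder objects and Proposition~\ref{DK-to-categorical}. Your route buys a much shorter proof that sidesteps all of that machinery; the paper's route buys the explicit model-categorical statement that the unit maps are $\Comp_s$-equivalences (though this is recoverable from the adjunction afterwards, since $\Map^+(\what{X},W)\lrar\Map^+(X,W)$ being an equivalence for all complete $W$ is exactly that statement). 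The one genuine debt common to both arguments is a written proof of the composition and cancellation lemmas for DK-equivalences; your sketch of these is essentially right and would need to be carried out, but there is no gap in the overall strategy.
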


\begin{proof}
From the second part of Theorem~\ref{completion} we get that a map $f: X \lrar Y$ of quasi-unital semiSegal spaces is a DK-equivalence if and only if
$ \what{f}: \what{X} \lrar \what{Y} $
is a DK-equivalence. In view of Proposition~\ref{complete-DK} we deduce that the collection of maps sent by $\what{\bullet}$ to equivalences are precisely the DK-equivalences. Hence it is left to prove that $\what{\bullet}$ is indeed a homotopy left adjoint to the inclusion $\CsS \hrar \QsS$. For this it will be enough to show that the natural map
$$ X \lrar \what{X} $$
is a weak equivalence in $\Comp_s$ (and hence induces an equivalence on mapping spaces into complete semiSegal spaces). Now the marked semi-simplicial space $\what{X}$ is the homotopy colimit of the $\Del^{\op}_s$-diagram $[m] \mapsto X_{\bullet,m}$ where $X_{\bullet,0} = X$. Since $\Del_s$ is weakly contractible it will suffice to show that for each $\rho: [k] \lrar [n]$ in $\Del_s$ the natural map
$$ \rho^*:X_{\bullet,n} = X^{\left(\Del^n\right)^{\sharp}} \lrar X^{\left(\Del^k\right)^{\sharp}} = X_{\bullet,k} $$
is a weak equivalence in $\Comp_s$. Now from Proposition~\ref{Q-anodyne} we know that the inclusion
$$ \left(\Del^k\right)^{\sharp} \hrar \left(\Del^n\right)^{\sharp} $$
is Q-anodyne. Hence it will be enough to prove the following key assertion:

\begin{thm}\label{Q-anodyne-lem-2}
Let $f: X \hrar Y$ be a Q-anodyne map and let $W$ be a quasi-unital marked semiSegal space. Then the map
$ f^*:W^Y \lrar W^X $
is an equivalence in $\Comp_s$.
\end{thm}

The remainder of this section is devoted to proving Theorem~\ref{Q-anodyne-lem-2}. For this we will need to have some way to spot weak equivalences in $\Comp_s$. This will be achieved using a weak notion of cylinder object:

\begin{define}\label{d:cylinder}
Let $\M$ be a model category and $X \in \M$ and object. We will say that a cofibration of the form
$$ X \coprod X \x{d_0 \coprod d_1}{\lrar}  IX $$
exhibits $IX$ as a \textbf{weak cylinder object} for $X$ if the two maps $d_0,d_1: X \lrar IX$ are weak equivalences which become equal in $\Ho(\M)$. Given a weak cylinder object as above and two maps $f,g: X \lrar Y$ we will say that $f,g$ are \textbf{homotopic} via $IX$ if the corresponding map
$$ X \coprod X \x{f \coprod g}{\lrar}  Y $$
factors through $IX$. This notion is in general stronger then $f,g$ being equal in $\Ho(\M)$.
\end{define}

Our reason for introducing this notion is that in the model category $\Comp_s$ we have very natural choices for weak cylinder object, namely:
\begin{lem}
Let $X$ be a marked semi-simplicial space. Then the natural map
$$ X \coprod X \hrar X \otimes \left(\Del^1\right)^{\sharp} $$
exhibits $X \otimes \left(\Del^1\right)^{\sharp}$ as a weak cylinder object of $X$.
\end{lem}
\begin{proof}
Clearly the map in question is a cofibration. It will hence be enough to show that the two maps $d_0,d_1:\Del^0 \lrar \left(\Del^1\right)^{\sharp}$ are equal in $\Ho(\Comp_s)$. Let $\iota:\left(\Del^1\right)^{\sharp} \lrar W$ be a fibrant replacement of $\left(\Del^1\right)^{\sharp}$ in $\Comp_s$. We need to show that $\iota \circ d_0$ and $\iota \circ d_1$ are in the same connected component of $\Map_+(\Del^0,W) = W_0$. But this is clear because the map $\iota$ determines a marked edge from $\iota \circ d_0$ to $\iota \circ d_1$ and $W$ is complete (so that $\wtl{W}$ is homotopy-constant).
\end{proof}

The notion of homotopy between maps which is associated to the above choice of weak cylinder objects will be called \textbf{$\left(\Del^1\right)^{\sharp}$-homotopy}. There is a corresponding notion of a \textbf{$\left(\Del^1\right)^{\sharp}$-homotopy equivalence}, which in general is stronger then being a weak equivalence in $\Comp_s$. These types of equivalences are analogous to the notion of \textbf{categorical equivalences} in~\cite{rez}.

We will apply our construction of cylinder objects in order to prove Theorem~\ref{Q-anodyne-lem-2}. From Proposition~\ref{Q-anodyne-lem} and Corollary~\ref{c:section} we know that if $f: X \lrar Y$ is Q-anodyne then the map $W^Y \lrar W^X$ is a DK-equivalence which admits a section. Hence Theorem~\ref{Q-anodyne-lem-2} will follow from the following proposition:

\begin{prop}\label{DK-to-categorical}
Let $p:W \lrar Z$ be a DK-equivalence between quasi-unital marked semiSegal spaces which admits a section $g: Z \lrar W$. Then $f$ is a weak equivalence in $\Comp_s$.
\end{prop}
\begin{proof}
We can assume without loss of generality that $f$ is a marked fibration. We claim that $g$ is a homotopy inverse of $f$. On one direction the composition $f \circ g$ is the identity. We need to show that $g \circ f$ is equivalent to the identity $\Comp_s$. For this it will suffice to produce a $\left(\Del^1\right)^{\sharp}$-homotopy from $g \circ f$ to the identity, or in other words a marked edge from $g \circ f$ to the identity in $W^W$.

Since the mapping object $Z^W$ is quasi-unital (Proposition~\ref{Q-anodyne-lem}) there exists a marked edge $h \in (Z^W)_1$ from $f$ to itself. The edge $h$ corresponds to a map
$$ h: W \otimes \left(\Del^1\right)^{\sharp} \lrar Z $$
whose restriction to each $Z \otimes \Del^{\{i\}}$ is $f$. Now consider the commutative square
$$ \xymatrix{
W \times \partial \Del^1 \ar[r]\ar[d] & W \ar^{f}[d] \\
W \times \left(\Del^1\right)^{\sharp} \ar@{-->}^{\wtl{h}}[ur] \ar^{h}[r] & Z \\
}$$
where the top horizontal map is given by $(g \circ f) \coprod \Id$. Since the right vertical map is a fully-faithful marked fibration and the left vertical map is a cofibration which induces an isomorphism on the $0$'th level we get from Proposition~\ref{lift-character} that the a lift $\wtl{h}: W \times \left(\Del^1\right)^{\sharp} \lrar W$ indeed exists. Then $\wtl{h}$ gives an equivalence from $g \circ f$ to the identity in $W^W$ and we are done.

\end{proof}

This finishes the proof of Theorem~\ref{Q-anodyne-lem-2}, and hence the proof of Theorem~\ref{t:localization}.
\end{proof}

\subsection{ Proof of the main theorem }\label{ss-equivalence}

Let $\Comp$ denote Rezk's model category of complete Segal spaces (so that the underlying category of $\Comp$ is the category $\Top^{\Del^{\op}}$ of simplicial spaces). Recall the Quillen adjunction
$$ \xymatrix{
\Top^{\Del^{\op}} \ar@<0.5ex>[r]^{\F^+} & \Top^{\Del^{\op}_s}_+ \ar@<0.5ex>[l]^{\RK^+} \\
}$$
described in \S\S~\ref{sss-marked-right-kan}. The purpose of this section is to prove the following theorem, which is the main result of this paper:

\begin{thm}\label{t:main-2}
The Quillen adjunction $\F^+ \dashv \RK^+$ descends to a Quillen equivalence
\begin{equation}\label{e:adjunction}
\xymatrix{
\Comp \ar@<0.5ex>[r]^{\F^+} & \Comp_s \ar@<0.5ex>[l]^{\RK^+} \\
}
\end{equation}
\end{thm}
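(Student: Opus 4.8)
The plan is to prove Theorem~\ref{t:main-2} in two stages: first check that the Quillen adjunction $\F^+ \dashv \RK^+$ of \S\S~\ref{sss-marked-right-kan} descends to a Quillen adjunction between $\Comp$ and $\Comp_s$, and then upgrade this to a Quillen equivalence by verifying the two standard conditions (the right adjoint reflects weak equivalences between fibrant objects, and the derived unit is a weak equivalence on cofibrant objects). The whole argument is driven by a single computation — the value of $\RK^+$ on a complete marked semiSegal space — so I will set that up first.

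For the first stage, recall that $\Comp$ is a left Bousfield localization of the Reedy structure on $\Top^{\Del^{\op}}$ and $\Comp_s$ a left Bousfield localization of the marked structure on $\Top^{\Del^{\op}_s}_+$. Since $\F^+$ is already left Quillen for the unlocalized structures, by the standard criterion (Hirschhorn, applied first to the identity Quillen pair $\id\colon \Top^{\Del^{\op}_s}_+ \rightleftarrows \Comp_s$ and then to the localization $\Comp$) it suffices to check that $\RK^+$ carries $\Comp_s$-fibrant objects — i.e.\ complete marked semiSegal spaces — to $\Comp$-fibrant objects — i.e.\ complete Segal spaces. So let $W$ be a complete marked semiSegal space. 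Being right Quillen for the unlocalized structures, $\RK^+$ sends $W$ to a Reedy fibrant simplicial space, so only the Segal and completeness conditions remain. Here I would use the explicit formula $\RK^+(W)_n = \lim_{\C_n} \G_n$ with $\G_n(f\colon [m]\to [n]) = W^f_m = \Map^+\bigl((\Del^m)^f, W\bigr)$, which is a homotopy limit by Remark~\ref{homotopy-limit}, and prove that the unit map $W_n \lrar \RK^+(W)_n$ is a levelwise weak equivalence. On the cofinal part of $\C_n$ indexed by the surjections $f\colon [m]\twoheadrightarrow [n]$ the transition maps among the $W^f_m$ are, by Proposition~\ref{Q-anodyne} together with Corollary~\ref{c:section}, DK-equivalences admitting sections, while the purely degeneracy directions collapse because $\wtl{W}$ is homotopy-constant (Remark~\ref{r:homotopy-constant}, using completeness of $W$); for non-surjective $f$ the value of $\G_n$ is forced by restriction to the image of $f$. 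Assembling these identifications gives $\RK^+(W)_n \simeq W_n$ naturally in $n$, so that $\RK^+(W)$ inherits the Segal condition from $W$, has $\RK^+(W)_0 \simeq W_0$, and has its space of homotopy equivalences identified with the space $M$ of marked edges of $W$ — all invertible edges being marked, by completeness — which is in turn homotopy equivalent to $W_0$ by the second clause of Definition~\ref{d:complete}. Hence $\RK^+(W)$ is a complete Segal space, and $\F^+ \dashv \RK^+\colon \Comp \rightleftarrows \Comp_s$ is a Quillen adjunction.

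For the second stage I would verify the two conditions of a Quillen equivalence. First, $\RK^+$ reflects weak equivalences between fibrant objects: if $p\colon W \lrar W'$ is a map of complete marked semiSegal spaces with $\RK^+(p)$ a weak equivalence in $\Comp$, then — weak equivalences in $\Comp$ between complete Segal spaces being levelwise — the identifications $\RK^+(W)_\bullet \simeq W_\bullet$ and $\RK^+(W')_\bullet \simeq W'_\bullet$ from the first stage show that $p$ is a levelwise weak equivalence of the underlying semi-simplicial spaces; it also induces an isomorphism on marked connected components, since in a complete marked semiSegal space the marked edges are exactly the invertible ones, which $\RK^+$ detects. Thus $p$ is a marked equivalence, hence a weak equivalence in $\Comp_s$ (Corollary~\ref{marked-fibrant-we}). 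Second, for $X$ cofibrant in $\Comp$ the derived unit $X \lrar \RK^+\bigl(\what{\F^+X}\bigr)$ is a weak equivalence in $\Comp$; by replacing $X$ with a cofibrant–fibrant model it suffices to treat $X$ a complete Segal space. In that case $\what{\F^+X}$, being $\Comp_s$-fibrant, is a complete marked semiSegal space, so the first-stage computation applies to $W = \what{\F^+X}$. It then remains to check directly that the unit $X_n \lrar \RK^+(\F^+X)_n$ of the original adjunction is a levelwise equivalence for $X$ a complete Segal space — this uses the Segal condition on $X$ to see that the constraint "every $f$-degenerate edge lies in the image of $s_0$" cuts $W^f_m$ down to a space equivalent to $X_n$ — and to compare it with the fibrant replacement $\F^+X \lrar \what{\F^+X}$; combining, the derived unit is a levelwise weak equivalence. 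This establishes the Quillen equivalence.

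The step I expect to be the main obstacle is the computation carried out in the first stage, namely controlling the homotopy limit $\lim_{\C_n} W^f_m$ and proving $W_n \lrar \RK^+(W)_n$ is an equivalence. The delicate point is to disentangle the "inner" directions in $\C_n$, governed by the Segal condition on $W$, from the degeneracy directions, governed by completeness via the homotopy-constancy of $\wtl{W}$ and the Q-anodyne maps of Proposition~\ref{Q-anodyne}, and to see that these fit together cofinally so that the entire homotopy limit contracts onto $W_n$. Once this is available, the reflecting property of $\RK^+$, the derived unit computation, and — by combining with Theorem~\ref{marked-monoidality} and Rezk's monoidal statement for $\Comp$ — the compatibility with internal mapping objects recorded in Theorem~\ref{t:main} all follow with comparatively little additional work.
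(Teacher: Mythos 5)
Your proposal is correct in outline and its engine is the same as the paper's: everything reduces to the identification $\RK^+(W)_n \simeq W_n$ for $W$ a complete marked semiSegal space, obtained by restricting the limit over $\C_n$ to the cofinal subcategory of surjections $[m] \twoheadrightarrow [n]$, noting that this subcategory is weakly contractible, and using Proposition~\ref{Q-anodyne} together with completeness to see that the restricted diagram $f \mapsto W^f_m$ is homotopy-constant. This is precisely Theorem~\ref{counit} (phrased there as: the counit $\F^+(\RK^+(W)) \lrar W$ is a marked equivalence) together with Lemmas~\ref{main-lem-1}, \ref{contractible-fiber} and Proposition~\ref{main-lem-2}; your first stage and your "reflects equivalences" step follow from it just as the paper's Corollary~\ref{rk-complete} does. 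Where you genuinely diverge is the second half: you invoke the criterion "right adjoint reflects weak equivalences between fibrant objects plus derived unit is an equivalence", whereas the paper uses the dual one, "derived counit is an equivalence (immediate from Theorem~\ref{counit}, every object of $\Comp$ being cofibrant) plus $\F^+$ detects equivalences". The paper's choice is not cosmetic: your derived-unit step proposes to compute $\RK^+(\F^+X)_n$ directly, but $\F^+X$ is not fibrant in $\Comp_s$ nor even marked-fibrant (its marking is the image of $s_0$, which need not be a union of components of $X_1$), so the strict limit $\lim_{\C_n}\G_n$ defining $\RK^+(\F^+X)_n$ is not a priori a homotopy limit (Remark~\ref{homotopy-limit} requires marked-fibrancy), and one cannot simply transport the answer along the fibrant replacement $\F^+X \lrar \what{\F^+X}$, since $\RK^+$ only preserves equivalences between fibrant objects. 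The paper sidesteps this by observing that $\iota:\F^+X \hrar \F^{\natural}X$ is already a marked equivalence onto a complete, hence fibrant, object, and then applying the triangle identity: the composite $\F^+X \lrar \F^+\left(\RK^+\left(\F^{\natural}X\right)\right) \lrar \F^{\natural}X$ equals $\iota$, so Theorem~\ref{counit} and $2$-out-of-$3$ force $\F^+(u_X)$, and hence $u_X$, to be a levelwise equivalence. If you reroute your unit computation through $\F^{\natural}X$ in this way, your argument closes; as written, the direct evaluation of $\RK^+(\F^+X)$ is the one step that needs repair.
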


Note that a-priori it is not even clear that this is a Quillen adjunction. Even though $\F^+$ preserves cofibrations, to show that it preserves trivial cofibrations is equivalent to showing that $\RK^+$ maps complete marked semiSegal spaces to complete Segal spaces. Fortunately, this claim as well as the desired Quillen equivalence will both follow from the following theorem:

\begin{thm}\label{counit}
Let $X$ be a complete marked semiSegal space. Then the counit map
$$ \nu_X:\F^+\left(\RK^+\left(X\right)\right) \lrar X $$
is a marked equivalence.
\end{thm}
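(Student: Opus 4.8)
The plan is to analyze the counit map $\nu_X$ levelwise and show it is a levelwise weak equivalence which also induces an isomorphism on marked components, i.e.\ a marked equivalence in the sense of the underlying model structure on $\Top^{\Del^{\op}_s}_+$ (this is the relevant notion since $X$ is marked-fibrant, and marked equivalences between marked-fibrant objects are exactly the levelwise weak equivalences inducing isomorphisms on marked edge spaces, by Corollary~\ref{marked-fibrant-we}). First I would unwind the definition of $\RK^+(X)_n = \lim_{\C_n} \G_n$ where $\G_n(f) = X^f_m$ for $f:[m]\lrar[n]$, and of $\F^+$ which forgets degeneracies and remembers the degenerate $1$-simplices as the marking. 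Since $X$ is marked-fibrant, Remark~\ref{homotopy-limit} tells us the limit defining $\RK^+$ is a homotopy limit, so I can work homotopy-invariantly. The key point is that the indexing category $\C_n = \Del^{\op}_s \times_{\Del^{\op}} \Del^{\op}_{[n]/}$ has an initial object, namely $\mathrm{id}:[n]\lrar[n]$, because any $f:[m]\lrar[n]$ receives a unique morphism in $\C_n$ from $\mathrm{id}_{[n]}$ (the injective map realizing the factorization) — wait, one must check the direction carefully: a morphism in $\C_n$ from $f$ to $g$ is an injection $h$ with $f\circ h = g$, so $\mathrm{id}_{[n]}$ is \emph{terminal}, not initial; but then the limit over $\C_n$ does not immediately collapse. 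The correct structural fact is that $\C_n$ admits $\mathrm{id}_{[n]}$ as a terminal object only if we had the opposite variance, so instead the argument must proceed by genuinely computing the homotopy limit.

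The core of the argument is therefore to show that $\G_n:\C_n \lrar \Top$, although not constant, has homotopy limit equivalent to its value $X_n$ at the object $\mathrm{id}_{[n]}$. The mechanism is \emph{completeness} of $X$: for a surjection $f:[m]\lrar[n]$ with section $h$, Proposition~\ref{Q-anodyne} shows that the corresponding map between mapping objects is Q-anodyne, and $X$ being complete (hence local with respect to $C_0,C_1,C_2$ and in particular a complete, so quasi-unital, marked semiSegal space) the induced restriction $X^f_m = \Map^+((\Del^m)^f, X) \lrar X_n$ is a weak equivalence — this is precisely the content that degeneracies become ``forced'' once one knows equivalences, and it is the completeness condition (2) of Definition~\ref{d:complete} that makes the $d_0,d_1:M\lrar X_0$ equivalences needed to handle the $f$-degenerate marked edges in $(\Del^m)^f$. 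So every value $\G_n(f)$ is weakly equivalent to $X_n$, compatibly with the structure maps, and moreover the cofinality/contractibility of the relevant subcategory of ``injections into $[n]$'' inside $\C_n$ (the non-surjective $f$'s form a contractible piece since $\Del_s$-style nerve arguments apply, and $\Del_s$ being weakly contractible is used as elsewhere in the paper, e.g.\ Remark~\ref{r:homotopy-constant}) lets me conclude $\holim_{\C_n}\G_n \simeq X_n$. Concretely I would filter $\C_n$ by the surjective-part, use that $\G_n$ restricted to injections-into-$[n]$ recovers $\RK(\ovl{X})_n$-type data, and then use completeness to kill the discrepancy coming from surjections; the map $\nu_X$ in level $n$ is exactly the projection $\holim_{\C_n}\G_n \lrar \G_n(\mathrm{id}) = X_n$, which is thus an equivalence.

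Finally I would check the marking: $\F^+\RK^+(X)$ has as its marked $1$-simplices the degenerate ones coming from the simplicial structure reconstructed by $\RK^+$, and I need that $\nu_X$ sends these bijectively (on $\pi_0$) onto the marking $M$ of $X$. Since $X$ is complete, $M = X^{\inv}_1$, and the degenerate edges of $\RK^+(X)$ are built from the $f$-degenerate markings in the definition of $X^f_m$, which are marked (hence invertible, hence land in $M$) by construction; surjectivity onto $\ovl{M}$ follows from quasi-unitality of $X$ exactly as in the proof that completeness implies quasi-unitality, and injectivity on $\pi_0$ follows because $\nu_X$ is already a levelwise equivalence so induces an injection on $\pi_0$ of every level. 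Combining with the levelwise equivalence established above and Corollary~\ref{marked-fibrant-we} (after noting $\F^+\RK^+(X)$ is marked-fibrant, which needs $A$ to be a union of components of $X_1$ — automatic since the marking is degenerate edges and $X$ is Reedy fibrant) gives that $\nu_X$ is a marked equivalence.

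\textbf{Main obstacle.} The hard part will be the homotopy-limit computation $\holim_{\C_n}\G_n \simeq X_n$: one must carefully set up a cofinality or filtration argument on the Reedy category $\C_n$ and invoke completeness (via Proposition~\ref{Q-anodyne} and the $C_i$-locality of $X$) uniformly across all the surjections $f:[m]\lrar[n]$ and all their sections, tracking the $f$-degenerate markings through $\Map^+((\Del^m)^f,X)$. Everything else — the marking bookkeeping, marked-fibrancy of $\F^+\RK^+(X)$, and the passage from levelwise equivalence to marked equivalence — is routine given the lemmas already proved.
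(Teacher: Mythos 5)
Your overall strategy coincides with the paper's: work levelwise, use Remark~\ref{homotopy-limit} to replace the limit defining $\RK^+(X)_n$ by a homotopy limit over $\C_n$, and use Proposition~\ref{Q-anodyne} together with completeness (via Propositions~\ref{mapping-into-complete} and~\ref{complete-DK}) to show that for a \emph{surjection} $f:[m]\lrar[n]$ with section $h$ the restriction $X^f_m \lrar X_n$ is a weak equivalence; this is exactly the paper's Proposition~\ref{main-lem-2}. You are also right that the marking bookkeeping is comparatively routine (the paper in fact leaves it implicit).

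The step you flag as ``the hard part'' is, however, a genuine gap, and your sketch of it contains an error. You assert that ``every value $\G_n(f)$ is weakly equivalent to $X_n$,'' but this fails for non-surjective $f$: for the inclusion $f:[0]\hrar[1]$ one has $\G_1(f)=X_0$, which is not equivalent to $X_1$ in general. So $\G_n$ is \emph{not} homotopy-constant on all of $\C_n$, and there is no role for ``injections into $[n]$'' or for treating ``the non-surjective $f$'s as a contractible piece.'' What the paper actually proves is: (a) the full subcategory $\C^0_n\subseteq\C_n$ spanned by the surjections $f:[m]\lrar[n]$ is cofinal (Lemma~\ref{main-lem-1}), by showing each comma category $\C^0_n\times_{\C_n}(\C_n)_{/g}$ decomposes as a product of weakly contractible categories of the form $\Del^{op}_s$ or $(\Del^{op}_s)_{/g^{-1}(i)}$; (b) $\C^0_n\cong(\Del^{op}_s)^n$ is itself weakly contractible (Lemma~\ref{contractible-fiber}); and (c) $\G_n$ restricted to $\C^0_n$ is homotopy-constant (Proposition~\ref{main-lem-2}, reducing to $g=\Id$ by choosing a section of $g$). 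A homotopy-constant diagram over a weakly contractible category has homotopy limit equivalent to any of its values, giving $\RK^+(X)_n\simeq X_n$. Without the cofinality statement (a) your argument does not close, since the individual equivalences $X^f_m\simeq X_n$ for surjective $f$ say nothing about the contribution of the remaining objects of $\C_n$ to the homotopy limit.
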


Before we proceed to prove Theorem~\ref{counit} let us derive two short corollaries of it, which together imply that~\ref{e:adjunction} is a Quillen equivalence. We start with the following observation:
\begin{cor}\label{rk-complete}
Assume Theorem~\ref{counit} and let $X$ be a complete marked semiSegal space. Then $\RK^+(X)$ is a complete Segal space.
\end{cor}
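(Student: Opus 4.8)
The plan is to reduce the statement to Theorem~\ref{counit} together with the fact that $\F^+$ is a left Quillen functor from the Reedy model structure, plus the observation that $\F^+$ detects complete Segal spaces among Reedy fibrant simplicial spaces. Concretely, let $X$ be a complete marked semiSegal space. Since $X$ is fibrant in $\Comp_s$, and $\RK^+$ is right adjoint to the left Quillen functor $\F^+$ (for the marked model structure, which $\F^+$ is easily seen to respect — it sends Reedy cofibrations to levelwise injections and levelwise equivalences to marked equivalences), the object $\RK^+(X)$ is at least Reedy fibrant in $\Top^{\Del^{\op}}$. So the only thing to check is that $\RK^+(X)$ satisfies the Segal and completeness conditions, i.e. that it is local with respect to the spine inclusions $\Sp^n \hrar \Del^n$ and with respect to Rezk's completeness map $E \hrar \Del^0$.

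First I would invoke Theorem~\ref{counit}: the counit $\nu_X: \F^+(\RK^+(X)) \lrar X$ is a marked equivalence. Now $\F^+$, being a left Quillen functor $\Comp \to \Comp_s$ once we know it is one (which is precisely what we are proving, so we must be careful not to argue circularly) — instead, I would argue directly at the level of homotopy types. The forgetful functor $\ovl{\bullet}: \Top^{\Del^{\op}_s}_+ \to \Top^{\Del^{\op}_s}$ composed with $\F^+$ is just the ordinary forgetful functor $\F: \Top^{\Del^{\op}} \to \Top^{\Del^{\op}_s}$, which remembers everything except the degeneracies. The key point is that a Reedy fibrant simplicial space $Y$ is a Segal space if and only if its underlying semi-simplicial space $\F(Y)$ is a semiSegal space (the Segal condition is phrased entirely via face maps, see Definition~\ref{d:semiSegal}), and $Y$ is complete if and only if, roughly, its space of degeneracies realizes the right homotopy type relative to $Y_0$. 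Since $\nu_X$ is a marked equivalence and $X$ is a complete marked semiSegal space (hence $\F^+(\RK^+(X))$ is marked-equivalent to one, and in particular — using that both sides are marked-fibrant, by Lemma~\ref{marked-fibrant} applied to $\F^+(\RK^+(X))$ which is Reedy fibrant with its marking the degeneracies, a union of components since we are in the image of $\F^+$ of a Reedy fibrant object) it follows that $\F(\RK^+(X)) = \ovl{\F^+(\RK^+(X))}$ is a semiSegal space and satisfies the analogue of completeness.

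The translation from these semi-simplicial statements back to the simplicial ones is the step I expect to require the most care: one must check that for a Reedy fibrant simplicial space $Y$, the condition "$Y$ is a complete Segal space" is equivalent to "the underlying marked semi-simplicial space $\F^+(Y)$ (with degenerate edges marked) is a complete marked semiSegal space." The Segal direction is routine since the Segal maps only use faces. For completeness, one uses that in a Segal space the invertible $1$-simplices in Rezk's sense coincide with the invertible edges in the sense of Definition~\ref{inv-unital}, that the degenerate edges are always invertible, and that Rezk's completeness condition (the map $X_0 \to X_{\mathrm{heq}}$ being an equivalence) unwinds, via Definition~\ref{d:complete}, to conditions $(1)$ and $(2)$ there — namely that the marked edges are exactly the invertibles and that $d_0, d_1$ restricted to them are equivalences. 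Granting this dictionary, $\F(\RK^+(X))$ being a complete marked semiSegal space forces $\RK^+(X)$ to be a complete Segal space, and we are done. The main obstacle is thus assembling this dictionary cleanly; everything else is formal consequence of Theorem~\ref{counit} and the adjunction.
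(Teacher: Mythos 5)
Your overall strategy is the paper's: deduce Reedy fibrancy of $\RK^+(X)$ from the pre-localized Quillen adjunction, get the Segal condition for free from the levelwise equivalence $\F^+(\RK^+(X)) \simeq X$ supplied by Theorem~\ref{counit} (the Segal condition only involves face maps), and then extract completeness. But the step you yourself flag as "requiring the most care" is exactly where the content of the corollary lies, and the scaffolding you set up for it contains a false claim. The marking of $\F^+(Y)$ is the image of $s_0: Y_0 \lrar Y_1$, and this is essentially never a union of connected components of $Y_1$, even for $Y$ Reedy fibrant (take any $Y$ with $Y_1$ connected and $s_0(Y_0) \neq Y_1$); being "in the image of $\F^+$ of a Reedy fibrant object" gives nothing here. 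Consequently $\F^+(\RK^+(X))$ is not marked-fibrant, Lemma~\ref{marked-fibrant} does not apply to it, and your proposed dictionary ("$Y$ is a complete Segal space iff $\F^+(Y)$ is a complete marked semiSegal space") is false as stated, since the right-hand side presupposes marked-fibrancy. The paper confronts precisely this issue one corollary later, where it replaces $\F^+(Y)$ by $\F^{\natural}(Y)$, marked by all of $Y^{\inv}_1$, and notes that the inclusion $\F^+(Y) \hrar \F^{\natural}(Y)$ is only a marked \emph{equivalence}, not an identification.

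The repair is short and is what the paper actually does: one never needs $\F^+(\RK^+(X))$ to be fibrant or to literally be a complete marked semiSegal space. The marked equivalence $\nu_X$ is in particular a levelwise equivalence, and levelwise equivalences of semiSegal spaces preserve and reflect invertibility of edges, so it identifies $\RK^+(X)^{\inv}_1$ with $X^{\inv}_1$. Completeness of $X$ then gives that $d_0: \RK^+(X)^{\inv}_1 \lrar \RK^+(X)_0$ is a weak equivalence. Now invoke the simplicial identity $d_0 \circ s_0 = \mathrm{id}$: the degeneracy $s_0$ lands in the invertible edges and is a section of this equivalence, so by two-out-of-three $s_0: \RK^+(X)_0 \lrar \RK^+(X)^{\inv}_1$ is itself a weak equivalence, which is Rezk's completeness condition. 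This section argument is the one ingredient your outline is missing; with it in place the rest of your write-up goes through.
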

\begin{proof}
First since $X$ is marked-fibrant we get that $\RK^+(X)$ is Reedy fibrant. Since
$ \F^+\left(\RK^+\left(X\right)\right) \simeq X $
we get that $\RK^+\left(X\right)$ satisfies the Segal condition and hence is a Segal space. Furthermore, we get that
$ \RK^+\left(X\right)^{\inv}_1 \simeq X^{\inv}_1 $
and in particular the map
$$ d_0: \RK^+\left(X\right)^{\inv}_1 \lrar \RK^+\left(X\right)_0 $$
is a weak equivalence. Since $s_0$ is a section of $d_0$ we get that $s_0$ is a weak equivalence and hence $\RK^+\left(X\right)$ is a complete Segal space.
\end{proof}

Now Corollary~\ref{rk-complete} implies that
$$ \xymatrix{
\Comp \ar@<0.5ex>[r]^{\F^+} & \Comp_s \ar@<0.5ex>[l]^{\RK^+} \\
}$$
is indeed a Quillen adjunction. We then get a derived adjunction
$$ \xymatrix{
\Ho\left(\Comp\right) \ar@<0.5ex>[r] & \Ho\left(\Comp_s\right) \ar@<0.5ex>[l] \\
}$$
between the respective homotopy categories. Since every object in $\Comp$ is cofibrant Theorem~\ref{counit} tells us that the counit of the derived adjunction is a natural isomorphism. To show that the unit map is an isomorphism it will be enough to show that $\F^+$ \textbf{detects equivalences}, i.e. that if $f: X \lrar Y$ is a map of simplicial spaces such that $\F^+(f)$ is a weak equivalence in $\Comp_s$ then $f$ is an equivalence in $\Comp$ (the unit transformation is then an equivalence by a standard argument).

\begin{cor}
Assume Theorem~\ref{counit}. Then the functor $\F^+$ detects equivalences.
\end{cor}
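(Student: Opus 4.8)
The plan is to argue directly, without passing through the unit transformation of $\F^+ \dashv \RK^+$ (that would be circular, since the present corollary is an ingredient in the proof that the adjunction is a Quillen equivalence). The key point is that when $X$ is a \emph{complete} Segal space one has an explicit fibrant model of $\F^+(X)$ inside $\Comp_s$. First I would reduce to this case. Let $f : X \lrar Y$ be a map in $\Comp$ with $\F^+(f)$ an equivalence in $\Comp_s$, and choose functorial fibrant replacements $j_X : X \lrar RX$, $j_Y : Y \lrar RY$ in $\Comp$ (so $RX, RY$ are complete Segal spaces), with induced $Rf : RX \lrar RY$. Since every object of $\Comp$ is cofibrant and, by Corollary~\ref{rk-complete} (which assumes Theorem~\ref{counit}), $\F^+ : \Comp \lrar \Comp_s$ is left Quillen, $\F^+$ preserves weak equivalences; hence $\F^+(j_X),\F^+(j_Y)$ are equivalences in $\Comp_s$, by two-out-of-three $\F^+(Rf)$ is one, and by two-out-of-three in $\Comp$ it suffices to show $Rf$ is an equivalence in $\Comp$. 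So we may assume $X$ and $Y$ are complete Segal spaces.

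Next, the main lemma: for a complete Segal space $X$ with underlying semi-simplicial space $\ovl{X}$, the marked semi-simplicial space $\ovl{X}^{\natural} = (\ovl{X},\ovl{X}^{\inv}_1)$ is a \emph{complete} marked semiSegal space, and the canonical marking-enlargement $\F^+(X) = (\ovl{X}, s_0(X_0)) \lrar \ovl{X}^{\natural}$ (well defined because degenerate edges are invertible) is a marked equivalence. Indeed $\ovl{X}$ is Reedy fibrant and satisfies the Segal condition, since these involve only face maps and are inherited from $X$, so $\ovl{X}$ is a semiSegal space; by Lemma~\ref{inv-closed-composition} the set $\ovl{X}^{\inv}_1$ is a union of components closed under $2$-out-of-$3$, so $\ovl{X}^{\natural}$ is a marked semiSegal space, marked-fibrant by Lemma~\ref{marked-fibrant}. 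Completeness of $\ovl{X}^{\natural}$ is where completeness of $X$ enters: condition $(1)$ of Definition~\ref{d:complete} holds tautologically, while completeness of the Segal space $X$ says precisely that $s_0 : X_0 \lrar \ovl{X}^{\inv}_1$ is a weak equivalence, and then $d_0 s_0 = d_1 s_0 = \Id$ together with two-out-of-three forces $d_0, d_1 : \ovl{X}^{\inv}_1 \lrar X_0$ to be weak equivalences, giving condition $(2)$. The same fact (that $s_0$ is surjective on $\pi_0$) shows that the inclusion $s_0(X_0) \hookrightarrow \ovl{X}^{\inv}_1$ induces a bijection on marked components, so $\F^+(X) \lrar \ovl{X}^{\natural}$ is a marked equivalence, hence an equivalence in $\Comp_s$ onto the $\Comp_s$-fibrant object $\ovl{X}^{\natural}$.

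Finally I would assemble. The assignment $X \mapsto \ovl{X}^{\natural}$ is functorial for maps of Segal spaces, since such a map preserves invertible edges and so carries $\ovl{X}^{\inv}_1$ into $\ovl{Y}^{\inv}_1$; thus $f$ fits into a commuting square with vertical maps $\F^+(f)$ and $\ovl{f} : \ovl{X}^{\natural} \lrar \ovl{Y}^{\natural}$ and horizontal maps the marking-enlargements of the lemma. The horizontal maps and $\F^+(f)$ are equivalences in $\Comp_s$, so $\ovl{f}$ is an equivalence in $\Comp_s$ between $\Comp_s$-fibrant objects. Walking down the tower of left Bousfield localizations from $\Comp_s$ to $\Seg_s$ to the marked model structure — at each stage a weak equivalence between fibrant objects of the localization is a weak equivalence of the previous structure — we conclude that $\ovl{f}$ is a marked equivalence, hence by Corollary~\ref{marked-fibrant-we} a levelwise equivalence of the underlying semi-simplicial spaces. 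Since $\ovl{X}_n = X_n$ and $\ovl{f}_n = f_n$, this says exactly that $f$ is a Reedy (levelwise) equivalence of simplicial spaces, and therefore an equivalence in $\Comp$.

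The main obstacle is the lemma in the second step — pinning down that $\ovl{X}^{\natural}$ is complete and is a correct fibrant model of $\F^+(X)$, which is the one place the completeness hypothesis on $X$ genuinely gets used — together with the discipline of avoiding any circular appeal to the unit of the adjunction; the bookkeeping through the tower of Bousfield localizations and the use of Ken Brown's lemma in the reduction step are routine.
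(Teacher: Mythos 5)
Your argument is correct, and it shares the paper's central construction: the object you call $\ovl{X}^{\natural}$ is exactly the paper's $\F^{\natural}(X)$, and the key lemma — that for $X$ a complete Segal space the marking-enlargement $\F^+(X)\hrar \F^{\natural}(X)$ is a marked equivalence onto a complete marked semiSegal space — is verbatim the heart of the paper's proof, justified the same way (completeness of $X$ forces $X^{\inv}_1$ to be the union of components meeting the degenerate edges, and gives condition $(2)$ of Definition~\ref{d:complete} via $d_i\circ s_0=\Id$). Where you diverge is in how the conclusion is assembled. The paper argues contravariantly: it uses the lemma to show that the derived unit $Y\lrar\RK^+(\F^{\natural}(Y))$ is a levelwise equivalence, hence that every complete Segal space lies in the essential image of $\RK^+$, and then detects $\Comp$-equivalences by mapping into complete Segal spaces and transposing across the adjunction. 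You argue covariantly: after a Ken Brown reduction to complete Segal spaces (legitimate, since Corollary~\ref{rk-complete} is already available under the standing hypothesis and every object of $\Comp$ is cofibrant), you push $f$ forward to a $\Comp_s$-equivalence $\ovl{f}$ between $\Comp_s$-fibrant objects and invoke the general fact that a left Bousfield localization reflects weak equivalences between local objects, landing on a levelwise equivalence directly. Your route never touches $\RK^+$ or the unit, which makes it slightly more self-contained; the paper's route does the extra work of identifying the essential image of $\RK^+$, which it wants anyway for Remark~\ref{r:topological-adj}. The only small points worth tightening are the justification that Reedy fibrancy of $X$ passes to the underlying semi-simplicial space (this holds because $\LK(\partial\Del^n)$ recovers the simplicial boundary, so the two matching objects agree — the paper also uses this silently) and the observation that a map of unital Segal spaces preserves invertible edges because it induces a unital functor on homotopy categories; both are fine.
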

\begin{proof}
By definition the equivalences in $\Comp$ are detected by mapping into complete Segal spaces. Hence the claim that $\F^+$ detects equivalences will follow once we show that every complete Segal space is in the image of $\RK^+$ (up to a levelwise equivalence).

Let $Y$ be a complete Segal space. Note that $\F^+(Y)$ is then \textbf{almost} a complete marked semiSegal space in the following sense: let $\F^{\natural}(Y)$ be the marked simplicial space which has the same underlying semi-simplicial space as $\F^+(Y)$ but whose marking is given by $M = (\F^+(Y))^{\inv}_1 = Y^{\inv}_1$. Since every degenerate edge in $Y$ is an equivalence in $\F^+(Y)$ we have an inclusion
$$ \iota:\F^+(Y) \hrar \F^{\natural}(Y) $$
which is a levelwise equivalence by definition. Since $Y$ is complete we get that $Y^{\inv}_1$ is the union of all connected components which contain degenerate edges. This means that $\iota$ is a marked equivalence. Furthermore the completeness of $Y$ implies the completeness of $\F^{\natural}(Y)$, and so $\iota$ can serve as a \textbf{fibrant replacement} of $\F^+(Y)$. The upshot of this is that $\F^+(Y)$ is \textbf{marked equivalent} to its fibrant replacement (and not just weakly equivalent in $\Comp_s$).

Now let $u_Y$ be the map given by the composition
$$ Y \lrar \RK^+\left(\F^+(Y)\right) \lrar \RK^+\left(\F^{\natural}(Y)\right)) .$$
From the discussion above we see that we can consider $u_Y$ is the derived unit in the \textbf{pre-localized} Quillen adjunction
$$ \xymatrix{
\Top^{\Del^{\op}} \ar@<0.5ex>[r]^{\F^+} & \Top^{\Del^{\op}_s}_+ \ar@<0.5ex>[l]^{\RK^+} \\
}.$$
This means that the composition
$$ \F^+(Y) \x{\F^+(u_Y)}{\lrar} \F^+\left(\RK^+\left(\F^{\natural}(Y)\right)\right) \x{\nu_{\F^{\natural}(Y)}}{\lrar} \F^{\natural}(Y) $$
is a marked equivalence. Since the second map is a marked equivalence by Theorem~\ref{counit} we get that $\F^+(u_Y)$ is a marked equivalence. This means that $u_Y$ is a levelwise equivalence and hence $Y$ is in the essential image of $\RK^+$.
\end{proof}

\begin{rem}\label{r:topological-adj}
Let $\CS \subseteq \Comp^{\fib}$ be the full topological subcategory spanned by complete Segal spaces. In light of Corollary~\ref{counit} the functors $\RK^+$ and $\F^{\natural}$ above restrict to an adjunction
$$ \xymatrix{
\CS \ar@<0.5ex>[r]^{\F^{\natural}} & \CsS \ar@<0.5ex>[l]^{\RK^+} \\
} $$
which is an \textbf{equivalence of topological categories}.
\end{rem}

We are now ready to prove Theorem~\ref{counit}:

\begin{proof}[Proof of Theorem~\ref{counit}]

We will start with a lemma which will help us compute $\RK^+\left(X\right)$ more easily by replacing the indexing category $\C_n$ with a simpler subcategory.

\begin{lem}\label{main-lem-1}
Let $n \geq 0$ and consider the full subcategory $\C^{0}_{n} \subseteq \C_n$  spanned by objects of the form $f: [m] \lrar [n]$ such that $f$ is \textbf{surjective}. Then the inclusion $\C^{0}_{n} \hrar \C_n$ is cofinal.
\end{lem}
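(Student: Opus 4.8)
The plan is to use the standard cofinality criterion (Quillen's Theorem A, or its $\infty$-categorical analogue): the inclusion $\C^0_n \hrar \C_n$ is cofinal if and only if for every object $f:[m]\lrar[n]$ of $\C_n$ the comma category $\C^0_n \times_{\C_n} (\C_n)_{f/}$ — equivalently the category of objects of $\C^0_n$ receiving a map \emph{under} $f$ in $\C_n$ — is weakly contractible. So first I would unwind what this comma category is. An object is a surjection $g:[k]\twoheadrightarrow[n]$ together with a morphism $f\to g$ in $\C_n$, i.e. an injective order-preserving $h:[m]\hrar[k]$ with $g\circ h = f$. In other words we are looking at ways of factoring $f$ as $[m]\x{h}{\hrar}[k]\x{g}{\twoheadrightarrow}[n]$ with $h$ injective and $g$ surjective, and morphisms between such factorizations are the obvious injective maps $[k]\hrar[k']$ over $[n]$ and under $[m]$.

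The key observation is that this comma category has an initial object, which immediately gives contractibility. Namely, given $f:[m]\lrar[n]$, form the surjective–injective (epi-mono) factorization of $f$ itself: $f = \iota \circ \pi$ with $\pi:[m]\twoheadrightarrow[\ell]$ surjective and $\iota:[\ell]\hrar[n]$ injective — wait, that is the wrong factorization for our purpose. What we actually want is a factorization $[m]\x{h}{\hrar}[k]\x{g}{\twoheadrightarrow}[n]$ through a \emph{surjection onto} $[n]$. The universal (smallest) such $[k]$ is obtained by taking $[k] = [m] \amalg_{\im(f)} [n]$, the pushout in $\Del$ of the span $[m] \x{f_{\mathrm{surj}}}{\llar} \im(f) \hrar [n]$, where $f_{\mathrm{surj}}:[m]\twoheadrightarrow \im(f)$ is the surjective part of $f$ and $\im(f)\hrar[n]$ is its image inclusion; concretely $[k]$ is $[n]$ with the fibers of $f$ over points of $\im(f)$ "fattened" to match the fibers of $f_{\mathrm{surj}}$. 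This comes equipped with an injection $h:[m]\hrar[k]$ (a section of the fattening over $\im(f)$, extended by the identity elsewhere — one needs to check order-preservation, which is straightforward) and a surjection $g:[k]\twoheadrightarrow[n]$ collapsing the fattening, with $g\circ h = f$. I would then verify that $(g,h)$ is initial: given any other factorization $[m]\x{h'}{\hrar}[k']\x{g'}{\twoheadrightarrow}[n]$, the universal property of the pushout produces a unique map $[k]\lrar[k']$ commuting with everything, and one checks it is injective (it is mono in $\Del$ because $h'$ is and the pushout legs are jointly epi in the appropriate sense).

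The main obstacle I expect is purely combinatorial bookkeeping: making the construction of $[k]=[m]\amalg_{\im(f)}[n]$ and the maps $h,g$ completely explicit as order-preserving maps of finite ordinals, and checking order-preservation and injectivity of the comparison map $[k]\lrar[k']$ — these are the kind of verifications that are "obvious" but fiddly. An alternative, which avoids constructing the initial object, is to exhibit a zig-zag of natural transformations between the identity functor on the comma category and a constant functor (i.e. show the nerve is contractible by an explicit homotopy that at each stage either collapses a fattened fiber or enlarges it toward the universal one), but producing the initial object is cleaner. Once contractibility of all the comma categories is established, Theorem A gives cofinality and hence the limit defining $\RK^+(X)_n$ may be computed over $\C^0_n$ instead — this is exactly the simplification that will be exploited in the subsequent computation of the counit map $\nu_X$.
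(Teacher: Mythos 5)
Your identification of the comma category is correct in substance (up to a harmless reversal of conventions it is the paper's $\C^0_n \times_{\C_n} (\C_n)_{/f}$, whose objects are factorizations $[m] \x{h}{\hrar} [k] \x{g}{\twoheadrightarrow} [n]$ of $f$ with $h$ injective and $g$ surjective), but the key claim --- that this category has an initial object --- is false whenever $f$ is not surjective, and that is precisely the nontrivial case. The object you construct ("$[n]$ with the fibers over $\im(f)$ fattened to those of $f$") does admit a map to every other factorization, but not a unique one: for $i \in [n] \setminus \im(f)$ the single point of $g^{-1}(i)$ may be sent to \emph{any} point of the fiber $(g')^{-1}(i)$ of a competing factorization, since no constraint from $[m]$ reaches that fiber. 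Concretely, for $f:[0] \lrar [1]$ with $0 \mapsto 0$ your candidate is the identity $[1] \lrar [1]$, and there are two morphisms from it to the factorization $[0] \hrar [2] \twoheadrightarrow [1]$ with $g'(0)=0$, $g'(1)=g'(2)=1$. The appeal to the pushout's universal property breaks down at the same spot: to produce the comparison map $[k] \lrar [k']$ that way you would need a map $[n] \lrar [k']$, i.e.\ a section of $g'$, and such sections are not unique. (Nor is your object terminal: injections $[k'] \lrar [k]$ generally fail to exist.)

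The paper instead decomposes the comma category as a product $\prod_{i=0}^{n} \mcal{E}_i$ over the points of $[n]$, where $\mcal{E}_i$ records the fiber $g^{-1}(i)$ together with the injection from $f^{-1}(i)$: when $f^{-1}(i) \neq \emptyset$ this is a slice category of $\Del^{\op}_s$, which has a terminal object and is contractible, but when $f^{-1}(i) = \emptyset$ it is $\Del^{\op}_s$ itself, which has neither an initial nor a terminal object and is only \emph{weakly} contractible (the same fact about $\Del_s$ invoked elsewhere in the paper). So some genuinely homotopical input is unavoidable here, and to repair your argument you would have to carry out the alternative you dismissed --- a zig-zag of natural transformations contracting the nerve (in effect, reproving the weak contractibility of $\Del_s$ fiberwise) --- rather than exhibit an initial object.
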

\begin{proof}
We need to show that for every object $X \in \C_n$ the category
$ \C^0_{n} \times_{\C_n} {\C_n}_{/X} $
is weakly contractible. Let $X$ be the object corresponding to a morphism $g: [k] \lrar [n]$. The objects of the category $\C^0_{n} \times_{\C_n} {\C_n}_{/X}$ can be identified with commutative diagrams of the form
$$ \xymatrix{
[k] \ar_{g}[dr]\ar^{h}[rr] & & [m] \ar^{f}[dl] \\
& [n]  &\\
}$$
such that $f$ is surjective and $h$ is injective (and $g$ remains fixed). A morphism $\C^0_{n} \times_{\C_n} {\C_n}_{/X}$ between two diagrams as above as a morphism of diagrams in the opposite direction which is the identity on $[k]$ and $[n]$. A careful examination shows that the category $\C^0_{n} \times_{\C_n} {\C_n}_{/X}$ is then in fact isomorphic to the product
$$ \C^0_{n} \times_{\C_n} {\C_n}_{/X} \cong \prod_{i=0}^{n} \mcal{E}_i $$
such that
$$ \mcal{E}_i = \left\{
\begin{matrix}
\Del^{op}_s & g^{-1}(i) = \emptyset \\
{\Del^{op}_s}_{/g^{-1}(i)} & g^{-1}(i) \neq \emptyset \\
\end{matrix}\right.
$$
When $g^{-1}(i) \neq \emptyset$ then $\mcal{E}_i$ has a terminal object and so is weakly contractible. When $g^{-1}(i) = \emptyset$ then $\mcal{E}_i = \Del^{op}_s$ which is weakly contractible as well.

\end{proof}

In view of Remark~\ref{homotopy-limit} this means in particular that
\begin{equation}\label{e:c0}
\RK^+\left(X\right)_n \simeq \holim_{\C^0_n} \mcal{G}_n .
\end{equation}
We now observe the following:
\begin{lem}\label{contractible-fiber}
The category $\C^0_n$ is weakly contractible.
\end{lem}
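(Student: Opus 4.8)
I want to show that the category $\C^0_n$ — whose objects are surjections $f:[m]\lrar[n]$ in $\Del$ and whose morphisms $f\to g$ (with $f:[m]\lrar[n]$, $g:[k]\lrar[n]$) are injections $h:[k]\hrar[m]$ over $[n]$ — is weakly contractible. The cleanest route is to exhibit an explicit terminal object, or failing that, a zig-zag of natural transformations contracting the identity functor to a constant functor. The natural candidate for a terminal object is the identity map $\mathrm{id}:[n]\lrar[n]$: it is surjective, so it lies in $\C^0_n$, and for any surjection $f:[m]\lrar[n]$ a morphism $f\to\mathrm{id}$ in $\C^0_n$ would be an injection $h:[n]\hrar[m]$ with $f\circ h=\mathrm{id}$, i.e.\ a section of $f$ that is a morphism in $\Del_s$. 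Such a section always exists (e.g.\ $h(i)=\min f^{-1}(i)$), but it need not be unique, so $\mathrm{id}$ is a \emph{weakly} terminal object rather than a genuine terminal one. That is still not enough on its own; I will need to either pick sections functorially or build a contracting homotopy.

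First I would try the functorial-section approach: define $F:\C^0_n\lrar\C^0_n$ to be the constant functor at $\mathrm{id}:[n]\to[n]$, and construct a natural transformation from the identity functor of $\C^0_n$ to $F$. On an object $f:[m]\lrar[n]$ the component must be the morphism in $\C^0_n$ from $f$ to $\mathrm{id}$ given by the canonical section $h_{\min}^f(i)=\min f^{-1}(i)$; this is injective and order-preserving and satisfies $f\circ h_{\min}^f=\mathrm{id}$. Naturality amounts to the following: given a morphism $h:[k]\hrar[m]$ in $\C^0_n$ from $g:[k]\lrar[n]$ to $f:[m]\lrar[n]$ (so $f\circ h=g$), one needs $h_{\min}^f = h\circ h_{\min}^g$ as maps $[n]\to[m]$. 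This is where the argument could fail: it is not automatic, because $h$ need not carry $\min g^{-1}(i)$ to $\min f^{-1}(i)$. So a single choice of minimal sections does not give a natural transformation, and the naive approach breaks down — this is the main obstacle.

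To get around it, I would instead use the weak terminality together with a homotopy-theoretic argument, or reorganize the contraction. One robust option: the inclusion of the one-object subcategory $\{\mathrm{id}\}\hrar\C^0_n$ is cofinal by Quillen's Theorem A, since for every $f:[m]\lrar[n]$ the comma category $\{\mathrm{id}\}_{f/}$ — equivalently the set of sections $h$ of $f$ in $\Del_s$ — is nonempty and, being a subposet of $\Hom_{\Del_s}([n],[m])$ with a maximum (the map $i\mapsto\max f^{-1}(i)$) and a minimum (the minimal section), is weakly contractible (it is even a finite lattice). A cofinal inclusion induces a weak equivalence on nerves, and the nerve of a point is contractible, so $\C^0_n$ is weakly contractible. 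Alternatively, and perhaps more transparently, one can decompose $\C^0_n$ in the spirit of Lemma~\ref{main-lem-1}: an object $f:[m]\lrar[n]$ is the same as a choice, for each $i\in[n]$, of a nonempty finite totally ordered set $f^{-1}(i)$, and a morphism is a simultaneous system of injections; this identifies $\C^0_n$ with a suitable Grothendieck-style construction over $([n],\Del_s)$ in which each fiber is the category of nonempty finite ordered sets and order-preserving injections, which is weakly contractible (its nerve is a cone, since adjoining a new bottom element gives a natural transformation to the constant functor at $[0]$).

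I expect the homotopy-theoretic step — verifying weak contractibility of the relevant comma categories or fibers — to be the only real content; once that is in hand, Quillen's Theorem A (or the cofinality lemma already used in the proof of Lemma~\ref{main-lem-1}) finishes the argument. The subtlety to keep in mind throughout is that $\Del_s$ has no degeneracies, so sections of surjections are genuine honest injections and the posets of such sections are finite lattices rather than anything with extra structure; their contractibility is elementary but must be stated carefully to avoid the false claim that minimal sections are natural.
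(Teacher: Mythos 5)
Your second, ``alternatively'' route is the paper's actual proof: the paper identifies $\C^0_n$ with a product of copies of $\Del_s^{\op}$, one for each $i \in [n]$, via $f \mapsto \left(f^{-1}(0),\dots,f^{-1}(n)\right)$ (a morphism $f \to g$ in $\C^0_n$ restricts to an injection $g^{-1}(i) \hrar f^{-1}(i)$ in each fiber), and then invokes the weak contractibility of $\Del_s$, which is exactly your adjoin-a-bottom-element cone argument. That part of your proposal is correct and complete, modulo the one-line remark that the nerve of a finite product of categories is the product of the nerves. It is a plain product indexed by the set $\{0,\dots,n\}$, not a Grothendieck construction over anything varying, but the content is the same.

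Your advertised ``robust option'' via Quillen's Theorem~A, however, does not work as stated, and you should not rely on it. For the inclusion of the one-object subcategory $\{\mathrm{id}_{[n]}\} \hrar \C^0_n$, the relevant comma category at an object $f:[m]\lrar [n]$ has as objects the morphisms $f \to \mathrm{id}$, i.e.\ the sections of $f$; but since the only endomorphism of $\mathrm{id}_{[n]}$ in $\C^0_n$ is the identity, this comma category is the \emph{discrete} category on the set of sections. The pointwise partial order on sections, with the minimal and maximal sections as bottom and top, is structure you have imposed from outside; it is not the morphism structure of the comma category, so the ``finite lattice, hence contractible'' step is unavailable. Already for $f:[1]\lrar[0]$ the comma category is two isolated points, so the hypothesis of Theorem~A fails. (Its conclusion happens to be true, but only because $\C^0_n$ is contractible for the other reason.) So discard the Theorem~A paragraph and keep the fiberwise product decomposition, which is the paper's argument.
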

\begin{proof}
The category $\C^0_n$ is isomorphic to $(\Del^{op}_s)^n$: the isomorphism is given by sending a surjective map $f:[m] \lrar [n]$ to the vector of linearly ordered sets $(f^{-1}(0),...,f^{-1}(n))$ considered as an object of $(\Del^{op}_s)^n$.
\end{proof}

In light of~\ref{e:c0} and Lemma~\ref{contractible-fiber} the proof of Theorem~\ref{counit} will be done once we show that for each $n$ the restricted functor ${\G_n}|_{\C^0_n}$ is homotopy-constant. This is done in the following Proposition:

\begin{prop}\label{main-lem-2}
Let $X$ be a complete marked semiSegal space. Suppose we are given a diagram
$$ \xymatrix{
[k] \ar_{g}[dr]\ar^{h}[rr] & & [m] \ar^{f}[dl] \\
& [n]  &\\
}$$
such that both $f,g$ are surjective and $h$ is injective. Then $h$ induces an equivalence
$$ h^*:X^{f}_m \x{\simeq}{\lrar} X^{g}_k .$$
\end{prop}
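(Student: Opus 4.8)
The plan is to reduce the general statement to two elementary cases which together generate all the maps $h$ appearing in a diagram of the indicated form, and then verify each case directly using the structure of a complete marked semiSegal space. Given a commutative triangle with $f,g$ surjective and $h:[k]\hookrightarrow[m]$ injective, I will factor $h$ (compatibly with the surjections onto $[n]$) into a composite of two types of basic inclusions: (a) inclusions $[k]\hookrightarrow[m]$ which are bijective on the fibers over each $i\in[n]$ except that they "extend a single fiber by one new maximal or minimal element", and (b) the complementary possibility. More precisely, since both $f$ and $g$ are surjective, $h$ restricts on each fiber to an inclusion $g^{-1}(i)\hookrightarrow f^{-1}(i)$, and the whole of $h$ is obtained by composing inclusions which add one element at a time to one fiber at a time. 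Thus it suffices to treat the case where $[m]=[k+1]$ and $h$ omits exactly one element $v\in[m]$, with $f(v)$ equal to $g$ of one of the neighbours of $v$ (this is exactly the situation controlled by Proposition~\ref{Q-anodyne}). I would isolate this as a reduction lemma at the start.

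Next, for such an elementary $h$, I will identify the map $h^*\colon X^f_m\to X^g_k$ with a map that Proposition~\ref{Q-anodyne} and Corollary~\ref{Q-anodyne-lem-2} already tell us a lot about. Indeed, by definition $X^f_m=\Map^+\!\left((\Del^m)^f,X\right)$ and $X^g_k=\Map^+\!\left((\Del^k)^g,X\right)$, and the inclusion $(\Del^k)^g\hookrightarrow(\Del^m)^f$ of marked semi-simplicial spaces is, after examining the markings, precisely (a pushout/retract-friendly variant of) the map $h\colon(\Del^k,\wtl M)\to(\Del^n,\wtl M_f)$ shown to be Q-anodyne in Proposition~\ref{Q-anodyne} — here one takes $M$ on the $k$-spine to be the set of $g$-degenerate consecutive edges, so that $\wtl M$ consists exactly of the $g$-degenerate edges of $\Del^k$ and $\wtl M_f$ of the $f$-degenerate edges of $\Del^m$. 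Hence $h^*$ is the map induced on mapping objects by a Q-anodyne inclusion. By Theorem~\ref{Q-anodyne-lem-2}, pulling back along a Q-anodyne map induces a weak equivalence in $\Comp_s$ between the mapping objects $X^{(\Del^m)^f}$ and $X^{(\Del^k)^g}$; but $X$ is already a complete marked semiSegal space, hence fibrant in $\Comp_s$, so both $X^{(\Del^m)^f}$ and $X^{(\Del^k)^g}$ are again complete marked semiSegal spaces (they are retracts/mapping objects of $X$ by Proposition~\ref{mapping-into-complete}), and a weak equivalence in $\Comp_s$ between complete objects is a marked equivalence, in particular a levelwise weak equivalence. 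Evaluating at level $0$ — or rather noting that the relevant spaces $X^f_m$, $X^g_k$ are themselves the level-$0$ spaces, or extracting them via the exponential law — gives that $h^*$ is a weak equivalence of spaces.

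I expect the main obstacle to be the careful bookkeeping in the reduction step: verifying that an arbitrary injective $h$ sitting over the surjections $f,g$ really does decompose into the elementary inclusions to which Proposition~\ref{Q-anodyne} applies, and in particular that at each elementary stage the relevant marked inclusion of standard simplices matches the hypotheses of that proposition (surjectivity of the ambient $f$, the section $h$, and the precise generated markings $\wtl M$, $\wtl M_f$). The homotopical part is then essentially formal given Theorem~\ref{Q-anodyne-lem-2} and Proposition~\ref{mapping-into-complete}. A secondary point to be careful about is that Proposition~\ref{Q-anodyne} is stated for a surjection $f\colon[n]\to[k]$ with a chosen section, so one must translate our triangle (with target $[n]$) into that framework by working fiberwise over $[n]$; since everything is compatible with the maps to $[n]$ this is harmless, but it should be spelled out. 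Once these identifications are in place, the conclusion $h^*\colon X^f_m\xrightarrow{\ \simeq\ }X^g_k$ follows, completing the proof and hence — via~\eqref{e:c0} and Lemma~\ref{contractible-fiber} — the proof of Theorem~\ref{counit}.
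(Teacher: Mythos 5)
Your proposal is correct, but it routes the argument differently from the paper at both ends. For the reduction: the paper avoids all fiberwise bookkeeping by choosing a section $s:[n]\lrar [k]$ of $g$ and observing that in the composite $X^f_m \x{h^*}{\lrar} X^g_k \x{s^*}{\lrar} X^{\Id}_n$ both $s$ and $h\circ s$ are sections of surjections onto $[n]$, so by $2$-out-of-$3$ one only needs the case $g=\Id$, which is literally the setting of Proposition~\ref{Q-anodyne}. Your fiberwise factorization works too, and I checked the bookkeeping it requires: closing the consecutive $g$-degenerate edges under $2$-out-of-$3$ does give exactly $A_g$, and for each elementary step the collapse map $\phi:[k+1]\lrar[k]$ onto the neighbour in the same fiber satisfies $g\circ\phi=f'$, so (using that an order-preserving surjection of simplices never skips a value, whence every spine edge of the big simplex is either $\phi$-degenerate or maps to a spine edge) the marking $\wtl{M}_\phi$ of Proposition~\ref{Q-anodyne} is exactly $A_{f'}$; hence each elementary inclusion, and therefore the composite $(\Del^k)^g\hrar(\Del^m)^f$, is Q-anodyne. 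In fact you could skip the factorization entirely: a single order-preserving retraction $\phi:[m]\lrar[k]$ of $h$ compatible with the maps to $[n]$ always exists, and Proposition~\ref{Q-anodyne} applied once to it gives the same conclusion. For the endgame, you invoke Theorem~\ref{Q-anodyne-lem-2} plus the general fact that a weak equivalence in a left Bousfield localization between local objects is a weak equivalence in the underlying model structure, whereas the paper uses Corollary~\ref{Q-anodyne-lem} (the map of mapping objects is a DK-equivalence) together with Propositions~\ref{mapping-into-complete} and~\ref{complete-DK}; both are available at this point and neither introduces a circularity, so this is a matter of taste. The paper's version is shorter; yours proves the slightly stronger intermediate statement that $(\Del^k)^g\hrar(\Del^m)^f$ is itself Q-anodyne for arbitrary $h$ over $[n]$, which is mildly more information than the $g=\Id$ case alone.
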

\begin{proof}
Since $g$ is a surjective map between simplices it admits a section $s: [n] \lrar [k]$. One then obtain a sequence
$$ X^{f}_m \x{h^*}{\lrar} X^{g}_k  \x{s^*}{\lrar} X^{\Id}_n  .$$
From the $2$-out-of-$3$ rule we see that it will be enough to prove the lemma for $k=n$ and $g=\Id$. Note that in this case $X^{\Id}_n = X_n$ and we can consider $h$ as a section of $f$.

According to Proposition~\ref{Q-anodyne} we get that the map
$ X^{\left(\Del^m,A_f\right)} \lrar X^{\left(\Del^n\right)^{\flat}} $
is a DK-equivalence. By Propositions~\ref{mapping-into-complete} and~\ref{complete-DK} this map is a levelwise equivalence. Evaluating at level $0$ we get the desired result.
\end{proof}

This finishes the proof of Theorem~\ref{counit}
\end{proof}

\subsection{ Monoidality }

Since both $\Comp$ and $\Comp_s$ are monoidal, it is natural to ask whether the Quillen equivalence $\F^+ \dashv \RK^+$ can be promoted to a weakly monoidal one in the sense of Definition~\ref{d:quillen-monoidal}. Unfortunately, this is not exactly the case. However, we seem to be in a somewhat dual situation, in which we have a \textbf{lax structure} on $\F^+$, whose structure maps are weak equivalences (this in turn determines no structure on $\RK^+$, and we do not know if $\RK^+$ carries any colax structure). Note that on the level of $\infty$-categories this does not matter - the weakly monoidal structure on $\F^+$ still induces a symmetric monoidal structure on the corresponding map of $\infty$-categories (in the suitable $\infty$-sense).

To begin, consider the adjunction
$$ \xymatrix{
\Top^{\Del_s^{\op}} \ar@<0.5ex>[r]^{\LK} & \Top^{\Del^{\op}} \ar@<0.5ex>[l]^{\F} \\
}$$
where $\F$ is the forgetful functor and $\LK$ is the left Kan extension functor. This adjunction carries a compatible lax-colax structure $(\alp_{X,Y},u),(\bet_{Z,W},v)$ (see Definition~\ref{d:monoidal-adj}) with respect to the $\otimes$ product on $\Top^{\Del_s^{\op}}$ and the Cartesian product $\times$ on $\Top^{\Del^{\op}}$. The resulting lax monoidal adjunction is in fact strongly monoidal, i.e. $v$ and $\bet_{Z,W}$ are \textbf{isomorphisms} for all $Z,W$ (in particular, the unit of the Cartesian structure, i.e. the standard $0$-simplex, can be identified with $\LK\left(\Del^0\right)$).

The isomorphisms $\bet_{Z,W}$ are completely explicit, and through them one can obtain an explicit formula for the $\alp_{X,Z}$. This in turn shows that $\alp_{X,Y}$ and $u$ actually give \textbf{marked maps}
$$ \alp^+_{X,Y}:\F^+(X) \otimes \F^+(Y) \lrar \F^+(X \otimes Y) $$
and
$$ u^+: \Del^0 \lrar \F^+(\LK(\Del^0)) $$
constituting a lax structure on $\F^+$. Now $\F^+$ in turn is the right functor in the adjunction $\F^+ \dashv \RK^+$. However, since we have a lax and not a colax structure on $\F^+$ we do not get any type of structure on $\RK^+$.

We will now proceed to show that $\alp^+_{X,Y}$ and $u^+$ are weak equivalences in $\Comp_s$ for every $X,Y \in \Top^{\Del^{\op}}$. We start with the following direct corollary of Theorem~\ref{counit}:
\begin{lem}\label{l:lk}
Let $Z$ be an (unmarked) semi-simplicial space. Then the composition of natural maps
$$ Z^{\flat} \lrar \left(\F(\LK(Z))\right)^{\flat} \lrar \F^+(\LK(Z)) $$
is a weak equivalence in $\Comp_s$.
\end{lem}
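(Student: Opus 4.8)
The plan is to verify the weak equivalence by testing against complete marked semiSegal spaces and pushing everything through the two adjunctions $\F^+ \dashv \RK^+$ and $\LK \dashv \F$, the actual input being Theorem~\ref{counit}. Since the cofibrations of $\Comp_s$ are exactly the levelwise inclusions, both $Z^\flat$ and $\F^+(\LK(Z))$ are cofibrant, and the fibrant objects of $\Comp_s$ are the complete marked semiSegal spaces; hence it suffices to show that for every complete marked semiSegal space $W$ the map
$$ \Map_+\left(\F^+(\LK(Z)),W\right) \lrar \Map_+\left(Z^\flat,W\right) $$
induced by the composite $Z^\flat \to \F^+(\LK(Z))$ is a weak equivalence.

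First I would rewrite the left-hand side. Since the adjunctions $\F^+ \dashv \RK^+$ and $\LK \dashv \F$ are compatible with the simplicial enrichments, we obtain natural isomorphisms
$$ \Map_+\left(\F^+(\LK(Z)),W\right) \cong \Map_{\Top^{\Del^{\op}}}\left(\LK(Z),\RK^+(W)\right) \cong \Map_{\Top^{\Del^{\op}_s}}\left(Z,\F\left(\RK^+(W)\right)\right) . $$
Next, Theorem~\ref{counit} tells us that the counit $\nu_W \colon \F^+(\RK^+(W)) \lrar W$ is a marked equivalence, so its underlying map of semi-simplicial spaces $\F(\RK^+(W)) \lrar \ovl{W}$ is a levelwise weak equivalence. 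Both of these are Reedy fibrant: $\ovl{W}$ by Lemma~\ref{marked-fibrant}, and $\F(\RK^+(W))$ because $\RK^+(W)$ is fibrant in $\Comp$ (as $\RK^+$ is right Quillen), hence a complete Segal space, hence Reedy fibrant, and restriction along $\Del_s \hrar \Del$ preserves Reedy fibrancy since the matching objects of a simplicial space and of its underlying semi-simplicial space coincide. As $Z$ is cofibrant, this gives a weak equivalence $\Map_{\Top^{\Del^{\op}_s}}(Z,\F(\RK^+(W))) \x{\simeq}{\lrar} \Map_{\Top^{\Del^{\op}_s}}(Z,\ovl{W})$, and the strong monoidality of $(\bullet)^\flat \dashv \ovl{\bullet}$ identifies the target with $\Map_+(Z^\flat,W)$.

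It then remains to check that this zig-zag of natural equivalences is the map induced by the given composite $Z^\flat \to (\F(\LK(Z)))^\flat \to \F^+(\LK(Z))$. Unravelling, the first arrow of that composite is $(\bullet)^\flat$ applied to the unit $\eta_Z \colon Z \lrar \F(\LK(Z))$ of $\LK \dashv \F$ and the second is the tautological inclusion of markings, so the underlying map of $Z^\flat \to \F^+(\LK(Z))$ is exactly $\eta_Z$. Feeding a marked map $\phi \colon \F^+(\LK(Z)) \to W$ through the chain and using the triangle identity $\nu_W \circ \F^+(\wtl{\phi}) = \phi$ for its adjunct $\wtl{\phi}$, one finds that the chain sends $\phi$ to $\ovl{\phi} \circ \eta_Z$, which is precisely the underlying map of $\phi$ composed with $Z^\flat \to \F^+(\LK(Z))$; since the source is flat this underlying map determines the marked map, so the two agree. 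I expect no deep obstacle here — the substance of the lemma is carried entirely by Theorem~\ref{counit} — and the only points requiring care are the routine but slightly fiddly identification of the natural zig-zag with the given map via the triangle identities, and the observation that restriction to $\Del_s$ preserves Reedy fibrancy.
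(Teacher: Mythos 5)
Your proof is correct and follows essentially the same route as the paper: test against an arbitrary complete marked semiSegal space $W$, transpose through the adjunctions to identify the map with the one induced by the underlying semi-simplicial map of the counit $\F^+(\RK^+(W)) \lrar W$, and conclude from Theorem~\ref{counit} together with Reedy fibrancy of both sides. The extra verifications you supply (Reedy fibrancy of $\F(\RK^+(W))$ and the identification of the zig-zag with the given composite via the triangle identity) are exactly the details the paper leaves implicit.
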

\begin{proof}
Let $(W,M)$ be a complete marked semi-simplicial space. Mapping the above composition into $W$ yields the map
$$ \Map_+\left(\F^+(\LK(Z)),W\right) \lrar \Map_+\left(Z^{\flat},W\right) .$$
By adjunction the above map can be written as
\begin{equation}\label{e:desired}
\Map\left(Z,\F(\RK^+(W))\right) \lrar \Map\left(Z,W\right)
\end{equation}
where the map is induced by the map $\F\left(\RK^+(W)\right) \lrar W$ of semi-simplicial spaces underlying the counit map. By Theorem~\ref{counit} this counit map is a marked equivalence and so the underlying map is a levelwise equivalence. Since both $\F\left(\RK^+(W)\right)$ and $W$ are Reedy fibrant we get that the map~\ref{e:desired} is a weak equivalence. Since this is true for any complete marked semi-simplicial space $W$ we get that the map
$ Z^{\flat} \lrar \F^+(\LK(Z)) $
is a weak equivalence in $\Comp_s$ as desired.
\end{proof}

Now, applying Lemma~\ref{l:lk} for $Z = \Del^0$ we obtain that
$ u^+ $
is a weak equivalence in $\Comp_s$. It is left to prove the following:
\begin{prop}\label{p:monoidality}
The natural map
$$ \alp^+_{X,Y}:\F^+(X) \otimes \F^+(Y) \lrar \F^+(X \otimes Y) $$
is a weak equivalence in $\Comp_s$ for every two simplicial spaces $X,Y$.
\end{prop}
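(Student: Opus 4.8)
The plan is to reduce the statement to the case of standard simplices by a cell induction, and then to settle that case by replacing $\F^+$ with a flat realization via Lemma~\ref{l:lk}. For the reduction I would work with the two bifunctors
$$ \Phi(X,Y) \x{\df}{=} \F^+(X) \otimes \F^+(Y), \qquad \Psi(X,Y) \x{\df}{=} \F^+(X \otimes Y) $$
from $\Top^{\Del^{\op}} \times \Top^{\Del^{\op}}$ to $\Top^{\Del^{\op}_s}_+$ (with the levelwise Cartesian product on $\Top^{\Del^{\op}}$), so that $\alp^+$ is a natural transformation $\Phi \lrar \Psi$. Both preserve colimits separately in each variable: $\F^+$ is a left adjoint, the product on $\Top^{\Del^{\op}}$ is computed levelwise in a Cartesian closed category, and $\otimes$ on $\Top^{\Del^{\op}_s}_+$ is closed. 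Both also send levelwise injections in each variable to MS-cofibrations, since $\F^+$ turns levelwise injections into levelwise injective marked maps and so does $- \otimes Z$ for fixed $Z$ (by the explicit description in Remark~\ref{concrete} and the definition of the marked product). Finally, $\otimes$ is a left Quillen bifunctor for $\Comp_s$ (Theorem~\ref{marked-monoidality} and its completed analogue), and every object of $\Comp_s$ is cofibrant, so $\otimes$ preserves $\Comp_s$-weak equivalences separately in each variable.

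Using that every simplicial space is Reedy cofibrant, $X$ carries a skeletal filtration built by attaching generating Reedy cofibrations, whose (co)domains are finite colimits of cells of the form $\LK(\Del^n) \otimes |\Del^k|$. Fixing a generating cell $Y$ and inducting along the skeleta of $X$, the gluing lemma for the left proper model category $\Comp_s$, together with the fact that sequential colimits along MS-cofibrations in $\Comp_s$ are homotopy colimits, reduces the claim ``$\alp^+_{X,Y}$ is a $\Comp_s$-weak equivalence'' to the case of $X$ a generating cell; a second such induction, now along the skeleta of $Y$ with $X$ arbitrary, reduces it to $X$ and $Y$ both generating cells. Since $\F^+$ and $\otimes$ are compatible with the $\Top$-tensoring, the $|\Del^k|$-directions split off as a $\Top$-tensor on which $\alp^+$ restricts to an isomorphism, so one is left with $X = \LK(\Del^n)$, $Y = \LK(\Del^m)$, i.e. $X,Y$ the standard simplices regarded as levelwise discrete simplicial spaces.

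For this remaining case, recall that $\LK$ and $(\bullet)^{\flat}$ are strongly monoidal and that $\LK(\Del^k)$ is the simplicial $k$-simplex, whence $\LK(\Del^n) \otimes \LK(\Del^m) \cong \LK(\Del^n \otimes \Del^m)$ and $(\Del^n)^{\flat} \otimes (\Del^m)^{\flat} \cong (\Del^n \otimes \Del^m)^{\flat}$. For a semi-simplicial space $Z$ let $\eta_Z: Z^{\flat} \lrar \F^+(\LK(Z))$ denote the natural map of Lemma~\ref{l:lk}, which that lemma identifies as a $\Comp_s$-weak equivalence; then $\eta_{\Del^n} \otimes \eta_{\Del^m}$ is a $\Comp_s$-weak equivalence as well, since $\otimes$ preserves $\Comp_s$-weak equivalences in each variable. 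These maps fit into the square
$$ \xymatrix{
(\Del^n)^{\flat} \otimes (\Del^m)^{\flat} \ar^{\eta_{\Del^n} \otimes \eta_{\Del^m}}[rr] \ar_{\cong}[d] & & \F^+(\LK(\Del^n)) \otimes \F^+(\LK(\Del^m)) \ar^{\alp^+_{\LK(\Del^n),\LK(\Del^m)}}[d] \\
(\Del^n \otimes \Del^m)^{\flat} \ar_{\eta_{\Del^n \otimes \Del^m}}[rr] & & \F^+(\LK(\Del^n \otimes \Del^m)) \\
} $$
in which the left vertical arrow is the structure isomorphism of $(\bullet)^{\flat}$ and the lower right corner has been identified, via strong monoidality of $\LK$, with the target $\F^+(\LK(\Del^n) \otimes \LK(\Del^m))$ of $\alp^+_{\LK(\Del^n),\LK(\Del^m)}$. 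Granting commutativity of this square, the left vertical map is an isomorphism and the two horizontal maps are $\Comp_s$-weak equivalences, so two-out-of-three forces $\alp^+_{\LK(\Del^n),\LK(\Del^m)}$ to be a $\Comp_s$-weak equivalence, which finishes the argument.

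The step I expect to be the main obstacle is the commutativity of that square, i.e. the assertion that the family $\{\eta_Z\}$ is a monoidal natural transformation from the strongly monoidal functor $(\bullet)^{\flat}$ to the lax monoidal functor $\F^+ \circ \LK$. This should follow directly from the construction: $\alp^+_{X,Y}$ is obtained by transposing the isomorphisms $\bet_{Z,W}$ across the adjunction $\LK \dashv \F$ and checking that the transposes preserve markings, while $\eta_Z$ is built from the unit of that same adjunction, so the required identity unwinds to a diagram chase with the triangle identities and the explicit formula for $\bet$. The only other mildly delicate point is the legitimacy of the two-variable cell induction for a natural transformation valued in the Bousfield localization $\Comp_s$, but this is routine once one has the preservation properties recorded above together with left properness of $\Comp_s$.
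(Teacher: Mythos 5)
Your proposal is correct and follows essentially the same route as the paper: reduce to $X=\LK(\Del^n)$, $Y=\LK(\Del^m)$ via (homotopy) colimits of simplices, then compare $\alp^+$ with the maps of Lemma~\ref{l:lk} in a commutative square whose other three sides are weak equivalences (or isomorphisms) and conclude by two-out-of-three; your square is just a transposed arrangement of the one in the paper, and its commutativity is likewise deduced from the compatibility of the lax structure on $\F$ with the monoidal structure of $\LK$. The only difference is that you spell out the cell-induction reduction in more detail than the paper does.
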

\begin{proof}

Note that for every $n$, the left Kan extension $\LK\left(\Del^n\right)$ is the standard $n$-simplex (considered as a levelwise discrete simplicial space). Since any simplicial space is a colimit (which is simultaneously a homotopy colimit) of simplices it will be enough to prove the claim for $X = \LK\left(\Del^n\right), Y = \LK\left(\Del^m\right)$. In particular, we need to show that the lower horizontal map in the diagram
$$ \xymatrix{
\Del^n \otimes \Del^m \ar[r]\ar[d] & \F^+\left(\LK\left(\Del^n \otimes \Del^m\right)\right) \ar^{\simeq}[d] \\
\F^+\left(\LK\left(\Del^n\right)\right) \otimes \F^+\left(\LK\left(\Del^m\right)\right)  \ar[r] & \F^+\left(\LK\left(\Del^n\right) \times \LK\left(\Del^m\right)\right) \\
}$$
is a weak equivalence (to check that this diagram commutes note that the underlying diagram of semi-simplicial spaces is one of the compatibility diagram of the lax structure of $\F$ and the monoidal structure of $\LK$, see Definition~\ref{d:monoidal-adj}). Now from Lemma~\ref{l:lk} we get that upper horizontal map and the left vertical map are weak equivalences (for the left vertical map one uses the fact that in a symmetric monoidal model category the product of two weak equivalences between cofibrant objects is again a weak equivalence). Since the right vertical map is an isomorphism the result follows from the $2$-out-of-$3$ property.

\end{proof}

\begin{rem}
By using adjunction and the exponential law one sees that the lax structure on $\F^+$ induces a natural map $$ \F^+\left(\RK^+(Y)^X\right) \lrar Y^{\F^+(X)} $$
for every $X \in \Comp, Y \in \Comp_s$. Proposition~\ref{p:monoidality} then implies that this map is a weak equivalence whenever $Y$ is fibrant, i.e. a complete marked semiSegal space. In particular, if $Y = \F^{\natural}(Y')$ for some complete Segal space $Y'$ (see Remark~\ref{r:topological-adj}) then we get a sequence of weak equivalences
$$ \F^+\left((Y')^X\right) \x{\simeq}{\lrar} \F^+\left(\left(\RK^+\left(\F^{\natural}(Y')\right)\right)^X\right) \x{\simeq}{\lrar} \F^{\natural}(Y')^{\F^+(X)} \simeq \F^{\natural}(Y')^{\F^{\natural}(X)} .$$
The composition of all these equivalences can be interpreted as follows: if $\C,\D$ are two $\infty$-categories and $\ovl{\C},\ovl{D}$ their respective underlying quasi-unital $\infty$-categories, then the quasi-unital functor category $\ovl{C}^{\ovl{D}}$ is equivalent to the underlying quasi-unital $\infty$-category of the functor category $C^D$.
\end{rem}

\end{document}